\newcommand\numberthis{\addtocounter{equation}{1}\tag{\theequation}}
\newtheorem{theorem}{Theorem}[section]
\newtheorem{remark}{Remark}[section]
\newtheorem{example}{Example}[section]
\numberwithin{equation}{section}
\newtheorem{lemma}[theorem]{Lemma}
\DeclareMathOperator*{\argmax}{arg\,max}
     \def\R{\mathbb{R}}
\def\calA{{\cal  A}}
\def\calL{{\cal  L}} 
\def\calM{{\cal  M}}
\def\calS{{\cal  S}} 
\def\calT{{\cal  T}}
\begin{document}

\title{Elementary Analysis of  Policy Gradient Methods}
\author{Jiacai Liu}
\author{Wenye Li}
\author{Ke Wei}
\affil{School of Data Science, Fudan University, Shanghai, China.}
\date{\today}
\maketitle

\begin{abstract}
    Projected policy gradient under the simplex parameterization, policy gradient and natural policy gradient under the softmax parameterization, are fundamental algorithms in reinforcement learning. There have been a flurry of recent activities in studying  these algorithms from the theoretical aspect. Despite this, their convergence behavior is still not fully understood, even given the access to exact policy evaluations. In this paper, we focus on the discounted MDP setting and conduct a systematic study of the aforementioned policy optimization methods. Several novel results are presented, including 1) global linear convergence of projected policy gradient for any constant step size, 2) sublinear convergence of softmax policy gradient for any constant step size, 3) global linear convergence of softmax natural policy gradient for any constant step size, 4) global linear convergence of entropy regularized softmax policy gradient for a wider range of constant step sizes than existing result, 5)  tight local linear convergence rate of entropy regularized natural policy gradient, and 6) a new and concise  local quadratic convergence rate of soft policy iteration without the assumption on the stationary distribution under the optimal policy. New and elementary analysis techniques have been developed to establish these results.\\

    \noindent
    \textbf{Keywords.} Reinforcement learning, policy gradient methods, entropy regularization, sublinear convergence, linear convergence, quadratic convergence
\end{abstract}
\section{Introduction}
Reinforcement Learning (RL) has achieved great success in the fields of both science and engineering, see for example~\cite{ref-AlphaTensor,ref-AlphaFold,robot3,ref-AlphaGo}. As the fundamental model for RL, Markov Decision Process (MDP) can be represented as a tuple $\calM(\calS, \calA, P, r, \gamma)$, where $\calS$ is the state space and $\calA$ is the action space, 
 $P(s'|s,a)$ is the transition probability  from state $s$ to state $s'$ under action $a$, $r: \calS \times \calA\rightarrow \mathbb{R}$ is the reward function, and $\gamma \in [0,1)$ is the discounted factor.
In this paper, we focus on the finite MDP setting, i.e., $|\calS|<\infty$ and $|\calA|<\infty$, and also assume $r\in[0,1]$ for simplicity. 
Letting
\begin{align*}
    \Delta(\calA) = \Big\{ \theta \in \mathbb{R}^{|\calA|}|~ \theta_i \geq 0, \; \sum_{i=1}^{|\calA|} \theta_i = 1 \Big\},
\end{align*}
we can define the set of all admissible policies for MDP as,
\begin{align*}
    \Pi := \left\{ \pi=(\pi(\cdot|s))_{s\in\calS} \; \big| \; \pi(\cdot|s) \in \Delta(\calA) \; \mbox{for all } s\in\calS \right\}.
\end{align*}
Given policy $\pi\in\Pi$, the value function $V^\pi(s)$ is defined as the average discounted cumulative reward over the random trajectory starting from $s$ and induced by $\pi$,
\begin{align*}
V^{\pi}\left( s \right) :=\mathbb{E}\left[ \sum_{t=0}^{\infty}{\gamma ^t}r\left( s_t,a_t \right) |s_0=s,\,\pi \right].\numberthis\label{eq:state-value}
\end{align*}
The overall goal of RL is to seek an optimal policy to maximize the state values, which can be expressed as 
\begin{align*}
    \max_{\pi\in\Pi} \; V^\pi(\mu) =  \mathbb{E}_{s\sim\mu}\left[V^\pi(s)\right],
\end{align*}
where $\mu\in\Delta(\mathcal{S})$ denotes the initial state distribution.

Policy optimization refers to a family of effective methods for solving challenging RL problems. In contrast to value-based methods such as value iteration and policy iteration, policy optimization methods conduct direct search in the policy space through the parameterization of policies.  More precisely, given a parameterized policy class $\{\pi_\theta: \R^d \to \Pi\}$, the state value in \eqref{eq:state-value} becomes a function of the parameters, denoted $V^{\pi_\theta}(\mu)$. Thus, the problem of seeking an optimal policy over the parameterized policy class turns to be a finite dimensional optimization problem, and different kinds of optimization methods can be applied to solve this problem. Maybe the simplest one among them is the policy gradient method \cite{Williams1992,suttonRL},
\begin{align*}
\theta^+ \gets \theta +\eta\, \nabla _{\theta}V^{\pi_\theta}\left( \mu \right),\numberthis\label{eq:policy-optimization}
\end{align*}
where $\nabla _{\theta}V^{\pi_\theta}\left( \mu \right)$ is the policy gradient of $V^{\pi_\theta}(\mu)$. Different parameterization methods lead to different policy gradient methods, for instance projected policy gradient (PPG) under the simplex or direct parameterization and softmax policy gradient (PG) under the softmax parameterization. More details will be given in later sections. Natural policy gradient (NPG, \cite{kakade2002npg}) is an important variant of policy gradient methods, which searches along a preconditioned policy gradient direction based on the Fisher information matrix.
Other important variants of policy optimization methods include  trust region policy optimization (TRPO)  \cite{schulman2015trust} and  proximal policy optimization
(PPO)  \cite{schulman2017proximal}, which are closely related to NPG.

Exploration versus  exploitation is an important paradigm in RL. Entropy regularization can be introduced to enhance the exploration ability of policies in policy optimization.
In this case, the entropy regularized state value is given by
\begin{align*}
V^\pi_\tau(s) & = \mathbb{E}_\tau\left[\sum_{t=0}^\infty \gamma^t\left({r(s_t,a_t)-\tau \log \pi(a_t|s_t)}\right)|s_0=s,\pi\right]\\
&= \mathbb{E}_\tau\left[\sum_{t=0}^\infty \gamma^t\left(r(s_t,a_t)+\tau \mathcal{H}(\pi(\cdot|s_t)\right)|s_0=s,\pi\right],
\end{align*}
where $\tau>0$ is the regularization parameter and $\mathcal{H}(p)=-\sum_ap_a\log p_a$ is the entropy of a probability distribution. Intuitively, larger entropy represents more possibilities. The policy gradient method for the entropy regularized policy optimization problem
\begin{align*}
    \max_{\pi\in\Pi}\;V^\pi_\tau(\mu) = \mathbb{E}_{s\sim\mu}\left[V^\pi_\tau(s)\right],
\end{align*}
is overall similar to that in \eqref{eq:policy-optimization}, but with $\nabla _{\theta}V^{\pi_\theta}\left( \mu \right)$ being replaced by $\nabla _{\theta}V^{\pi_\theta}_\tau\left( \mu \right)$. 
Moreover, NPG can be similarly developed for the entropy regularized scenario.
\subsection{Related Work}
The convergence analyses of policy optimization methods have received a lot of attention recently under various settings, for example from the tabular MDP setting to the setting that incorporates function approximations, from the setting where exact gradient information is assumed to the setting that involves sample-based estimators. An exhaustive review about this is beyond the scope of this paper. Thus, we mainly focus on the convergence results of  different exact policy gradients in the tabular setting, which are mostly related to our work.

Under the simplex or direct policy parameterization, the $O(1/\sqrt{k})$ sublinear convergence has been established in \cite{Agarwal_Kakade_Lee_Mahajan_2019, bhabdari2024or} for projected policy gradient (PPG) with constant step size, which has been improved to $O(1/k)$ subsequently in \cite{Xiao_2022, Zhang_Koppel_Bedi_Szepesvari_Wang_2020}. Note that the results in \cite{Agarwal_Kakade_Lee_Mahajan_2019, bhabdari2024or,Xiao_2022, Zhang_Koppel_Bedi_Szepesvari_Wang_2020} all require the step size to be sufficiently small so that the smooth property of the value function can be used in the optimization analysis framework. In contrast, an approach by bounding the improvement of PPG at each state through the maximum improvement (or the improvement of PI at each state) is developed in \cite{ppgliu} and the $O(1/k)$ sublinear convergence of PPG is established for any constant step size. Moreover, it is further shown that in \cite{ppgliu} that PPG can find an optimal policy in a finite number of iterations.

For exact policy gradient methods under softmax parameterization, the global convergence of softmax PG is established in \cite{Agarwal_Kakade_Lee_Mahajan_2019} provided the step size $\eta$ satisfies $\eta\leq 1/L$, where $L$ is the smoothness constant of the value function. By utilizing the gradient dominant property of the value function, the $O(1/k)$ sublinear convergence of softmax PG is established in \cite{Mei_Xiao_Szepesvari_Schuurmans_2020}. In addition, it is shown in \cite{mei2021normalized} that  softmax PG can also achieve linear convergence using the adaptive step sizes based on the
geometric information.
In the presence of regularization, it is shown in \cite{Agarwal_Kakade_Lee_Mahajan_2019} that softmax PG with log barrier regularization attain an $O(1/\sqrt{k})$ convergence rate, while the linear convergence is established for softmax PG under entropy regularization (simply referred to as entropy softmax PG later) in \cite{Mei_Xiao_Szepesvari_Schuurmans_2020}. 

The $O(1/\sqrt{k})$ convergence rate of NPG under softmax parameterization is obtained in \cite{shani2020trpo}, and this rate has been improved to $O(1/k)$ in \cite{Agarwal_Kakade_Lee_Mahajan_2019}. The fast local linear convergence rate of softmax NPG is established in \cite{Khodadadian_Jhunjhunwala_Varma_Maguluri_2021} based on the contraction of the non-optimal probability measure at each state. For softmax NPG with entropy regularization (simply referred to as entropy softmax NPG later), the linear convergence is established in \cite{Cen_Cheng_Chen_Wei_Chi_2022} by expressing the update into a form for which the $\gamma$-contraction property of the Bellman operator can be used. This result has been extended to more general regularizers in \cite{Zhan_Cen_Huang_Chen_Lee_Chi_2021}. Note that softmax NPG can be cast as a special policy mirror descent (PMD) method with KL divergence, and the convergence of PMD with and without regularizers has also been studied. The $O(1/k)$ sublinear convergence of PMD is established in \cite{Xiao_2022} for any constant step size which generalizes the analysis in \cite{Agarwal_Kakade_Lee_Mahajan_2019} for softmax NPG based on the three-point lemma.  
In \cite{Xiao_2022}, the linear convergence is also provided for the non-adaptive geometrically increasing step sizes. The $\gamma$-rate convergence of PMD under the adaptive step sizes is established in \cite{Johnson_Pike-Burke_Rebeschini_2023}. The analyses in \cite{Xiao_2022} have been extended to the log-linear policy and the more general function approximation scenario in \cite{Yuan_Du_Gower_Lazaric_Xiao_2022, yuan2023general}.
The convergence of PMD with general convex and strongly convex regularizers is studied in \cite{Lan_2021} and it is shown that PMD with strongly convex regularizer can achieve a linear convergence. By adopting a homotopic technique using diminishing regularization combined with increasing step sizes, the linear convergence of PMD with entropy regularization is established in \cite{Li_Zhao_Lan_2022}.
\subsection{Main Contributions}
Despite the recent intensive investigations, there are still a few important issues to be addressed on the convergence of the very basic policy optimization methods, including projected policy gradient (PPG), softmax PG and NPG in the both the non-entropy and entropy cases.
A thorough treatment will be  given towards this line of research in this paper. The main contributions are summarized as follows:
\begin{itemize}
    \item \textbf{Global linear convergence of PPG.} Further to the sublinear convergence of PPG in \cite{ppgliu}, we extend the analysis therein to  establish the linear convergence of PPG for any constant step size despite the highly non-convexity of the RL problem, see Theorem~\ref{thm:PPG-linear}. Moreover,  a simple non-adaptive increasing step size is proposed to further improve the linear convergence rate of PPG, see Theorem~\ref{thm:PPG-linear-adaptive}.
    \item \textbf{Sublinear convergence of softmax PG for a full range of step sizes.} 
    As stated earlier, the sublinear convergence of softmax PG has been established in \cite{Mei_Xiao_Szepesvari_Schuurmans_2020} for small constant step size. The analysis therein follows an optimization analysis framework and thus requires the step size to be overall smaller than the reciprocal of the smoothness constant of $V^{\pi_\theta}(\mu)$. In contrast, a new elementary analysis technique has been developed to break the step size restriction so that the sublinear convergence can be established for any constant step size $\eta>0$, see Theorem~\ref{thm:softmaxPG-sublinear}.
    A new sublinear lower bound is also established which demonstrates that softmax PG converges at most sublinearly for any $\eta>0$, see Theorem~\ref{thm:softmaxPG-sublinear-lowers}. {Moreover, a simple adaptive step size  is proposed in Theorem~\ref{thm:softmaxPG-linear} such that softmax PG can achieve linear convergence with better parameter dependency than the one in \cite{mei2021normalized}.}
    \item \textbf{Global linear convergence of softmax NPG for constant step size.}
    In addition to the existing sublinear convergence \cite{Agarwal_Kakade_Lee_Mahajan_2019} and local linear convergence \cite{Xiao_2022} of softmax NPG with constant step size, we show that a global linear convergence can also be established for it, see Theorem~\ref{thm:softmaxNPG-global-linear}.  In addition, a local linear convergence lower bound is also established for general MDP which matches the fast local linear convergence upper bound established in \cite{Khodadadian_Jhunjhunwala_Varma_Maguluri_2021}.
    \item \textbf{Global linear convergence of entropy softmax PG for a wider range of step sizes.}  Utilizing an analysis technique that is similar to that for softmax PG, the global linear convergence of entropy softmax PG is established for a wider range of step sizes than that in \cite{Mei_Xiao_Szepesvari_Schuurmans_2020}, also with improved convergence result, see Theorem~\ref{thm:entropyPG-linear}.
    \item \textbf{New local quadratic convergence rate of soft PI and tight local linear convergence rate of entropy softmax NPG.}
    Soft PI can be viewed as the limit of entropy softmax NPG as the step size approaches infinity. A new local quadratic convergence rate in  the  form of $\gamma^{\displaystyle 2^{ k-k_0}}$ has been established, see Theorem~\ref{thm:softPI-quadratic}. Compared with the result in \cite{Cen_Cheng_Chen_Wei_Chi_2022}, our result only essentially relies on the discount factor $\gamma$ and does not require an additional assumption on the stationary distribution under the optimal policy. As mentioned earlier, the global linear convergence of entropy softmax NPG has also been established in \cite{Cen_Cheng_Chen_Wei_Chi_2022}. However, as $\eta\rightarrow\infty$, the limit of the rate is a fixed constant, which cannot reflect the fact that entropy softmax NPG should converge faster (locally) as $\eta$ becomes larger since the algorithm tends to be soft PI and the latter one enjoys a local quadratic convergence rate. Motivated by this observation, a local linear convergence rate in the form of $\left(\frac{1}{(\eta\tau+1)^2}\right)^{k-k_0}$ is established. It is meanwhile shown that this rate is indeed tight.

\end{itemize}
Overall, our analyses are elementary and a key ingredient is  essentially about bounding the state-wise improvement, expressed in terms of the Bellman operator as $\mathcal{T}^{k+1}V^k(s)-V^k(s)$ for the non-entropy case and as $\mathcal{T}_\tau^{k+1}V_\tau^k(s)-V_\tau^k(s)$ for the entropy case, directly based on the update rules of the corresponding policy gradient methods. Roughly speaking,
\begin{itemize}
    \item for PPG, we use the existing bound of $\mathcal{T}^{k+1}V^k(s)-V^k(s)$ based on the maximum improvement $\max_aA^k(s,a)$ (established in \cite{ppgliu}) to further establish the global linear convergence result, where $A^k(s,a)$ is the advantage function at the $k$-th iteration;
    \item for softmax PG,  a bound of $\mathcal{T}^{k+1}V^k(s)-V^k(s)$ is established through $\max_a|\pi^{k}(a|s)A^k(s,a)|$;
    \item for softmax NPG, two different bounds of $\mathcal{T}^{k+1}V^k(s)-V^k(s)$ are established through $\max_aA^k(s,a)$ and $\sum_a\pi^{k,*}A^k(s,a)$, respectively, where $\pi^{k,*}$ is a particular optimal policy chosen at the $k$-th iteration;
    \item for entropy softmax PG, a bound of $\mathcal{T}_\tau^{k+1}V_\tau^k(s)-V_\tau^k(s)$ is established through $\max_a |\pi^k(a|s)A^k_\tau(s,a)|$, where $A^k_\tau(s,a)$ is the advantage function defined for the entropy case;
    \item for entropy softmax NPG,  a bound of $\mathcal{T}_\tau^{k+1}V_\tau^k(s)-V_\tau^k(s)$ is established through $\mathrm{KL}(\pi^{k+1}(\cdot|s)\, \| \, \pi^k(\cdot|s))$.
\end{itemize}
\subsection{Outline of This Paper}
The rest of this paper is organized as follows. In Section~\ref{sec:overview} we present a general analysis framework for the policy gradient methods in the non-entropy case, which will be used in the subsequent sections. Some of the results can be easily extended to handle the entropy case, and those needed will be mentioned later in Section~\ref{sec:entropy}. The new convergence results for PPG, softmax PG and softmax NPG are presented in Sections~\ref{sec:ppg}, \ref{sec:softmaxPG}, and \ref{sec:softmaxNpg}, respectively. The discussion of softmax PG and softmax NPG in the entropy regularized case 
is provided in Section~\ref{sec:entropy}. This paper is concluded with a few future directions in Section~\ref{sec:conclusion}.
\section{General Analysis Framework}\label{sec:overview}

\subsection{Preliminaries}
\label{subsec:preliminaries}


Compared to the state value defined in \eqref{eq:state-value} which fixes the initial state, the action value is defined by fixing the initial state-action pair,
\begin{align*}
Q^{\pi}\left( s,a \right) &:=\mathbb{E} \left[ \sum_{t=0}^{\infty}{\gamma ^tr\left( s_t,a_t\right)}|s_0=s,a_0=a,\pi \right].
\numberthis\label{eq:action-value}
\end{align*}
In addition, the advantage function is defined as $A^\pi(s,a)=Q^\pi(s,a)-V^\pi(s)$. 
It can be verified that
\begin{align*}
    \mathbb{E}_{a\sim\pi(\cdot|s)} \left[ A^\pi(s,a) \right] = 0.
\end{align*}
For an arbitrary vector $V \in \mathbb{R}^{|\calS|}$, the Bellman operator induced by a policy $\pi$ is defined as
\begin{align*}
    \calT^\pi V(s) := \mathbb{E}_{a\sim \pi(\cdot|s)} \mathbb{E}_{s^\prime \sim P(\cdot|s,a)} \left[ r(s,a) + \gamma V(s^\prime) \right].
\end{align*}
It is easy to see that $V^\pi$ is the fixed point of $\mathcal{T}^\pi$, i.e. 
$
    \calT^\pi V^\pi = V^\pi.
$
Given two policies $\pi_1, \pi_2 \in \Pi$, one has
\begin{align*}
    \calT^{\pi_1}V^{\pi_2}(s) - V^{\pi_2}(s) = \sum_{a\in\calA} \pi_1(a|s) A^{\pi_2}(s,a).
\end{align*}
Moreover, the value difference between two policies can be given based on $\calT^{\pi_1}V^{\pi_2}(s) - V^{\pi_2}(s)$.
\begin{lemma}[Performance difference lemma \cite{kakade2002approximately}]
    Given two policies $\pi_1$ and $\pi_2$, there holds
    \begin{align*}
        V^{\pi_1}(\rho) - V^{\pi_2}(\rho) &= \frac{1}{1-\gamma} \mathbb{E}_{s\sim d^{\pi_1}_\rho} \left[ \calT^{\pi_1} V^{\pi_2}(s) - V^{\pi_2}(s) \right] \\
        &= \frac{1}{1-\gamma} \mathbb{E}_{s\sim d^{\pi_1}_\rho} \left[ \sum_{a\in \calA} \pi_1(a|s) A^{\pi_2}(s,a) \right].
    \end{align*}
    \label{lem:PDL}
\end{lemma}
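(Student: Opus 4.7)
The plan is to expand the trajectory-definition of $V^{\pi_1}(\rho)$ and insert a telescoping sum in $V^{\pi_2}$ to produce the Bellman-residual form. First I would write
\[
V^{\pi_1}(\rho) - V^{\pi_2}(\rho) = \mathbb{E}_{\pi_1}\!\left[\sum_{t=0}^{\infty}\gamma^t r(s_t,a_t)\right] - \mathbb{E}_{s_0\sim\rho}\!\left[V^{\pi_2}(s_0)\right],
\]
where the first expectation is over trajectories generated by $\pi_1$ and $P$ with $s_0\sim\rho$. Then I would rewrite the subtracted term along every trajectory via the telescoping identity
\[
-V^{\pi_2}(s_0) = \sum_{t=0}^{\infty}\left(\gamma^{t+1}V^{\pi_2}(s_{t+1}) - \gamma^t V^{\pi_2}(s_t)\right),
\]
which is valid because $|V^{\pi_2}|\le 1/(1-\gamma)$ forces $\gamma^{T+1}V^{\pi_2}(s_{T+1})\to 0$.

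Grouping the two sums index-by-index, every summand becomes $\gamma^t\left(r(s_t,a_t)+\gamma V^{\pi_2}(s_{t+1})-V^{\pi_2}(s_t)\right)$. Conditioning on $s_t$ and taking the inner expectation over $a_t\sim\pi_1(\cdot|s_t)$ and $s_{t+1}\sim P(\cdot|s_t,a_t)$ collapses the first two pieces into $\calT^{\pi_1}V^{\pi_2}(s_t)$ by the very definition of the Bellman operator recorded in the preliminaries. Swapping the infinite sum and the expectation then yields
\[
V^{\pi_1}(\rho) - V^{\pi_2}(\rho) = \sum_{t=0}^{\infty}\gamma^t\,\mathbb{E}\!\left[\calT^{\pi_1}V^{\pi_2}(s_t) - V^{\pi_2}(s_t)\right],
\]
where the outer expectation is taken with respect to the law of $s_t$ under $\pi_1$ started at $s_0\sim\rho$.

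To conclude, I would recognize the discounted state visitation distribution
\[
d_\rho^{\pi_1}(s) = (1-\gamma)\sum_{t=0}^{\infty}\gamma^t\,\mathbb{P}(s_t=s\mid s_0\sim\rho,\pi_1)
\]
inside the outer sum, which turns the right hand side into $\tfrac{1}{1-\gamma}\,\mathbb{E}_{s\sim d_\rho^{\pi_1}}\!\left[\calT^{\pi_1}V^{\pi_2}(s) - V^{\pi_2}(s)\right]$ and gives the first identity. The second identity then follows at once from the formula $\calT^{\pi_1}V^{\pi_2}(s)-V^{\pi_2}(s) = \sum_{a}\pi_1(a|s)A^{\pi_2}(s,a)$ already recorded in the preliminaries (obtained from $Q^{\pi_2}=A^{\pi_2}+V^{\pi_2}$ together with $\sum_a \pi_1(a|s)=1$). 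The entire argument is bookkeeping once the telescoping step is in hand; the only place requiring a bit of care is the interchange of the infinite sum with the expectation, which is the main (though mild) technical point and is handled by dominated convergence using the uniform bound $|V^{\pi_2}|\le 1/(1-\gamma)$.
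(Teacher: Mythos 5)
Your proof is correct and is exactly the standard telescoping argument from the cited reference (Kakade--Langford), which the paper itself does not reprove. The telescoping of $V^{\pi_2}$ along $\pi_1$-trajectories, the collapse of each summand into the Bellman residual $\calT^{\pi_1}V^{\pi_2}(s_t)-V^{\pi_2}(s_t)$, and the identification of the discounted visitation measure are all as expected, and the interchange of sum and expectation is properly justified by the uniform bound $|V^{\pi_2}|\le 1/(1-\gamma)$.
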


The optimal Bellman operator is defined as
\begin{align*}
    \calT V(s) := \max_{\pi \in \Pi} \calT^{\pi} V(s) = \max_{a\in\calA} \left\{ \mathbb{E}_{s^\prime\sim P(\cdot|s,a)} \left[ r(s,a) + \gamma V(s^\prime) \right] \right\}.
\end{align*}
Let $V^*$ and $Q^*$ be the optimal state and action values, with the corresponding optimal policy denoted by $\pi^*$ (may be non-unique), and define $A^*(s,a) = Q^*(s,a) - V^*(s)$. It is well-known that $V^*$ satisfies the Bellman optimality equation: $\calT V^* = V^*$. It is also noted that both $\calT^\pi$ and $\calT$ are $\gamma$-contraction under the infinity norm:
\begin{align*}
    \left\| \calT^\pi V_1 - \calT^\pi V_2 \right\|_\infty \leq \gamma \left\| V_1 - V_2 \right\|_\infty\quad\mbox{and}\quad\left\| \calT V_1 - \calT V_2 \right\|_\infty \leq \gamma \left\| V_1 - V_2 \right\|_\infty.
\end{align*}

Given a state value $V^k$ (short for $V^{\pi^k}$) associated with policy $\pi^k$,  policy iteration  (PI) attempts to obtain a new  policy $\pi^{k+1}$ through the greedy improvement:
\begin{align*}
    \mathrm{supp}(\pi^{k+1}(\cdot|s))\subset \argmax_a Q^k(s,a),\numberthis\label{eq:PI-update}
\end{align*}
where $Q^k(s,a)$ is short for $Q^{\pi^k}(s,a)$.
It is easy to see that the improvement of $\pi^{k+1}$ over $\pi^k$ is given by
\begin{align*}
   \mathcal{T}^{k+1}V^k(s)-V^k(s)= \calT V^k(s) - V^k(s) = \max _{a\in\calA} A^k(s,a),
\end{align*}
where $\mathcal{T}^{k+1}$ and $A^k(s,a)$ are short for $\mathcal{T}^{\pi^{k+1}}$ and  $A^{\pi^k}(s,a)$, respectively.


Next we present a few useful lemmas that will be used frequently later. The first two lemmas can be verified directly.
\begin{lemma}
    {Assume $r(s,a) \in [0,1]$.} For any policy $\pi$, one has
    \begin{align*}
        V^\pi(s) \in \left[ 0, \frac{1}{1-\gamma} \right], \; Q^\pi(s,a) \in \left[ 0, \frac{1}{1-\gamma} \right], \; A^\pi(s,a) \in \left[ -\frac{1}{1-\gamma}, \frac{1}{1-\gamma} \right].
    \end{align*}
\end{lemma}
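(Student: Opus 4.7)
The plan is to bound each of the three quantities directly from its defining expectation, using only the hypothesis $r(s,a) \in [0,1]$ together with $\gamma \in [0,1)$.

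First I would treat $V^\pi(s)$. From the definition \eqref{eq:state-value}, each term $\gamma^t r(s_t, a_t)$ inside the expectation is non-negative because $r \geq 0$ and $\gamma^t \geq 0$; summing non-negative random variables and taking expectation preserves non-negativity, so $V^\pi(s) \geq 0$. For the upper bound, I would replace $r(s_t, a_t)$ by $1$ term-by-term to obtain
\[
V^\pi(s) \;\leq\; \mathbb{E}\!\left[\sum_{t=0}^{\infty}\gamma^t\right] \;=\; \sum_{t=0}^{\infty}\gamma^t \;=\; \frac{1}{1-\gamma},
\]
where the last equality is the geometric series formula, valid because $\gamma \in [0,1)$.

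The bound on $Q^\pi(s,a)$ follows by exactly the same argument applied to \eqref{eq:action-value}, since that expression differs from \eqref{eq:state-value} only in additionally conditioning on the initial action $a_0 = a$; the termwise non-negativity and termwise bound by $\gamma^t$ are unaffected by this conditioning. Finally, for the advantage $A^\pi(s,a) = Q^\pi(s,a) - V^\pi(s)$, the two-sided bound is immediate from combining the endpoints already obtained: the largest the difference can be is $\tfrac{1}{1-\gamma} - 0 = \tfrac{1}{1-\gamma}$, and the smallest is $0 - \tfrac{1}{1-\gamma} = -\tfrac{1}{1-\gamma}$.

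There is no real obstacle in this argument; the result is essentially tautological once the geometric series is evaluated. The only subtlety that deserves a one-line remark is the interchange of expectation and infinite sum, which is justified either by monotone convergence (all summands are non-negative) or by dominated convergence (all summands are bounded in absolute value by $\gamma^t$, whose sum is finite), and which is in any event automatic in the finite MDP setting with bounded rewards.
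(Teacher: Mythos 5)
Your proof is correct and is exactly the direct verification the paper has in mind; the paper itself offers no written proof, simply noting that this lemma "can be verified directly." Your geometric-series bounds for $V^\pi$ and $Q^\pi$ and the resulting two-sided bound on $A^\pi = Q^\pi - V^\pi$ are the standard argument, and your remark on interchanging sum and expectation is a fine (if optional) touch.
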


\begin{lemma}\label{lem:QV-relation}
    For any policy $\pi$,
    \begin{align*}
        \left\| Q^* - Q^\pi \right\|_\infty \leq \gamma \left\| V^* - V^\pi \right\|_\infty, \quad \left\| A^* - A^\pi \right\|_\infty \leq \left\| V^* - V^\pi \right\|_\infty.
    \end{align*}
\end{lemma}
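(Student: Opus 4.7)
The plan is to prove both bounds in Lemma~\ref{lem:QV-relation} by direct manipulation using the identity $Q^\pi(s,a) = r(s,a) + \gamma\, \mathbb{E}_{s'\sim P(\cdot|s,a)}[V^\pi(s')]$, which holds for every policy $\pi$, including the optimal policy $\pi^*$. No contraction machinery beyond what appears naturally in the Bellman relation is needed.

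For the first inequality, I would simply subtract the two Bellman-type expressions for $Q^*(s,a)$ and $Q^\pi(s,a)$. The reward term cancels, leaving $Q^*(s,a)-Q^\pi(s,a) = \gamma\, \mathbb{E}_{s'\sim P(\cdot|s,a)}[V^*(s')-V^\pi(s')]$. Bounding the expectation by its supremum and then taking the supremum over $(s,a)$ delivers the factor $\gamma$ in front of $\|V^*-V^\pi\|_\infty$.

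For the second inequality, I would write
\begin{align*}
A^*(s,a) - A^\pi(s,a) = \bigl[Q^*(s,a) - Q^\pi(s,a)\bigr] - \bigl[V^*(s) - V^\pi(s)\bigr].
\end{align*}
Here the main subtlety appears: a crude triangle inequality combined with the first bound only yields $(1+\gamma)\|V^*-V^\pi\|_\infty$, which is too weak. To sharpen this, I would exploit that $V^*\geq V^\pi$ componentwise (optimality of $\pi^*$), which propagates through the Bellman relation to give $Q^*\geq Q^\pi$, so both bracketed terms are nonnegative. A sign split on $A^*(s,a) - A^\pi(s,a)$ then closes the argument: if the difference is nonnegative, it is dominated by the first bracket, which by the first inequality of the lemma is at most $\gamma\|V^*-V^\pi\|_\infty \leq \|V^*-V^\pi\|_\infty$; if it is negative, its absolute value is dominated by the second bracket, which is at most $\|V^*-V^\pi\|_\infty$ by definition of the infinity norm. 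Either way the stated bound holds, and taking the supremum over $(s,a)$ finishes the proof.

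The only real obstacle is recognizing that the triangle inequality is too lossy and that the tight bound forces one to use the nonnegativity of both $Q^*-Q^\pi$ and $V^*-V^\pi$ together with a case split; once that observation is made, everything else is a one-line computation from the Bellman recursion.
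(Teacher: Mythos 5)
Your proof is correct, and since the paper only remarks that this lemma ``can be verified directly,'' your direct verification from the Bellman identity is precisely the intended argument. The key observation you make --- that the na\"ive triangle inequality gives $(1+\gamma)\|V^*-V^\pi\|_\infty$ and one must instead use the nonnegativity of both $Q^*-Q^\pi$ and $V^*-V^\pi$ (so that the difference of two nonnegative quantities is bounded in absolute value by the larger of the two) --- is exactly the point that makes the second bound go through with the stated constant.
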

\noindent Before presenting the next lemma, further define $\calA^*_s$ as the optimal action set at state $s$,
\begin{align*}
    \calA^*_s := \argmax_{a\in\calA} \; A^*(s,a),
\end{align*}
and define $b^\pi_s$ as the probability on non-optimal actions
\begin{align*}
    b^\pi_s := \sum_{a\not\in\calA^*_s} \pi(a|s).
\end{align*}
The optimal advantage gap, denoted $\Delta$,  is defined as
\begin{align*}
    \Delta := \min_{s\in\tilde{S}, a\not\in\calA^*_s} \left| A^*(s,a) \right|
\end{align*}
where $\tilde{S} = \left\{ s\in\calS: \calA^*_s \neq \calA \right\}$. Intuitively, $b^\pi_s$ measures  the non-optimality of $\pi$, and $\Delta$ in some extent reflects the difficulty of the RL problem in the MDP setting. The following lemma shows that the state value error is of the same order as the non-optimal  probability measure, which can be proved directly by applying the performance difference lemma to $-(V^\pi(\rho)-V^*(\rho))$.
\begin{lemma}[\cite{Khodadadian_Jhunjhunwala_Varma_Maguluri_2021,ppgliu}]\label{lem:bsk-bounds}
    For any $\rho \in \Delta(\calS)$,
    \begin{align*}
        \Delta \cdot \mathbb{E}_{s\sim\rho} [b^\pi_s] \leq V^*(\rho) - V^\pi(\rho) \leq \frac{1}{(1-\gamma)^2} \cdot \mathbb{E}_{s\sim d^\pi_\rho} [b^\pi_s].
    \end{align*}
\end{lemma}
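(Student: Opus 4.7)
The plan is to apply the performance difference lemma (Lemma~\ref{lem:PDL}) in the reverse direction, with $\pi_1=\pi$ and $\pi_2=\pi^*$, and then read off both inequalities from the sign structure of $A^*$. Concretely, I would start from
\begin{align*}
V^\pi(\rho)-V^*(\rho) \;=\; \frac{1}{1-\gamma}\,\mathbb{E}_{s\sim d^\pi_\rho}\left[\sum_{a\in\calA}\pi(a|s)\,A^*(s,a)\right],
\end{align*}
negate both sides, and use the two elementary facts $A^*(s,a)\leq 0$ for every $(s,a)$ and $A^*(s,a)=0$ whenever $a\in\calA^*_s$. These collapse the inner sum to
\begin{align*}
-\sum_{a\in\calA}\pi(a|s)A^*(s,a) \;=\; \sum_{a\notin\calA^*_s}\pi(a|s)\,\bigl(-A^*(s,a)\bigr),
\end{align*}
so that what matters is only the mass $\pi$ places on non-optimal actions, weighted by how suboptimal those actions are.

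For the upper bound I would plug in the crude pointwise estimate $-A^*(s,a)\leq \tfrac{1}{1-\gamma}$ (a direct consequence of the range of $A^\pi$ applied to $\pi^*$), which gives $-\sum_{a}\pi(a|s)A^*(s,a)\leq \tfrac{1}{1-\gamma}\,b^\pi_s$ and hence the desired $\tfrac{1}{(1-\gamma)^2}\,\mathbb{E}_{s\sim d^\pi_\rho}[b^\pi_s]$. For the lower bound I would first observe that any $s\notin\tilde S$ contributes zero since $\calA^*_s=\calA$ forces $b^\pi_s=0$; for $s\in\tilde S$, the definition of $\Delta$ gives $-A^*(s,a)\geq \Delta$ for every $a\notin\calA^*_s$, producing
\begin{align*}
V^*(\rho)-V^\pi(\rho) \;\geq\; \frac{\Delta}{1-\gamma}\,\mathbb{E}_{s\sim d^\pi_\rho}[b^\pi_s].
\end{align*}

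The only remaining step is bridging the asymmetry of the final statement: the lower bound is in terms of $\rho$, while the upper bound is in terms of $d^\pi_\rho$. To handle this I would invoke the elementary pointwise inequality $d^\pi_\rho(s)\geq (1-\gamma)\rho(s)$, obtained by retaining only the $t=0$ term in $d^\pi_\rho(s)=(1-\gamma)\sum_{t\geq 0}\gamma^t\Pr\{s_t=s\mid s_0\sim\rho,\pi\}$. Since $b^\pi_s\geq 0$, this pointwise comparison transfers directly into the expectation and exactly cancels the $1/(1-\gamma)$ factor in front, yielding $\Delta\cdot\mathbb{E}_{s\sim\rho}[b^\pi_s]$ on the left. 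I do not anticipate any real obstacle: the whole argument reduces to PDL plus nonnegativity. The only subtlety worth flagging is why the conclusion is intentionally asymmetric, namely, the reverse comparison $\rho\geq c\cdot d^\pi_\rho$ fails in general without a distribution-mismatch assumption, which this lemma deliberately avoids.
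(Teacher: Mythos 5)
Your proposal is correct and follows exactly the route the paper indicates (applying the performance difference lemma to $-(V^\pi(\rho)-V^*(\rho))$, using $A^*\leq 0$ with $A^*(s,a)=0$ on $\calA^*_s$, the bounds $-A^*(s,a)\leq\frac{1}{1-\gamma}$ and $-A^*(s,a)\geq\Delta$ off the optimal action set, and $d^\pi_\rho(s)\geq(1-\gamma)\rho(s)$ for the left inequality). No gaps.
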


The last two lemmas are concerned with the properties of the  covariance of random variables. 
They can be easily verified  and we include the proofs in Appendix~\ref{sec:proofs-covariance} for completeness.
\begin{lemma}\label{lem:covariance-identity} For any random variable $X$ and two real-valued functions $f$, $g$, there holds
\begin{align*}
    \mathrm{Cov}(f(X), g(X)) = \frac{1}{2}\mathbb{E}_{X, Y} \left[ (f(X)-f(Y)) (g(X)-g(Y)) \right],
\end{align*}
where $Y$ is an i.i.d. copy of  $X$.

\end{lemma}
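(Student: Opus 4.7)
The plan is to verify this identity by directly expanding the right-hand side and exploiting the independence together with the identical distribution of $X$ and $Y$. First I would write out
\begin{align*}
(f(X)-f(Y))(g(X)-g(Y)) = f(X)g(X) - f(X)g(Y) - f(Y)g(X) + f(Y)g(Y),
\end{align*}
and then take the expectation of each term separately.

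Next I would use the fact that $X$ and $Y$ are independent to factor the two mixed terms: $\mathbb{E}[f(X)g(Y)] = \mathbb{E}[f(X)]\,\mathbb{E}[g(Y)]$ and $\mathbb{E}[f(Y)g(X)] = \mathbb{E}[f(Y)]\,\mathbb{E}[g(X)]$. Because $X$ and $Y$ share the same distribution, these two products both equal $\mathbb{E}[f(X)]\,\mathbb{E}[g(X)]$. Likewise $\mathbb{E}[f(Y)g(Y)] = \mathbb{E}[f(X)g(X)]$, so the four-term sum collapses to $2\mathbb{E}[f(X)g(X)] - 2\mathbb{E}[f(X)]\mathbb{E}[g(X)]$. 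Multiplying by $\tfrac{1}{2}$ yields exactly $\mathrm{Cov}(f(X), g(X))$, as required.

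There is no real obstacle here; the only subtlety worth flagging is the implicit assumption that $\mathbb{E}[|f(X)g(X)|]$ and $\mathbb{E}[|f(X)|]\,\mathbb{E}[|g(X)|]$ are finite so that the rearrangement of terms and the use of Fubini's theorem on the product measure of $(X,Y)$ are legitimate. Under the standard integrability hypothesis implicit in defining $\mathrm{Cov}(f(X),g(X))$, the manipulation is valid and the identity follows in a few lines.
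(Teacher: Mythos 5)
Your proof is correct and follows essentially the same route as the paper's: expand the product, use independence to factor the cross terms, and use the identical distribution of $X$ and $Y$ to collapse the sum to $2\mathbb{E}[f(X)g(X)] - 2\mathbb{E}[f(X)]\mathbb{E}[g(X)]$. The extra remark about integrability is fine but not needed in the paper's finite-MDP context, where all quantities are bounded.
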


\begin{lemma}\label{lem:positive-covariance}
    For any random variable $X$ and two monotonically increasing functions $f$ and $g$, there holds
\begin{align*}
    \mathrm{Cov}(f(X), g(X)) \geq 0.
\end{align*}

\end{lemma}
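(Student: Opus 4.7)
The plan is to invoke Lemma~\ref{lem:covariance-identity} as a black box and then observe that the resulting integrand is pointwise nonnegative. Specifically, let $Y$ be an i.i.d.\ copy of $X$; by Lemma~\ref{lem:covariance-identity},
\begin{align*}
    \mathrm{Cov}(f(X), g(X)) = \frac{1}{2}\,\mathbb{E}_{X,Y}\bigl[(f(X)-f(Y))(g(X)-g(Y))\bigr].
\end{align*}

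The key observation is that the random variable inside the expectation is nonnegative on every sample point. Indeed, consider any realization $(X,Y)$. If $X \geq Y$, then by monotonicity of $f$ and $g$ we have $f(X) - f(Y) \geq 0$ and $g(X) - g(Y) \geq 0$, so the product is nonnegative. If instead $X \leq Y$, then both differences are nonpositive, and again their product is nonnegative. Hence the product $(f(X)-f(Y))(g(X)-g(Y))$ has the same sign structure regardless of the ordering of $X$ and $Y$.

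Since this integrand is almost surely nonnegative, taking expectation preserves the inequality, yielding $\mathrm{Cov}(f(X),g(X)) \geq 0$. There is no real obstacle here — the entire work has already been done by the coupling identity in Lemma~\ref{lem:covariance-identity}, which converts a covariance (a priori a difference of two expectations whose sign is not obvious) into an expectation of a manifestly nonnegative quantity. The only mild subtlety worth noting is that the argument does not require any integrability assumption beyond what is needed for the covariance and the right-hand side of Lemma~\ref{lem:covariance-identity} to be well defined, which is implicit in the statement.
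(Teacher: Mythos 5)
Your proof is correct and follows essentially the same route as the paper: invoke Lemma~\ref{lem:covariance-identity} to rewrite the covariance as $\tfrac{1}{2}\mathbb{E}[(f(X)-f(Y))(g(X)-g(Y))]$ with $Y$ an i.i.d.\ copy of $X$, and observe that monotonicity makes the integrand pointwise nonnegative. Nothing further is needed.
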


In this paper, we consider minimizing $V^{\pi_\theta}(\mu)$ for fixed $\mu\in\Delta(\mathcal{S})$ with $\tilde{\mu}=\min_s\mu(s)>0$. The state value error $V^*(\rho)-V^k(\rho)$ will be evacuated for an arbitrary $\rho\in\Delta(\mathcal{S})$. Whenever it is necessary, we will assume $\tilde{\rho}=\min_s\rho(s)>0$. Otherwise, $V^*(\rho)-V^k(\rho)$ can be evaluated through $V^*(\mu)-V^k(\mu)$ since 
\begin{align*}
    V^*(\rho)-V^k(\rho)=& \sum_s \rho(s)\left(V^*(s)-V^k(s)\right)\\
    &=\sum_s \frac{\rho(s)}{\mu(s)}\mu(s)\left(V^*(s)-V^k(s)\right)\\
    &\leq \left\|\frac{\rho}{\mu}\right\|_\infty\left(V^*(\mu)-V^k(\mu)\right).
\end{align*}
For any policy $\pi$, define the visitation measure $d_\rho^\pi:\mathcal{S}\rightarrow\Delta(\mathcal{S})$ as 
\begin{align*}
d_{\rho}^{\pi}\left( s \right) :=\left( 1-\gamma \right) \mathbb{E}\left[ \sum_{t=0}^{\infty}{\gamma ^t\mathbf{1}\left\{ s_t=s \right\}}|s_0\sim \rho,\,\pi \right]. 
\end{align*}
The discuss in this section mainly centers around the following two quantities:
\begin{align*}
    \calL_k^{k+1} &= \frac{1}{1-\gamma} \mathbb{E}_{s\sim d_\rho^{\pi^{k,*}}} \left[ \calT^{k+1} V^k(s) - V^k(s) \right]
    = \frac{1}{1-\gamma} \mathbb{E}_{s\sim d_\rho^{\pi^{k,*}}} \left[ \sum_{a\in\calA} \pi^{k+1}(a|s) A^k(s,a) \right],\numberthis\label{eq:Lk-non}\\
    \calL_k^* &= \frac{1}{1-\gamma} \mathbb{E}_{s\sim d_\rho^{\pi^{k,*}}} \left[ \calT^* V^k(s) - V^k(s) \right]
    = \frac{1}{1-\gamma} \mathbb{E}_{s\sim d_\rho^{\pi^{k,*}}} \left[ \sum_{a\in\calA} \pi^*(a|s) A^k(s,a) \right]\numberthis\label{eq:Lstar-non},
\end{align*}
where $d_\rho^{\pi^{k,*}}$ is the visitation measure corresponding to certain optimal polity $\pi^{k,*}$ chosen at the $k$-th iteration, and $\mathcal{T}^*$ is the Bellman operator associated with $\pi^{k,*}$. Unless stated otherwise, we will assume the same optimal policy is used across all the $k$ most of the times, and in this case $d_\rho^{\pi^{k,*}}$ is simplified to $d_\rho^*$.
It follows from the performance difference lemma that 
\begin{align*}
    \calL_k^*=V^*(\rho)-V^k(\rho),
\end{align*}
and thus $\calL_k^*$ is irrelevant to the choice of the optimal policy. In addition, the following bounds hold for $\mathcal{L}^*_k$, whose proof can be found in Appendix~\ref{sec:proofs-covariance}.
\begin{lemma}\label{lem:LstarvsLmax}
For any optimal policy $\pi^*$, there holds 
\begin{align*}
\mathcal{L}_k^*\leq \frac{1}{1-\gamma}\mathbb{E}_{s\sim d_\rho^*}\left[\max_a A^k(s,a)\right]\leq \frac{1}{(1-\gamma)\tilde{\rho}}\mathcal{L}^*_k.
\end{align*}

\end{lemma}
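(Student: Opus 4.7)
The left inequality is essentially a one-liner. Since $\pi^*(\cdot|s)$ is a probability distribution on $\calA$, any weighted average is bounded above by the maximum, so $\sum_a \pi^*(a|s) A^k(s,a) \leq \max_a A^k(s,a)$ pointwise in $s$. Integrating against $d_\rho^*$ and dividing by $1-\gamma$ yields the first bound. I would note this first and move on.

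For the right inequality, the plan is to pass through $V^*(s)-V^k(s)$ as an intermediate quantity. First I would rewrite $\max_a A^k(s,a) = \calT V^k(s) - V^k(s)$ using the definition of the optimal Bellman operator, and then use the monotonicity of $\calT$ together with the elementary fact $V^k \leq V^*$ (which holds since $\pi^*$ is optimal) to conclude
\begin{align*}
\max_a A^k(s,a) = \calT V^k(s) - V^k(s) \leq \calT V^*(s) - V^k(s) = V^*(s) - V^k(s),
\end{align*}
where the last step uses the Bellman optimality equation $\calT V^* = V^*$.

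Next I would perform a change of measure from $d_\rho^*$ back to $\rho$. Using the trivial bound $d_\rho^*(s) \leq 1$ together with $\rho(s) \geq \tilde{\rho}$, we get $d_\rho^*(s)/\rho(s) \leq 1/\tilde{\rho}$ for every $s$, so
\begin{align*}
\mathbb{E}_{s\sim d_\rho^*}\left[V^*(s)-V^k(s)\right] = \sum_s \rho(s) \cdot \frac{d_\rho^*(s)}{\rho(s)} \cdot (V^*(s)-V^k(s)) \leq \frac{1}{\tilde{\rho}} \sum_s \rho(s)(V^*(s)-V^k(s)) = \frac{V^*(\rho)-V^k(\rho)}{\tilde{\rho}}.
\end{align*}
Since by the performance difference lemma $\mathcal{L}_k^* = V^*(\rho) - V^k(\rho)$, dividing by $1-\gamma$ gives the desired right inequality.

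There is really no serious obstacle here: the only subtle point is being careful that $V^k \leq V^*$ entrywise (otherwise the monotonicity step for $\calT$ would not apply), and that the change of measure does not need any relative density bound beyond the trivial $d_\rho^*(s) \le 1$, which is what allows the factor $1/\tilde\rho$ to suffice without any horizon-like penalty on top.
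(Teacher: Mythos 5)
Your proof is correct. The left inequality is handled exactly as in the paper (weighted average bounded by the maximum). For the right inequality, however, you take a genuinely different route. The paper lower-bounds $\mathcal{L}_k^* = V^*(\rho)-V^k(\rho)$ by $V^{\pi^{k,\mathrm{pi}}}(\rho)-V^k(\rho)$, where $\pi^{k,\mathrm{pi}}$ is the greedy PI policy built from $\pi^k$; it then invokes the performance difference lemma to write this as $\frac{1}{1-\gamma}\mathbb{E}_{s\sim d_\rho^{\pi^{k,\mathrm{pi}}}}\left[\max_a A^k(s,a)\right]$ and changes measure from $d_\rho^{\pi^{k,\mathrm{pi}}}$ to $d_\rho^*$ via $d_\rho^{\pi^{k,\mathrm{pi}}}(s)\geq (1-\gamma)\rho(s)\geq (1-\gamma)\tilde{\rho}\, d_\rho^*(s)$, using $\max_a A^k(s,a)\geq 0$ to preserve the inequality. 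You instead establish the pointwise bound $\max_a A^k(s,a)=\mathcal{T}V^k(s)-V^k(s)\leq V^*(s)-V^k(s)$ from monotonicity of $\mathcal{T}$ together with $V^k\leq V^*$ and $\mathcal{T}V^*=V^*$, and then perform a single change of measure on the nonnegative integrand $V^*(s)-V^k(s)$ using $d_\rho^*(s)\leq \rho(s)/\tilde{\rho}$. Both arguments yield the same constant $\frac{1}{(1-\gamma)\tilde{\rho}}$; yours is somewhat more self-contained since it avoids introducing the auxiliary greedy policy and its visitation measure, while the paper's version fits its recurring theme of comparing the improvement of a given update against that of policy iteration. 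Either way, the key points you flag — nonnegativity of the integrand before changing measure, and the entrywise inequality $V^k\leq V^*$ needed for the monotonicity step — are exactly the right ones to be careful about.
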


In addition to $\mathcal{A}_s^*$, the notation $\calA^\pi_s$ will also be used, defined as 
\begin{align*}
    \calA^\pi_s := \argmax_{a\in\calA} \; A^\pi(s,a).
\end{align*}
Moreover,  we may use for example $\pi^k(a|s)$ and $\pi^k_{s,a}$, $\pi^k_s$ and $\pi^k(\cdot|s)$, $A^k(s,a)$ and $A^k_{s,a}$ exchangeably.  

\subsection{Sublinear Convergence Analysis}

In this subsection, we present two general conditions for the sublinear convergence of a RL algorithm, as well as a general condition to establish the sublinear linear convergence lower bound.
\begin{theorem}[Sublinear Convergence]\label{thm:sublinear-general}
Let $\{\pi^k\}_{k\geq 0}$ be a policy sequence which satisfies $\mathcal{T}^{k+1}V^k\geq V^k$ for all $k$ \textup{(}or equivalently, $\sum_a\pi^{k+1}(a|s)A^k(s,a)\geq 0,\,\,\forall\, s$\textup{)}. Suppose there exists a sequence of optimal policies $\{\pi^{k,*}\}_{k\geq 0}$ and a positive constant sequence $\{C_k\}_{k\geq 0}$ such that 
\begin{align*}
\mathcal{L}_k^{k+1}\geq C_k\left(\mathcal{L}_k^*\right)^2,
\end{align*}
where both $\mathcal{L}_k^{k+1}$ and $\mathcal{L}_k^*$ are defined with respect to the visitation measure $d_\rho^{\pi^{k,*}}$. Then 
\begin{align*}
V^*(\rho) - V^k(\rho)\leq \frac{1}{k}\frac{1}{C\,(1-\gamma)\,\vartheta},
\end{align*}
where $C = \inf_kC_k$ and $\vartheta=\inf_k\left\|\frac{d_\rho^{\pi^{k,*}}}{\rho}\right\|_\infty^{-1}\geq \tilde{\rho}$.
\end{theorem}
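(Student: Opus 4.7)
The plan is to derive a one-step recursion of the form $\delta_{k+1}(1 + A\delta_k) \leq \delta_k$ with $\delta_k := V^*(\rho) - V^k(\rho)$ and $A := (1-\gamma)\,C\,\vartheta$, and then to invert and telescope to get the $O(1/k)$ rate. First, by the performance difference lemma, $\delta_k = \mathcal{L}_k^*$. Moreover, the hypothesis $\mathcal{T}^{k+1}V^k\geq V^k$ together with the monotonicity and $\gamma$-contraction of $\mathcal{T}^{k+1}$ implies $V^{k+1}=\lim_n(\mathcal{T}^{k+1})^n V^k\geq V^k$ pointwise, so $\delta_{k+1}\leq \delta_k$ for every $k$.

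The heart of the argument is to bound the per-step decrease $\delta_k-\delta_{k+1}=V^{k+1}(\rho)-V^k(\rho)$ from below by $(1-\gamma)\vartheta\,\mathcal{L}_k^{k+1}$. Applying the performance difference lemma with $\pi_1=\pi^{k+1}$ and $\pi_2=\pi^k$,
\begin{align*}
V^{k+1}(\rho)-V^k(\rho)=\frac{1}{1-\gamma}\mathbb{E}_{s\sim d_\rho^{\pi^{k+1}}}\bigl[\mathcal{T}^{k+1}V^k(s)-V^k(s)\bigr].
\end{align*}
Since $\mathcal{T}^{k+1}V^k(s)-V^k(s)\geq 0$ pointwise and $d_\rho^{\pi^{k+1}}(s)\geq (1-\gamma)\rho(s)$ (the $t=0$ contribution in the definition of $d_\rho^\pi$), the $\tfrac{1}{1-\gamma}$ factor cancels upon replacing $d_\rho^{\pi^{k+1}}$ by $(1-\gamma)\rho$:
\begin{align*}
V^{k+1}(\rho)-V^k(\rho)\geq \sum_s\rho(s)\bigl[\mathcal{T}^{k+1}V^k(s)-V^k(s)\bigr].
\end{align*}
On the other hand, a direct change of measure combined with the same nonnegativity yields
\begin{align*}
(1-\gamma)\mathcal{L}_k^{k+1}=\sum_s d_\rho^{\pi^{k,*}}(s)\bigl[\mathcal{T}^{k+1}V^k(s)-V^k(s)\bigr]\leq \Bigl\|\tfrac{d_\rho^{\pi^{k,*}}}{\rho}\Bigr\|_\infty\sum_s\rho(s)\bigl[\mathcal{T}^{k+1}V^k(s)-V^k(s)\bigr],
\end{align*}
and combining the two displays with $\vartheta\leq\|d_\rho^{\pi^{k,*}}/\rho\|_\infty^{-1}$ gives the desired $V^{k+1}(\rho)-V^k(\rho)\geq (1-\gamma)\vartheta\,\mathcal{L}_k^{k+1}$.

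Combining this with the hypothesis $\mathcal{L}_k^{k+1}\geq C_k(\mathcal{L}_k^*)^2\geq C\delta_k^2$ yields $\delta_k-\delta_{k+1}\geq A\delta_k^2$. Using $\delta_{k+1}\leq \delta_k$ on the right-hand side, I rewrite this as $\delta_{k+1}(1+A\delta_k)\leq \delta_k$, which rearranges to
\begin{align*}
\frac{1}{\delta_{k+1}}-\frac{1}{\delta_k}\geq A,
\end{align*}
and telescoping from $0$ to $k-1$ gives $1/\delta_k\geq kA$, i.e. the claimed bound. None of the steps are technically deep; the only delicate point is the change-of-measure argument above, where the pointwise nonnegativity of $\mathcal{T}^{k+1}V^k-V^k$ must be used to bridge the two different visitation measures $d_\rho^{\pi^{k+1}}$ (forced on us by PDL applied to $V^{k+1}-V^k$) and $d_\rho^{\pi^{k,*}}$ (the one built into the definition of $\mathcal{L}_k^{k+1}$). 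Once that bridge is in place, the familiar reciprocal-telescoping trick produces the sublinear rate essentially for free.
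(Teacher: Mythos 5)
Your proof is correct and follows essentially the same route as the paper's: lower-bound the one-step improvement $V^{k+1}(\rho)-V^k(\rho)$ via the performance difference lemma, bridge the measures $d_\rho^{\pi^{k+1}}$ and $d_\rho^{\pi^{k,*}}$ using $d_\rho^{\pi^{k+1}}\geq(1-\gamma)\rho$ together with the pointwise nonnegativity of $\mathcal{T}^{k+1}V^k-V^k$, and then apply the reciprocal-telescoping trick. The only difference is cosmetic: you pass through $\rho$ as an explicit intermediate measure in two steps, whereas the paper combines the two changes of measure into a single chain of inequalities.
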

\begin{proof}
Note that $\mathcal{L}_k^*=V^*(\rho)-V^k(\rho)$ is independent of $d_\rho^{\pi^{k,*}}$. In addition,  
\begin{align*}
\mathcal{L}_{k}^*-\mathcal{L}_{k+1}^* = V^{k+1}(\rho) - V^k(\rho) &= \frac{1}{1-\gamma} \sum_s d^{k+1}_\rho(s) \sum_a \pi^{k+1}(a|s) A^k(s,a) \\
&= \frac{1}{1-\gamma} \sum_s \frac{d^{k+1}_\rho(s)}{d^{\pi^{k, *}}_\rho(s)} d^{\pi^{k, *}}_\rho(s) \sum_a \pi^{k+1}(a|s) A^k(s,a)  \\
&\geq \sum_s \frac{\rho(s)}{d^{\pi^{k, *}}_\rho(s)} d^{\pi^{k, *}}_\rho(s) \sum_a \pi^{k+1}(a|s) A^k(s,a)  \\
&\geq (1-\gamma)\left\|\frac{d_\rho^{\pi^{k,*}}}{\rho}\right\|_\infty^{-1}\mathcal{L}_k^{k+1}\\
&\geq (1-\gamma)\left\|\frac{d_\rho^{\pi^{k,*}}}{\rho}\right\|_\infty^{-1}C_k\,\left(\mathcal{L}_k^*\right)^2\\
&\geq C(1-\gamma)\,\vartheta\left(\mathcal{L}_k^*\right)^2,
\end{align*}
where the first inequality leverages that $\sum_a\pi^{k+1}(a|s)A^k(s,a) \geq 0$. Therefore, 
\begin{align*}
\frac{1}{\mathcal{L}_{k+1}^*}-\frac{1}{\mathcal{L}_k^*} = \frac{\mathcal{L}_{k}^*-\mathcal{L}_{k+1}^*}{\mathcal{L}_k^*\mathcal{L}_{k+1}^*}\geq \frac{\mathcal{L}_{k}^*-\mathcal{L}_{k+1}^*}{\left(\mathcal{L}_k^*\right)^2}\geq C\,(1-\gamma)\,\vartheta,
\end{align*}
where the first inequality leverages that $\calL_k^* \geq \calL_{k+1}^*$. It follows that 
\begin{align*}
\frac{1}{\mathcal{L}_k^*}\geq \sum_{j=0}^{k-1}\left(\frac{1}{\mathcal{L}_{j+1}^*}-\frac{1}{\mathcal{L}_j^*}\right)\geq C\,(1-\gamma)\,\vartheta\,k,
\end{align*}
which completes the proof.    
\end{proof}
\begin{remark}\label{remark:sublinear}
From the proof, it is not difficult to see if one can show that
\begin{align*}
    \mathcal{L}_k^*-\mathcal{L}_{k+1}^*\geq \tilde{C}\left(\mathcal{L}_k^*\right)^2,
\end{align*}
then the $O(1/k)$ sublinear convergence also follows directly.
Based on the ascent lemma for $L$-smooth functions and the gradient domination property for $V^{\pi_\theta}(\mu)$,  the sublinear convergence of softmax PG for $\eta=1/L=\frac{(1-\gamma)^2}{8}$ is established  in \textup{\cite{Mei_Xiao_Szepesvari_Schuurmans_2020}} by showing that \textup{(}here $\mathcal{L}_k^*$ is defined with respect to $\mu$, i.e., $\mathcal{L}_k^*=V^*(\mu)-V^k(\mu)$\textup{)}
\begin{align*}
 \mathcal{L}_k^*-\mathcal{L}_{k+1}^* \ge \frac{\left( 1-\gamma \right) ^3}{16}\left\| \nabla _{\theta}V^{k}\left( \mu \right) \right\| _{2}^{2}\ge \frac{\left( 1-\gamma \right) ^5\kappa^2}{16|\mathcal{S}|}\left\| \frac{d_{\mu}^{\pi^*}}{\mu} \right\| _{\infty}^{-2}\left( \mathcal{L}_k^* \right) ^2.
\end{align*}
\noindent
In \textup{\cite{ppgliu}}, the sublinear convergence of PPG for any constant step size has been established by showing 
\begin{align*}
    \mathcal{L}_k^{k+1}\geq \frac{1-\gamma}{1+\frac{2+5|\mathcal{A}|}{\eta_k\,\tilde{\mu}}}(\mathcal{L}_k^*)^2
\end{align*}
 based on the particular update of PPG.
\end{remark}
\begin{theorem}[Sublinear Convergence by Controlling Error Terms]\label{thm:sublinear-global-error}
    Let $\{\pi^k\}_{k\geq 0}$ be a policy sequence which satisfies $\mathcal{T}^{k+1}V^k\geq V^k$ for all $k$ \textup{(}or equivalently, $\sum_a\pi^{k+1}(a|s)A^k(s,a)\geq 0,\,\,\forall\, s$\textup{)}.  Assume that 
    \begin{align*}
\mathcal{L}_k^{k+1}\geq C\mathcal{L}_k^*-\varepsilon_k\quad\mbox{where } C>0\mbox{ and }\sum_{k=0}^\infty\varepsilon_k<\infty.\numberthis\label{sublinear_condition_by_controlling_error_term}
\end{align*}
Then 
\begin{align*}
V^*(\rho)-V^k(\rho)\leq \frac{1}{k\,C}\left(\frac{1}{(1-\gamma)^2}+\sum_{t=0}^{k-1}\varepsilon_t\right).
\end{align*}
\end{theorem}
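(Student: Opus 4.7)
The plan is to turn the per-iteration hypothesis $\calL_k^{k+1}\ge C\calL_k^*-\varepsilon_k$ into a Cesaro-type bound on $\calL_k^*=V^*(\rho)-V^k(\rho)$, and then invoke the monotonicity of $\calL_k^*$ to deduce a pointwise $O(1/k)$ rate. The essential ingredient will be a telescoping upper bound on the cumulative improvement $\sum_{k=0}^{K-1}\calL_k^{k+1}$.

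First, I would sum the hypothesis over $k=0,\dots,K-1$ to obtain
$$C\sum_{k=0}^{K-1}\calL_k^*\le \sum_{k=0}^{K-1}\calL_k^{k+1}+\sum_{k=0}^{K-1}\varepsilon_k.$$
The assumption $\calT^{k+1}V^k\ge V^k$ gives $V^{k+1}\ge V^k$ pointwise (either by iterating the monotone operator $\calT^{k+1}$ from $V^k$ up to its fixed point, or by the performance difference lemma), so the sequence $\{\calL_k^*\}_{k\ge 0}$ is non-increasing and consequently $K\calL_K^*\le \sum_{k=0}^{K-1}\calL_k^*$. The theorem will follow once we establish $\sum_{k=0}^{K-1}\calL_k^{k+1}\le \frac{1}{(1-\gamma)^2}$.

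To prove this telescoping bound, I would first show the pointwise inequality
$$\calT^{k+1}V^k(s)-V^k(s)\le V^{k+1}(s)-V^k(s),\qquad \forall\, s\in\calS,$$
which follows from $V^{k+1}=\calT^{k+1}V^{k+1}\ge \calT^{k+1}V^k$, the last step using monotonicity of $\calT^{k+1}$ together with $V^{k+1}\ge V^k$. Multiplying by $d_\rho^*(s)/(1-\gamma)$, summing over $s$, and invoking the standing convention that $\pi^{k,*}=\pi^*$ is held fixed across iterations (so that $d_\rho^{\pi^{k,*}}=d_\rho^*$ does not depend on $k$), one obtains
$$\calL_k^{k+1}\le \frac{1}{1-\gamma}\,\mathbb{E}_{s\sim d_\rho^*}\!\left[V^{k+1}(s)-V^k(s)\right].$$
Summing over $k$ exchanges harmlessly with the expectation, and the inner sum telescopes to $V^K(s)-V^0(s)\le 1/(1-\gamma)$, yielding $\sum_{k=0}^{K-1}\calL_k^{k+1}\le 1/(1-\gamma)^2$. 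Combining all of the above produces $KC\,\calL_K^*\le \frac{1}{(1-\gamma)^2}+\sum_{t=0}^{K-1}\varepsilon_t$, which is the claimed bound.

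The argument is essentially an averaging-plus-telescoping routine and presents no serious technical obstacle; the only subtle point is that cross-$k$ telescoping requires $d_\rho^{\pi^{k,*}}$ to be independent of $k$, which is exactly what the paper's convention of fixing a single optimal reference policy across iterations buys us. Note also that finiteness of $\sum_k \varepsilon_k$ is not strictly needed to run the argument for a fixed $K$, but it is what guarantees the resulting bound is genuinely $O(1/K)$ rather than diluted by the error accumulation.
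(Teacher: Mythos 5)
Your proof is correct and follows essentially the same route as the paper's: average the hypothesis over iterations, use monotonicity of $\mathcal{L}_k^*$, and telescope $\sum_{k}\mathcal{L}_k^{k+1}$ against the bounded total improvement $V^K(d_\rho^*)-V^0(d_\rho^*)\le \frac{1}{1-\gamma}$. The only cosmetic difference is how the intermediate bound $\mathcal{L}_k^{k+1}\le \frac{1}{1-\gamma}\left(V^{k+1}(d_\rho^*)-V^k(d_\rho^*)\right)$ is justified: you use the operator inequality $\mathcal{T}^{k+1}V^k\le \mathcal{T}^{k+1}V^{k+1}=V^{k+1}$, whereas the paper uses $d^{k+1}_{d_\rho^*}(s)\ge (1-\gamma)\,d_\rho^*(s)$ together with the performance difference lemma; both are valid, and your remark that the telescoping needs a single fixed $d_\rho^*$ across iterations is on point.
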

\begin{proof}
    Noticing that $d_{d_\rho^*}^{k+1}(s)\geq (1-\gamma)d_\rho^*(s)$, one has 
\begin{align*}
\mathcal{L}_k^{k+1} & =\frac{1}{1-\gamma}\sum_sd_\rho^*(s)\left(\mathcal{T}^{k+1}V^k(s)- V^k(s)\right)\\
&\leq\frac{1}{(1-\gamma)^2}\sum_sd_{d_\rho^*}^{k+1}(s)\left(\mathcal{T}^{k+1}V^k(s)- V^k(s)\right)\\
&=\frac{1}{(1-\gamma)}\left(V^{k+1}(d_\rho^*)-V^k(d_\rho^*)\right).
\end{align*}
It follows that 
\begin{align*}
\mathcal{L}_k^*&\leq\frac{1}{k}\sum_{t=0}^{k-1}\mathcal{L}_t^*\\
&\leq \frac{1}{k\, C}\sum_{t=0}^{k-1}\left(\mathcal{L}_t^{t+1}+\varepsilon_t\right)\\
&\leq \frac{1}{k\,C}\left(\frac{1}{1-\gamma}\left(V^k(d_\rho^*)-V^0(d_\rho^*)\right)+\sum_{t=0}^{k-1}\varepsilon_t\right)\\
&\leq \frac{1}{k\,C}\left(\frac{1}{(1-\gamma)^2}+\sum_{t=0}^{k-1}\varepsilon_t\right),
\end{align*}
which completes the proof.
\end{proof}
\begin{remark}
    Using the three point lemma, it is shown in \textup{\cite{Agarwal_Kakade_Lee_Mahajan_2019,Xiao_2022,Lan_2021}}  that \eqref{sublinear_condition_by_controlling_error_term} is met with $C=1$ and 
\begin{align}
\varepsilon _k=\frac{1}{\eta _k}D_k-\frac{1}{\eta _k}D_{k+1}
\label{xiao,lan's formulation}
\end{align}
for softmax NPG \footnote{Though the sublinear analysis of NPG in \textup{\cite{Agarwal_Kakade_Lee_Mahajan_2019}} does not use the three point lemma explicitly, it can  be cast as a special case of that for PMD in \textup{\cite{Xiao_2022}}.} and its extension  policy mirror descent \textup{(}PMD\textup{)}, where $D_k:=\mathbb{E} _{s\sim d_{\rho}^{*}}\left[ B_h\left( \pi _{s}^{*},\pi _{s}^{k} \right) \right]$ with $B_h$ being the Bregman distance associated with a function  $h$. In this case, one has \textup{(}with constant step size $\eta$\textup{)}
$
\sum_{k=0}^{\infty}{\varepsilon _k}\leq \frac{1}{\eta}D_0.
$
It is worth noting that if increasing step size is adopted, linear convergence can also be established from \eqref{sublinear_condition_by_controlling_error_term} and \eqref{xiao,lan's formulation}, see \textup{\cite{Xiao_2022,Lan_2021,Li_Zhao_Lan_2022}} for more details.
\end{remark}

\begin{theorem}[Sublinear Lower Bound]\label{thm:sublinear-upper}
    Let $\{\pi^k\}_{k\geq 0}$ be a policy sequence which satisfies $\mathcal{T}^{k+1}V^k\geq V^k$ for all $k$ \textup{(}or equivalently, $\sum_a\pi^{k+1}(a|s)A^k(a|s)\geq 0,\,\,\forall\, s$\textup{)}.
    Assume that
\begin{align*}
0\leq \mathcal{L}_k^{k+1}\leq C\,\left(\mathcal{L}_k^*\right)^2.
\end{align*}
Then for any fixed $\sigma\in(0,1)$, there exists a time $T(\sigma)$ such that 
\begin{align*}
\forall\,k\geq T(\sigma):\quad V^*(\rho)-V^k(\rho)\geq \frac{1}{k}\frac{1-\sigma}{2\,C}\left\|\frac{1}{d_\rho^*}\right\|_\infty^{-1}.
\end{align*}
\end{theorem}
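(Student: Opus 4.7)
The plan is to derive a quadratic one-step decay estimate of the form $\mathcal{L}_k^* - \mathcal{L}_{k+1}^* \leq C'(\mathcal{L}_k^*)^2$ with $C' := C\,\|1/d_\rho^*\|_\infty$, and then invoke the classical telescoping argument to extract a $1/k$ lower bound. By the performance difference lemma,
$\mathcal{L}_k^* - \mathcal{L}_{k+1}^* = V^{k+1}(\rho) - V^k(\rho) = \frac{1}{1-\gamma}\sum_s d_\rho^{\pi^{k+1}}(s)\bigl(\mathcal{T}^{k+1}V^k(s) - V^k(s)\bigr)$.
Since $d_\rho^{\pi^{k+1}}$ is a probability distribution, $d_\rho^{\pi^{k+1}}(s)\leq 1\leq \|1/d_\rho^*\|_\infty\cdot d_\rho^*(s)$ pointwise, and the policy-improvement hypothesis gives $\mathcal{T}^{k+1}V^k(s) - V^k(s)\geq 0$ pointwise; substituting and then applying the standing upper bound $\mathcal{L}_k^{k+1}\leq C(\mathcal{L}_k^*)^2$ yields the desired quadratic recursion.

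The monotonicity of the Bellman operator together with $\mathcal{T}^{k+1}V^k\geq V^k$ implies $V^{k+1}\geq V^k$, so $\{\mathcal{L}_k^*\}$ is non-increasing with some limit $\mathcal{L}_\infty^*\geq 0$. If $\mathcal{L}_\infty^*>0$, the claim is immediate for $T(\sigma)$ large enough, since the right-hand side vanishes with $k$. In the interesting case $\mathcal{L}_k^*\to 0$, I would invert the recursion,
$\frac{1}{\mathcal{L}_{k+1}^*}-\frac{1}{\mathcal{L}_k^*} = \frac{\mathcal{L}_k^*-\mathcal{L}_{k+1}^*}{\mathcal{L}_k^*\mathcal{L}_{k+1}^*}\leq C'\cdot\frac{\mathcal{L}_k^*}{\mathcal{L}_{k+1}^*}$.
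Since the recursion itself gives $\mathcal{L}_{k+1}^*\geq \mathcal{L}_k^*(1-C'\mathcal{L}_k^*)$, one has $\mathcal{L}_k^*/\mathcal{L}_{k+1}^*\leq 1/(1-C'\mathcal{L}_k^*)\to 1$. For fixed $\sigma\in(0,1)$, I would then choose $T_1(\sigma)$ large enough that $C'\mathcal{L}_k^*\leq \sigma$ for all $k\geq T_1(\sigma)$, which forces $\frac{1}{\mathcal{L}_{k+1}^*}-\frac{1}{\mathcal{L}_k^*}\leq \frac{C'}{1-\sigma}$.

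Telescoping from $T_1(\sigma)$ to $k-1$ produces $\frac{1}{\mathcal{L}_k^*}\leq \frac{C'(k-T_1(\sigma))}{1-\sigma}+\frac{1}{\mathcal{L}_{T_1(\sigma)}^*}$. Enlarging $k$ to a further threshold $T(\sigma)\geq T_1(\sigma)$ so that the fixed constant $1/\mathcal{L}_{T_1(\sigma)}^*$ is dominated by $C'k/(1-\sigma)$ collapses this to $\frac{1}{\mathcal{L}_k^*}\leq\frac{2C'k}{1-\sigma}$, i.e.\ $\mathcal{L}_k^*\geq \frac{1-\sigma}{2C'k}=\frac{1-\sigma}{2Ck}\|1/d_\rho^*\|_\infty^{-1}$; the factor of $2$ in the theorem's denominator is precisely this absorption of the initial term. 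The main obstacle I anticipate is the bookkeeping around $T(\sigma)$: one threshold must be large enough to squeeze $\mathcal{L}_k^*/\mathcal{L}_{k+1}^*$ under $1/(1-\sigma)$, and a further enlargement must make the residual initial-term contribution negligible, but both demands are simultaneously achievable precisely because $\mathcal{L}_k^*\to 0$ in the nontrivial case.
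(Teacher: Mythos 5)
Your proposal is correct and follows essentially the same route as the paper's proof: the same quadratic one-step decay estimate $\mathcal{L}_k^*-\mathcal{L}_{k+1}^*\leq C\|1/d_\rho^*\|_\infty(\mathcal{L}_k^*)^2$ via the performance difference lemma, the same case split on whether $\mathcal{L}_k^*\to 0$, and the same inverted-recursion telescoping with the initial term absorbed into the factor of $2$. No gaps.
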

\begin{proof}
First note that 
\begin{align*}\mathcal{L}_k^*-\mathcal{L}_{k+1}^* & = \frac{1}{1-\gamma}\sum_s d_\rho^{k+1}(s)\sum_a\pi^{k+1}(a|s)A^k(s,a)\\&=\frac{1}{1-\gamma}\sum_s\frac{d_\rho^{k+1}(s)}{d_\rho^*(s)}d_\rho^*(s)\sum_a\pi^{k+1}(a|s)A^k(s,a)\\&\leq \left\|\frac{1}{d_\rho^*}\right\|_\infty\mathcal{L}_k^{k+1}\\&\leq C\left\|\frac{1}{d_\rho^*}\right\|_\infty\left(\mathcal{L}_k^*\right)^2,\end{align*}
where the first inequality utilizes the condition $\sum_a\pi^{k+1}(a|s)A^k(s,a)\geq 0$. It follows that 
\begin{align*}
\frac{1}{\mathcal{L}_{k+1}^*}-\frac{1}{\mathcal{L}_k^*} = \frac{\mathcal{L}_{k}^*-\mathcal{L}_{k+1}^*}{\mathcal{L}_k^*\mathcal{L}_{k+1}^*}\leq \frac{C\left\|\frac{1}{d_\rho^*}\right\|_\infty\left(\mathcal{L}_k^*\right)^2}{\mathcal{L}_k^*\mathcal{L}_{k+1}^*}= C\left\|\frac{1}{d_\rho^*}\right\|_\infty\frac{\mathcal{L}_k^*}{\mathcal{L}_{k+1}^*}.
\end{align*}
Since $\mathcal{L}_k^{k+1}\geq 0$, the sequence $\{V^k(\rho)\}$ is non-deceasing, and thus $\lim_{k\rightarrow\infty} V^k(\rho)$ exists. If $\lim_{k\rightarrow\infty} V^k(\rho)<V^*(\rho)$, then the result holds automatically. Thus, it suffices to consider the case $\lim_{k\rightarrow\infty} V^k(\rho)=V^*(\rho)$, in which case $\mathcal{L}_k^*\rightarrow 0$. Thus, there exists $T_1(\sigma)$ such that $\mathcal{L}_k^*\leq \frac{\sigma}{C}\left\|\frac{1}{d_\rho^*}\right\|_\infty^{-1}$ for $k\geq T_1(\sigma)$ and  
\begin{align*}
\mathcal{L}_{k+1}^*\geq \mathcal{L}_k^*-C\left\|\frac{1}{d_\rho^*}\right\|_\infty\left(\mathcal{L}_k^*\right)^2\geq (1-\sigma)\mathcal{L}_k^*.
\end{align*}
Substituting this result into the above inequality gives 
\begin{align*}
\frac{1}{\mathcal{L}_{k+1}^*}-\frac{1}{\mathcal{L}_k^*}\leq C\left\|\frac{1}{d_\rho^*}\right\|_\infty(1-\sigma)^{-1}.
\end{align*}
Consequently, for $k\geq T_1(\sigma)$,
\begin{align*}
\frac{1}{\mathcal{L}_k^*}&=\sum_{t=T_1(\sigma)}^{k-1}\left(\frac{1}{\mathcal{L}^*_{t+1}}-\frac{1}{\mathcal{L}_t^*}\right)+\frac{1}{\mathcal{L}_{T_1(\sigma)}^*}\\
&\leq (k-T_1(\sigma))C\left\|\frac{1}{d_\rho^*}\right\|_\infty(1-\sigma)^{-1}+\frac{1}{\mathcal{L}_{T_1(\sigma)}^*}\\
&\leq k\,C\left\|\frac{1}{d_\rho^*}\right\|_\infty(1-\sigma)^{-1}+\frac{1}{\mathcal{L}_{T_1(\sigma)}^*}
\end{align*}
As $\frac{1}{\mathcal{L}_{T_1(\sigma)}^*}$ is fixed, there exists a $T(\sigma)\ge T_1(\sigma)$ such that $\frac{1}{\mathcal{L}_{T_1(\sigma)}^*}\leq k\,C\left\|\frac{1}{d_\rho^*}\right\|_\infty(1-\sigma)^{-1}$ for $k\geq T(\sigma)$. Therefore, in this range, 
\begin{align*}
\frac{1}{\mathcal{L}_k^*}\leq 2\,k\,C\left\|\frac{1}{d_\rho^*}\right\|_\infty(1-\sigma)^{-1},
\end{align*}
as claimed. 
\end{proof}
\begin{remark}
It is easily seen from the proof that if one can show that  
\begin{align*}
    \mathcal{L}_k^*-\mathcal{L}_{k+1}^*\le \tilde{C}\left(\mathcal{L}_k^*\right)^2,
\end{align*}
then the $O(1/k)$ sublinear lower bound can also be established. For softmax PG, using the smoothness and the gradient domination of $V^{\pi_\theta}(\mu)$,
it is shown in \textup{\cite{Mei_Xiao_Szepesvari_Schuurmans_2020}}   that \textup{(}here $\mathcal{L}_k^*$ is defined with respect to $\mu$, i.e., $\mathcal{L}_k^*=V^*(\mu)-V^k(\mu)$\textup{)}
$$
\mathcal{L}_k^*-\mathcal{L}_{k+1}^* \le 
\left( \frac{4\eta  ^{2}}{\left( 1-\gamma \right) ^3}+\eta  \right) \left\| \nabla _{\theta}V^{k}\left( \mu \right) \right\| _{2}^{2} \leq \left( \frac{4\eta  ^{2}}{\left( 1-\gamma \right) ^3}+\eta  \right) \frac{2}{(1-\gamma)^2 \Delta^2} \cdot \left(\calL_k^*\right)^2. 
$$
 {Together with the condition on the step size to guarantee the monotonicity of $V^k(\mu)$}, the local $
\mathcal{O} \left( \frac{1}{k}\left( 1-\gamma \right) ^5\Delta ^2 \right) $ lower bound is further established  in \textup{\cite{Mei_Xiao_Szepesvari_Schuurmans_2020}}.
\end{remark}
\subsection{Global Linear Convergence Analysis}
In this subsection, we present the general conditions for the global linear convergence of a RL algorithm in terms of the weighted state value error as well as  in terms of infinite norm.

\begin{theorem}[Linear Convergence under Weighted State Value Error]\label{thm:linear-global}
Let $\{\pi^k\}_{k\geq 0}$ be a policy sequence which satisfies $\mathcal{T}^{k+1}V^k\geq V^k$ for all $k$ \textup{(}or equivalently, $\sum_a\pi^{k+1}(a|s)A^k(s,a)\geq 0,\,\,\forall\, s$\textup{)}.
    Assume that 
\begin{align*}
\mathcal{L}_k^{k+1} \geq C_k\,\mathcal{L}_k^*,
\end{align*}
where $\mathcal{L}_k^{k+1}$ and $\mathcal{L}_k^*$ are defined under certain optimal policy $\pi^{k,*}$.
One has
\begin{align*}
\mathcal{L}_{k+1}^* \leq \left(1-(1-\gamma)\left\|\frac{d_\rho^{\pi^{k,*}}}{\rho}\right\|_\infty^{-1}C_k\right)\mathcal{L}_k^*.
\end{align*}
\end{theorem}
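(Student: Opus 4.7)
The plan is to adapt the one-step change-of-measure calculation already carried out in the proof of Theorem~\ref{thm:sublinear-general}, but to substitute the assumed linear lower bound $\mathcal{L}_k^{k+1} \geq C_k \mathcal{L}_k^*$ in the place of the quadratic one. The key observation is that $\mathcal{L}_k^*$ and $\mathcal{L}_{k+1}^*$ equal $V^*(\rho)-V^k(\rho)$ and $V^*(\rho)-V^{k+1}(\rho)$ respectively, irrespective of which optimal policy $\pi^{k,*}$ is used to define them (by the performance difference lemma), so the desired inequality is simply a lower bound on the one-step progress $V^{k+1}(\rho)-V^k(\rho)$.

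First I would write
\begin{align*}
\mathcal{L}_k^* - \mathcal{L}_{k+1}^* \;=\; V^{k+1}(\rho) - V^k(\rho) \;=\; \frac{1}{1-\gamma}\sum_s d_\rho^{k+1}(s) \sum_a \pi^{k+1}(a|s)\, A^k(s,a),
\end{align*}
using the performance difference lemma. Next, to introduce the measure $d_\rho^{\pi^{k,*}}$ appearing in the definition of $\mathcal{L}_k^{k+1}$, I would insert the ratio $d_\rho^{\pi^{k,*}}(s)/d_\rho^{\pi^{k,*}}(s)$ in each summand, which is admissible because $d_\rho^{\pi^{k,*}}(s)\ge(1-\gamma)\rho(s)>0$ whenever $\tilde{\rho}>0$. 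The visitation lower bound $d_\rho^{k+1}(s) \ge (1-\gamma)\rho(s)$ then gives the pointwise estimate
\begin{align*}
\frac{d_\rho^{k+1}(s)}{d_\rho^{\pi^{k,*}}(s)} \;\ge\; (1-\gamma)\,\frac{\rho(s)}{d_\rho^{\pi^{k,*}}(s)} \;\ge\; (1-\gamma)\,\Bigl\|\tfrac{d_\rho^{\pi^{k,*}}}{\rho}\Bigr\|_\infty^{-1}.
\end{align*}
Because the hypothesis $\mathcal{T}^{k+1}V^k \geq V^k$ ensures $\sum_a \pi^{k+1}(a|s) A^k(s,a) \geq 0$ for every $s$, I can pull this uniform lower bound out of the outer sum without flipping the inequality, arriving at $\mathcal{L}_k^* - \mathcal{L}_{k+1}^* \ge (1-\gamma)\|d_\rho^{\pi^{k,*}}/\rho\|_\infty^{-1}\,\mathcal{L}_k^{k+1}$. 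Invoking the assumed bound $\mathcal{L}_k^{k+1}\ge C_k\mathcal{L}_k^*$ and rearranging yields the claimed contraction.

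There is no real obstacle: the argument is a direct analogue of the sublinear case, and the only care is to verify the sign condition that justifies the change of measure (provided by the ascent hypothesis $\mathcal{T}^{k+1}V^k\ge V^k$) and the positivity of $d_\rho^{\pi^{k,*}}(s)$ (provided by the standing assumption $\tilde{\rho}>0$). The chief conceptual point, worth emphasizing over the calculation itself, is that replacing the quadratic dependence $\mathcal{L}_k^{k+1}\gtrsim(\mathcal{L}_k^*)^2$ by the linear one $\mathcal{L}_k^{k+1}\ge C_k\mathcal{L}_k^*$ is exactly what converts the $O(1/k)$ rate of Theorem~\ref{thm:sublinear-general} into the geometric rate stated here.
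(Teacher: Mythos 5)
Your proposal is correct and follows essentially the same route as the paper's proof: the performance difference lemma for $V^{k+1}(\rho)-V^k(\rho)$, the change of measure from $d_\rho^{k+1}$ to $d_\rho^{\pi^{k,*}}$ via $d_\rho^{k+1}(s)\geq(1-\gamma)\rho(s)$ justified by the sign condition, and then the substitution of $\mathcal{L}_k^{k+1}\geq C_k\mathcal{L}_k^*$. No gaps.
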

\begin{proof}
First note that $\mathcal{L}_k^*-\mathcal{L}_{k+1}^*=V^{k+1}(\rho)-V^k(\rho)$. By Lemma~\ref{lem:PDL}, one has 
\begin{align*}
\mathcal{L}_k^*-\mathcal{L}_{k+1}^* & = \frac{1}{1-\gamma}\sum_s d_\rho^{k+1}(s)\sum_a\pi^{k+1}(a|s)A^k(s,a)\\
&=\frac{1}{1-\gamma}\sum_s\frac{d_\rho^{k+1}(s)}{d_\rho^{\pi^{k,*}}(s)}d_\rho^{\pi^{k,*}}(s)\sum_a\pi^{k+1}(a|s)A^k(s,a)\\
&\geq \sum_s\frac{\rho(s)}{d_\rho^{\pi^{k,*}}(s)}d_\rho^{\pi^{k,*}}(s)\sum_a\pi^{k+1}(a|s)A^k(s,a)\\
&\geq (1-\gamma)\left\|\frac{d_\rho^{\pi^{k,*}}}{\rho}\right\|_\infty^{-1}\mathcal{L}_k^{k+1}\\
&\geq (1-\gamma)\left\|\frac{d_\rho^{\pi^{k,*}}}{\rho}\right\|_\infty^{-1}C_k\mathcal{L}_k^*,
\end{align*}
which completes the proof after rearrangement. Note that the first inequality requires  $\sum_a\pi^{k+1}(a|s)A^k(s,a)\geq 0,\,\forall s$.
\end{proof}

\begin{theorem}[Linear Convergence under Infinite Norm: I]\label{thm:linear-infinity}
    Assume for some $C_k\in(0,1)$,
\begin{align*}\forall\,s:\quad\sum_a\pi^{k+1}_{s,a}A^k_{s,a}\geq C_k\,\max_aA^k_{s,a}.
\end{align*}
Then, 
\begin{align*}
\|V^*-V^{k+1}\|_\infty\leq \left(1-(1-\gamma)\,C_k\right)\|V^*-V^{k}\|_\infty.
\end{align*}
\end{theorem}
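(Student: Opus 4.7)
The plan is to rewrite the hypothesis in the language of Bellman operators, propagate the resulting pointwise inequality through the $\gamma$-contraction of the optimal Bellman operator $\mathcal{T}$, and then replace the one-step image $\mathcal{T}^{k+1}V^k$ by the true fixed point $V^{k+1}$ via a standard policy-improvement step.

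First I would translate the hypothesis using the identities $\sum_a\pi^{k+1}(a|s)A^k(s,a)=\mathcal{T}^{k+1}V^k(s)-V^k(s)$ and $\max_a A^k(s,a)=\mathcal{T}V^k(s)-V^k(s)$ recorded in Section~\ref{subsec:preliminaries}. The assumption then takes the pointwise form
\[
\mathcal{T}^{k+1}V^k(s)\;\ge\;C_k\,\mathcal{T}V^k(s)+(1-C_k)\,V^k(s)\qquad\forall\,s.
\]

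Next I would subtract both sides from $V^*(s)=\mathcal{T}V^*(s)$ and regroup the right-hand side as $C_k\bigl(\mathcal{T}V^*(s)-\mathcal{T}V^k(s)\bigr)+(1-C_k)\bigl(V^*(s)-V^k(s)\bigr)$. Using $V^*\ge V^k$ together with the $\gamma$-contraction of $\mathcal{T}$ under $\|\cdot\|_\infty$ bounds the first piece by $C_k\gamma\|V^*-V^k\|_\infty$, while the second piece is trivially at most $(1-C_k)\|V^*-V^k\|_\infty$. This delivers the uniform estimate
\[
V^*(s)-\mathcal{T}^{k+1}V^k(s)\;\le\;\bigl(1-C_k(1-\gamma)\bigr)\,\|V^*-V^k\|_\infty\qquad\forall\,s.
\]

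Finally, since $\mathbb{E}_{a\sim\pi^k}[A^k(s,a)]=0$ forces $\max_a A^k(s,a)\ge 0$, the hypothesis yields $\mathcal{T}^{k+1}V^k\ge V^k$ pointwise; iterating the monotone $\gamma$-contraction $\mathcal{T}^{k+1}$ on $V^k$ produces a non-decreasing sequence whose limit is $V^{k+1}$, so in particular $V^{k+1}\ge \mathcal{T}^{k+1}V^k$. Plugging this into the previous display and taking the supremum over $s$ delivers the claim. I expect the bulk of the argument to be routine; the only step requiring a moment of care is this last one, the passage from a bound on $V^*-\mathcal{T}^{k+1}V^k$ to a bound on $V^*-V^{k+1}$ via monotone iteration, and this is precisely where the implicit sign information $\max_a A^k\ge 0$ gets used.
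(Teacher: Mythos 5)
Your proposal is correct and follows essentially the same route as the paper: rewrite the hypothesis as $\mathcal{T}^{k+1}V^k - V^k \geq C_k(\mathcal{T}V^k - V^k)$, decompose $V^* - \mathcal{T}^{k+1}V^k$ into $C_k(\mathcal{T}V^* - \mathcal{T}V^k) + (1-C_k)(V^*-V^k)$, and apply the $\gamma$-contraction of $\mathcal{T}$. The only cosmetic difference is in justifying $V^{k+1} \geq \mathcal{T}^{k+1}V^k$ — you iterate the monotone contraction $\mathcal{T}^{k+1}$ upward from $V^k$ to its fixed point, whereas the paper first gets $V^{k+1}\geq V^k$ from the performance difference lemma and then applies monotonicity once; both are standard and equivalent.
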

\begin{proof}
    First the assumption can be rewritten as 
\begin{align*}
\mathcal{T}^{k+1}V^k(s)-V^k(s)\geq C_k\left(\mathcal{T}V^k(s)-V^k(s)\right).
\end{align*}
It follows that $\forall s$,
\begin{align*}
 V^*(s)-V^{k+1}(s)&=V^*(s)-\mathcal{T}^{k+1}V^{k+1}(s)\\
&\leq V^*(s)-\mathcal{T}^{k+1}V^{k}(s)\\
&\leq V^*(s)-V^{k}(s)-C_k\left(\mathcal{T}V^{k}(s)-V^{k}(s)\right)\\
&=C_k\left(\mathcal{T}V^*(s)-\mathcal{T}V^{k}(s)\right)+(1-C_k)\left(V^*(s)-V^{k}(s)\right),
\end{align*}
where the second line follows from the fact $V^{k+1}\geq V^k$ since $\forall\,s$ 
\begin{align*}
    V^{k+1}(s)-V^k(s)=\sum_a\pi^{k+1}_{s,a}A^k_{s,a}\ge 0.
\end{align*}
Noticing that $V^*(s)-V^k(s)\geq 0,\,\forall s$, the application of the contraction property of Bellman optimality operator yields that
\begin{align*}
\|V^*-V^{k+1}\|_\infty\leq \left(1-(1-\gamma)\,C_k\right)\|V^*-V^{k}\|_\infty,
\end{align*}
which completes the proof.
\end{proof}

A slightly different form of Theorem~\ref{thm:linear-infinity} is given below, which can be proved in a similar manner. The proof details are omitted.
\begin{theorem}[Linear Convergence under Infinite Norm: II]\label{thm:linear-infinity-02}
Let $\{\pi^k\}_{k\geq 0}$ be a policy sequence which satisfies $\mathcal{T}^{k+1}V^k\geq V^k$ for all $k$ (or equivalently, $\sum_a\pi^{k+1}(a|s)A^k(s,a)\geq 0,\,\,\forall\, s$. Assume for some $C_k\in (0,1)$, 
\begin{align*}
\forall\,s:\quad \sum_{a}\pi^{k+1}_{s,a}A^k_{s,a}\geq C_k\sum_{a}\pi^{k,*}_{s,a}A^k_{s,a},
\end{align*}
where $\pi^{k,*}$ is an optimal policy which may vary from iteration to iteration. Then,
\begin{align*}
\|V^*-V^{k+1}\|_\infty\leq \left(1-(1-\gamma)\,C_k\right)\|V^*-V^{k}\|_\infty.
\end{align*}
\end{theorem}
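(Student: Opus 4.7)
The plan is to follow exactly the template established in the proof of Theorem~\ref{thm:linear-infinity}, but with $\mathcal{T}$ replaced by $\mathcal{T}^{\pi^{k,*}}$ at each step. The key algebraic identities still apply because $\pi^{k,*}$ is an optimal policy, so $V^* = \mathcal{T}^{\pi^{k,*}} V^*$, and $\mathcal{T}^{\pi^{k,*}}$ is also a $\gamma$-contraction on the infinity norm.

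First, I would rewrite both sides of the hypothesis in operator form. Using the identity $\sum_a \pi_{s,a} A^k_{s,a} = \mathcal{T}^{\pi} V^k(s) - V^k(s)$ for any policy $\pi$, the assumption becomes
\[
\mathcal{T}^{k+1} V^k(s) - V^k(s) \;\geq\; C_k\bigl(\mathcal{T}^{\pi^{k,*}} V^k(s) - V^k(s)\bigr), \qquad \forall\, s.
\]
At the same time, the assumption $\mathcal{T}^{k+1} V^k \geq V^k$ implies (by fixed-point iteration of the monotone contraction $\mathcal{T}^{k+1}$) that $V^{k+1} \geq V^k$, which will be used immediately.

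Second, for any $s$ I would chain the inequalities
\[
V^*(s) - V^{k+1}(s) \;=\; V^*(s) - \mathcal{T}^{k+1} V^{k+1}(s) \;\leq\; V^*(s) - \mathcal{T}^{k+1} V^k(s),
\]
where the second step uses monotonicity of $\mathcal{T}^{k+1}$ together with $V^{k+1} \geq V^k$. Subtracting and adding $V^k(s)$ and applying the reinterpreted hypothesis yields
\[
V^*(s) - V^{k+1}(s) \;\leq\; \bigl(V^*(s)-V^k(s)\bigr) - C_k\bigl(\mathcal{T}^{\pi^{k,*}} V^k(s) - V^k(s)\bigr).
\]
Using $V^*(s) = \mathcal{T}^{\pi^{k,*}} V^*(s)$, one rearranges this into
\[
V^*(s) - V^{k+1}(s) \;\leq\; C_k\bigl(\mathcal{T}^{\pi^{k,*}} V^*(s) - \mathcal{T}^{\pi^{k,*}} V^k(s)\bigr) + (1-C_k)\bigl(V^*(s)-V^k(s)\bigr).
\]

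Finally, taking $\|\cdot\|_\infty$ on both sides, noting that $V^*(s) - V^{k+1}(s) \geq 0$ so the left-hand side drops any absolute-value concern, and invoking the $\gamma$-contraction of $\mathcal{T}^{\pi^{k,*}}$ on the first term delivers
\[
\|V^* - V^{k+1}\|_\infty \;\leq\; \bigl(C_k \gamma + (1 - C_k)\bigr)\|V^*-V^k\|_\infty \;=\; \bigl(1-(1-\gamma)C_k\bigr)\|V^*-V^k\|_\infty.
\]
There is no real obstacle: the only conceptual difference from Theorem~\ref{thm:linear-infinity} is that the comparison policy on the right-hand side of the hypothesis is a (possibly iteration-dependent) optimal policy rather than the greedy policy, which forces us to use $\mathcal{T}^{\pi^{k,*}}$ in place of $\mathcal{T}$; both satisfy $\gamma$-contraction and both fix $V^*$, so the argument goes through unchanged.
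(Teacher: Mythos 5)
Your proposal is correct and is exactly the adaptation the paper has in mind: the paper omits this proof, stating only that it "can be proved in a similar manner" to Theorem~\ref{thm:linear-infinity}, and your argument carries out that adaptation faithfully, replacing $\mathcal{T}$ by $\mathcal{T}^{\pi^{k,*}}$ and using that any optimal policy fixes $V^*$ and that $\mathcal{T}^{\pi^{k,*}}$ is a monotone $\gamma$-contraction. No gaps.
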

\begin{theorem}[Linear Convergence under Infinite Norm by Controlling Error Terms]\label{thm:linear-infinity-error}
Let $\{\pi^k\}_{k\geq 0}$ be a policy sequence which satisfies $\mathcal{T}^{k+1}V^k\geq V^k$ for all $k$ \textup{(}or equivalently, $\sum_a\pi^{k+1}(a|s)A^k(s,a)\geq 0,\,\,\forall\, s$\textup{)}.
 Assume there exists a constant $c_0>0$ such that 
\begin{align*}
\forall\,s:\quad \sum_a\pi^{k+1}_{s,a}A^k_{s,a}\geq C\,\max_aA^k_{s,a}-\varepsilon_k\quad\mbox{and}\quad \sum_{t=0}^{k-1} \frac{\varepsilon_t}{(1-(1-\gamma)C)^{t+1}}\leq c_0.\numberthis\label{linear_condition by maxA and controlling error terms }
\end{align*}
Then 
\begin{align*}
\|V^*-V^k\|_\infty\leq (1-(1-\gamma)C)^k\left(\|V^*-V^0\|_\infty+c_0\right).
\end{align*}

\end{theorem}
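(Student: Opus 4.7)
The plan is to adapt the proof of Theorem~\ref{thm:linear-infinity} to accommodate the additive errors $\varepsilon_k$, then unroll the resulting perturbed one-step contraction and absorb the error sum via the weighted summability hypothesis. Observe first that the assumed state-wise inequality can be rewritten in terms of the Bellman operators as
\begin{align*}
\mathcal{T}^{k+1}V^k(s)-V^k(s)\geq C\bigl(\mathcal{T}V^k(s)-V^k(s)\bigr)-\varepsilon_k,\qquad\forall\,s.
\end{align*}
Since $\mathcal{T}^{k+1}V^k\geq V^k$ by hypothesis and $\mathcal{T}^{k+1}$ is monotone, iterating $\mathcal{T}^{k+1}$ from $V^k$ produces a non-decreasing sequence converging to $V^{k+1}$, which yields the key pointwise bound $V^{k+1}\geq \mathcal{T}^{k+1}V^k$ used implicitly in Theorem~\ref{thm:linear-infinity}.

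Combining the two displays, I would write for each state $s$,
\begin{align*}
V^*(s)-V^{k+1}(s)&\leq V^*(s)-\mathcal{T}^{k+1}V^k(s)\\
&\leq V^*(s)-V^k(s)-C\bigl(\mathcal{T}V^k(s)-V^k(s)\bigr)+\varepsilon_k\\
&= (1-C)\bigl(V^*(s)-V^k(s)\bigr)+C\bigl(\mathcal{T}V^*(s)-\mathcal{T}V^k(s)\bigr)+\varepsilon_k.
\end{align*}
Because $V^*\geq V^k$ pointwise, the scalar $V^*(s)-V^{k+1}(s)$ is non-negative, so taking the infinity norm on both sides and invoking the $\gamma$-contraction of $\mathcal{T}$ gives the perturbed one-step recursion
\begin{align*}
\|V^*-V^{k+1}\|_\infty\leq \alpha\,\|V^*-V^k\|_\infty+\varepsilon_k,\qquad \alpha:=1-(1-\gamma)C\in(0,1).
\end{align*}

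Unrolling this recursion inductively yields
\begin{align*}
\|V^*-V^k\|_\infty\leq \alpha^k\|V^*-V^0\|_\infty+\sum_{t=0}^{k-1}\alpha^{k-1-t}\varepsilon_t=\alpha^k\left(\|V^*-V^0\|_\infty+\sum_{t=0}^{k-1}\frac{\varepsilon_t}{\alpha^{t+1}}\right),
\end{align*}
and the second summability hypothesis in \eqref{linear_condition by maxA and controlling error terms } bounds the inner sum by $c_0$, giving the claim.

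There is no genuine obstacle: the only subtlety is the sign/monotonicity reasoning that lets us pass from $\mathcal{T}^{k+1}V^k\geq V^k$ to $V^{k+1}\geq \mathcal{T}^{k+1}V^k$ (so that dropping $V^{k+1}$ in favor of $\mathcal{T}^{k+1}V^k$ preserves an upper bound on $V^*-V^{k+1}$), and the bookkeeping that converts the additive error sum $\sum_{t=0}^{k-1}\alpha^{k-1-t}\varepsilon_t$ into the weighted form assumed in the hypothesis by factoring out $\alpha^k$. Everything else is a direct recycling of the contraction argument behind Theorem~\ref{thm:linear-infinity}.
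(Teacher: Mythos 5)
Your proposal is correct and follows essentially the same route as the paper: it reruns the contraction argument of Theorem~\ref{thm:linear-infinity} with the additive error $\varepsilon_k$ carried along, obtains the perturbed recursion $\|V^*-V^{k+1}\|_\infty\leq (1-(1-\gamma)C)\|V^*-V^k\|_\infty+\varepsilon_k$, and unrolls it, factoring out $(1-(1-\gamma)C)^k$ to invoke the weighted summability hypothesis. The monotone-iteration justification of $V^{k+1}\geq\mathcal{T}^{k+1}V^k$ is a valid (standard) way to supply the step the paper takes via $V^{k+1}\geq V^k$ and monotonicity of $\mathcal{T}^{k+1}$.
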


\begin{proof}
    Repeating the proof of Theorem~\ref{thm:linear-infinity} yields
    \begin{align*}
\|V^*-V^{k+1}\|_\infty\leq \left(1-(1-\gamma)C\right)\|V^*-V^{k}\|_\infty+\varepsilon_k.
\end{align*}
Iterating this procedure gives
\begin{align*}
\|V^*-V^k\|_\infty&\leq \left(1-(1-\gamma)C\right)^{k}\|V^*-V^{0}\|_\infty+\sum_{t=0}^{k-1}\varepsilon_t\left(1-(1-\gamma)C\right)^{k-1-t}\\
&=\left(1-(1-\gamma)C\right)^{k}\left(\|V^*-V^{0}\|_\infty+\sum_{t=0}^{k-1}\frac{\varepsilon_t}{(1-(1-\gamma)C)^{t+1}}\right),
\end{align*}
which completes the proof.
\end{proof}
\begin{remark}
It is shown in \textup{\cite{ppgliu}}  that {\eqref{linear_condition by maxA and controlling error terms }} holds with $C=1$ and $
\varepsilon _k=\frac{2}{\eta _k\tilde{\mu}}$ for PPG. Furthermore, if the step size $\eta_k$ obeys
$$
\eta_k \geq \frac{2}{\tilde{\mu} (1-\gamma) c_0 \gamma ^{2k+1}},
$$
then the second condition of~\eqref{linear_condition by maxA and controlling error terms } is satisfied, which gives the $\gamma$-rate convergence of PPG. In \textup{\cite{Johnson_Pike-Burke_Rebeschini_2023}}, it shows  that \eqref{linear_condition by maxA and controlling error terms } is satisfied for PMD with $C=1$ and 
$$
\varepsilon _k=\frac{1}{\eta _k}\underset{s\in \mathcal{S}}{\max}\underset{\tilde{\pi}_{s}^{k+1}\in \widetilde{\Pi}_{s}^{k+1}}{\min}B_h\left( \tilde{\pi}_{s}^{k+1},\pi _{s}^{k} \right),
$$
where $B_h$ is the Bregman distance associated with the function $h$ and $\tilde{\Pi}^{k+1}_s$ is defined as 
$$
\widetilde{\Pi}_{s}^{k+1}:=\Big\{ p\in \Delta \left( \mathcal{A} \right) :\sum_{a\in \mathcal{A} _{s}^{k}}{p\left( a \right)}=1 \Big\} .
$$
Considering the step size $$
\eta _k\ge \frac{1}{\gamma ^{2k+1}}\underset{s\in \mathcal{S}}{\max}\underset{\tilde{\pi}_{s}^{k+1}\in \widetilde{\Pi}_{s}^{k+1}}{\min}B_h\left( \tilde{\pi}_{s}^{k+1},\pi _{s}^{k} \right)$$
gives the $\gamma$-rate convergence of PMD  established in \textup{\cite{Johnson_Pike-Burke_Rebeschini_2023}}.

\end{remark}
\section{Projected Policy Gradient}\label{sec:ppg}
Under the  simplex (or direct) parameterization,
$$
\pi =\left\{\left( \pi_s \right) _{s\in \mathcal{S}}\,\,|\,\, \pi_s\in \Delta \left( \mathcal{A} \right)\right\},
$$
 the policy gradient is given by \cite{pg}
\begin{align*}
    \nabla _{\pi _s}V^{\pi}\left( \mu \right) =\frac{d_{\mu}^{\pi}\left( s \right)}{1-\gamma}Q^{\pi}(s, \cdot).
\end{align*}
Thus, the update of the projection policy gradient  (PPG) under the simplex parameterization is given by
\begin{align*}
\pi^{k+1}_s &= \argmax_{p\in\Delta}\left\{\eta_k\left\langle\frac{d_{\mu}^k(s)}{1-\gamma} Q^{\pi}\left( s,\cdot \right),p-\pi_s^k \right\rangle-\frac{1}{2}\|p-\pi_s^k\|_2^2\right\}\\
&=\mathrm{Proj}_{\Delta(\mathcal{A})}\left(\pi_s^k+\eta_s^k\,Q^k(s,\cdot)\right),
\end{align*}
where $\eta_s^k = \frac{\eta_k}{1-\gamma}d_\mu^k(s)$, and $\mathrm{Proj}_{\Delta(\mathcal{A})}\left(\cdot\right)$ denotes the projection onto the probability simplex.

As already mentioned in the introduction, the $O(1/\sqrt{k})$ sublinear convergence of PPG with constant step size has been
established in \cite{Agarwal_Kakade_Lee_Mahajan_2019,bhabdari2024or} while this  rate has been improved  to $O(1/k)$ in \cite{Xiao_2022,Zhang_Koppel_Bedi_Szepesvari_Wang_2020}. The analyses in those works are overall conducted under the optimization framework and relies particularly on the smoothness constant of the state value function. Thus, the sublinear convergence can only be established for small step sizes therein. However, as $\eta\rightarrow\infty$, it is easy to see that PPG tends to be the PI method. Since PI always converges, it is natural to anticipate PPG also converges for large step sizes.  Motivated by this observation, the $O(1/k)$ sublinear convergence of PPG under any constant step size  has been investigated and established in \cite{ppgliu}. To overcome the step size barrier hidden the classical optimization analysis framework, an elementary analysis route has been adopted. The overall idea in \cite{ppgliu} is to measure how large the policy improvement of PPG in each state is compared with the policy improved of PI, and establish the bound of the following form:
\begin{align*}
    \mathcal{T}^{k+1}V^k(s)-V^k(s)\geq f\left(\mathcal{T}V^k(s)-V^k(s)\right),
\end{align*}
where we note that the policy improvement of PI is given by $\mathcal{T}V^k(s)-V^k(s)$. More precisely, the following result (which expresses $\mathcal{T}^{k+1}V^k(s)-V^k(s)$ and $\mathcal{T}V^k(s)-V^k(s)$ explicitly) has been established.
\begin{lemma}[Improvement Lower Bound \protect{\cite[Theorem~3.2]{ppgliu}}]\label{lem:ppg-improvement}
Let $\eta_k>0$ be step size in the $k$-th iteration of PPG. Then,
\begin{align*}
\sum_a\pi^{k+1}_{s,a}A^k_{s,a}\geq \frac{\left(\max_a A^k_{s,a}\right)^2}{\max_a A^k_{s,a}+\frac{2+5|\mathcal{A}|}{\eta_s^k}}.
\end{align*}
\end{lemma}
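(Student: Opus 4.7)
The plan is to exploit the first-order optimality of the projection defining the PPG update, together with a one-parameter family of test policies that concentrate mass on a greedy action. Let $M := \max_a A^k_{s,a}$ and fix some $a^* \in \argmax_a A^k_{s,a}$.

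First I would reduce the update to advantage form. Because $\Delta(\mathcal{A})$ lies inside the hyperplane $\{p : \langle \bone, p\rangle = 1\}$, the projection onto $\Delta(\mathcal{A})$ is invariant under adding any multiple of $\bone$ to its argument, and the decomposition $Q^k(s,\cdot) = A^k(s,\cdot) + V^k(s)\bone$ gives the equivalent update $\pi^{k+1}_s = \mathrm{Proj}_{\Delta(\mathcal{A})}(\pi_s^k + \eta_s^k A^k(s,\cdot))$. The quantity to lower bound is then $I_k := \langle A^k(s,\cdot), \pi^{k+1}_s - \pi_s^k\rangle$ (using $\langle A^k(s,\cdot),\pi_s^k\rangle = 0$), and the first-order optimality condition for the projection reads
\begin{equation*}
\eta_s^k \langle A^k(s,\cdot), \pi^{k+1}_s - p\rangle \geq \langle \pi^{k+1}_s - \pi_s^k, \pi^{k+1}_s - p\rangle \quad \text{for all } p \in \Delta(\mathcal{A}).
\end{equation*}
Taking $p = \pi_s^k$ already gives the baseline estimate $\eta_s^k I_k \geq \|\pi^{k+1}_s - \pi_s^k\|_2^2$, so the improvement automatically controls the squared displacement.

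Next I would introduce the test policies $\tilde p_t := (1-t)\pi_s^k + t\,e_{a^*} \in \Delta(\mathcal{A})$ for $t \in [0,1]$, which satisfy the direct identities $\langle A^k(s,\cdot), \tilde p_t - \pi_s^k\rangle = tM$ and $\|\tilde p_t - \pi_s^k\|_2 \leq t\sqrt{2}$. Substituting $p = \tilde p_t$ into the variational inequality, applying Cauchy--Schwarz to the cross term $\langle \pi^{k+1}_s - \pi_s^k, \tilde p_t - \pi_s^k\rangle$, and completing the square in $\|\pi^{k+1}_s - \pi_s^k\|_2$ produces an inequality of the shape $I_k \geq tM - c\,t^2/\eta_s^k$ for a universal constant $c$. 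Optimizing this over $t \in [0,1]$ interpolates between the regime $\eta_s^k M$ small (where it yields $I_k \gtrsim \eta_s^k M^2$) and $\eta_s^k M$ large (where it yields $I_k \gtrsim M$), and a direct algebraic comparison shows that both regimes can be packaged uniformly into a bound of the form $I_k \geq M^2/(M + c'/\eta_s^k)$, matching the structure of the claimed inequality.

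The main obstacle I expect is recovering the precise constant $2 + 5|\mathcal{A}|$ in the denominator rather than the absolute constant my sketch produces. The clean test-policy argument above gives a bound independent of $|\mathcal{A}|$, which is formally at least as strong as the stated inequality, so the qualitative conclusion of the lemma is already reached. Matching the specific form $2+5|\mathcal{A}|$ from \cite[Theorem~3.2]{ppgliu} requires a more explicit route based on the waterfilling characterization of the simplex projection: one decomposes $\pi^{k+1}_s$ by its active set, tracks the coordinates truncated to zero, and charges a price proportional to $|\mathcal{A}|$ for the redistribution; this bookkeeping pays the extra linear $|\mathcal{A}|$ factor in exchange for a more explicit identity, and ultimately yields the stated bound verbatim. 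Since the lemma is a direct restatement of the cited result, either route suffices for its use in what follows.
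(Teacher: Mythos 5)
Your argument is correct and in fact yields a strictly stronger bound than the one stated, but it follows a genuinely different route from the one the paper relies on: the paper does not prove this lemma at all, importing it from \cite{ppgliu} and noting explicitly that the proof there hinges on the closed-form waterfilling expression $\mathrm{Proj}_{\Delta(\mathcal{A})}(y)=(y+\lambda\cdot\bm{1})_+$ and the attendant active-set bookkeeping, which is where the $2+5|\mathcal{A}|$ constant comes from. Your route uses only the variational inequality characterizing the Euclidean projection together with the line of test points $\tilde p_t=(1-t)\pi^k_s+t\,e_{a^*}$, and it does close: writing $d=\|\pi^{k+1}_s-\pi^k_s\|_2$ and $M=\max_aA^k_{s,a}\geq 0$, the inequality at $p=\tilde p_t$ gives $\eta_s^k(I_k-tM)\geq d^2-\sqrt{2}\,t\,d\geq -t^2/2$, hence $I_k\geq tM-t^2/(2\eta_s^k)$ for every $t\in[0,1]$; taking $t=\eta_s^kM$ when $\eta_s^kM\leq 1$ and $t=1$ otherwise, a short computation packages both regimes into $I_k\geq M^2\big/\big(M+2/\eta_s^k\big)$, which dominates the claimed bound since $2\leq 2+5|\mathcal{A}|$ (and the case $M=0$ is covered by the baseline choice $p=\pi^k_s$, which gives $I_k\geq d^2/\eta_s^k\geq 0$). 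So your approach is simpler, dimension-free in $|\mathcal{A}|$, and would even sharpen the downstream constants in Theorems~\ref{thm:PPG-linear} and~\ref{thm:PPG-linear-adaptive}; what the explicit projection formula buys in \cite{ppgliu} is finer structural information about which coordinates are truncated to zero, which is what underlies the finite-termination statement of Lemma~\ref{lem:ppg-finite} but is not needed for the improvement lower bound itself. Your closing remarks about recovering the exact constant $2+5|\mathcal{A}|$ are therefore moot: you do not need to match it, only to dominate it.
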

\noindent The establishment of this lemma relies essentially on the explicit formula for the projection:
\begin{align*}
    \mathrm{Proj}_{\Delta(\mathcal{A})}(y) = (y+\lambda\cdot \bm{1})_+,
\end{align*}
where $\lambda$ is a constant such that $\sum_a(y_a+\lambda)_+=1.$ 

Given the above lemma, recalling the definitions of $\mathcal{L}_{k}^{k+1}$ and $\mathcal{L}_k^*$ in \eqref{eq:Lk-non} and \eqref{eq:Lstar-non}, it is not hard to obtain that 
\begin{align*}
    \mathcal{L}_k^{k+1} & = \frac{1}{1-\gamma}\mathbb{E}_{s\sim d_\rho^*}\left[\sum_{a}\pi^{k+1}_{s,a}A^k_{s,a}\right]\\
    &\geq \frac{1}{1-\gamma}\mathbb{E}_{s\sim d_\rho^*}\left[\frac{\left(\max_a A^k_{s,a}\right)^2}{\max_a A^k_{s,a}+\frac{2+5|\mathcal{A}|}{\eta^k_s}}\right]\\
    &\geq \frac{1}{1-\gamma}\frac{\left(\mathbb{E}_{s\sim d_\rho^*}\left[\max_a A^k_{s,a}\right]\right)^2}{\mathbb{E}_{s\sim d_\rho^*}\left[\max_a A^k_{s,a}\right]+\frac{2+5|\mathcal{A}|}{\eta_k\,\tilde{\mu}}}\\
    &\geq \frac{(1-\gamma)(\mathcal{L}_k^*)^2}{(1-\gamma)\mathcal{L}_k^*+C_1(\eta_k)} \numberthis \label{eq:ppg-improvement02} \\
    &\geq \frac{(1-\gamma)(\mathcal{L}_k^*)^2}{1+C_1(\eta_k)},
\end{align*}
where $C_1(\eta_k):=\frac{2+5|\mathcal{A}|}{\eta_k\,\tilde{\mu}}$. In the above derivation, the fact $\eta_s^k\geq \eta_k\,\tilde{\mu}$ and the Jensen inequality are used in the third line, the monotonicity of the function and the fact $\mathbb{E}_{s\sim d_\rho^*}\left[\max_a A^k_{s,a}\right]\geq (1-\gamma)\mathcal{L}_k^*$ are used in the fourth line, and the last line follows from $0\le \mathcal{L}_k^*=V^*(\rho)-V^k(\rho)\leq \frac{1}{1-\gamma}$. As already mentioned in Remark~\ref{remark:sublinear}, letting $\eta_k=\eta$ and invoking Theorem~\ref{thm:sublinear-general} yields the $O(1/k)$ sublinear convergence of PPG.

In addition to the sublinear convergence, it is also shown in \cite{ppgliu} that PPG terminates in a finite number of iterations based on the following property. 
\begin{lemma}[\protect{\cite[Lemma~4.2]{ppgliu}}]\label{lem:ppg-finite}
Consider PPG with constant step size $\eta_k=\eta$ \textup{(}in this case $\eta_s^k$ is simplified to $\eta_s$\textup{)}.
If the state value of $\pi^k$ satisfies 
    \begin{align*}
\|V^*-V^k\|_\infty \leq \frac{\Delta}{2}\frac{\eta_s\Delta}{1+\eta_s\Delta},
\end{align*}
then $\pi^{k+1}$ is an optimal policy.
\end{lemma}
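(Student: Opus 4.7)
The plan is to show that, under the stated condition on $\|V^* - V^k\|_\infty$, the PPG projection at every state $s$ zeros out all non-optimal actions, so that $\pi^{k+1}(\cdot|s)$ is supported on $\calA_s^*$ at each $s$. Since every stationary policy whose support is contained in the optimal action set at each state is an optimal policy, this yields the claim.

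Fix $s$ with $\calA_s^* \subsetneq \calA$ (otherwise nothing to prove at $s$). Using the explicit simplex projection formula, the PPG update reads $\pi^{k+1}(a|s) = (\pi^k(a|s) + \eta_s Q^k(s,a) - \lambda_s)_+$, where $\lambda_s$ is the unique threshold making $\sum_a \pi^{k+1}(a|s)=1$, so the target becomes
\begin{align*}
\pi^k(a|s) + \eta_s Q^k(s,a) \leq \lambda_s\quad\text{for every }a\notin \calA_s^*.
\end{align*}
The first step is to convert the hypothesis into a quantitative gap in $Q^k$. By Lemma~\ref{lem:QV-relation} together with the definition of $\Delta$, for any $a^* \in \calA_s^*$ and $a \notin \calA_s^*$,
\begin{align*}
Q^k(s,a^*) - Q^k(s,a) \geq \Delta - 2\gamma\|V^* - V^k\|_\infty,
\end{align*}
and substituting the assumed bound on $\|V^* - V^k\|_\infty$ and simplifying gives $Q^k(s,a^*) - Q^k(s,a) \geq \frac{\Delta}{1+\eta_s\Delta}$, i.e.\ $\eta_s(Q^k(s,a^*) - Q^k(s,a)) \geq \frac{\eta_s\Delta}{1+\eta_s\Delta}$.

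The second step feeds this gap into the KKT characterisation of the projection. Writing $y_a := \pi^k(a|s) + \eta_s Q^k(s,a)$ and letting $K = \{a : y_a > \lambda_s\}$ denote the kept set, one has the identity $|K|\,\lambda_s = \sum_{b\in K} y_b - 1$, so the desired inequality $y_a \leq \lambda_s$ for $a\notin\calA_s^*$ is equivalent to $\sum_{b\in K}(y_b - y_a) \geq 1$. The strategy is to argue that all of $\calA_s^*$ lies in $K$ and that each summand $y_{a^*} - y_a$ with $a^*\in\calA_s^*$ contributes at least $-1 + \frac{\eta_s\Delta}{1+\eta_s\Delta}$, then close the gap from below by summing over $K$.

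The main obstacle is exactly this second step: since the projection input $y_a$ mixes $\eta_s Q^k(s,a)$ with $\pi^k(a|s)\in[0,1]$, the $Q^k$-gap $\frac{\eta_s\Delta}{1+\eta_s\Delta}\leq 1$ does not on its own dominate the at-most-$1$ offset coming from $\pi^k$. To overcome this I would further exploit that a small $\|V^*-V^k\|_\infty$ forces $\pi^k$ to place vanishing total mass on non-optimal actions, via the lower bound in Lemma~\ref{lem:bsk-bounds}; this bounds $\pi^k(a|s)$ for $a\notin\calA_s^*$ and lets the sum $\sum_{b\in K}(y_b-y_a)$ inherit the $Q^k$-gap from the optimal-action indices in $K$, yielding the required $\geq 1$. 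Once $\pi^{k+1}(a|s)=0$ is established at every state for every non-optimal action, $\pi^{k+1}$ is supported on $\calA_s^*$ at each $s$ and is therefore optimal.
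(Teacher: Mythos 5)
This lemma is imported from \cite[Lemma~4.2]{ppgliu} and the present paper gives no proof of it, so there is nothing internal to compare against; I am therefore judging your argument on its own terms. Your overall strategy is the right one, and your quantitative ingredients are exactly the ones that make the stated constant work: Lemma~\ref{lem:QV-relation} plus the hypothesis gives $\eta_s\bigl(Q^k(s,a^*)-Q^k(s,a)\bigr)\geq \frac{\eta_s\Delta}{1+\eta_s\Delta}$ for $a^*\in\calA_s^*$, $a\notin\calA_s^*$ (your algebra here is correct), and Lemma~\ref{lem:bsk-bounds} with $\rho=\delta_s$ gives $b_s^k\leq \frac{1}{2}\frac{\eta_s\Delta}{1+\eta_s\Delta}$, which is precisely what compensates the $\pi^k$-offset in the projection input. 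These two facts, fed into the threshold characterization of the simplex projection, do yield the conclusion, and the factor $\frac{\Delta}{2}$ in the hypothesis is exactly what this argument consumes.

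The gap is in how you propose to close the final step. Your reduction is ``$y_a\leq\lambda_s$ iff $\sum_{b\in K}(y_b-y_a)\geq 1$, and I will lower-bound the summands.'' The problem is that $K$ may contain non-optimal actions, and for $b\in K\setminus\calA_s^*$ the summand $y_b-y_{a}$ (with $a$ the worst-case non-optimal action, say $a=\argmax_{a\notin\calA_s^*}y_a$) is \emph{non-positive}, so summing per-term lower bounds over all of $K$ cannot reach $1$; your plan only controls the contribution of the indices in $\calA_s^*$. The clean way out is a contradiction argument that discards those terms against $\lambda_s$ rather than against $y_a$: suppose $y_{a_0}>\lambda_s$ for $a_0=\argmax_{a\notin\calA_s^*}y_a$. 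Since $y_{a^*}-y_{a_0}\geq -b_s^k+\frac{\eta_s\Delta}{1+\eta_s\Delta}>0$ for every $a^*\in\calA_s^*$, we get $\calA_s^*\subseteq K$, and then
\begin{align*}
1=\sum_{b\in K}(y_b-\lambda_s)\;\geq\;\sum_{a^*\in\calA_s^*}(y_{a^*}-\lambda_s)+(y_{a_0}-\lambda_s)\;>\;\sum_{a^*\in\calA_s^*}(y_{a^*}-y_{a_0})\;\geq\;(1-b_s^k)-|\calA_s^*|\,b_s^k+|\calA_s^*|\tfrac{\eta_s\Delta}{1+\eta_s\Delta}\;\geq\;1,
\end{align*}
using $\sum_{a^*\in\calA_s^*}\pi^k(a^*|s)=1-b_s^k$, $\pi^k(a_0|s)\leq b_s^k$, and $b_s^k\leq\frac12\frac{\eta_s\Delta}{1+\eta_s\Delta}$ — a contradiction. (If instead $K\subseteq\calA_s^*$ there is nothing to prove.) So your proposal identifies the correct mechanism and the correct auxiliary lemmas, but as written the decisive inequality is asserted rather than derived, and the per-summand route you sketch would fail without this restructuring.
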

In this section we  first extend the analysis in \cite{ppgliu} to further show that PPG with a constant step size indeed displays a linear convergence before termination.
\begin{theorem}[Linear Convergence for any Constant Step Size]\label{thm:PPG-linear}
    Consider PPG with any constant step size $\eta>0$. One has 
    \begin{align*}
    V^*(\rho )-V^k(\rho )\le \left( 1-(1-\gamma )\left\| \frac{d_{\rho}^{*}}{\rho} \right\| _{\infty}^{-1}\cdot \frac{(1-\gamma )C_2\left( \eta \right)}{(1-\gamma )C_2\left( \eta \right) +C_1(\eta )} \right) ^k\left( V^*(\rho )-V^0(\rho ) \right),
    \end{align*}
where $
C_1\left( \eta \right) :=\frac{2+5\left| \mathcal{A} \right|}{\eta \tilde{\mu}}$ is defined as above and $
C_2\left( \eta \right) :=\frac{\tilde{\rho}\,\Delta }{2}\frac{\eta \,\tilde{\mu}\,\Delta}{1+\eta \,\tilde{\mu}\,\Delta}$. 
\end{theorem}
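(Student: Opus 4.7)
The plan is to combine three ingredients already prepared in this section: the improvement lower bound derived just above in \eqref{eq:ppg-improvement02}, the finite-termination criterion of Lemma~\ref{lem:ppg-finite}, and the general contraction machinery of Theorem~\ref{thm:linear-global}. The starting point is
\[
\mathcal{L}_k^{k+1}\;\ge\;\frac{(1-\gamma)(\mathcal{L}_k^*)^2}{(1-\gamma)\mathcal{L}_k^* + C_1(\eta)}
\;=\;\frac{(1-\gamma)\mathcal{L}_k^*}{(1-\gamma)\mathcal{L}_k^* + C_1(\eta)}\,\mathcal{L}_k^*,
\]
which already matches the hypothesis of Theorem~\ref{thm:linear-global}, except that the prefactor depends on $\mathcal{L}_k^*$ and degenerates to $0$ as $\mathcal{L}_k^*\to 0$. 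The entire task is therefore to replace $\mathcal{L}_k^*$ inside that prefactor by an iteration-independent positive lower bound, and the natural source for such a bound is Lemma~\ref{lem:ppg-finite}.

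I would split on whether $\pi^{k+1}$ has already become an optimal policy. If it has, then $V^{k+1}=V^*$ and $\mathcal{L}_{k+1}^*=0$, so the desired contraction propagates trivially from that iteration onward because PPG is monotone. Otherwise, the contrapositive of Lemma~\ref{lem:ppg-finite}, combined with the uniform bound $\eta_s^k = \frac{\eta}{1-\gamma}d_\mu^k(s)\ge \eta\tilde\mu$ (so that $\frac{\eta_s^k\Delta}{1+\eta_s^k\Delta}\ge \frac{\eta\tilde\mu\Delta}{1+\eta\tilde\mu\Delta}$ by monotonicity of $x\mapsto x/(1+x)$), yields
\[
\|V^*-V^k\|_\infty\;>\;\frac{\Delta}{2}\frac{\eta\tilde\mu\Delta}{1+\eta\tilde\mu\Delta}.
\]
Using $V^*\ge V^k$ pointwise together with $\tilde\rho=\min_s\rho(s)>0$, this translates into
\[
\mathcal{L}_k^* \;=\; \sum_s\rho(s)\bigl(V^*(s)-V^k(s)\bigr)\;\ge\;\tilde\rho\,\|V^*-V^k\|_\infty\;\ge\;C_2(\eta).
\]

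Plugging this lower bound back into the first display and exploiting the monotonicity of $x\mapsto \frac{(1-\gamma)x}{(1-\gamma)x+C_1(\eta)}$ upgrades it to
\[
\mathcal{L}_k^{k+1}\;\ge\;\frac{(1-\gamma)\,C_2(\eta)}{(1-\gamma)\,C_2(\eta)+C_1(\eta)}\,\mathcal{L}_k^*,
\]
which is precisely the hypothesis of Theorem~\ref{thm:linear-global} with a constant, iteration-independent $C_k$; one application of that theorem, iterated across $k$, delivers the claimed per-iteration contraction. The main subtlety I expect is this uniform-lower-bound step: Lemma~\ref{lem:ppg-finite} is naturally phrased in $\|\cdot\|_\infty$ with a state-dependent step parameter $\eta_s^k$, whereas Theorem~\ref{thm:linear-global} demands a $\rho$-weighted lower bound on $\mathcal{L}_k^*$. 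The two losses incurred — bounding $\eta_s^k$ below by $\eta\tilde\mu$ and trading $\|V^*-V^k\|_\infty$ for the $\rho$-weighted difference via $\tilde\rho$ — are what pin down the particular form of $C_2(\eta)$ in the statement, and are what prevent the naive strategy of plugging $\mathcal{L}_k^*\ge 0$ directly into \eqref{eq:ppg-improvement02}.
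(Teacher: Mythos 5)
Your proposal is correct and follows essentially the same route as the paper's proof: both use the contrapositive of Lemma~\ref{lem:ppg-finite} together with $\eta_s \geq \eta\tilde{\mu}$ and $\mathcal{L}_k^* \geq \tilde{\rho}\,\|V^*-V^k\|_\infty$ to obtain the uniform lower bound $\mathcal{L}_k^* \geq C_2(\eta)$ before termination, plug it into \eqref{eq:ppg-improvement02}, and conclude via Theorem~\ref{thm:linear-global}. The only cosmetic difference is that the paper phrases the pre-termination regime via the terminal index $T$ rather than your per-iteration case split, which is immaterial.
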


\begin{proof}
    Let $T$ be the iteration at which PPG terminates with the optimal policy (i.e., $\pi^T$ is an output optimal policy). Then by Lemma~\ref{lem:ppg-finite},
\begin{align*}
\|V^*-V^k\|_\infty > \frac{\Delta}{2}\frac{\eta_s\Delta}{1+\eta_s\Delta},\quad \mbox{if }k\leq T-2.
\end{align*}
It follows that $\forall\, k\leq T-2$,
\begin{align*}
\mathcal{L}_k^* & = V^*(\rho) - V^k(\rho)\geq \tilde{\rho}\,\|V^*-V^k\|_\infty > \frac{\Delta}{2}\frac{\eta_s\Delta}{1+\eta_s\Delta}\tilde{\rho} \geq \frac{\Delta}{2}\frac{\eta\,\tilde{\mu}\,\Delta}{1+\eta\,\tilde{\mu}\,\Delta}\tilde{\rho}:=C_2(\eta),
\end{align*}
where the last inequality follows from $\eta_s\geq \eta\,\tilde{\mu}$. Combining it with \eqref{eq:ppg-improvement02} gives 
\begin{align*}
\mathcal{L} _{k}^{k+1}&\ge \frac{(1-\gamma )(\mathcal{L} _{k}^{*})^2}{(1-\gamma )\mathcal{L} _{k}^{*}+C_1(\eta )}
\\
\,\,      &=\left( 1-\frac{C_1(\eta)}{(1-\gamma )\mathcal{L} _{k}^{*}+C_1(\eta)} \right) \cdot \mathcal{L} _{k}^{*}
\\
\,\,      &\ge \left( 1-\frac{C_1(\eta )}{(1-\gamma )C_2\left( \eta \right) +C_1(\eta )} \right) \cdot \mathcal{L} _{k}^{*}
\\
\,\,      &=\frac{(1-\gamma )C_2\left( \eta \right)}{(1-\gamma )C_2\left( \eta \right) +C_1(\eta )}\cdot \mathcal{L} _{k}^{*}.
\end{align*}
Then the claim follows immediately from Theorem~\ref{thm:linear-global}.
\end{proof}

\begin{remark} Considering the  case $\eta \rightarrow \infty$, one has
$$
C_1\left( \eta \right) \rightarrow 0\qquad \mathrm{and}\qquad C_2\left( \eta \right) \rightarrow \frac{\tilde{\rho}\,\Delta }{2}.
$$    
Then the linear convergence rate of PPG becomes
 $1-(1-\gamma )\left\| \frac{d_{\rho}^{*}}{\rho} \right\| _{\infty}^{-1}$,
which matches the convergence rate of PI provided $\left\| \frac{d_{\rho}^{*}}{\rho} \right\| _{\infty}^{-1}=1$ \textup{(}for example when $\rho$ is the stationary distribution under the optimal policy\textup{)}. Indeed, $1-(1-\gamma )\left\| \frac{d_{\rho}^{*}}{\rho} \right\| _{\infty}^{-1}$ is the rate that can be obtained if we analyse the convergence of PI based on the performance difference lemma instead of the  $\gamma$-contraction of the Bellman optimality operator.
\end{remark}
Next, we present a non-adaptive increasing step size selection rule for PPG, with improved linear convergence rate.
\begin{theorem}[Linear Convergence for Non-Adaptive Increasing Step Size]\label{thm:PPG-linear-adaptive}
    Letting $C_3>0$ be any constant, assume the step size of PPG satisfies \begin{align*}
\eta_k\geq \frac{2+5|\mathcal{A}|}{\tilde{\mu}}\frac{1-\gamma}{C_3}\left(1+\frac{1-\gamma}{C_3}\right)^{k+1}.
\end{align*}
 Then one has
\begin{align*}
V^*(\rho) - V^k(\rho)\leq \frac{1}{1-\gamma}\left(1-(1-\gamma)\left\|\frac{d_\rho^*}{\rho}\right\|_\infty^{-1}\frac{(1-\gamma)}{1-\gamma+C_3}\right)^k.
\end{align*}
\end{theorem}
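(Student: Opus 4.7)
The plan is to combine the per-iteration improvement bound~\eqref{eq:ppg-improvement02} with Theorem~\ref{thm:linear-global}, and then exploit the fact that the prescribed increasing step size forces $C_1(\eta_k)$ to decay geometrically. Throughout, I abbreviate $w=(1-\gamma)\bigl\|d_\rho^{*}/\rho\bigr\|_\infty^{-1}$ and $\alpha=\frac{1-\gamma}{1-\gamma+C_3}$, and I note the useful identity $1-\alpha=\bigl(1+\frac{1-\gamma}{C_3}\bigr)^{-1}$. Under the stated step-size rule, a direct substitution gives $C_1(\eta_k)=\frac{2+5|\mathcal{A}|}{\eta_k\tilde{\mu}}\le \frac{C_3}{1-\gamma}(1-\alpha)^{k+1}$, which is the only role the step size plays in the analysis.

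I would then prove the slightly stronger inductive statement $\mathcal{L}_k^{*}\le \frac{1}{1-\gamma}(1-w\alpha)^k$ by induction on $k$, with the $k=0$ case being immediate from $\mathcal{L}_0^{*}\le \frac{1}{1-\gamma}$. For the inductive step I would split into two cases based on how $C_1(\eta_k)$ compares with $C_3\mathcal{L}_k^{*}$.

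In the ``good'' case $C_1(\eta_k)\le C_3\mathcal{L}_k^{*}$, inequality~\eqref{eq:ppg-improvement02} yields
\begin{align*}
\mathcal{L}_k^{k+1}\ge \frac{(1-\gamma)(\mathcal{L}_k^{*})^2}{(1-\gamma)\mathcal{L}_k^{*}+C_3\mathcal{L}_k^{*}}=\alpha\,\mathcal{L}_k^{*},
\end{align*}
so Theorem~\ref{thm:linear-global} (applied with $C_k=\alpha$) gives the one-step contraction $\mathcal{L}_{k+1}^{*}\le (1-w\alpha)\mathcal{L}_k^{*}$, which combined with the induction hypothesis closes the induction. In the ``bad'' case $C_1(\eta_k)>C_3\mathcal{L}_k^{*}$, the step-size bound forces $\mathcal{L}_k^{*}<\frac{1}{1-\gamma}(1-\alpha)^{k+1}$, and monotonicity of $\{V^k(\rho)\}$ (which holds because PPG guarantees $\mathcal{T}^{k+1}V^k\ge V^k$) gives $\mathcal{L}_{k+1}^{*}\le \mathcal{L}_k^{*}\le \frac{1}{1-\gamma}(1-\alpha)^{k+1}\le \frac{1}{1-\gamma}(1-w\alpha)^{k+1}$, where the last step uses $w\le 1$; this bound on $w$ follows from $d_\rho^{*}(s)\ge(1-\gamma)\rho(s)$, which implies $\bigl\|d_\rho^{*}/\rho\bigr\|_\infty\ge 1-\gamma$.

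The main conceptual point, and the step worth checking carefully, is the dichotomy argument: one does not try to get a uniform one-step contraction, but rather observes that whenever the step size is ``wasted'' (i.e.\ $C_1(\eta_k)$ is not yet dominated by $C_3\mathcal{L}_k^{*}$), the target bound is already satisfied for free. No routine calculations beyond the algebraic identity $1-\alpha=(1+(1-\gamma)/C_3)^{-1}$ and the verification $w\le 1$ are needed; everything else is a direct appeal to~\eqref{eq:ppg-improvement02} and Theorem~\ref{thm:linear-global}.
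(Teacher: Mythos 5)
Your proof is correct and follows essentially the same route as the paper's: an induction on $k$ that feeds the step-size-induced bound $C_1(\eta_k)\le \frac{C_3}{1-\gamma}(1-\alpha)^{k+1}$ into \eqref{eq:ppg-improvement02} and Theorem~\ref{thm:linear-global}, with the same dichotomy (the paper phrases it as ``assume $\mathcal{L}_t^*$ is at least the target bound, otherwise monotonicity finishes it,'' which is logically your good/bad case split on $C_1(\eta_k)$ versus $C_3\mathcal{L}_k^*$). Your explicit verification that $w\le 1$ via $d_\rho^*\ge(1-\gamma)\rho$ is a detail the paper uses implicitly; otherwise the two arguments coincide.
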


\begin{proof}
    The proof is by induction. It is obvious that the result  holds for $k=0$. Assume it is true for $k\leq t$. First, the bound on $\eta_t$ implies that
    \begin{align*}
C_1(\eta_t)&\leq \frac{C_3}{1-\gamma}\left(1-\frac{(1-\gamma)}{1-\gamma+C_3}\right)^{t+1}\\
&\leq \frac{C_3}{1-\gamma}\left(1-(1-\gamma)\left\|\frac{d_\rho^*}{\rho}\right\|_\infty^{-1}\frac{(1-\gamma)}{1-\gamma+C_3}\right)^{t+1}.
\end{align*}
Assume 
\begin{align*}
V^*(\rho)-V^t(\rho)\geq\frac{1}{1-\gamma}\left(1-(1-\gamma)\left\|\frac{d_\rho^*}{\rho}\right\|_\infty^{-1}\frac{(1-\gamma)}{1-\gamma+C_3}\right)^{t+1}.
\end{align*}
Otherwise, the result holds automatically by monotonicity. It follows that 
\begin{align*}
C_1(\eta_t)\leq C_3\,\mathcal{L}_t^*.
\end{align*}
Inserting this inequality into 
\eqref{eq:ppg-improvement02}
yields 
\begin{align*}
\mathcal{L}_t^{t+1}\geq \frac{1-\gamma}{1-\gamma+C_3}\mathcal{L}_t^*.
\end{align*}
Then the application of Theorem~\ref{thm:linear-global} implies that 
\begin{align*}
\mathcal{L}_{t+1}^*\leq \left(1-(1-\gamma)\left\|\frac{d_\rho^*}{\rho}\right\|_\infty^{-1}\frac{1-\gamma}{1-\gamma+C_3}\right)\mathcal{L}_t^*,
\end{align*}
which completes the proof by the  induction hypothesis.
\end{proof}
\begin{remark}
    It is shown in \textup{\cite{Bhandari_Russo_2021}} that PPG with exact line search can also achieve the linear convergence. The analysis therein simply uses the fact that the new value function obtained by exact linear search is at least as large as that for the PI policy. Note that assumption on the exact line search is not realistic.
\end{remark}
\section{Softmax Policy Gradient}\label{sec:softmaxPG}
The softmax parameterization is in the form of 
\begin{align*}
\pi_\theta(a|s) = \frac{\exp(\theta_{s,a})}{\sum_{a'\in\mathcal{A}}\exp(\theta_{s,a'})},\quad 
\mbox{where }\theta=(\theta_{s,a})\in\R^{|\mathcal{S}|\times |\mathcal{A}|}.
\end{align*}
Under this parameterization, by the chain rule, it is easy to see that the policy gradient with respect to $\theta$ is given by
  \begin{align*}
        \frac{\partial V^{\pi_\theta}(\mu)}{\partial \theta_{s,a}} = \frac{1}{1-\gamma} d^{\pi_\theta}_\mu(s)\, \pi_\theta(a|s) A^{\pi_\theta}(s,a).\numberthis\label{eq:softmax-gradient}
    \end{align*}
Thus, the update of softmax PG in the parameter space is given by 
\begin{align*}
\theta_{s,a}^{k+1} = \theta^k_{s,a}+\eta_s^k\,\pi^k_{s,a}A^k_{s,a},
\end{align*}
where $\eta_s^k=\frac{\eta_k}{1-\gamma}d_\mu^k(s)$.
In the policy domain, the update can be written as 
\begin{align*}
\pi^{k+1}_{s,a}= \frac{\pi^k_{s,a}\exp\left(\eta_s^k\pi^k_{s,a}A^k_{s,a}\right)}{Z_s^k},\numberthis\label{eq:softmaxPG}
\end{align*}
 where $Z_s^k=\sum_{a'} \pi_{s,a'}^k\exp\left(\eta_s^k\,\pi^k_{s,a'}A^k_{s,a'}\right)$ is the normalization factor. 
For ease of notation, we will let $\hat{A}^k_{s,a}=\pi^k_{s,a}A^k_{s,a}$  in this section. It is evident that 
\begin{align*}
\sum_{a\in\mathcal{A}}\hat{A}^k_{s,a}=0.
\end{align*}

 The $O(1/k)$ sublinear convergence has been established for softmax PG with constant step size in \cite{Mei_Xiao_Szepesvari_Schuurmans_2020}. The analysis therein is essentially based on the smoothness and the gradient domination property of the value function, and thus requires the step size to be sufficiently small. In addition, an adaptive step size based on the norm of the gradient has been proposed in \cite{mei2021normalized}, with which softmax PG can achieve linear convergence.
 In this section, we will establish the $O(1/k)$ sublinear convergence of softmax PG to any constant step size. Additionally, a new adaptive step size selection rule will also be introduced.
 
 In contrast to the optimization analysis framework adopted in \cite{Mei_Xiao_Szepesvari_Schuurmans_2020}, the idea of analysis here is more elementary and similar to the one used in Section~\ref{sec:ppg}. That being said, the extension is by no means trivial since the update of softmax PG is substantially different with that of PPG. In particular, we will establish the improvement lower bound of $ \mathcal{T}^{k+1}V^k(s)-V^k(s)$ for softmax PG in terms of a function of $\max_a|\hat{A}^k_{s,a}|$ in contrast to $\max_aA^k_{s,a}$ due to the existence of $\pi_{s,a}^k$ in the exponent in \eqref{eq:softmaxPG}. Moreover, as noted after Lemma~\ref{lem:ppg-improvement}, the establishment of the improvement lower bound in terms of $\max_aA^k_{s,a}$ for PPG relies heavily on the explicit formula for the projection which is not applicable for softmax PG. Instead, a complete different route is adopted to establish the improvement lower bound for softmax PG. Moreover, the improvement upper bound is also established which enables us to show that the $O(1/k)$ sublinear convergence rate is tight for softmax PG with any constant step size.
\subsection{Improvement Lower and Upper Bounds}
We begin with a novel identity that plays an important role in the establishment of the improvement lower and upper  bounds.
\begin{lemma}[Improvement Identity]\label{lem:softmax-identity}
For any state $s$, the improvement of softmax PG with step size $\eta_k$ in the $k$-th iteration  is equal to
\begin{align*}
\sum_a\pi^{k+1}_{s,a}A^k_{s,a} = \frac{1}{2|\mathcal{A}|Z_s^k}\sum_{a,a'}\left(\hat{A}^k_{s,a}-\hat{A}^k_{s,a'}\right)\left(\exp\left(\eta_s^k\,\hat{A}^k_{s,a}\right)-\exp\left(\eta_s^k\,\hat{A}^k_{s,a'}\right)\right).\numberthis\label{eq:softmaxPG-identity}
\end{align*}

\end{lemma}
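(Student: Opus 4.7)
The plan is to reduce the claimed identity to a direct application of Lemma~\ref{lem:covariance-identity} (the covariance identity) after noting that the expected advantage under $\pi^k$ vanishes. The only algebraic content is the expansion of the symmetrized sum on the right-hand side; there is no real obstacle here, so the proposal is essentially an explicit bookkeeping argument.

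First I would substitute the update rule \eqref{eq:softmaxPG} into the left-hand side. Writing $\hat A^k_{s,a}=\pi^k_{s,a}A^k_{s,a}$, this gives
\begin{align*}
\sum_a \pi^{k+1}_{s,a}A^k_{s,a}
=\frac{1}{Z_s^k}\sum_a \pi^k_{s,a}A^k_{s,a}\exp\!\left(\eta_s^k \hat A^k_{s,a}\right)
=\frac{1}{Z_s^k}\sum_a \hat A^k_{s,a}\exp\!\left(\eta_s^k \hat A^k_{s,a}\right).
\end{align*}
So the task reduces to showing that $\sum_a \hat A^k_{s,a}\exp(\eta_s^k \hat A^k_{s,a})$ equals $\frac{1}{2|\mathcal{A}|}\sum_{a,a'}(\hat A^k_{s,a}-\hat A^k_{s,a'})(\exp(\eta_s^k\hat A^k_{s,a})-\exp(\eta_s^k\hat A^k_{s,a'}))$.

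Next I would apply Lemma~\ref{lem:covariance-identity} with $X\sim\mathrm{Unif}(\mathcal{A})$, taking $f(a)=\hat A^k_{s,a}$ and $g(a)=\exp(\eta_s^k\hat A^k_{s,a})$, so that
\begin{align*}
\mathrm{Cov}(f(X),g(X))=\frac{1}{2|\mathcal{A}|^2}\sum_{a,a'}\left(\hat A^k_{s,a}-\hat A^k_{s,a'}\right)\left(\exp(\eta_s^k\hat A^k_{s,a})-\exp(\eta_s^k\hat A^k_{s,a'})\right).
\end{align*}
The key observation is that $\mathbb{E}[f(X)]=\frac{1}{|\mathcal{A}|}\sum_a\hat A^k_{s,a}=\frac{1}{|\mathcal{A}|}\sum_a\pi^k_{s,a}A^k_{s,a}=0$, since the expected advantage under any policy is zero. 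Consequently $\mathrm{Cov}(f(X),g(X))=\mathbb{E}[f(X)g(X)]=\frac{1}{|\mathcal{A}|}\sum_a \hat A^k_{s,a}\exp(\eta_s^k\hat A^k_{s,a})$.

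Finally, combining the two expressions for $\mathrm{Cov}(f(X),g(X))$ and multiplying by $|\mathcal{A}|$ yields
\begin{align*}
\sum_a \hat A^k_{s,a}\exp\!\left(\eta_s^k \hat A^k_{s,a}\right)=\frac{1}{2|\mathcal{A}|}\sum_{a,a'}\left(\hat A^k_{s,a}-\hat A^k_{s,a'}\right)\left(\exp(\eta_s^k\hat A^k_{s,a})-\exp(\eta_s^k\hat A^k_{s,a'})\right),
\end{align*}
and dividing both sides by $Z_s^k$ gives \eqref{eq:softmaxPG-identity}. As a sanity check, one could alternatively derive the identity by a direct expansion of the double sum: collecting terms produces $2|\mathcal{A}|\sum_a \hat A^k_{s,a}\exp(\eta_s^k\hat A^k_{s,a})$ minus cross terms that factor as $2(\sum_a\hat A^k_{s,a})(\sum_{a'}\exp(\eta_s^k\hat A^k_{s,a'}))$, which vanish because $\sum_a\hat A^k_{s,a}=0$. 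The hardest (and arguably only) substantive point is noticing that the zero-mean property of $\hat A^k_{s,\cdot}$ is exactly what allows the inner product $\sum_a \hat A^k_{s,a}\exp(\eta_s^k\hat A^k_{s,a})$ to be rewritten as the symmetric sum that appears on the right-hand side; everything else is straightforward manipulation.
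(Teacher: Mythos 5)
Your proof is correct and follows essentially the same route as the paper: reduce the left-hand side to $\frac{1}{Z_s^k}\sum_a \hat A^k_{s,a}\exp(\eta_s^k\hat A^k_{s,a})$, recognize this (up to the factor $|\mathcal{A}|$) as a covariance under the uniform distribution because $\sum_a\hat A^k_{s,a}=0$, and then invoke Lemma~\ref{lem:covariance-identity}. The constants all check out, so there is nothing to add.
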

\begin{proof}
    The identity follows from a direct computation:
    \begin{align*}
\sum_a\pi^{k+1}_{s,a}A^k_{s,a} & = \sum_a\frac{\pi^{k+1}_{s,a}}{\pi^k_{s,a}}\hat{A}^k_{s,a}\\
& =\frac{1}{Z_s^k}\sum_a \hat{A}^k_{s,a}\exp\left(\eta_s^k\,\hat{A}^k_{s,a}\right)\\
& = \frac{|\mathcal{A}|}{Z_s^k}\sum_a \frac{1}{|\mathcal{A}|}\hat{A}^k_{s,a}\exp\left(\eta_s^k\,\hat{A}^k_{s,a}\right)\\
& = \frac{|\mathcal{A}|}{Z_s^k}\mathbb{E}_{a\sim U} \left[\hat{A}^k_{s,a}\exp\left(\eta_s^k\,\hat{A}^k_{s,a}\right)\right]\\
& = \frac{|\mathcal{A}|}{Z_s^k}\mathrm{Cov}_{a\sim U} \left(\hat{A}^k_{s,a}, \; \exp\left(\eta_s^k\,\hat{A}^k_{s,a}\right)\right)\\
& =  \frac{|\mathcal{A}|}{2\,Z_s^k}\mathbb{E}_{a\sim U,a'\sim U} \left[\left(\hat{A}^k_{s,a}-\hat{A}^k_{s,a'}\right)\left(\exp\left(\eta_s^k\,\hat{A}^k_{s,a}\right)-\exp\left(\eta_s^k\,\hat{A}^k_{s,a'}\right)\right)\right]\\
&=\frac{1}{2|\mathcal{A}|Z_s^k}\sum_{a,a'}\left(\hat{A}^k_{s,a}-\hat{A}^k_{s,a'}\right)\left(\exp\left(\eta_s^k\,\hat{A}^k_{s,a}\right)-\exp\left(\eta_s^k\,\hat{A}^k_{s,a'}\right)\right),
\end{align*}
where $U$ represents the uniform distribution on $\calA$, the fifth equality follows from $\sum_a \hat{A}^k_{s,a}=0$, and the sixth equality follows from Lemma~\ref{lem:covariance-identity}.
\end{proof}

\begin{lemma}[Improvement Lower Bound]\label{lem:softmaxPG-improvement-lower}
    For any state $s\in\mathcal{S}$, the improvement of softmax PG with step size $\eta_k$ in the $k$-th iteration has the following lower bound:
    \begin{align*}
\sum_a\pi^{k+1}_{s,a}A^k_{s,a}\geq \frac{1}{|\mathcal{A}|}\left(\max_a|\hat{A}^k_{s,a}|\right)
\left(1-\exp\left(-\eta_k\,\tilde{\mu}\max_a|\hat{A}^k_{s,a}|\right)\right).
\end{align*}
\end{lemma}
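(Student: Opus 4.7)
The plan is to exploit the improvement identity in Lemma~\ref{lem:softmax-identity}. The key observation is that every summand $(\hat{A}^k_{s,a}-\hat{A}^k_{s,a'})(\exp(\eta_s^k\hat{A}^k_{s,a})-\exp(\eta_s^k\hat{A}^k_{s,a'}))$ is non-negative, since $x\mapsto\exp(\eta_s^k x)$ is monotone and so the two differences share a sign (this is, in disguise, the statement of Lemma~\ref{lem:positive-covariance}). Consequently, I can discard all but a carefully chosen pair of terms from the double sum and still retain a valid lower bound.

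Set $M:=\max_a|\hat{A}^k_{s,a}|$ and pick $a^*\in\arg\max_a|\hat{A}^k_{s,a}|$. I would keep only the two symmetric terms indexed by $(a^*,a^\dagger)$ and $(a^\dagger,a^*)$ for a judicious $a^\dagger$, and bound the denominator by the trivial estimate $Z_s^k\leq\exp(\eta_s^k\max_a\hat{A}^k_{s,a})$. The right choice of $a^\dagger$ must be adapted to the sign of $\hat{A}^k_{s,a^*}$, so I would split into two cases.

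If $\hat{A}^k_{s,a^*}=M>0$, then since $\sum_a\hat{A}^k_{s,a}=0$ there must exist some $a^\dagger$ with $\hat{A}^k_{s,a^\dagger}\leq 0$; a direct estimate gives retained-pair value at least $M(\exp(\eta_s^k M)-1)$, and together with $Z_s^k\leq\exp(\eta_s^k M)$ this cancels to $\frac{M(1-\exp(-\eta_s^k M))}{|\mathcal{A}|}$. If instead $\hat{A}^k_{s,a^*}=-M<0$, I would take $a^\dagger\in\arg\max_a\hat{A}^k_{s,a}$ and write its value as $M'\geq 0$; the pair value is then at least $M(\exp(\eta_s^k M')-\exp(-\eta_s^k M))$ while $Z_s^k\leq\exp(\eta_s^k M')$, so after cancellation I obtain $\frac{M(1-\exp(-\eta_s^k(M+M')))}{|\mathcal{A}|}$, which is bounded below by $\frac{M(1-\exp(-\eta_s^k M))}{|\mathcal{A}|}$ since $M'\geq 0$. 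To convert $\eta_s^k$ into $\eta_k\tilde{\mu}$ in the exponent, I would use $\eta_s^k=\frac{\eta_k}{1-\gamma}d_\mu^k(s)\geq\eta_k\tilde{\mu}$, which follows from $d_\mu^k(s)\geq(1-\gamma)\mu(s)$, and then invoke monotonicity of $x\mapsto x(1-e^{-cx})$.

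The most delicate point is the choice of $a^\dagger$ in the case $\hat{A}^k_{s,a^*}<0$: the large factor $\exp(\eta_s^k M')$ appearing in the retained pair value must cancel exactly against the worst-case bound on $Z_s^k$, and this cancellation fails for a naive choice (for instance, picking any single $a^\dagger$ with $\hat{A}^k_{s,a^\dagger}\geq 0$ guaranteed by the sum-to-zero constraint would not match the argmax used in the bound on $Z_s^k$). Taking $a^\dagger$ to be precisely the argmax action resolves this issue and is the one ingredient beyond pure calculation.
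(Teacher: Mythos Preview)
Your argument is correct and follows essentially the same approach as the paper: drop all but one (symmetric) pair from the double sum in Lemma~\ref{lem:softmax-identity}, bound $Z_s^k\le\exp(\eta_s^k\max_a\hat{A}^k_{s,a})$, and then pass from $\eta_s^k$ to $\eta_k\tilde{\mu}$. The only difference is cosmetic: the paper always retains the single pair $(\arg\max_a\hat{A}^k_{s,a},\,\arg\min_a\hat{A}^k_{s,a})$, obtains the bound in terms of the spread $D:=\max_a\hat{A}^k_{s,a}-\min_a\hat{A}^k_{s,a}$, and then uses $D\ge M$ together with the monotonicity of $x\mapsto x(1-e^{-cx})$ to replace $D$ by $M$. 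Your case split on the sign of $\hat{A}^k_{s,a^*}$ reaches the bound in $M$ directly, so you never actually need that monotonicity step (the final conversion from $\eta_s^k$ to $\eta_k\tilde{\mu}$ only needs that $t\mapsto 1-e^{-tM}$ is increasing in $t$, which is immediate). Your remark about why $a^\dagger$ must be chosen as the argmax in Case~2 is exactly right and corresponds to the paper's uniform choice of the max/min pair.
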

\begin{proof}
    Noting that the term
\begin{align*}
\left(\hat{A}^k_{s,a}-\hat{A}^k_{s,a'}\right)\left(\exp\left(\eta_s^k\,\hat{A}^k_{s,a}\right)-\exp\left(\eta_s^k\,\hat{A}^k_{s,a'}\right)\right)
\end{align*}
in the identity \eqref{eq:softmaxPG-identity} is always non-negative, one has 
\begin{align*}
\sum_a\pi^{k+1}_{s,a}A^k_{s,a} &= \frac{1}{2|\mathcal{A}|Z_s^k}\sum_{a,a'}\left(\hat{A}^k_{s,a}-\hat{A}^k_{s,a'}\right)\left(\exp\left(\eta_s^k\,\hat{A}^k_{s,a}\right)-\exp\left(\eta_s^k\,\hat{A}^k_{s,a'}\right)\right)\\
&\geq  \frac{1}{|\mathcal{A}|Z_s^k}\left(\max_a\hat{A}^k_{s,a}-\min_a\hat{A}^k_{s,a}\right)\left(\exp\left(\eta_s^k\,\max_a\hat{A}^k_{s,a}\right)-\exp\left(\eta_s^k\,\min_a\hat{A}^k_{s,a}\right)\right)\\
&\geq \frac{1}{|\mathcal{A}|}\left(\max_a\hat{A}^k_{s,a}-\min_a\hat{A}^k_{s,a}\right)\left(1-\exp\left(-\eta_s^k\left(\max_a\hat{A}^k_{s,a}-\min_a\hat{A}^k_{s,a}\right)\right)\right)\\
&\geq \frac{1}{|\mathcal{A}|}\left(\max_a\hat{A}^k_{s,a}-\min_a\hat{A}^k_{s,a}\right)\left(1-\exp\left(-\eta_k\,\tilde{\mu}\left(\max_a\hat{A}^k_{s,a}-\min_a\hat{A}^k_{s,a}\right)\right)\right)\\
&\geq \frac{1}{|\mathcal{A}|}\max_a|\hat{A}^k_{s,a}|\left(1-\exp\left(-\eta_k\,\tilde{\mu}\max_a|\hat{A}^k_{s,a}|\right)\right).
\end{align*}
Here, the third line follows from 
\begin{align*}
Z_s^k=\sum_a \pi_{s,a}^k\exp\left(\eta_s^k\,\hat{A}^k_{s,a}\right)\leq \exp\left(\eta_s^k\max_a\hat{A}^k_{s,a}\right),
\end{align*}
the fourth line follows from $\eta_s^k\geq \eta_k\,\tilde{\mu}$, the fifth line follows from $f(x)=x(1-\exp(-cx))$  ($c>0$) is non-negative, monotonically increasing on $[0,\infty)$ and 
\begin{align*}
\max_a\hat{A}^k_{s,a}-\min_a\hat{A}^k_{s,a}\geq \max_a|\hat{A}^k_{s,a}|
\end{align*}
since $\sum_a\hat{A}^k_{s,a}=0$.
\end{proof}

\begin{lemma}[Improvement Upper Bound]\label{lem:softmaxPG-improvement-upper}
    For any state $s\in\mathcal{S}$, the improvement of softmax PG with step size $\eta_k$ in the $k$-th iteration has the following upper bound:
    \begin{align*}
\sum_a\pi^{k+1}_{s,a}A^k_{s,a}\leq \left(\exp\left(\frac{2\eta_k}{(1-\gamma)^2}\right)-1 \right)|\mathcal{A}|(1-\gamma)\left(\max_a|\hat{A}^k_{s,a}|\right)^2.
\end{align*}
\end{lemma}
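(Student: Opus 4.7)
The plan is to start from the covariance identity derived mid-proof in Lemma~\ref{lem:softmax-identity}, namely
\[
\sum_a\pi^{k+1}_{s,a}A^k_{s,a} \;=\; \frac{|\mathcal{A}|}{Z_s^k}\,\mathrm{Cov}_{a\sim U}\!\left(\hat{A}^k_{s,a},\,\exp(\eta_s^k \hat{A}^k_{s,a})\right),
\]
and bound each factor separately. Set $M := \max_a|\hat{A}^k_{s,a}|$ and write $X := \hat{A}^k_{s,a}$ for $a\sim U$. The relation $\sum_a \hat{A}^k_{s,a}=0$ forces $\mathbb{E}[X]=0$, so the covariance collapses to $\mathbb{E}[X(\exp(\eta_s^k X)-1)]$. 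The pointwise estimates $|X|\le M$ and $|\exp(y)-1|\le \exp(|y|)-1$ then give $|\mathrm{Cov}|\le M(\exp(\eta_s^k M)-1)$. For the partition function, $Z_s^k \ge \exp(\eta_s^k\min_a \hat{A}^k_{s,a})\ge \exp(-\eta_s^k M)$, whence $1/Z_s^k \le \exp(\eta_s^k M)$.

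Inserting these two bounds into the covariance identity and using the elementary inequality $\exp(y)(\exp(y)-1)=\exp(2y)-\exp(y)\le \exp(2y)-1$ for $y\ge 0$ produces the intermediate bound
\[
\sum_a\pi^{k+1}_{s,a}A^k_{s,a} \;\le\; |\mathcal{A}|\,M\bigl(\exp(2\eta_s^k M)-1\bigr).
\]
To match the stated form I would then exploit that $y\mapsto (\exp(y)-1)/y$ is monotonically increasing on $y>0$. Since $|\hat{A}^k_{s,a}|\le \pi_{s,a}^k/(1-\gamma)\le 1/(1-\gamma)$ and $\eta_s^k=\eta_k d_\mu^k(s)/(1-\gamma)\le \eta_k/(1-\gamma)$, one has $2\eta_s^k M\le 2\eta_k/(1-\gamma)^2$, and monotonicity yields
\[
\exp(2\eta_s^k M)-1 \;\le\; \frac{2\eta_s^k M}{2\eta_k/(1-\gamma)^2}\bigl(\exp(2\eta_k/(1-\gamma)^2)-1\bigr)\;\le\; (1-\gamma)\,M\bigl(\exp(2\eta_k/(1-\gamma)^2)-1\bigr).
\]
Substituting back produces exactly the claimed upper bound.

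The main obstacle is the final rescaling step. A naive monotone substitution of $2\eta_s^k M$ by its worst-case value $2\eta_k/(1-\gamma)^2$ inside the exponential would only yield $|\mathcal{A}|M\bigl(\exp(2\eta_k/(1-\gamma)^2)-1\bigr)$, losing the factor $(1-\gamma)M$ and failing to deliver the quadratic dependence on $M$ needed to match the scale of Lemma~\ref{lem:softmaxPG-improvement-lower}. Exploiting the monotonicity of $(\exp y-1)/y$ is the key trick: it transfers one copy of $M$ out of the exponential into a polynomial factor, producing the quadratic-in-$M$ bound while preserving the correct exponential dependence on $\eta_k/(1-\gamma)^2$.
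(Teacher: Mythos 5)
Your proposal is correct and follows essentially the same route as the paper: both start from the improvement identity of Lemma~\ref{lem:softmax-identity}, arrive at the intermediate bound $|\mathcal{A}|\max_a|\hat{A}^k_{s,a}|\bigl(\exp(2\eta_s^k\max_a|\hat{A}^k_{s,a}|)-1\bigr)$ (the paper via the symmetrized double sum and the lower bound $Z_s^k\geq\exp(\eta_s^k\min_a\hat{A}^k_{s,a})$, you via a pointwise bound on the covariance plus $1/Z_s^k\leq\exp(\eta_s^kM)$), and close with the same linearization of $\exp(y)-1$ over $[0,2\eta_k/(1-\gamma)^2]$, which is exactly your monotonicity-of-$(\exp y-1)/y$ step. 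All the individual estimates you invoke check out, so the argument is sound.
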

\begin{proof}
    The proof also begins with the identity \eqref{eq:softmaxPG-identity} as follows:
\begin{align*}
\sum_a\pi^{k+1}_{s,a}A^k_{s,a} &= \frac{1}{2|\mathcal{A}|Z_s^k}\sum_{a,a'}\left(\hat{A}^k_{s,a}-\hat{A}^k_{s,a'}\right)\left(\exp\left(\eta_s^k\,\hat{A}^k_{s,a}\right)-\exp\left(\eta_s^k\,\hat{A}^k_{s,a'}\right)\right)\\
&\leq \frac{|\mathcal{A}|}{2Z_s^k}\left(\max_a\hat{A}^k_{s,a}-\min_a\hat{A}^k_{s,a}\right)\left(\exp\left(\eta_s^k\,\max_a\hat{A}^k_{s,a}\right)-\exp\left(\eta_s^k\,\min_a\hat{A}^k_{s,a}\right)\right)\\
&\leq \frac{|\mathcal{A}|}{2}\left(\max_a\hat{A}^k_{s,a}-\min_a\hat{A}^k_{s,a}\right)\left(\exp\left(\eta_s^k\left(\max_a\hat{A}^k_{s,a}-\min_a\hat{A}^k_{s,a}\right)\right)-1\right)\\
&\leq  |\mathcal{A}|\max_a|\hat{A}^k_{s,a}| \left(\exp\left(2\,\eta_s^k\max_a|\hat{A}^k_{s,a}|\right)-1\right)\\
& \leq  |\mathcal{A}|\max_a|\hat{A}^k_{s,a}| \left(\exp\left(\frac{2\,\eta_k}{1-\gamma}\max_a|\hat{A}^k_{s,a}|\right)-1\right),
\end{align*}
where the third line follows from
\begin{align*}
Z_s^k=\sum_a \pi_{s,a}^k\exp\left(\eta_s^k\,\hat{A}^k_{s,a}\right)\geq \exp\left(\eta_s^k\min_a\hat{A}^k_{s,a}\right).
\end{align*}
Noticing that $\max_a|\hat{A}^k_{s,a}|\leq \frac{1}{1-\gamma}$, together with the fact
\begin{align*}
\exp\left(\frac{2\eta_k}{1-\gamma}x\right)-1\leq \left(\exp\left(\frac{2\eta_k}{(1-\gamma)^2}\right)-1\right)(1-\gamma)\,x,\quad \mbox{for }x\in\left[0,\frac{1}{1-\gamma}\right],
\end{align*}one further has 
\begin{align*}
\sum_a\pi^{k+1}_{s,a}A^k_{s,a}\leq \left(\exp\left(\frac{2\,\eta_k}{(1-\gamma)^2}\right)-1\right)(1-\gamma)|\mathcal{A}|\left(\max_a|\hat{A}^k_{s,a}|\right)^2.
\end{align*}
The proof is now complete.
\end{proof}
\subsection{Convergence Results}
We first present the global convergence of softmax PG for any step size sequence $\eta_k$ as long as $\inf_k\eta_k\geq \alpha$ for some $\alpha>0$. The proof overall follows the route in~\cite{Agarwal_Kakade_Lee_Mahajan_2019} and is presented in Appendix~\ref{sec:proof-softmaxPG-global} where we only point out how to generalize the proof in~\cite{Agarwal_Kakade_Lee_Mahajan_2019} to any step size sequence.
\begin{theorem}[Global Convergence]\label{thm:softmaxPG-global}
    Consider softmax PG with step size $\eta_k$ in the $k$-th iteration. Assume $\inf_k\eta_k\geq \alpha$ for some $\alpha>0$. Then the produced state values  converge to the optimal one,
    \begin{align*}
        \forall \, s\in\calS: \quad \lim_{k\rightarrow\infty}V^k(s) = V^*(s).
    \end{align*}
\end{theorem}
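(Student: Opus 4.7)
My plan is to follow the classical three-step argument for softmax PG global convergence:
(i) establish pointwise monotonicity of $V^k$ so that it converges to some limit $V^\infty \leq V^*$;
(ii) deduce that the per-state Bellman improvement $\mathcal{T}^{k+1}V^k(s) - V^k(s)$ vanishes as $k\to\infty$; and
(iii) rule out $V^\infty \neq V^*$ via a contradiction argument based on the softmax parameter dynamics. The step-size hypothesis $\inf_k \eta_k \geq \alpha$ enters only through the uniform lower bound $\eta_s^k \geq \eta_k \tilde\mu \geq \alpha\tilde\mu$ on the state-wise step size, so the constant step-size argument in~\cite{Agarwal_Kakade_Lee_Mahajan_2019} should carry over without structural change.

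\noindent\textbf{Monotonicity and vanishing state-wise improvements.}
First I would note that the right-hand side of Lemma~\ref{lem:softmaxPG-improvement-lower} is manifestly non-negative, so $\sum_a \pi^{k+1}(a|s) A^k(s,a) \geq 0$ for every $s$ and $k$. Applying Lemma~\ref{lem:PDL} with the Dirac measure at $s$ then gives the pointwise monotonicity $V^{k+1}(s) \geq V^k(s)$; since $V^k(s) \in [0, 1/(1-\gamma)]$ is bounded and monotone it has a limit $V^\infty(s) \leq V^*(s)$, and consequently $Q^k(s,a) \to Q^\infty(s,a)$ and $A^k(s,a) \to A^\infty(s,a)$. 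Applying Lemma~\ref{lem:PDL} again at $s$, together with the elementary fact $d^{k+1}_s(s) \geq 1-\gamma$ and the non-negativity of each summand, I would obtain
\begin{align*}
\mathcal{T}^{k+1} V^k(s) - V^k(s) \;=\; \sum_a \pi^{k+1}(a|s) A^k(s,a) \;\leq\; V^{k+1}(s) - V^k(s) \;\longrightarrow\; 0.
\end{align*}
Combining this with Lemma~\ref{lem:softmaxPG-improvement-lower} and the uniform lower bound $\eta_s^k \geq \alpha\tilde\mu$, and noting that $x \mapsto x\bigl(1-\exp(-\alpha\tilde\mu\, x)\bigr)$ on $[0,\infty)$ vanishes only at $x=0$, I would conclude $\max_a |\pi^k(a|s) A^k(s,a)| \to 0$ for every state $s$.

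\noindent\textbf{The main obstacle: excluding $V^\infty \neq V^*$.}
Suppose for contradiction that some state $s_0$ satisfies $V^\infty(s_0) < V^*(s_0)$, so that $I_{s_0}^+ := \{a : A^\infty(s_0,a) > 0\}$ is non-empty. For each $a \in I_{s_0}^+$, $A^k(s_0,a) \to A^\infty(s_0,a) > 0$ together with $\pi^k(a|s_0) A^k(s_0,a) \to 0$ forces $\pi^k(a|s_0) \to 0$. The hard part, which is where I expect the main technical effort, is to turn this into a contradiction. I would adapt the parameter-tracking argument of~\cite{Agarwal_Kakade_Lee_Mahajan_2019}, working with the softmax update
\begin{align*}
\theta_{s_0, a}^{k+1} - \theta_{s_0, a}^k \;=\; \eta_{s_0}^k \, \pi^k(a|s_0) A^k(s_0, a)
\end{align*}
across the three classes $I_{s_0}^+$, $I_{s_0}^0 := \{a : A^\infty(s_0,a)=0\}$, and $I_{s_0}^- := \{a : A^\infty(s_0,a) < 0\}$, using the log-ratio identity $\log\pi^{k+1}(a|s_0) - \log\pi^k(a|s_0) = \eta_{s_0}^k \pi^k(a|s_0) A^k(s_0,a) - \log Z_{s_0}^k$ to extract an inconsistency between $\pi^k(a|s_0) \to 0$ on $I_{s_0}^+$ and the divergence behavior of the $\theta_{s_0,\cdot}^k$ coordinates. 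Since all quantitative estimates in that argument depend on $\eta_k$ only through the uniform lower bound $\alpha\tilde\mu$ on $\eta_s^k$, the extension from the constant step-size case to any sequence $\{\eta_k\}$ with $\inf_k \eta_k \geq \alpha$ is immediate, yielding $V^\infty = V^*$ and completing the proof.
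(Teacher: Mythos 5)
Your proposal is correct and follows essentially the same route as the paper: both use the new improvement lower bound (Lemma~\ref{lem:softmaxPG-improvement-lower}) to obtain monotonicity of $V^k$ and the vanishing of $\max_a|\pi^k(a|s)A^k(s,a)|$ (equivalently, of the gradient), and then defer the exclusion of a suboptimal limit to the parameter-tracking contradiction machinery of \cite{Agarwal_Kakade_Lee_Mahajan_2019}. The only caveat is that the carry-over of that machinery to a varying step-size sequence is not entirely ``immediate'': as the paper notes, one of the cited lemmas (Lemma~50 of \cite{Agarwal_Kakade_Lee_Mahajan_2019}) requires redefining its accumulated quantity $Z_t$ to weight each gradient term by $\eta_{t'}$, a small but genuine modification you would need to make explicit.
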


Recall that $b_s^k=\sum_{a\not\in\mathcal{A}_s^*}\pi^k(a|s)$ is the probability of $\pi^k$ on the non-optimal actions. We define 
\begin{align}
\kappa =\mathop {\mathrm{inf}} \limits_{k\ge 0,s\in \mathcal{S}}\min \left( 1-b_{s}^{k} \right) .
\label{def:kappa}
\end{align}
It follows immediately from the global convergence of softmax PG and the left inequality of Lemma~\ref{lem:bsk-bounds}
that $\kappa>0$. It is worth mentioning that $\kappa$  can be very small for softmax PG on certain MDP problems, as demonstrated in \cite{li2023exponential}.
 In the rest of this section, we present the sublinear convergence rate and the corresponding lower bound for softmax PG with any constant step size, as well as a new adaptive step size that enables softmax PG to converge linearly.  
\begin{theorem}[Sublinear Convergence for any Constant Step Size]\label{thm:softmaxPG-sublinear}
    For any  constant step size $\eta>0$, softmax PG converges sublinearly, 
    \begin{align*}
V^*(\rho)-V^k(\rho)\leq \frac{1}{k}\frac{1}{\tilde{\rho}\,(1-\gamma)^3}\frac{|\mathcal{A}|}{\kappa^2}\left(\max_s|\mathcal{A}_s^*|\right)^2\left(1+\frac{1-\gamma}{\eta\,\tilde{\mu}}\right).
\end{align*}
 In particular, if the optimal policy is unique, i.e., $|\mathcal{A}_s^*|=1$,  one has
\begin{align*}
V^*(\rho)-V^k(\rho)\leq \frac{1}{k}\frac{1}{(1-\gamma)^3}\frac{|\mathcal{A}|}{\kappa^2}\left\|\frac{d_\rho^*}{\rho}\right\|_\infty\left(1+\frac{1-\gamma}{\eta\,\tilde{\mu}}\right).
\end{align*}
\end{theorem}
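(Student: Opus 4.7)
The plan is to apply the general sublinear framework of Theorem~\ref{thm:sublinear-general}, which reduces the task to verifying two things: the monotonic improvement condition $\mathcal{T}^{k+1}V^k \geq V^k$, and a state-averaged quadratic lower bound of the shape $\mathcal{L}_k^{k+1} \geq C(\mathcal{L}_k^*)^2$ for an explicit constant $C$. Monotonicity comes for free from the improvement lower bound in Lemma~\ref{lem:softmaxPG-improvement-lower}, whose right-hand side is manifestly non-negative. The substance of the proof is producing $C$, which I will do by (i) squaring the improvement bound and (ii) relating the squared quantity back to $\mathcal{L}_k^*$ through a carefully chosen optimal policy.

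For step (i), I combine Lemma~\ref{lem:softmaxPG-improvement-lower} with the elementary inequality $1-e^{-x} \geq x/(1+x)$ for $x \geq 0$, and then use the uniform bound $\max_a |\hat{A}^k_{s,a}| \leq 1/(1-\gamma)$ to clear the resulting denominator. Taking expectation with respect to the visitation measure $d_\rho^{\pi^{k,*}}$ of any chosen optimal policy $\pi^{k,*}$ then yields
\begin{align*}
\mathcal{L}_k^{k+1} \;\geq\; \frac{\eta\tilde{\mu}}{|\mathcal{A}|\,(1-\gamma+\eta\tilde{\mu})}\;\mathbb{E}_{s\sim d_\rho^{\pi^{k,*}}}\!\left[\left(\max_a|\hat{A}^k_{s,a}|\right)^2\right].
\end{align*}

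For step (ii), I exploit the freedom to pick $\pi^{k,*}$, recalling that $\mathcal{L}_k^* = V^*(\rho)-V^k(\rho)$ is invariant under this choice by Lemma~\ref{lem:PDL}. For each state $s$, let $a_s^\dagger \in \argmax_{a\in\mathcal{A}_s^*}\pi^k(a|s)$; since $\sum_{a\in\mathcal{A}_s^*}\pi^k(a|s) = 1-b_s^k \geq \kappa$ by the definition of $\kappa$ in \eqref{def:kappa}, pigeonhole yields $\pi^k(a_s^\dagger|s) \geq \kappa/|\mathcal{A}_s^*|$. Take $\pi^{k,*}(\cdot|s)$ to be the Dirac mass at $a_s^\dagger$, which is an optimal policy since $a_s^\dagger \in \mathcal{A}_s^*$ and satisfies $\mathcal{T}^{\pi^{k,*}}V^k(s)-V^k(s) = A^k(s,a_s^\dagger)$ pointwise; this gives
\begin{align*}
\max_a|\hat{A}^k_{s,a}| \;\geq\; \pi^k(a_s^\dagger|s)\,\bigl|A^k(s,a_s^\dagger)\bigr| \;\geq\; \frac{\kappa}{\max_s|\mathcal{A}_s^*|}\,\bigl|\mathcal{T}^{\pi^{k,*}}V^k(s)-V^k(s)\bigr|.
\end{align*}
Squaring and applying Jensen's inequality $\mathbb{E}[X^2] \geq (\mathbb{E}[X])^2$, the triangle inequality $\mathbb{E}[|X|] \geq |\mathbb{E}[X]|$, and the identity $\mathbb{E}_{s\sim d_\rho^{\pi^{k,*}}}[\mathcal{T}^{\pi^{k,*}}V^k(s)-V^k(s)] = (1-\gamma)\mathcal{L}_k^* \geq 0$ then delivers
\begin{align*}
\mathbb{E}_{s\sim d_\rho^{\pi^{k,*}}}\!\left[\left(\max_a|\hat{A}^k_{s,a}|\right)^2\right] \;\geq\; \frac{\kappa^2(1-\gamma)^2}{(\max_s|\mathcal{A}_s^*|)^2}\,(\mathcal{L}_k^*)^2.
\end{align*}

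Combining steps (i) and (ii) produces $\mathcal{L}_k^{k+1} \geq C(\mathcal{L}_k^*)^2$ with $C = \eta\tilde{\mu}\kappa^2(1-\gamma)^2/\bigl(|\mathcal{A}|(\max_s|\mathcal{A}_s^*|)^2(1-\gamma+\eta\tilde{\mu})\bigr)$; plugging into Theorem~\ref{thm:sublinear-general} with the universal bound $\vartheta \geq \tilde\rho$ then delivers the general-case rate. In the unique-optimum special case $|\mathcal{A}_s^*|=1$ one may instead hold $\pi^{k,*}=\pi^*$ fixed across $k$, which tightens $\vartheta$ to $\|d_\rho^*/\rho\|_\infty^{-1}$ and yields the cleaner prefactor of the second inequality. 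I anticipate the main subtlety in step (ii): the pointwise quantity $\mathcal{T}^{\pi^{k,*}}V^k(s)-V^k(s)$ is only known to be non-negative in expectation and may change sign pointwise, so the absolute value is mandatory before invoking Jensen; meanwhile the adaptive choice of $\pi^{k,*}$ must simultaneously secure a lower bound on $\pi^k(a_s^\dagger|s)$ without letting $|A^k(s,a_s^\dagger)|$ collapse to zero, which is precisely what the pigeonhole selection of $a_s^\dagger$ as the most probable optimal action under $\pi^k$ is engineered to accomplish.
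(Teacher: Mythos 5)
Your proposal is correct and follows essentially the same route as the paper: both hinge on the improvement lower bound of Lemma~\ref{lem:softmaxPG-improvement-lower}, convert it into $\mathcal{L}_k^{k+1}\geq C(\mathcal{L}_k^*)^2$ with the identical constant $C=\eta\tilde{\mu}\kappa^2(1-\gamma)^2/\bigl(|\mathcal{A}|(\max_s|\mathcal{A}_s^*|)^2(1-\gamma+\eta\tilde{\mu})\bigr)$, and invoke Theorem~\ref{thm:sublinear-general}. The only (immaterial) differences are that you linearize via $1-e^{-x}\geq x/(1+x)$ directly rather than via the secant bound followed by $e^{-x}\leq 1/(1+x)$, and you pick the Dirac optimal policy at the most probable optimal action (with a pigeonhole bound) where the paper uses the renormalized restriction $\xi^k$ of $\pi^k$ to $\mathcal{A}_s^*$ — both yield the same $\kappa/\max_s|\mathcal{A}_s^*|$ factor.
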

\begin{proof}
    By the improvement lower bound, one has 
\begin{align*}
\sum_a\pi^{k+1}_{s,a}A^k_{s,a}&\geq \frac{1}{|\mathcal{A}|}\left(\max_a|\hat{A}^k_{s,a}|\right)
\left(1-\exp\left(-\eta\,\tilde{\mu}\max_a|\hat{A}^k_{s,a}|\right)\right)\\
&\geq \frac{1}{|\mathcal{A}|}(1-\gamma)\left(1-\exp\left(-\frac{\eta\,\tilde{\mu}}{1-\gamma}\right)\right)\left(\max_a|\hat{A}^k_{s,a}|\right)^2\\
&\geq \frac{1}{|\mathcal{A}|}(1-\gamma)\left(1-\exp\left(-\frac{\eta\,\tilde{\mu}}{1-\gamma}\right)\right)\left(\frac{1}{|\mathcal{A}_s^*|}\sum_{a\in\mathcal{A}_s^*}\hat{A}^k_{s,a}\right)^2\\
&=\frac{1}{|\mathcal{A}|}(1-\gamma)\left(1-\exp\left(-\frac{\eta\,\tilde{\mu}}{1-\gamma}\right)\right)\left(\frac{1-b_s^k}{|\mathcal{A}_s^*|}\sum_{a\in\mathcal{A}_s^*}\frac{\pi^k_{s,a}}{1-b_s^k}{A}^k_{s,a}\right)^2\\
&\geq \frac{\kappa^2}{|\mathcal{A}|}(1-\gamma)\left(1-\exp\left(-\frac{\eta\,\tilde{\mu}}{1-\gamma}\right)\right)\left(\max_s|\mathcal{A}_s^*|\right)^{-2}\left(\sum_{a}\xi^k_{s,a}{A}^k_{s,a}\right)^2\\
&:=C\left(\sum_{a}\xi^k_{s,a}{A}^k_{s,a}\right)^2.
\end{align*}
where the second line follows from
\begin{align*}
1-\exp\left(-\eta\,\tilde{\mu}\,x\right)\geq (1-\gamma)\left(1-\exp\left(-\frac{\eta\,\tilde{\mu}}{1-\gamma}\right)\right)x,\quad\mbox{for }x\in\left[0,\frac{1}{1-\gamma}\right],
\end{align*}
and $\xi^k$ is defined as 
\begin{align*}
\xi^k(a|s)=\begin{cases}
\frac{\pi^k(a|s)}{1-b_s^k}& \mbox{if }a\in\mathcal{A}_s^*,\\
0 &\mbox{otherwise}.
\end{cases}
\end{align*}
Noticing that $\xi^k$ is indeed an optimal policy, under this policy, one has 
\begin{align*}
\mathcal{L}_k^{k+1} &= \frac{1}{1-\gamma}\mathbb{E}_{s\sim d_\rho^{\xi^k}}\left[\sum_a\pi^{k+1}_{s,a}A^k_{s,a}\right]\\
&\geq \frac{C}{1-\gamma}\mathbb{E}_{s\sim d_\rho^{\xi^k}}\left[\left(\sum_{a}\xi^k_{s,a}{A}^k_{s,a}\right)^2\right]\\
&\geq \frac{C}{1-\gamma}\left(\mathbb{E}_{s\sim d_\rho^{\xi^k}}\left[\sum_{a}\xi^k_{s,a}{A}^k_{s,a}\right]\right)^2\\
&=C(1-\gamma)\left(\mathcal{L}_k^*\right)^2.
\end{align*}
The proof of the first result is completed by noting that $\exp(-x)\leq 1/(1+x)$ for $x\geq 0$ and then utilizing Theorem~\ref{thm:sublinear-general}. The second result follows immediately by noting that $|\mathcal{A}_s^*|=1$ and $1/\vartheta=\|\frac{d_\rho^*}{\rho}\|_\infty$ in Theorem~\ref{thm:sublinear-general}.
\end{proof}

\begin{remark}
   By utilizing the smoothness and gradient dominance property of $V^{\pi_\theta}(\mu)$, 
    the following sublinear convergence rate has been established for $\eta< 2/L$ with $L$ being the smoothness constant of $V^{\pi_\theta}(\mu)$. Consider the case where the optimal policy is unique.  For $\eta=1/L=(1-\gamma)^3/8$, the best rate obtained in \textup{\cite[Theorem~4]{Mei_Xiao_Szepesvari_Schuurmans_2020}} is
\begin{align*}
V^*\left( \rho \right) -V^{k}\left( \rho \right) \le \frac{1}{k}\frac{1}{\left( 1-\gamma \right) ^5}\frac{16\left| S \right|}{\kappa^2}\left\| \frac{d_{\mu}^{*}}{\mu} \right\| _{\infty}^{2}\left\|\frac{\rho}{\mu}\right\|_\infty.
\end{align*}
In contrast, our result for this particular step size is 
\begin{align*}
V^*(\rho)-V^k(\rho)\leq \frac{1}{k}\frac{1}{(1-\gamma)^5}\frac{16|\mathcal{A}|}{\kappa^2}\left\|\frac{d_\rho^*}{\rho}\right\|_\infty\left\|\frac{1}{\mu}\right\|_\infty,
\end{align*}
which does not rely on $|\mathcal{S}|$ but on $|\mathcal{A}|$. In addition, if $\eta\geq \frac{1-\gamma}{\tilde{\mu}}$, our result reduces to 
\begin{align*}
V^*(\rho)-V^k(\rho)\leq \frac{1}{k}\frac{1}{(1-\gamma)^3}\frac{2|\mathcal{A}|}{\kappa^2}\left\|\frac{d_\rho^*}{\rho}\right\|_\infty\left\|\frac{1}{\mu}\right\|_\infty,
\end{align*}
which has better dependency on $1-\gamma$.
\end{remark}

\begin{theorem}[Sublinear Lower Bound for any Constant Step Size]\label{thm:softmaxPG-sublinear-lowers}
For any  constant step size $\eta>0$ and a given constant $\sigma\in(0,1)$, there exists a time $T(\sigma)$ such that the state value sequence generated by softmax PG satisfies
\begin{align*}
\forall\,k\geq T(\sigma):\quad V^*(\rho) - V^k(\rho)\geq \frac{1}{k}\frac{\tilde{\rho}^3(1-\sigma)(1-\gamma)}{2|\mathcal{A}|^3}\left\|\frac{1}{d_\rho^*}\right\|_\infty^{-1}\left(\exp\left(\frac{2\eta}{(1-\gamma)^2}\right)-1\right)^{-1}.
\end{align*}
\end{theorem}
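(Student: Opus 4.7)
The plan is to reduce the lower bound to the general framework of Theorem~\ref{thm:sublinear-upper}, which delivers an $O(1/k)$ lower bound of the form $V^*(\rho) - V^k(\rho) \ge \frac{1-\sigma}{2Ck}\|1/d_\rho^*\|_\infty^{-1}$ for $k \ge T(\sigma)$, once its two hypotheses are verified: (i) the policy sequence is monotone in the sense $\mathcal{T}^{k+1}V^k \ge V^k$, and (ii) a quadratic upper bound $\mathcal{L}_k^{k+1} \le C(\mathcal{L}_k^*)^2$ holds with a constant $C$ matching the stated $|\mathcal{A}|^3/[\tilde{\rho}^3(1-\gamma)]$ and $e^{2\eta/(1-\gamma)^2}-1$ scaling.

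Hypothesis (i) comes essentially for free from Lemma~\ref{lem:softmaxPG-improvement-lower}, whose right hand side is non-negative, giving $\sum_a \pi^{k+1}(a|s)A^k(s,a) \ge 0$ at every state and hence $\mathcal{T}^{k+1}V^k \ge V^k$. In particular $V^k(\rho)$ is non-decreasing and $\lim_k V^k(\rho)$ exists.

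Hypothesis (ii) is the crux. I would begin from Lemma~\ref{lem:softmaxPG-improvement-upper}, take expectation over $s \sim d_\rho^*$, and divide by $(1-\gamma)$; the explicit $(1-\gamma)$ in the lemma cancels, leaving
\begin{align*}
\mathcal{L}_k^{k+1} \le \bigl(e^{2\eta/(1-\gamma)^2}-1\bigr)\,|\mathcal{A}|\,\mathbb{E}_{s\sim d_\rho^*}\!\left[\bigl(\max_a|\hat{A}^k_{s,a}|\bigr)^2\right].
\end{align*}
The key step is then to bound $\max_a|\hat{A}^k_{s,a}|$ by the gap $V^*(s)-V^k(s)$, which I would do by a sign split: when $A^k(s,a) \ge 0$, $|\hat{A}^k_{s,a}| = \pi^k(a|s)A^k(s,a) \le A^k(s,a) \le V^*(s)-V^k(s)$; when $A^k(s,a) < 0$, the identity $\sum_{a'}\pi^k(a'|s)A^k(s,a')=0$ lets me bound the individual negative contribution by the total positive one, $-\pi^k(a|s)A^k(s,a) \le \sum_{a':A^k(s,a')>0}\pi^k(a'|s)A^k(s,a') \le \max_{a'} A^k(s,a') \le V^*(s)-V^k(s)$. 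Together with $V^*(s)-V^k(s) \le \|V^*-V^k\|_\infty \le \mathcal{L}_k^*/\tilde{\rho}$ (since $\mathcal{L}_k^* \ge \tilde{\rho}(V^*(s)-V^k(s))$ by extracting one state in $V^*(\rho)-V^k(\rho) = \sum_{s'}\rho(s')(V^*(s')-V^k(s'))$), this yields the desired quadratic bound $\mathcal{L}_k^{k+1} \le C(\mathcal{L}_k^*)^2$. Any residual slack between the natural constant produced by this chain and the stated $|\mathcal{A}|^3/[\tilde{\rho}^3(1-\gamma)]$ can be absorbed by conservatively applying the trivial bound $V^*(s)-V^k(s) \le 1/(1-\gamma)$ at an intermediate stage and/or the average-max inequality $\max_a x_a^2 \le |\mathcal{A}|\sum_a x_a^2$.

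With both hypotheses in hand, Theorem~\ref{thm:sublinear-upper} delivers the claimed $O(1/k)$ lower bound directly. The main obstacle is the pointwise bound $\max_a|\hat{A}^k_{s,a}| \le V^*(s)-V^k(s)$ in step (ii): a naive bound such as $|\hat{A}^k_{s,a}| \le 1/(1-\gamma)$ would produce only linear (not quadratic) dependence on $\mathcal{L}_k^*$, which is incompatible with the framework and would not yield a $1/k$ lower bound. The sign-splitting argument above, exploiting $\sum_a \pi^k(a|s)A^k(s,a)=0$ together with $A^k(s,a) \le V^*(s)-V^k(s)$, is precisely what makes the reduction to Theorem~\ref{thm:sublinear-upper} work.
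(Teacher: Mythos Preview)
Your proposal is correct and in fact sharper than the paper's argument. Both routes verify the hypotheses of Theorem~\ref{thm:sublinear-upper}: monotonicity from Lemma~\ref{lem:softmaxPG-improvement-lower}, and a quadratic upper bound $\mathcal{L}_k^{k+1}\le C(\mathcal{L}_k^*)^2$ built on Lemma~\ref{lem:softmaxPG-improvement-upper}. The difference lies entirely in how $\max_a|\hat{A}^k_{s,a}|$ is controlled after invoking Lemma~\ref{lem:softmaxPG-improvement-upper}. The paper uses the zero-sum bound $\max_a|\hat{A}^k_{s,a}|\le |\mathcal{A}|\max_a\hat{A}^k_{s,a}\le |\mathcal{A}|\max_aA^k_{s,a}$, then converts $\mathbb{E}_{d_\rho^*}[(\max_aA^k_{s,a})^2]$ to $(\mathbb{E}_{d_\rho^*}[\max_aA^k_{s,a}])^2$ via the inequality $(\sum_s d_\rho^*(s)x_s)^2\ge (1-\gamma)\tilde{\rho}\sum_s d_\rho^*(s)x_s^2$, and finally applies Lemma~\ref{lem:LstarvsLmax}. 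Your sign-splitting argument instead gives the pointwise bound $\max_a|\hat{A}^k_{s,a}|\le V^*(s)-V^k(s)$ directly (using $Q^k(s,a)\le Q^*(s,a)\le V^*(s)$), and then $V^*(s)-V^k(s)\le \|V^*-V^k\|_\infty\le \mathcal{L}_k^*/\tilde{\rho}$. This yields $C=|\mathcal{A}|\bigl(e^{2\eta/(1-\gamma)^2}-1\bigr)/\tilde{\rho}^2$, strictly smaller than the paper's $C=|\mathcal{A}|^3\bigl(e^{2\eta/(1-\gamma)^2}-1\bigr)/[(1-\gamma)\tilde{\rho}^3]$, and hence a \emph{larger} lower bound via Theorem~\ref{thm:sublinear-upper}. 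Your remark about ``absorbing residual slack'' is therefore unnecessary in the direction you anticipated: your constant is already better, so the stated theorem follows a fortiori.
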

\begin{proof}
    By the improvement upper bound one has 
\begin{align*}
\sum_a\pi^{k+1}_{s,a}A^k_{s,a}&\leq \left(\exp\left(\frac{2\eta}{(1-\gamma)^2}\right)-1\right)|\mathcal{A}|(1-\gamma)\left(\max_a|\hat{A}^k_{s,a}|\right)^2\\
&\leq  \left(\exp\left(\frac{2\eta}{(1-\gamma)^2}\right)-1\right)|\mathcal{A}|^3(1-\gamma)\left(\max_a\hat{A}^k_{s,a}\right)^2\\
&\leq \left(\exp\left(\frac{2\eta}{(1-\gamma)^2}\right)-1\right)|\mathcal{A}|^3(1-\gamma)\left(\max_a{A}^k_{s,a}\right)^2,
\end{align*}
where the second line follows from $\max_a|\hat{A}^k_{s,a}|\leq |\mathcal{A}|\max_a\hat{A}^k_{s,a}$ since $\sum_a\hat{A}^k_{s,a}=0$. It follows that 
\begin{align*}
\mathcal{L}_k^{k+1}&\leq \left(\exp\left(\frac{2\eta}{(1-\gamma)^2}\right)-1\right)|\mathcal{A}|^3\mathbb{E}_{s\sim d_\rho^*}\left[\left(\max_a{A}^k_{s,a}\right)^2\right]\\
&\leq \frac{|\mathcal{A}|^3}{(1-\gamma)\tilde{\rho}}\left(\exp\left(\frac{2\eta}{(1-\gamma)^2}\right)-1\right)\left(\mathbb{E}_{s\sim d_\rho^*}\left[\max_a{A}^k_{s,a}\right]\right)^2\\
&\leq \frac{|\mathcal{A}|^3}{(1-\gamma)\,\tilde{\rho}^3}\left(\exp\left(\frac{2\eta}{(1-\gamma)^2}\right)-1\right)\left(\mathcal{L}_k^*\right)^2,
\end{align*}
where the second line follows from 
\begin{align*}
\left(\mathbb{E}_{s\sim d_\rho^*}\left[\max_a A^k_{s,a}\right]\right)^2&=\left(\sum_sd_\rho^*(s)\max_aA^k_{s,a}\right)^2\\
&\geq\sum_s (d_\rho^*(s))^2\left(\max_a A^k_{s,a}\right)^2\\
&\geq (1-\gamma)\tilde{\rho}\sum_sd_\rho^*(s)\left(\max_a A^k_{s,a}\right)^2\\
&=(1-\gamma)\tilde{\rho}\,\mathbb{E}_{s\sim d_\rho^*}\left[\left(\max_a A^k_{s,a}\right)^2\right].
\end{align*}
and the third line follows from Lemma~\ref{lem:LstarvsLmax}.
The proof is then completed by utilizing Theorem~\ref{thm:sublinear-upper}.
\end{proof}
\begin{remark}
Under a  step size condition that guarantees the monotonicity of $V^k(\mu)$, a local sublinear lower bound of the following form has been established in \textup{\cite{Mei_Xiao_Szepesvari_Schuurmans_2020}},
\begin{align*}
        V^*(\mu) - V^k(\mu) \geq \frac{1}{k} \cdot \frac{(1-\gamma)^5 \Delta^2}{12\eta^2 + 3(\eta+1)(1-\gamma)^3}, \quad \mbox{for $k$ being sufficiently large}.
\end{align*}
Compared with this bound, our bound does not rely on $\Delta$ which can be arbitrarily small \textup{\cite{Johnson_Pike-Burke_Rebeschini_2023}}.
\end{remark}
\begin{theorem}[Linear Convergence for Adaptive Step Size]\label{thm:softmaxPG-linear}
    Let 
\begin{align*}
{\mathcal{S}^k}=\left\{s:\,\max_a|\hat{A}^k_{s,a}|>0\right\}=\left\{s:\,\max_a\hat{A}^k_{s,a}>0\right\}. 
\end{align*}
Without loss of generality, assume $\mathcal{S}^k\neq \emptyset$.  For any constant $C>0$, if the step size $\eta_k$ satisfies 
\begin{align*}
\eta_k\geq \frac{C}{\min_{s\in\mathcal{S}^k}\max_a|\hat{A}^k_{s,a}|},
\end{align*}
then softmax PG achieves a linear convergence, 
\begin{align*}
V^*(\rho) - V^k(\rho) \leq \left(1-(1-\gamma)\tilde{\rho}\frac{\kappa}{|\mathcal{A}|\,\max_a|\mathcal{A}_s^*|}\left(1-\frac{1}{C\,\tilde{\mu}+1}\right)\right)^k\left(V^*(\rho)-V^0(\rho)\right).
\end{align*}
In particular, if the policy is unique, then 
\begin{align*}
V^*(\rho) - V^k(\rho) \leq \left(1-(1-\gamma)\left\|\frac{d_\rho^*}{\rho}\right\|_\infty^{-1}\frac{\kappa}{|\mathcal{A}|}\left(1-\frac{1}{C\,\tilde{\mu}+1}\right)\right)^k\left(V^*(\rho)-V^0(\rho)\right).
\end{align*}

\end{theorem}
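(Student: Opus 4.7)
The plan is to mirror the skeleton of the sublinear proof (Theorem~\ref{thm:softmaxPG-sublinear}), but to exploit the adaptive step size to replace the Jensen--square step by a genuine linear lower bound. The first move is to linearize the improvement lower bound of Lemma~\ref{lem:softmaxPG-improvement-lower}. For every $s\in\mathcal{S}^k$ the hypothesis $\eta_k\ge C/\min_{s\in\mathcal{S}^k}\max_a|\hat{A}^k_{s,a}|$ gives $\eta_k\tilde{\mu}\max_a|\hat{A}^k_{s,a}|\ge C\tilde{\mu}$, so monotonicity of $1-e^{-x}$ together with $1-e^{-x}\ge x/(x+1)$ yields
\begin{align*}
\sum_a \pi^{k+1}_{s,a}A^k_{s,a}\;\ge\;\frac{\max_a|\hat{A}^k_{s,a}|}{|\mathcal{A}|}\left(1-\frac{1}{C\tilde{\mu}+1}\right),\qquad s\in\mathcal{S}^k.
\end{align*}
For $s\notin\mathcal{S}^k$, all $\hat{A}^k_{s,a}$ vanish and Lemma~\ref{lem:softmaxPG-improvement-lower} still supplies the trivial bound $\sum_a\pi^{k+1}_{s,a}A^k_{s,a}\ge 0$.

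The second step reuses the auxiliary optimal policy $\xi^k$ from the sublinear proof and converts the above into a state-wise inequality between $\sum_a\pi^{k+1}_{s,a}A^k_{s,a}$ and $\sum_a\xi^k_{s,a}A^k_{s,a}$. When $\sum_a\xi^k_{s,a}A^k_{s,a}<0$, the desired inequality is automatic because the left-hand side is non-negative. When $\sum_a\xi^k_{s,a}A^k_{s,a}\ge 0$, one has $\sum_{a\in\mathcal{A}^*_s}\hat{A}^k_{s,a}\ge 0$ and hence
\begin{align*}
\max_a|\hat{A}^k_{s,a}|\;\ge\;\max_a\hat{A}^k_{s,a}\;\ge\;\frac{1}{|\mathcal{A}^*_s|}\sum_{a\in\mathcal{A}^*_s}\hat{A}^k_{s,a}\;=\;\frac{1-b^k_s}{|\mathcal{A}^*_s|}\sum_a\xi^k_{s,a}A^k_{s,a}\;\ge\;\frac{\kappa}{\max_s|\mathcal{A}^*_s|}\sum_a\xi^k_{s,a}A^k_{s,a},
\end{align*}
using $1-b^k_s\ge\kappa$. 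For $s\notin\mathcal{S}^k$ all $\hat{A}^k_{s,a}$ vanish, so $\sum_a\xi^k_{s,a}A^k_{s,a}=0$ as well, and the inequality is trivial. Combining with Step~1, for every $s$,
\begin{align*}
\sum_a \pi^{k+1}_{s,a}A^k_{s,a}\;\ge\;\frac{\kappa}{|\mathcal{A}|\max_s|\mathcal{A}^*_s|}\left(1-\frac{1}{C\tilde{\mu}+1}\right)\sum_a \xi^k_{s,a}A^k_{s,a}.
\end{align*}

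Taking expectation with respect to $s\sim d_\rho^{\xi^k}$ and applying the performance difference lemma (noting $\xi^k$ is optimal, so the right-hand side evaluates to $\mathcal{L}_k^*$) gives $\mathcal{L}_k^{k+1}\ge C'_k\,\mathcal{L}_k^*$ with $C'_k=\frac{\kappa}{|\mathcal{A}|\max_s|\mathcal{A}^*_s|}(1-1/(C\tilde{\mu}+1))$. Theorem~\ref{thm:linear-global} applied with $\pi^{k,*}=\xi^k$, together with $\|d_\rho^{\xi^k}/\rho\|_\infty^{-1}\ge\tilde{\rho}$, yields one-step contraction of $\mathcal{L}_k^*$ with the advertised factor; induction closes the general case. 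In the unique-optimal-policy setting $|\mathcal{A}^*_s|=1$, $\xi^k$ coincides with the unique optimal policy $\pi^*$ for every $k$, so $d_\rho^{\xi^k}=d_\rho^*$ and the sharper $\|d_\rho^*/\rho\|_\infty^{-1}$ replaces $\tilde{\rho}$, giving the second stated bound.

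The main obstacle is the sign bookkeeping in Step~2: the sublinear argument squared the advantage expression and thereby avoided handling signs, whereas the linear version needs a genuine signed pointwise inequality. Splitting on the sign of $\sum_a\xi^k_{s,a}A^k_{s,a}$ and separately checking the degenerate states $s\notin\mathcal{S}^k$ are the only non-routine steps; everything else is a direct appeal to the existing improvement identity, Lemma~\ref{lem:softmaxPG-improvement-lower}, and the general linear-convergence engine in Theorem~\ref{thm:linear-global}.
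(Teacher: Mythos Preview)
Your proposal is correct and follows essentially the same approach as the paper's proof: linearize the improvement lower bound of Lemma~\ref{lem:softmaxPG-improvement-lower} using the adaptive step size, pass to the auxiliary optimal policy $\xi^k$, and invoke Theorem~\ref{thm:linear-global}. The only cosmetic difference is that the paper avoids your sign case-split by keeping absolute values throughout, writing $\max_a|\hat{A}^k_{s,a}|\ge\frac{1}{|\mathcal{A}^*_s|}\sum_{a\in\mathcal{A}^*_s}|\hat{A}^k_{s,a}|=\frac{1-b^k_s}{|\mathcal{A}^*_s|}\sum_a\xi^k_{s,a}|A^k_{s,a}|\ge\frac{\kappa}{\max_s|\mathcal{A}^*_s|}\sum_a\xi^k_{s,a}A^k_{s,a}$, and it makes explicit that $s\notin\mathcal{S}^k$ forces $A^k_{s,a}=0$ for all $a$ because $\pi^k_{s,a}>0$ under softmax (a step you rely on implicitly when concluding $\sum_a\xi^k_{s,a}A^k_{s,a}=0$).
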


\begin{proof}
    Recall the improvement lower bound as follows:
\begin{align*}
\sum_a\pi^{k+1}_{s,a}A^k_{s,a}\geq \frac{1}{|\mathcal{A}|}\left(\max_a|\hat{A}^k_{s,a}|\right)
\left(1-\exp\left(-\eta_k\,\tilde{\mu}\max_a|\hat{A}^k_{s,a}|\right)\right).
\end{align*}
For $s\in\mathcal{S}^k$, since $\eta_k\geq \frac{C}{\max_a|\hat{A}^k_{s,a}|}$, one has 
\begin{align*}
\sum_a\pi^{k+1}_{s,a}A^k_{s,a}\geq \frac{1}{|\mathcal{A}|}\left(\max_a|\hat{A}^k_{s,a}|\right)
\left(1-\exp\left(-C\,\tilde{\mu}\right)\right).\numberthis\label{eq:softmaxPG-adaptive-tmp01}
\end{align*}
For $s\notin\mathcal{S}^k$, one has $\max_a|\hat{A}^k_{s,a}|=0$. Noting that $\pi^k_{s,a}>0$, it follows that $A^k_{s,a}=0,\,\forall a$ and the above inequality holds automatically. Therefore, for any $s$, 
\begin{align*}
\sum_a\pi^{k+1}_{s,a}A^k_{s,a}&\geq \frac{1}{|\mathcal{A}|}
\left(1-\exp\left(-C\,\tilde{\mu}\right)\right)\left(\max_a|\hat{A}^k_{s,a}|\right)\\
&\geq \frac{1}{|\mathcal{A}|}
\left(1-\exp\left(-C\,\tilde{\mu}\right)\right)\left(\frac{1}{|\mathcal{A}_s^*|}\sum_{a\in\mathcal{A}_s^*}|\hat{A}^k_{s,a}|\right)\\
&=\frac{1}{|\mathcal{A}|}
\left(1-\exp\left(-C\,\tilde{\mu}\right)\right)\left(\frac{1-b_s^k}{|\mathcal{A}_s^*|}\sum_{a}\xi^k_{s,a}|{A}^k_{s,a}|\right)\\
&\geq \frac{\kappa}{|\mathcal{A}|}\left(\max_s|\mathcal{A}_s^*|\right)^{-1}
\left(1-\exp\left(-C\,\tilde{\mu}\right)\right)\left(\sum_{a}\xi^k_{s,a}|{A}^k_{s,a}|\right)\\
&\geq \frac{\kappa}{|\mathcal{A}|}\left(\max_s|\mathcal{A}_s^*|\right)^{-1}
\left(1-\frac{1}{C\,\tilde{\mu}+1}\right)\left(\sum_{a}\xi^k_{s,a}|{A}^k_{s,a}|\right)\\
&\geq \frac{\kappa}{|\mathcal{A}|}\left(\max_s|\mathcal{A}_s^*|\right)^{-1}
\left(1-\frac{1}{C\,\tilde{\mu}+1}\right)\left(\sum_{a}\xi^k_{s,a}{A}^k_{s,a}\right).
\end{align*}
Taking expectation with respect to $d_\rho^{\xi^k}$ yields that 
\begin{align*}
\mathcal{L}_k^{k+1} \geq \frac{\kappa}{|\mathcal{A}|}\left(\max_s|\mathcal{A}_s^*|\right)^{-1}
\left(1-\frac{1}{C\,\tilde{\mu}+1}\right)\,\mathcal{L}_k^*.
\end{align*}
The proof is completed by using Theorem~\ref{thm:linear-global} and noting that $\left\| \frac{d^{\xi^k}_\rho}{\rho} \right\|^{-1} \geq \tilde{\rho}$.
\end{proof}

\begin{remark} 
It follows easily from \eqref{eq:softmaxPG-adaptive-tmp01} and the performance difference lemma that $\sum_{a\in \mathcal{A}}{\pi _{s,a}^{k+1}A_{s,a}^{k}}\ge 0$ and $\underset{k\rightarrow +\infty}{\lim}| \hat{A}_{s,a}^{k} |=0$. Thus, $\eta_k$ is lower bounded. The application of Theorem~\ref{thm:softmaxPG-global} also implies the global convergence of softmax PG with the adaptive step size and  thus $\kappa>0$. 
It is worth noting that  the adaptive step size $
\eta _k={\eta}/{\left\| \nabla _{\theta}V^{\pi _{\theta _k}}\left( \mu \right) \right\| _2}$ is considered in  \textup{\cite{Mei_Xiao_Dai_Li_Szepesvari_Schuurmans_2020}} and the following linear convergence rate has been established for softmax PG,
$$
V^*\left( \rho \right) -V^{k}\left( \rho \right) \le \frac{\underset{\pi \in \Pi}{\max}\left\| \frac{d_{\rho}^{\pi}}{\mu} \right\| _{\infty}}{1-\gamma}\left( \exp \left\{ -\frac{\left( 1-\gamma \right) ^2\kappa}{12\left( 1-\gamma \right) +16\left( \underset{\pi \in \Pi}{\max}\left\| \frac{d_{\mu}^{\pi}}{\mu} \right\| _{\infty}-\left( 1-\gamma \right) \right)}\frac{1}{\left| \mathcal{S} \right|}\left\| \frac{d_{\mu}^{*}}{\mu} \right\| _{\infty}^{-1} \right\} \right) ^k,
$$
provided the optimal policy $\pi^*$ is unique and $
\eta =(1-\gamma)/\left({6\left( 1-\gamma \right) +8\left( \underset{\pi \in \Pi}{\max}\left\| \frac{d_{\mu}^{\pi}}{\mu} \right\| -\left( 1-\gamma \right) \right)}\right)$. 
\end{remark}
\section{Softmax Natural Policy Gradient}
\label{sec:softmaxNpg}
Softmax NPG can be viewed as a preconditioned version of softmax PG in the parameter domain, see for example \cite{kakade2002npg,Agarwal_Kakade_Lee_Mahajan_2019}. Here we only present the update of softmax NPG in the policy domain,
\begin{align*}
\pi^{k+1}_{s,a}&\propto{\pi^k_{s,a}\exp(\eta\, A^k_{s,a})}\\
&\propto {\pi^k_{s,a}\exp(\eta\, Q^k_{s,a})}.
\end{align*}
It is worth noting that NPG can be viewed as the policy mirror ascent method under the KL-divergence:
\begin{align*}
\pi_s^{k+1} = \argmax_{p\in\Delta(\mathcal{A})}\left\{{\eta}\left(\langle Q^{k}(s,\cdot),p\rangle \right)-\mathrm{KL}(p\|\pi^k_s)\right\}.
\end{align*}
Thus, it is easy to see that as $\eta\rightarrow\infty$, NPG is also close to PI.

Let $Z_s^k=\sum_{a'} \pi^k_{s,a'}\exp(\eta\,A^k_{s,a'})$ be the normalization factor such that 
\begin{align*}
\pi^{k+1}_{s,a}=\frac{\pi^k_{s,a}\exp(\eta\, A^k_{s,a})}{Z_s^k},\numberthis\label{eq:softmaxNPG}
\end{align*}
 one has  
\begin{align*}
\log\frac{\pi^{k+1}_{s,a}}{\pi^k_{s,a}}=\eta\,A^k_{s,a}-\log Z_s^k.\numberthis\label{eq:softmaxNPG-starting-point}
\end{align*}
Taking expectation with respect to $\pi^k_{s}$ on both sides yields 
\begin{align*}
\log Z_s^k = \mathrm{KL}(\pi^k_s\|\pi^{k+1}_s),
\end{align*}
while taking expectation with respect to $\pi^{k+1}_s$ yields 
\begin{align*}
\mathrm{KL}(\pi^{k+1}_s\|\pi^k_s) = \eta\, \mathbb{E}_{a\sim \pi^{k+1}(\cdot|s)}\left[A^k_{s,a}\right] - \mathrm{KL}(\pi^k_s\|\pi^{k+1}_s).
\end{align*}
It follows that
\begin{align*}
\sum_a\pi^{k+1}_{s,a}A^k_{s,a}=\frac{1}{\eta}\mathrm{KL}(\pi^{k+1}_s\|\pi^k_s)+\frac{1}{\eta}\mathrm{KL}(\pi^k_s\|\pi^{k+1}_s).\numberthis\label{eq:softmaxNPG-identity01}
\end{align*}
Moreover, taking expectation with respect to $\pi^*_s$ on both sides of \eqref{eq:softmaxNPG-starting-point} yields
\begin{align*}
\mathrm{KL}(\pi_s^*\|\pi_s^k)-\mathrm{KL}(\pi^*_s\|\pi_s^{k+1})&=\eta \sum_a \pi^*_{s,a}A^k_{s,a}-\log Z_s^k\\
&=\eta \sum_a \pi^*_{s,a}A^k_{s,a}-\mathrm{KL}(\pi^k_s\|\pi^{k+1}_s).
\end{align*}
Combining the above two inequalities together yields
\begin{align*}
\sum_a\pi^{k+1}_{s,a}A^k_{s,a} = \sum_a\pi^*_{s,a}A^k_{s,a} +\frac{1}{\eta}\mathrm{KL}(\pi^*_s\|\pi_s^{k+1})-\frac{1}{\eta}\mathrm{KL}(\pi_s^*\|\pi_s^k)+\frac{1}{\eta}\mathrm{KL}(\pi^{k+1}_s\|\pi^k_s).\numberthis\label{eq:softmaxNPG-identity02}
\end{align*}
Indeed, recalling the definitions of $\mathcal{L}_k^{k+1}$ and $\mathcal{L}_k^*$ in \eqref{eq:Lk-non} and \eqref{eq:Lstar-non}, the above equality immediately implies that 
\begin{align*}
    \mathcal{L}_k^{k+1}\geq \mathcal{L}_k^*-\frac{1}{\eta\,(1-\gamma)}\left(\mathbb{E}_{s\sim d_\rho^*}\left[\mathrm{KL}(\pi_s^*\|\pi_s^k)\right]-\mathbb{E}_{s\sim d_\rho^*}\left[\mathrm{KL}(\pi_s^*\|\pi_s^{k+1})\right]\right).
\end{align*}
Together with Theorem~\ref{thm:sublinear-global-error}, the sublinear convergence of softmax NPG can be established, as shown in \cite{Agarwal_Kakade_Lee_Mahajan_2019}.
\begin{theorem}[\protect{\cite[Theorem~16]{Agarwal_Kakade_Lee_Mahajan_2019}}]\label{thm:softmaxNPG-sublinear}
    For any constant step size $\eta>0$, softmax NPG converges sublinearly,
    \begin{align*}
        V^*(\rho)-V^k(\rho) \leq \frac{1}{k}\left[\frac{1}{(1-\gamma)^2}+\frac{\mathrm{KL}(\pi_s^*\|\pi_s^{0})}{\eta\,(1-\gamma)}\right].
    \end{align*}
\end{theorem}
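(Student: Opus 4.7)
The plan is to apply the general sublinear convergence framework of Theorem~\ref{thm:sublinear-global-error} with $C = 1$ and an error sequence that telescopes. First I would verify the monotonicity hypothesis $\mathcal{T}^{k+1}V^k \geq V^k$, which is immediate from identity \eqref{eq:softmaxNPG-identity01}: the right-hand side $\frac{1}{\eta}\mathrm{KL}(\pi^{k+1}_s\|\pi^k_s) + \frac{1}{\eta}\mathrm{KL}(\pi^k_s\|\pi^{k+1}_s)$ is a sum of two non-negative quantities, so $\sum_a \pi^{k+1}_{s,a} A^k_{s,a} \geq 0$ at every state.

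Next I would take expectation with respect to $d_\rho^*$ on both sides of identity \eqref{eq:softmaxNPG-identity02} and divide by $1-\gamma$. Dropping the non-negative $\mathbb{E}_{s \sim d_\rho^*}[\mathrm{KL}(\pi^{k+1}_s\|\pi_s^k)]/[\eta(1-\gamma)]$ term and using $\mathcal{L}_k^* = \frac{1}{1-\gamma}\mathbb{E}_{s\sim d_\rho^*}[\sum_a \pi^*_{s,a}A^k_{s,a}]$ yields precisely
\begin{align*}
\mathcal{L}_k^{k+1} \;\geq\; \mathcal{L}_k^* \;-\; \frac{1}{\eta(1-\gamma)}\bigl(D_k - D_{k+1}\bigr),
\end{align*}
where $D_k := \mathbb{E}_{s \sim d_\rho^*}\bigl[\mathrm{KL}(\pi_s^*\|\pi_s^k)\bigr]$. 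This matches the hypothesis \eqref{sublinear_condition_by_controlling_error_term} with $C = 1$ and $\varepsilon_k = \frac{1}{\eta(1-\gamma)}(D_k - D_{k+1})$.

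Finally I would exploit the telescoping structure: since $D_k \geq 0$, the partial sums satisfy $\sum_{t=0}^{k-1}\varepsilon_t = \frac{1}{\eta(1-\gamma)}(D_0 - D_k) \leq \frac{D_0}{\eta(1-\gamma)}$, so in particular $\sum_{t=0}^\infty \varepsilon_t < \infty$. Plugging this into the conclusion of Theorem~\ref{thm:sublinear-global-error} gives
\begin{align*}
V^*(\rho) - V^k(\rho) \;\leq\; \frac{1}{k}\left(\frac{1}{(1-\gamma)^2} + \frac{D_0}{\eta(1-\gamma)}\right),
\end{align*}
which is the claimed bound after identifying $D_0$ with $\mathrm{KL}(\pi_s^*\|\pi_s^0)$ in the abbreviated notation of the theorem. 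There is no real obstacle: all the structural work is done in \eqref{eq:softmaxNPG-identity02} and in Theorem~\ref{thm:sublinear-global-error}, so the argument is essentially a one-line substitution, with the only subtle point being the decision to discard the $\mathrm{KL}(\pi^{k+1}_s\|\pi^k_s)$ term rather than try to use it for a sharper constant.
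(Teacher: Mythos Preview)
Your proposal is correct and mirrors the paper's own argument essentially line for line: the paper also takes the expectation of \eqref{eq:softmaxNPG-identity02} with respect to $d_\rho^*$, drops the non-negative $\mathrm{KL}(\pi_s^{k+1}\|\pi_s^k)$ term to obtain $\mathcal{L}_k^{k+1}\geq \mathcal{L}_k^*-\frac{1}{\eta(1-\gamma)}(D_k-D_{k+1})$, and then invokes Theorem~\ref{thm:sublinear-global-error}. Your explicit verification of the monotonicity hypothesis via \eqref{eq:softmaxNPG-identity01} and the telescoping computation are exactly the missing details the paper leaves implicit.
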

In addition to the global sublinear convergence, the local linear convergence has been established in \cite{Khodadadian_Jhunjhunwala_Varma_Maguluri_2021} based on the contraction of the sub-optimal probability $b_s^k$ at each state and Lemma~\ref{lem:bsk-bounds}. In this section, we are going to show the global linear convergence of softmax NPG. To this end, we need to consider the convergence in two phases, which requires us to establish two improvement lower bounds.
\begin{lemma}[Improvement Lower Bound I]\label{lem:softmaxNPG-improvement-lower01}
For softmax NPG with any constant step size $\eta>0$, one has 
\begin{align*}
\sum_a \pi^{k+1}_{s,a}A^k_{s,a} \geq \left[1-\frac{1}{1+\pi_s^k(\mathcal{A}_s^k)\left(\exp\left(\eta\,\Delta_s^k\right)-1\right)}\right]\max_a A^k_{s,a},
\end{align*}
where $ \Delta_s^k=\max_a A^k_{s,a}-\max_{a'\not\in\mathcal{A}_s^k}A^k_{s,a'}$, $\calA^k_s := \argmax_{a\in\calA} \; A^k(s,a)$ and $\pi^k_s(\mathcal{A}^k_s)=\sum_{a\in\mathcal{A}_s^k}\pi^k(a|s)$.
\end{lemma}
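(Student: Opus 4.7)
The plan is to reduce the target inequality to a clean two-moment inequality by exploiting the ratio form of the NPG update, and then dispatch that inequality by combining a pointwise gap bound with the covariance Lemma~\ref{lem:positive-covariance}.

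First I would assume $\mathcal{A}_s^k\subsetneq\mathcal{A}$ (otherwise every $A^k_{s,a}$ is zero and both sides of the claim vanish). Set $A_s^*=\max_a A^k_{s,a}\ge 0$ (non-negativity follows from $\sum_a\pi^k_{s,a}A^k_{s,a}=0$), $p=\pi^k_s(\mathcal{A}^k_s)$, and $g_a=A_s^*-A^k_{s,a}$, so that $g_a=0$ on $\mathcal{A}^k_s$ and $g_a\ge\Delta_s^k$ off it. The identity $\sum_a\pi^k_{s,a}A^k_{s,a}=0$ rearranges to $\sum_{a\notin\mathcal{A}^k_s}\pi^k_{s,a}g_a=A_s^*$. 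From the NPG rule~\eqref{eq:softmaxNPG}, for any $a_0\in\mathcal{A}^k_s$ and any $a\notin\mathcal{A}^k_s$ the ratio $\pi^{k+1}_{s,a}/\pi^{k+1}_{s,a_0}$ equals $(\pi^k_{s,a}/\pi^k_{s,a_0})\,e^{-\eta g_a}$. Summing over $a_0\in\mathcal{A}^k_s$ yields the compact representation
\begin{align*}
\pi^{k+1}_{s,a}=\pi^{k+1}_s(\mathcal{A}^k_s)\cdot\frac{\pi^k_{s,a}}{p}\,e^{-\eta g_a}\quad\text{for }a\notin\mathcal{A}^k_s,
\end{align*}
and imposing $\sum_a\pi^{k+1}_{s,a}=1$ then gives $\pi^{k+1}_s(\mathcal{A}^k_s)=p/(p+M)$ with $M:=\sum_{a\notin\mathcal{A}^k_s}\pi^k_{s,a}e^{-\eta g_a}$.

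Second, using $\sum_a\pi^{k+1}_{s,a}=1$ together with the representation above, one obtains
\begin{align*}
\sum_a\pi^{k+1}_{s,a}A^k_{s,a}=A_s^*-\sum_{a\notin\mathcal{A}^k_s}\pi^{k+1}_{s,a}g_a=A_s^*-\frac{N}{p+M},\qquad N:=\sum_{a\notin\mathcal{A}^k_s}\pi^k_{s,a}e^{-\eta g_a}g_a.
\end{align*}
Setting $\lambda:=1+p(e^{\eta\Delta_s^k}-1)=(1-p)+p\,e^{\eta\Delta_s^k}$, the desired bound is equivalent to $\lambda N\le A_s^*(p+M)$.

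The main obstacle is that a naive pointwise estimate $e^{-\eta g_a}\le e^{-\eta\Delta_s^k}$ only yields $e^{\eta\Delta_s^k}N\le A_s^*$, which handles the piece $p\,e^{\eta\Delta_s^k}N\le pA_s^*$ but misses $(1-p)N\le A_s^*M$. To supply the missing half, I would pass to the conditional distribution $\mu_a=\pi^k_{s,a}/(1-p)$ on $\mathcal{A}\setminus\mathcal{A}^k_s$ and note that both $g\mapsto g$ and $g\mapsto -e^{-\eta g}$ are non-decreasing; Lemma~\ref{lem:positive-covariance} then gives $\mathrm{Cov}_\mu(g,\,-e^{-\eta g})\ge 0$, which rearranges exactly to $(1-p)N\le A_s^* M$. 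Adding the two pieces delivers $\lambda N\le A_s^*(p+M)$, so $\sum_a\pi^{k+1}_{s,a}A^k_{s,a}\ge(1-1/\lambda)A_s^*$, which is the claimed bound.
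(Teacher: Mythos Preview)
Your proof is correct and follows essentially the same route as the paper: both arguments hinge on the covariance inequality of Lemma~\ref{lem:positive-covariance} applied to the conditional distribution on $\mathcal{A}\setminus\mathcal{A}^k_s$ together with the pointwise gap bound $A^k_{s,a'}\le \max_a A^k_{s,a}-\Delta_s^k$. The only difference is packaging---the paper works directly with the ratio $\mathbb{E}_{a\sim\pi^k}[e^{\eta A^k_{s,a}}A^k_{s,a}]/\mathbb{E}_{a\sim\pi^k}[e^{\eta A^k_{s,a}}]$, applies the covariance step to the numerator first, and defers the gap bound to the final line, whereas you shift to the gap variables $g_a$, derive the exact identity $\sum_a\pi^{k+1}_{s,a}A^k_{s,a}=A_s^*-N/(p+M)$, and then split $\lambda N$ additively into two pieces handled by the same two ingredients.
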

\begin{proof}
    Define the policy $\xi^k_-(\cdot|s)$ as 
\begin{align*}
\xi^k_-(a|s)=\begin{cases}
0 & \mbox{if }a\in\mathcal{A}_s^k,\\
{\pi^k(a|s)}/{(1-\pi_s^k(\mathcal{A}_s^k))}&\mbox{if }a\not\in\mathcal{A}_s^k.
\end{cases}
\end{align*}
 Noting that $\sum_a\pi^k_{s,a}A^k_{s,a}=0$, one has 
\begin{align*}
\pi^k_s(\mathcal{A}^k_s)\max_a A^k_{s,a} +(1-\pi^k_s(\mathcal{A}_s^k))\,\mathbb{E}_{a'\sim\xi^k_-(\cdot|s)}\left[A^k_{s,a'}\right]=0.\numberthis\label{eq:npg-tmp01}
\end{align*}
By the definition of $\pi^{k+1}_{s,a}$ in the NPG update, one has 
\begin{align*}
\sum_a\pi^{k+1}_{s,a} A^k_{s,a} &= \sum_a \frac{\pi^k_{s,a}\exp(\eta\, A^k_{s,a})}{Z_s^k}A^k_{s,a}\\
&=\frac{\mathbb{E}_{a\sim \pi^k(\cdot|s)}\left[\exp\left(\eta\,A^k_{s,a}\right)A^k_{s,a}\right]}{\mathbb{E}_{a\sim\pi^k(\cdot|s)}\left[\exp\left(\eta\,A^k_{s,a}\right)\right]}.
\end{align*}
The numerator can be bounded as follows: 
\begin{align*}
\mathbb{E}_{a\sim \pi^k(\cdot|s)}\left[\exp\left(\eta\,A^k_{s,a}\right)A^k_{s,a}\right]&=\pi_s^k(\mathcal{A}_s^k)\exp\left(\eta\,\max_a A^k_{s,a}\right)\max_a A^k_{s,a}+(1-\pi_s^k(\mathcal{A}_s^k))
\mathbb{E}_{a'\sim \xi^k_-(\cdot|s)}\left[\exp\left(\eta\,A^k_{s,a'}\right)A^k_{s,a'}\right]\\
&\geq \pi_s^k(\mathcal{A}_s^k)\exp\left(\eta\,\max_a A^k_{s,a}\right)\max_a A^k_{s,a}\\
&+(1-\pi_s^k(\mathcal{A}_s^k))
\mathbb{E}_{a'\sim \xi^k_-(\cdot|s)}\left[\exp\left(\eta\,A^k_{s,a'}\right)\right]\,\mathbb{E}_{a'\sim \xi^k_-(\cdot|s)}\left[A^k_{s,a'}\right]\\
&=\pi_s^k(\mathcal{A}_s^k)\left[\exp\left(\eta\,\max_a A^k_{s,a}\right)-\mathbb{E}_{a'\sim \xi^k_-(\cdot|s)}\left[\exp\left(\eta\,A^k_{s,a'}\right)\right]\right]\max_a A^k_{s,a},
\end{align*}
where the first inequality is due to Lemma~\ref{lem:positive-covariance} and the last equality comes from \eqref{eq:npg-tmp01}. The denominator can be rewritten  as 
\begin{align*}
\mathbb{E}_{a\sim\pi^k(\cdot|s)}\left[\exp\left(\eta\,A^k_{s,a}\right)\right]& = \pi_s^k(\mathcal{A}_s^k)\exp\left(\eta\,\max_a A^k_{s,a}\right)+(1-\pi_s^k(\mathcal{A}_s^k))\mathbb{E}_{a'\sim \xi^k_-(\cdot|s)}\left[\exp\left(\eta\,A^k_{s,a'}\right)\right]\\
&=\pi_s^k(\mathcal{A}_s^k)\left[\exp\left(\eta\,\max_aA^k_{s,a}\right)-\mathbb{E}_{a'\sim \xi^k_-(\cdot|s)}\left[\exp\left(\eta\,A^k_{s,a'}\right)\right]\right]+\mathbb{E}_{a'\sim \xi^k_-(\cdot|s)}\left[\exp\left(\eta\,A^k_{s,a'}\right)\right].
\end{align*}
Therefore, 
\begin{align*}
\sum_a\pi^{k+1}_{s,a} A^k_{s,a}&\geq \left[1-\frac{1}{1+\pi_s^k(\mathcal{A}_s^k)\left(\frac{\exp\left(\eta\,\max_a A^k_{s,a}\right)}{\mathbb{E}_{a'\sim \xi^k_-(\cdot|s)}\left(\exp\left(\eta\,A^k_{s,a'}\right)\right)}-1\right)}\right]\max_a A^k_{s,a}\\
&\geq \left[1-\frac{1}{1+\pi_s^k(\mathcal{A}_s^k)\left(\exp\left(\eta\,\Delta_s^k\right)-1\right)}\right]\max_a A^k_{s,a},
\end{align*}
which completes the proof.
\end{proof}
\begin{lemma}[Improvement Lower Bound II]\label{lem:softmaxNPG-improvement-lower02}
    Let $\varepsilon \in(0,\Delta/2)$. If $\|V^*-V^k\|_\infty\leq \varepsilon$, then for softmax NPG with any constant step size $\eta>0$, one has
\begin{align*}
\sum_a\pi^{k+1}_{s,a}A^k_{s,a}\geq \left[1-\frac{1}{1+(1-b_s^k)\left(\exp(\eta\,(\Delta-\varepsilon))-1\right)}\right]\sum_a\pi^{k,*}_{s,a}A^k_{s,a},
\end{align*}
{where $\pi^{k,*}$ is an optimal policy.}
\end{lemma}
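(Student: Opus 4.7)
Plan: The argument mirrors the proof of Lemma~\ref{lem:softmaxNPG-improvement-lower01}, with the greedy set $\calA_s^k$ replaced by the optimal action set $\calA_s^*$ and $\Delta_s^k$ replaced by the global advantage gap controlled by the hypothesis $\|V^*-V^k\|_\infty \leq \varepsilon$. First, Lemma~\ref{lem:QV-relation} gives $\|A^* - A^k\|_\infty \leq \varepsilon$, which translates into the two pointwise bounds $A^k(s,a) \geq -\varepsilon$ for $a \in \calA_s^*$ (since $A^*(s,a)=0$) and $A^k(s,a) \leq -(\Delta - \varepsilon)$ for $a \notin \calA_s^*$ (since $A^*(s,a) \leq -\Delta$). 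Because $\varepsilon < \Delta/2$, the latter bound combined with $\sum_a \pi^k_{s,a} A^k_{s,a} = 0$ forces $b_s^k < 1$, so the renormalized restriction $\pi^{k,*}_{s,a} := \pi^k_{s,a}\mathbf{1}_{\{a\in\calA_s^*\}}/(1-b_s^k)$ is a legitimate optimal policy at $s$; analogously define $\xi^k_-$ on $\calA\setminus\calA_s^*$ so that $\pi^k = (1-b_s^k)\pi^{k,*} + b_s^k \xi^k_-$.

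Next, let $\alpha_+ := \mathbb{E}_{a\sim\pi^{k,*}}[\exp(\eta A^k_{s,a})]$, $\alpha_- := \mathbb{E}_{a\sim\xi^k_-}[\exp(\eta A^k_{s,a})]$, and $\beta := \sum_a \pi^{k,*}_{s,a} A^k_{s,a}$. I write $\sum_a \pi^{k+1}_{s,a} A^k_{s,a}$ as the ratio $\mathbb{E}_{a\sim\pi^k}[\exp(\eta A^k_{s,a}) A^k_{s,a}]/Z_s^k$, split both numerator and denominator along the mixture decomposition, and apply Lemma~\ref{lem:positive-covariance} to each of the two conditional expectations. The identity $(1-b_s^k)\beta + b_s^k \mathbb{E}_{\xi^k_-}[A^k] = 0$ together with $\mathbb{E}_{\xi^k_-}[A^k] \leq -(\Delta - \varepsilon) < 0$ certifies $\beta \geq 0$ and allows one to eliminate $\mathbb{E}_{\xi^k_-}[A^k]$. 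After cancellation the bound simplifies to
\[
\sum_a \pi^{k+1}_{s,a} A^k_{s,a} \geq \frac{(1-b_s^k)(\alpha_+ - \alpha_-)\beta}{(1-b_s^k)\alpha_+ + b_s^k \alpha_-} = \left[1 - \frac{1}{1+(1-b_s^k)(\alpha_+/\alpha_- - 1)}\right]\beta.
\]

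It remains to bound $\alpha_+/\alpha_-$ from below. The pointwise upper bound on non-optimal actions gives $\alpha_- \leq \exp(-\eta(\Delta-\varepsilon))$ immediately. For $\alpha_+$ the temptation is to use the pointwise lower bound $\exp(-\eta\varepsilon)$, but this only yields the exponent $\Delta - 2\varepsilon$ and loses the target factor. The decisive step is Jensen's inequality: $\alpha_+ \geq \exp(\eta\beta) \geq 1$, the last inequality using the non-negativity of $\beta$ established above. Hence $\alpha_+/\alpha_- \geq \exp(\eta\beta)\exp(\eta(\Delta-\varepsilon)) \geq \exp(\eta(\Delta-\varepsilon))$, and monotonicity of $x \mapsto 1 - 1/(1+(1-b_s^k)(x-1))$ in $x$ delivers the claim after multiplying through by $\beta \geq 0$.

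The main obstacle is precisely this last ratio estimate: naive pointwise bounds on $\exp(\eta A^k)$ over $\calA_s^*$ produce only $\Delta - 2\varepsilon$ in the exponent. Recognising that one should instead invoke Jensen through the already established non-negativity of $\beta$ (which itself requires the identity $\sum_a \pi^k A^k = 0$ and the sign of $A^k$ on non-optimal actions) is what preserves the full $\Delta - \varepsilon$ gap inside the exponential.
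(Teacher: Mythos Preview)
Your proposal is correct and follows essentially the same approach as the paper's own proof: the same mixture decomposition of $\pi^k$ along $\calA_s^*$ and its complement, the same application of Lemma~\ref{lem:positive-covariance} to each conditional piece, the same elimination of $\mathbb{E}_{\xi^k_-}[A^k]$ via $\sum_a\pi^k_{s,a}A^k_{s,a}=0$, and the same decisive use of Jensen's inequality $\alpha_+\geq\exp(\eta\beta)\geq 1$ (rather than the naive pointwise bound) to secure the full exponent $\Delta-\varepsilon$. The only cosmetic difference is that you argue $b_s^k<1$ from the hypothesis, whereas for softmax policies this is automatic since every $\pi^k(a|s)>0$.
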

\begin{proof}
    First define the following two polices based on $\mathcal{A}_s^*$:
\begin{align*}
\xi^k(a|s) =\begin{cases}
\pi^k(a|s)/(1-b_s^k)&\mbox{if }a\in\mathcal{A}_s^*,\\
0&\mbox{if }a\not\in\mathcal{A}_s^*,
\end{cases}
\end{align*}
and 
\begin{align*}
\xi^k_-(a|s) = \begin{cases}
0 & \mbox{if }a\in\mathcal{A}_s^*,\\
\pi^k(a|s)/b_s^k & \mbox{if }a\not\in\mathcal{A}_s^*.
\end{cases}
\end{align*}
Again, it follows from $\sum_a\pi^k_{s,a}A^k_{s,a}=0$ that
\begin{align*}
(1-b_s^k)\,\mathbb{E}_{a\sim \xi^k(\cdot|s)}\left[A^k_{s,a}\right]+b_s^k\,\mathbb{E}_{a'\sim\xi^k_-(\cdot|s)}\left[A^k_{s,a'}\right]=0.
\end{align*}
Therefore,
\begin{align*}
&\mathbb{E}_{a\sim \pi^k(\cdot|s)}\left[\exp\left(\eta\,A^k_{s,a}\right)A^k_{s,a}\right]\\
&=(1-b_s^k)\,\mathbb{E}_{s\sim\xi^k(\cdot|s)}\left[\exp\left(\eta\,A^k_{s,a}\right)A^k_{s,a}\right]+b_s^k\,\mathbb{E}_{a'\sim\xi^k_-(\cdot|s)}\left[\exp\left(\eta\,A^k_{s,a'}\right)A^k_{s,a'}\right]\\
&\geq (1-b_s^k)\,\mathbb{E}_{s\sim\xi^k(\cdot|s)}\left[\exp\left(\eta\,A^k_{s,a}\right)\right]\mathbb{E}_{s\sim\xi^k(\cdot|s)}\left[A^k_{s,a}\right]+b_s^k\,\mathbb{E}_{a'\sim\xi^k_-(\cdot|s)}\left[\exp\left(\eta\,A^k_{s,a'}\right)\right]\mathbb{E}_{a'\sim\xi^k_-(\cdot|s)}\left[A^k_{s,a'}\right]\\
&=(1-b_s^k)\,\mathbb{E}_{a\sim\xi^k(\cdot|s)}\left[A^k_{s,a}\right]\left(\mathbb{E}_{s\sim\xi^k(\cdot|s)}\left[\exp\left(\eta\,A^k_{s,a}\right)\right]-\mathbb{E}_{a'\sim\xi^k_-(\cdot|s)}\left[\exp\left(\eta\,A^k_{s,a'}\right)\right]\right).
\end{align*}
In addition,  
\begin{align*}
&\mathbb{E}_{a\sim\pi^k(\cdot|s)}\left[\exp\left(\eta\,A^k_{s,a}\right)\right]\\
&=(1-b_s^k)\,\mathbb{E}_{a\sim\xi^k(\cdot|s)}\left[\exp\left(\eta\,A^k_{s,a}\right)\right]+b_s^k\,\mathbb{E}_{a'\sim\xi^k_-(\cdot|s)}\left[\exp\left(\eta\,A^k_{s,a'}\right)\right]\\
&= (1-b_s^k)\left(\mathbb{E}_{a\sim\xi^k(\cdot|s)}\left[\exp\left(\eta\,A^k_{s,a}\right)\right]-\mathbb{E}_{a'\sim\xi^k_-(\cdot|s)}\left[\exp\left(\eta\,A^k_{s,a'}\right)\right]\right)+\mathbb{E}_{a'\sim\xi^k_-(\cdot|s)}\left[\exp\left(\eta\,A^k_{s,a'}\right)\right].
\end{align*}
By the assumption $\|V^*-V^k\|_\infty\leq\varepsilon$, it is easy to see that $\|A^*-A^k\|_\infty\leq\varepsilon$, implying that 
\begin{align*}
\forall a\in\mathcal{A}_s^*:&\quad -\varepsilon\leq A^k(s,a)\leq \varepsilon, \quad\\
\forall a'\not\in\mathcal{A}_s^*:&\quad A^k(s,a')\leq -\Delta+\varepsilon.
\end{align*}
Since $A^k_{s,a'}\leq 0,\,\forall a'\not\in\mathcal{A}_s^*$,  there must hold $\mathbb{E}_{a\sim \xi^k(\cdot|s)}\left[A^k_{s,a}\right]\geq 0$.
Together with the expressions in the above two inequalities, we have 
\begin{align*}
\sum_a\pi^{k+1}_{s,a}A^k_{s,a}&\geq \left[1-
\frac{1}{1+(1-b_s^k)\left(\frac{\mathbb{E}_{a\sim\xi^k(\cdot|s)}\left[\exp\left(\eta\,A^k_{s,a}\right)\right]}{\mathbb{E}_{a'\sim\xi^k_-(\cdot|s)}\left[\exp\left(\eta\,A^k_{s,a'}\right)\right]}-1\right)}\right]\mathbb{E}_{a\sim\xi^k(\cdot|s)}\left[A^k_{s,a}\right]\\
&\geq  \left[1-\frac{1}{1+(1-b_s^k)\left(\exp(\eta\,(\Delta-\varepsilon))-1\right)}\right]\mathbb{E}_{a\sim\xi^k(\cdot|s)}\left[A^k_{s,a}\right],
\end{align*}
where the last inequality follows from the above property and $$\mathbb{E}_{a\sim\xi^k(\cdot|s)}\left[\exp\left(\eta\,A^k_{s,a}\right)\right]\geq \exp\left(\eta\,\mathbb{E}_{a\sim \xi^k(\cdot|s)}\left[A^k_{s,a}\right]\right)\geq 1.$$ The proof is completed since $\xi^k$ is an optimal policy.
\end{proof}
\begin{theorem}[Global Linear Convergence]\label{thm:softmaxNPG-global-linear}
    For softmax NPG with any constant step size $\eta>0$, one has
    \begin{align*}
\left\|V^*-V^k\right\|_\infty&\leq \left\|V^*-V^0\right\|_\infty\prod_{t=0}^{k-1}\left(1-(1-\gamma)\left[1-\frac{1}{1+C^t}\right]\right)\\
&\leq \left\|V^*-V^0\right\|_\infty\left(1-(1-\gamma)\left[1-\frac{1}{1+\inf_t C^t}\right]\right)^k,
\end{align*}
where $C^t=\min_s C_s^t$ and 
\begin{align*}
C_s^t=\begin{cases}
\pi_s^t(\mathcal{A}_s^t)\left(\exp\left(\eta\,\Delta_s^k\right)-1\right)&\mbox{ if }\|V^*-V^t\|_\infty> \varepsilon,\\
(1-b_s^t)\left(\exp(\eta\,(\Delta-\varepsilon))-1\right) & \mbox{ if }\|V^*-V^t\|_\infty\leq \varepsilon.
\end{cases}
\end{align*}
\end{theorem}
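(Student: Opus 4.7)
The plan is a two-phase argument built on the two improvement lower bounds, Lemma~\ref{lem:softmaxNPG-improvement-lower01} and Lemma~\ref{lem:softmaxNPG-improvement-lower02}, combined with the infinity-norm linear-convergence templates Theorem~\ref{thm:linear-infinity} and Theorem~\ref{thm:linear-infinity-02}. First I would dispatch monotonicity: the identity \eqref{eq:softmaxNPG-identity01} gives $\sum_a\pi^{t+1}_{s,a}A^t_{s,a}=\eta^{-1}\bigl(\mathrm{KL}(\pi^{t+1}_s\,\|\,\pi^t_s)+\mathrm{KL}(\pi^t_s\,\|\,\pi^{t+1}_s)\bigr)\ge 0$, so $V^{t+1}\ge V^t$ pointwise and the hypotheses of both templates are satisfied.

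For iterations $t$ with $\|V^*-V^t\|_\infty>\varepsilon$, I would apply Lemma~\ref{lem:softmaxNPG-improvement-lower01} at every state to obtain a uniform bound of the form $\sum_a\pi^{t+1}_{s,a}A^t_{s,a}\ge (1-1/(1+C_s^t))\max_a A^t_{s,a}$ with $C_s^t=\pi_s^t(\mathcal{A}_s^t)(\exp(\eta\Delta_s^t)-1)$. Taking $C^t=\min_s C_s^t$ collapses this to the hypothesis of Theorem~\ref{thm:linear-infinity} with $C_k = 1-1/(1+C^t)$, which yields the one-step contraction $\|V^*-V^{t+1}\|_\infty\le (1-(1-\gamma)(1-1/(1+C^t)))\|V^*-V^t\|_\infty$. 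For iterations $t$ with $\|V^*-V^t\|_\infty\le\varepsilon$, I would run the same move through Lemma~\ref{lem:softmaxNPG-improvement-lower02}, whose right-hand side is $(1-1/(1+C_s^t))\sum_a\pi^{t,*}_{s,a}A^t_{s,a}$ with $C_s^t=(1-b_s^t)(\exp(\eta(\Delta-\varepsilon))-1)$; this activates Theorem~\ref{thm:linear-infinity-02} in place of Theorem~\ref{thm:linear-infinity} and produces a contraction of the same algebraic shape. Chaining the per-iteration contractions gives the product bound, and replacing each $C^t$ in the factor by $\inf_t C^t$ yields the simpler geometric form stated in the theorem.

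The main delicacy I anticipate is the bookkeeping across the phase boundary. Because $V^{t+1}\ge V^t$ and $V^*\ge V^{t+1}$, the sequence $\|V^*-V^t\|_\infty$ is non-increasing, so once the error falls below $\varepsilon$ it stays below; the iterations therefore split into a single Phase~1 prefix followed by a Phase~2 tail, and the two templates can be applied in turn without reopening cases. A smaller point to verify is that Theorem~\ref{thm:linear-infinity-02} tolerates the optimal reference $\pi^{t,*}$ varying with $t$, which it does; using the policy $\xi^t$ constructed inside the proof of Lemma~\ref{lem:softmaxNPG-improvement-lower02} as the optimal policy at iteration $t$ is therefore legitimate. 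With these pieces in place the proof is essentially mechanical: apply the appropriate lower bound and template at each $t$, record the resulting factor $1-(1-\gamma)(1-1/(1+C^t))$, and multiply.
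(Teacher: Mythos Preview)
Your proposal is correct and follows essentially the same approach as the paper: combine the two improvement lower bounds (Lemmas~\ref{lem:softmaxNPG-improvement-lower01} and~\ref{lem:softmaxNPG-improvement-lower02}) with the infinity-norm contraction templates to get a per-step factor, then multiply. The paper's own proof is a two-sentence sketch citing only Theorem~\ref{thm:linear-infinity-02} (since the Phase~1 bound in terms of $\max_a A^t_{s,a}$ trivially dominates any $\sum_a\pi^{t,*}_{s,a}A^t_{s,a}$), whereas you invoke Theorem~\ref{thm:linear-infinity} for Phase~1 and Theorem~\ref{thm:linear-infinity-02} for Phase~2; this is a cosmetic difference. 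The one point the paper adds that you leave implicit is why $\inf_t C^t>0$: Phase~1 contains only finitely many iterations (by the sublinear convergence of softmax NPG), and in Phase~2 one has $b_s^t\to 0$, so $(1-b_s^t)$ is bounded away from zero.
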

\begin{proof}
    Together  with Theorem~\ref{thm:linear-infinity-02}, the improvement lower bounds I and II imply the global linear convergence of softmax NPG. Note that $\inf_t C^t$ exists since $b_s^t\rightarrow 0$ as softmax NPG converges.
\end{proof}

It is not hard to see that the following dynamic convergence result solely based on the improvement bound I holds for all the $k$,
\begin{align*}
    \left\|V^*-V^k\right\|_\infty\leq \left\|V^*-V^0\right\|_\infty\prod_{t=0}^{k-1}\left(1-(1-\gamma)\left[1-\frac{1}{1+\min_s\{\pi_s^t(\mathcal{A}_s^t)\left(\exp\left(\eta\,\Delta_s^k\right)-1\right)\}}\right]\right)
    \numberthis\label{eq:dynamic-rate}
\end{align*}
Next we consider a  bandit example  which shows that the above  dynamic is indeed tight, i.e., the equality can be achieved.
\begin{example}
For simplicity, consider a bandit problem with one state $s$ and two actions $\{a_1,a_2\}$. Assume 
\begin{align*}
r(s,a_1) = 1 \quad\mbox{and}\quad r(s,a_2)=0.
\end{align*}
Thus, $a_1$ is the optimal action. For any $\pi$, we have 
\begin{align*}
Q^\pi(s,a_1) = 1, \quad Q^\pi(s,a_2)=0,\quad V^\pi(s) = \pi_{a_1},\\
A^\pi(s,a_1) = 1- \pi_{a_1},\quad A^\pi(s,a_2) = - \pi_{a_1}, \quad\Delta_s^\pi = 1.
\end{align*}
It is also clear that $V^*(s) =1$.

The update of NPG at $a_1$ is given by 
\begin{align*}
\pi^{k+1}_{a_1} = \frac{\pi^k_{a_1}\exp\left(\eta\,(1-\pi^k_{a_1})\right)}{Z^k},
\end{align*}
where 
\begin{align*}
Z^k = \pi^k_{a_1}\exp\left(\eta\,(1-\pi^k_{a_1})\right) + (1-\pi^k_{a_1})\exp\left(-\eta\,\pi^k_{a_1}\right).
\end{align*}
It can be verified directly that  
\begin{align*}
\left(V^*(s)-V^{k+1}(s)\right)\left(1+\pi_{a_1}^k(e^\eta-1)\right) = 1-\pi_{a_1}^k = V^*(s)-V^k(s).
\end{align*}
Consequently,
\begin{align*}
V^*(s)-V^k(s)&= \left(V^*(s)-V^0(s)\right)\prod_{t=0}^{k-1}\frac{1}{1+\pi_{a_1}^t(e^\eta-1)}.
\end{align*}
Noting that $\gamma=0$, $\pi^t_s(\mathcal{A}_s^t)=\pi_{a_1}^t$ and $\Delta_s^t=1$, it shows that \eqref{eq:dynamic-rate} holds with equality for this example.
\end{example}

Note that  the best global linear rate that can be expected from Theorem~\ref{thm:softmaxNPG-global-linear} is $\gamma$ no matter how $\eta$ varies. As is already already mentioned, as $\eta\rightarrow\infty$, softmax NPG approaches PI. Since PI has a local quadratic linear convergence \cite{PI-super},  one might expect softmax NPG to have a faster linear convergence rate as $\eta\to\infty$. In \cite{Khodadadian_Jhunjhunwala_Varma_Maguluri_2021}, such a local linear convergence has been established. 
\begin{theorem}[\protect{\cite[Theorem~3.1]{Khodadadian_Jhunjhunwala_Varma_Maguluri_2021}}]\label{thm:softmaxNPG-fast-local} For any constant step size $\eta$, $\rho \in \Delta(\mathcal{S})$ and  arbitrary constant $\lambda > 1$, there exists a time $T(\lambda) \in \mathbb{N}$  such that 
$$
\forall\, k\ge T\left( \lambda \right) :\qquad V^*\left( \rho \right) -V^k\left( \rho \right) \le \left( \exp \left\{ -\eta\, \Delta \left( 1-\frac{1}{\lambda} \right) \right\} \right) ^{k-T\left( \lambda \right)}\left\| \frac{1}{d_{\rho}^{T\left( \lambda \right)}} \right\| _{\infty}\left( V^*\left( \rho \right) -V^{T\left( \lambda \right)}\left( \rho \right) \right).
$$
    
\end{theorem}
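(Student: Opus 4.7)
The strategy is to run softmax NPG until it enters a local regime where non-optimal actions are clearly identifiable, then show that the NPG update contracts the non-optimal probability $b_s^k$ pointwise at the claimed rate, and finally translate this contraction into the stated bound on $V^*(\rho) - V^k(\rho)$. The plan would start by invoking the global convergence of softmax NPG, which follows from Theorem~\ref{thm:softmaxNPG-sublinear} applied at the full-support reference $\mu$ (with $\tilde{\mu}>0$): since $V^*(\mu) - V^k(\mu) = \sum_s \mu(s)(V^*(s)-V^k(s))$ with nonnegative summands, $V^k(\mu) \to V^*(\mu)$ upgrades to $\|V^*-V^k\|_\infty \to 0$. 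Hence for any $\varepsilon \in (0,\Delta/2)$ there exists $T_0(\varepsilon)$ with $\|V^*-V^k\|_\infty \le \varepsilon$ for all $k \ge T_0(\varepsilon)$.

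The second step is the key contraction. For $k \ge T_0(\varepsilon)$, Lemma~\ref{lem:QV-relation} gives $\|A^*-A^k\|_\infty \le \varepsilon$, so $A^k(s,a) \ge -\varepsilon$ for $a \in \mathcal{A}_s^*$ and $A^k(s,a) \le -\Delta + \varepsilon$ for $a \notin \mathcal{A}_s^*$. Substituting into~\eqref{eq:softmaxNPG}, the normalizer obeys $Z_s^k \ge (1-b_s^k)\exp(-\eta\varepsilon)$, while the numerator restricted to non-optimal actions is at most $b_s^k \exp(-\eta\Delta+\eta\varepsilon)$. This yields
\begin{align*}
b_s^{k+1} \;\le\; \frac{b_s^k}{1-b_s^k}\exp\!\left(-\eta\Delta + 2\eta\varepsilon\right).
\end{align*}
Applying the left inequality of Lemma~\ref{lem:bsk-bounds} with $\rho=\delta_s$ gives $b_s^k \le \varepsilon/\Delta$, hence $1/(1-b_s^k) \le 1/(1-\varepsilon/\Delta)$. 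Given $\lambda > 1$, I would pick $\varepsilon(\lambda) \in (0,\Delta/2)$ small enough that $\exp(2\eta\varepsilon)/(1-\varepsilon/\Delta) \le \exp(\eta\Delta/\lambda)$ (possible since the left-hand side tends to $1$ as $\varepsilon \to 0^+$), and set $T(\lambda) := T_0(\varepsilon(\lambda))$. Then for every $k \ge T(\lambda)$ and every $s$, one obtains the uniform pointwise contraction $b_s^{k+1} \le \exp(-\eta\Delta(1-1/\lambda))\, b_s^k$.

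Iterating yields $b_s^k \le \beta^{k-T(\lambda)}\, b_s^{T(\lambda)}$ with $\beta := \exp(-\eta\Delta(1-1/\lambda))$. The right inequality of Lemma~\ref{lem:bsk-bounds} then gives
\begin{align*}
V^*(\rho) - V^k(\rho) \;\le\; \frac{1}{(1-\gamma)^2}\mathbb{E}_{s\sim d_\rho^k}\!\left[b_s^k\right] \;\le\; \frac{\beta^{k-T(\lambda)}}{(1-\gamma)^2}\sum_s d_\rho^k(s)\, b_s^{T(\lambda)}.
\end{align*}
The claimed prefactor $\|1/d_\rho^{T(\lambda)}\|_\infty$ should emerge from the distribution-mismatch inequality $d_\rho^k(s) \le \|1/d_\rho^{T(\lambda)}\|_\infty\, d_\rho^{T(\lambda)}(s)$ (using $d_\rho^k(s) \le 1$), after which $\sum_s d_\rho^{T(\lambda)}(s)\, b_s^{T(\lambda)}$ is controlled by a multiple of $V^*(\rho) - V^{T(\lambda)}(\rho)$ via the performance difference lemma combined with the left inequality of Lemma~\ref{lem:bsk-bounds}. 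Any residual absolute constants (such as $1/(1-\gamma)^2$ and $1/\Delta$) are absorbed by enlarging $T(\lambda) \leftarrow T(\lambda) + \lceil \log(\mathrm{const})/\log(1/\beta)\rceil$, exploiting the strict geometric decay $\beta<1$.

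The principal obstacle is the conversion in the last paragraph: producing \emph{exactly} the prefactor $\|1/d_\rho^{T(\lambda)}\|_\infty$ with no extra $1/\Delta$ or $1/(1-\gamma)$ factors requires either a more refined performance-difference computation or the absorption trick just mentioned. A secondary subtlety is the calibration of $\varepsilon(\lambda)$ so that the per-step loss factor $\exp(2\eta\varepsilon)/(1-\varepsilon/\Delta)$ fits inside the target slack $\exp(\eta\Delta/\lambda)$ uniformly in $s$; once this calibration is made explicit, Stages~2 and~3 reduce to direct computation.
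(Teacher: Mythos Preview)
The paper does not prove this theorem itself; it is cited from Khodadadian et al.\ with only the remark that the stated form ``requires a little adjustment in the proof.'' Your overall strategy---enter the local regime, contract the non-optimal probabilities pointwise, then convert via Lemma~\ref{lem:bsk-bounds}---is exactly the approach the paper attributes to that reference (``based on the contraction of the sub-optimal probability $b_s^k$ at each state and Lemma~\ref{lem:bsk-bounds}''). Stages~1 and~2 are correct, and you rightly flag the conversion in Stage~3 as the principal obstacle.

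The genuine gap is that your absorption trick does \emph{not} work. The inequality you obtain,
\[
V^*(\rho)-V^k(\rho)\;\le\;C\,\beta^{\,k-T}\Bigl\|\tfrac{1}{d_\rho^{T}}\Bigr\|_\infty\bigl(V^*(\rho)-V^{T}(\rho)\bigr),\qquad C=\tfrac{1}{(1-\gamma)\Delta},
\]
holds for \emph{every} admissible starting time $T$ with the same $C$. Shifting $T\leftarrow T+m$ to push $C$ into the exponent simultaneously changes the two occurrences of $T$ in the prefactor to $T+m$; but re-deriving the bound from the new starting time $T+m$ just reproduces the same $C$ in front. No amount of shifting eliminates it.

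The fix is to sharpen two steps so that no residual constant appears. First, replace $Z_s^k\ge(1-b_s^k)\exp(-\eta\varepsilon)$ by the stronger $Z_s^k\ge 1$, which follows from Jensen's inequality and $\mathbb{E}_{a\sim\pi^k_s}[A^k(s,a)]=0$. Second, contract each non-optimal action individually rather than only the aggregate $b_s^k$: for $a\notin\mathcal{A}_s^*$ one has $\pi^{k+1}(a|s)\le\exp(\eta A^k(s,a))\,\pi^k(a|s)\le\exp\bigl(-\eta(\Delta-\varepsilon)\bigr)\,\pi^k(a|s)$, and with $\varepsilon=\Delta/\lambda$ this gives $\pi^k(a|s)\le\beta^{\,k-T}\pi^T(a|s)$. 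Now use the \emph{identity} that underlies Lemma~\ref{lem:bsk-bounds} (apply Lemma~\ref{lem:PDL} to $V^\pi-V^*$ and use $A^*(s,a)=0$ for $a\in\mathcal{A}_s^*$):
\[
V^*(\rho)-V^\pi(\rho)\;=\;\frac{1}{1-\gamma}\sum_{s} d_\rho^{\pi}(s)\sum_{a\notin\mathcal{A}_s^*}\pi(a|s)\,|A^*(s,a)|.
\]
Writing this at $\pi=\pi^k$, inserting the action-wise contraction, using $d_\rho^k(s)\le\|1/d_\rho^T\|_\infty\,d_\rho^T(s)$, and then recognising the same identity at $\pi=\pi^T$ gives precisely
\[
V^*(\rho)-V^k(\rho)\;\le\;\beta^{\,k-T}\Bigl\|\tfrac{1}{d_\rho^{T}}\Bigr\|_\infty\bigl(V^*(\rho)-V^{T}(\rho)\bigr),
\]
with no leftover factors of $(1-\gamma)^{-1}$ or $\Delta^{-1}$; they cancel because the same weights $|A^*(s,a)|$ appear on both sides.
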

Note that Theorem~\ref{thm:softmaxNPG-fast-local} is presented in a slightly different form from the one presented in \cite{Khodadadian_Jhunjhunwala_Varma_Maguluri_2021} which only requires a little adjustment in the proof.
In \cite{Khodadadian_Jhunjhunwala_Varma_Maguluri_2021}, the lower bound on the local linear convergence rate for the bandit problem is also established. At the end of this section, we will show a lower bound for the general MDP problem which overall matches the upper bound.
\begin{theorem}[Lower Bound of Local Linear Convergence for Any MDP]
For any constant step size $\eta>0$ and an arbitrary constant $\sigma>0$, there exists a time $T(\sigma)$ such that 
\begin{align*}
\forall\,k\geq T(\sigma):\quad V^*(\rho)-V^k(\rho)\geq \tilde{\rho}\,\Delta \min_{s,a}\pi^{T(\sigma)}(a|s)\exp\left(-\eta(\Delta+\sigma)(k-T(\sigma))\right).
\end{align*}
 
\end{theorem}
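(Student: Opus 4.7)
The strategy is to combine the left inequality of Lemma~\ref{lem:bsk-bounds}, which converts state-value error into non-optimal probability mass, with a careful state-wise decay estimate for one well-chosen non-optimal action. The hope is that there is a single state/action pair whose probability decays no faster than $\exp(-\eta(\Delta+\sigma))$ per iteration, which then controls the whole error from below.

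First, I would fix a state $s^\star\in\tilde{\mathcal{S}}$ and action $a^0\notin\mathcal{A}^*_{s^\star}$ realizing the advantage gap, i.e.\ $|A^*(s^\star,a^0)|=\Delta$, so $A^*(s^\star,a^0)=-\Delta$. Since $b^{\pi^k}_{s^\star}\geq \pi^k(a^0|s^\star)$ and $\rho(s^\star)\geq \tilde{\rho}$, applying Lemma~\ref{lem:bsk-bounds} gives
\begin{align*}
V^*(\rho)-V^k(\rho)\;\geq\;\Delta\,\mathbb{E}_{s\sim\rho}\bigl[b^{\pi^k}_s\bigr]\;\geq\;\Delta\,\tilde{\rho}\,\pi^k(a^0|s^\star).
\end{align*}
Thus it suffices to bound $\pi^k(a^0|s^\star)$ from below by $\min_{s,a}\pi^{T(\sigma)}(a|s)\exp(-\eta(\Delta+\sigma)(k-T(\sigma)))$.

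Second, I would unfold the softmax NPG update between $T$ and $k$. Writing $\pi^k(a^0|s^\star)=\pi^T(a^0|s^\star)\prod_{t=T}^{k-1}\exp(\eta A^t(s^\star,a^0))/Z^t_{s^\star}$, taking logs, and using the crude bound $\log Z^t_{s^\star}\leq \eta\,\max_a A^t(s^\star,a)$ reduces the task to lower bounding $\eta A^t(s^\star,a^0)-\log Z^t_{s^\star}$. By Theorem~\ref{thm:softmaxNPG-global-linear} (or even just Theorem~\ref{thm:softmaxNPG-sublinear}) together with Lemma~\ref{lem:QV-relation}, we have $\|A^k-A^*\|_\infty\to 0$, so for any prescribed $\sigma>0$ there exists $T(\sigma)$ such that for all $t\geq T(\sigma)$ simultaneously
\begin{align*}
A^t(s^\star,a^0)\;\geq\; -\Delta-\tfrac{\sigma}{2},\qquad \max_a A^t(s^\star,a)\;\leq\; \tfrac{\sigma}{2},
\end{align*}
the second inequality using that $\max_a A^*(s^\star,a)=0$. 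Combining yields $\eta A^t(s^\star,a^0)-\log Z^t_{s^\star}\geq -\eta(\Delta+\sigma)$ for every $t\geq T(\sigma)$, hence $\pi^k(a^0|s^\star)\geq \pi^{T(\sigma)}(a^0|s^\star)\exp(-\eta(\Delta+\sigma)(k-T(\sigma)))$, and replacing $\pi^{T(\sigma)}(a^0|s^\star)$ by $\min_{s,a}\pi^{T(\sigma)}(a|s)$ together with the first display closes the argument.

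The main (minor) obstacle is that $Z^t_{s^\star}$ is a mixture whose upper bound needs to be tight enough; using $Z^t_{s^\star}\leq \exp(\eta\max_a A^t(s^\star,a))$ wastes nothing in the limit because $\max_a A^*(s^\star,a)=0$, so the two $\sigma/2$ slacks combine to exactly $\sigma$, matching the rate claimed. One should also remark that the case $\tilde{\mathcal{S}}=\emptyset$ is vacuous since then $V^{\pi}=V^*$ for every $\pi$; otherwise the chosen $(s^\star,a^0)$ exists and the proof goes through.
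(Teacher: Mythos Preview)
Your proposal is correct and follows essentially the same route as the paper: pick a state--action pair $(s^\star,a^0)$ realizing the optimal advantage gap $\Delta$, bound the per-step ratio $\pi^{k+1}(a^0|s^\star)/\pi^k(a^0|s^\star)\geq\exp(\eta(A^k(s^\star,a^0)-\max_aA^k(s^\star,a)))$ via $Z^k_{s^\star}\leq\exp(\eta\max_aA^k(s^\star,a))$, use $\|A^k-A^*\|_\infty\to 0$ to get the $-\eta(\Delta+\sigma)$ exponent, and finish with Lemma~\ref{lem:bsk-bounds}. The paper's proof is the same argument, with the same $\sigma/2$ split between the two advantage terms.
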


\begin{proof}
    Let $s_0=\arg\min_{s}\{\max_a Q^*(s,a)-\max_{a\not\in\mathcal{A}_s^*}Q^*(s,a)\}$ and $a_0\in \arg\max_{a}Q^*(s_0,a)$. According to the definition of $\Delta$, one has 
\begin{align*}
A^*(s_0,a_0)=-\Delta.
\end{align*}
Now we consider the ratio $\pi^{k+1}(s_0,a_0)/\pi^k(s_0,a_0)$. By the update of softmax NPG,
\begin{align*}
\frac{\pi^{k+1}(a_0|s_0)}{\pi^k(a_0|s_0)}&=\frac{\exp(\eta\, A^k(s_0,a_0))}{\mathbb{E}_{a\sim\pi^k(\cdot|s_0)}\left[\exp(\eta\,A^k(s_0,a))\right]}\geq \exp\left(\eta\left(A^k(s_0,a_0)-\max_aA^k(s_0,a)\right)\right).
\end{align*}
Since $A^k\rightarrow A^*$, for any $\sigma>0$, there exists $T(\sigma)$ such that
\begin{align*}
\forall\,k\geq T(\sigma):\quad \|A^k-A^*\|_\infty\leq \sigma/2,
\end{align*}
which implies that 
\begin{align*}
A^k(s_0,a_0)&\geq A^*(s_0,a_0)-\sigma/2\geq -\Delta-\sigma/2,\\
\max_a A^k(s_0,a)&\leq\max_a A^*(s_0,a)+\sigma/2\leq \sigma/2. 
\end{align*}
It follows that 
\begin{align*}
\forall\,k\geq T(\sigma):\quad \frac{\pi^{k+1}(a_0|s_0)}{\pi^k(a_0|s_0)}\geq \exp\left( -\eta\left(\Delta+\sigma\right)\right).
\end{align*}
By Lemma~\ref{lem:bsk-bounds},
\begin{align*}
V^*(\rho)-V^k(\rho)\geq \Delta\,\mathbb{E}_{s\sim \rho}\left[b_s^k\right]\geq \tilde{\rho}\,\Delta\,\pi^k(a_0|s_0),
\end{align*}
and the  claim follows immediately. 
\end{proof}
\section{Entropy Regularized Policy Gradient Methods}
\label{sec:entropy}
\subsection{Preliminaries}
Recall that the entropy regularized state value is given by 
\begin{align*}
V^\pi_\tau(s) & = \mathbb{E}_\tau\left[\sum_{t=0}^\infty \gamma^t\left({r(s_t,a_t)-\tau \log \pi(a_t|s_t)}\right)|s_0=s,\pi\right]\\
&= \mathbb{E}_\tau\left[\sum_{t=0}^\infty \gamma^t\left(r(s_t,a_t)+\tau \mathcal{H}(\pi(\cdot|s_t)\right)|s_0=s,\pi\right],
\end{align*}
where $r(s,a)\in[0,1]$ and $\mathcal{H}(p)=-\sum_ap_a\log p_a$ defines the entropy of a distribution. Moreover, the action value and the advantage function  are  defined as 
\begin{align*}
Q_\tau^\pi(s,a)=\mathbb{E}_{s'\sim P(\cdot|s,a)}\left[r(s,a)+\gamma V^\pi_\tau(s')\right]\quad\mbox{and}\quad A_\tau^\pi(s,a) = Q^\pi_\tau(s,a)-\tau\log\pi(a|s)-V_\tau^\pi(s).
\end{align*}
It can be easily seen that 
\begin{align*}\mathbb{E}_{a\sim\pi(\cdot|s)}[A^\pi_\tau(s,a)]=0.
\end{align*}

Given a policy $\pi$, the Bellman operator with entropy is defined as 
\begin{align*}
\mathcal{T}^\pi_\tau V(s) 
&=\mathbb{E}_{a\sim\pi(\cdot|s)}\mathbb{E}_{s'\sim P(\cdot|s,a)}\left[r(s,a)+\gamma V(s')-\tau\log\pi(a|s)\right]\\
&=\mathbb{E}_{a\sim \pi(\cdot|s)}\left[Q^V(s,a)\right]+\tau \mathcal{H}(\pi(\cdot|s)),
\end{align*}
where $Q^V(s,a)=\mathbb{E}_{s'\sim P(\cdot|s,a)}[r(s,a)+\gamma V(s')]$. It can be verified  that $\mathcal{T}_\tau^\pi V^\pi_\tau = V_\tau^\pi$, which is the Bellman equation for the entropy regularization case. Moreover, given two policies $\pi_1$ and $\pi_2$, one has 
\begin{align*}
\mathcal{T}_\tau^{\pi_1}V_\tau^{\pi_2}(s)-V_\tau^{\pi_2}(s) & = \mathbb{E}_{a\sim\pi_1(|s)}[Q^{\pi_2}_\tau(s,a)-\tau \log\pi_1(a|s)-V_\tau^{\pi_2}(s)]\\
&=\mathbb{E}_{a\sim\pi_1(|s)}[Q^{\pi_2}_\tau(s,a)-\tau \log\pi_2(a|s)-V_\tau^{\pi_2}(s)]\\
&\quad -\tau\mathbb{E}_{a\sim \pi_1(\cdot|s)}[\log\pi_1(a|s)-\log\pi_2(a|s)]\\
& = \mathbb{E}_{a\sim\pi_1(\cdot|s)}[A^{\pi_2}_\tau(s,a)]-\tau \mathrm{KL}(\pi_1(\cdot|s)\|\pi_2(\cdot|s)).
\end{align*}
Similar to the non-entropy case, the performance difference of two policies can also be given in terms of $\mathcal{T}_\tau^{\pi_1}V_\tau^{\pi_2}(s)-V_\tau^{\pi_2}(s)$, which can be verified directly.
\begin{lemma}[Performance difference lemma]\label{lem:entropyPDL} Given two policies $\pi_1$ and $\pi_2$, there holds 
\begin{align*}
V^{\pi_1}_\tau(\rho)-V^{\pi_2}_\tau(\rho)=\frac{1}{1-\gamma}\mathbb{E}_{s\sim d_\rho^{\pi_1}}\left[\mathcal{T}_\tau^{\pi_1}V_\tau^{\pi_2}(s)-V_\tau^{\pi_2}(s)\right].
\end{align*}
\end{lemma}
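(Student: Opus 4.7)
The plan is to mimic the standard trajectory-based proof of the non-regularized performance difference lemma, being careful to carry the entropy term along at each step. Concretely, I would start from the trajectory expansion
\[
V^{\pi_1}_\tau(\rho) \;=\; \mathbb{E}_{s_0\sim\rho,\pi_1}\!\left[\sum_{t=0}^{\infty}\gamma^t\bigl(r(s_t,a_t)-\tau\log\pi_1(a_t|s_t)\bigr)\right],
\]
and rewrite the constant $V^{\pi_2}_\tau(\rho) = \mathbb{E}_{s_0\sim\rho}[V^{\pi_2}_\tau(s_0)]$ as a telescoping sum along the same trajectory induced by $\pi_1$, namely $V^{\pi_2}_\tau(s_0) = \sum_{t=0}^{\infty}\gamma^t\bigl(V^{\pi_2}_\tau(s_t)-\gamma V^{\pi_2}_\tau(s_{t+1})\bigr)$. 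Here I use that $V^{\pi_2}_\tau(s_0)$ depends only on $s_0$, so expectation under the $\pi_1$-trajectory is legitimate, and the discounted telescoping sum converges because $V^{\pi_2}_\tau$ is bounded.

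Next I would subtract the two expressions and combine the $t$-th terms to obtain
\[
V^{\pi_1}_\tau(\rho)-V^{\pi_2}_\tau(\rho) \;=\; \mathbb{E}_{s_0\sim\rho,\pi_1}\!\left[\sum_{t=0}^{\infty}\gamma^t\Bigl(r(s_t,a_t)-\tau\log\pi_1(a_t|s_t)+\gamma V^{\pi_2}_\tau(s_{t+1})-V^{\pi_2}_\tau(s_t)\Bigr)\right].
\]
Conditioning on $s_t$ and then on $a_t\sim\pi_1(\cdot|s_t)$, and taking the inner expectation over $s_{t+1}\sim P(\cdot|s_t,a_t)$, the bracket becomes
\[
\mathbb{E}_{a_t\sim\pi_1(\cdot|s_t)}\bigl[Q^{\pi_2}_\tau(s_t,a_t)-\tau\log\pi_1(a_t|s_t)\bigr]-V^{\pi_2}_\tau(s_t) \;=\; \mathcal{T}^{\pi_1}_\tau V^{\pi_2}_\tau(s_t)-V^{\pi_2}_\tau(s_t),
\]
where I have used the definition $Q^{\pi_2}_\tau(s,a)=r(s,a)+\gamma\mathbb{E}_{s'\sim P(\cdot|s,a)}[V^{\pi_2}_\tau(s')]$ together with the definition of $\mathcal{T}^{\pi_1}_\tau$ given just above the lemma statement.

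Finally, collapsing the discounted sum over $t$ using the visitation measure identity
\[
\sum_{t=0}^{\infty}\gamma^t\,\mathbb{E}_{s_0\sim\rho,\pi_1}\!\bigl[f(s_t)\bigr] \;=\; \frac{1}{1-\gamma}\,\mathbb{E}_{s\sim d^{\pi_1}_\rho}[f(s)]
\]
(which follows directly from the definition of $d^{\pi_1}_\rho$ given earlier) yields exactly the claimed identity. I expect no real obstacle: the entropy term slots in naturally because $-\tau\log\pi_1(a_t|s_t)$ appearing in the trajectory expansion of $V^{\pi_1}_\tau$ is precisely the piece required to turn $\mathbb{E}_{a_t\sim\pi_1}[Q^{\pi_2}_\tau(s_t,a_t)]$ into $\mathcal{T}^{\pi_1}_\tau V^{\pi_2}_\tau(s_t)$; the only mild subtlety is justifying the exchange of the infinite sum and expectations, which follows from $r\in[0,1]$, $|\tau\log\pi_1|$ being absolutely summable under the discount (bounded by a geometric series once entropy is finite), and dominated convergence.
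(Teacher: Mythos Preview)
Your proposal is correct and is precisely the direct verification the paper alludes to (the paper does not spell out a proof, merely stating the identity ``can be verified directly''). The trajectory-telescoping argument you outline is the standard route and handles the entropy term without issue, since $-\pi_1(a|s)\log\pi_1(a|s)$ is bounded by $1/e$ so the discounted sums are absolutely convergent.
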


In contrast to the case without entropy, the Bellman optimality operator with entropy is defined as 
\begin{align*}
\mathcal{T}_\tau V(s)=\max_\pi\mathbb{E}_{a\sim\pi(\cdot|s)}\mathbb{E}_{s'\sim P(\cdot|s,a)}[r(s,a)-\tau\log\pi(a|s)+\gamma V(s')]=\tau\log\|\exp(Q^V(s,\cdot)/\tau)\|_1,
\end{align*}
where $Q^V(s,a)$ is defined as above. Moreover, the optimal soft PI policy (or softmax policy), at which the maximum is achieved, is given by 
\begin{align*}
\pi^{\mathrm{spi}}(\cdot|s) = \frac{\exp(Q^V(s,\cdot)/\tau)}{\|\exp(Q^V(s,\cdot)/\tau)\|_1}.
\end{align*}
Note that the Bellman operator and the optimal Bellman operator also satisfy the $\gamma$-contraction property in the entropy case:
\begin{align*}
\left\| \calT^\pi_\tau V_1 - \calT^\pi_\tau V_2 \right\|_\infty \leq \gamma \cdot \left\| V_1 - V_2 \right\|_\infty\quad \mbox{and}\quad\left\| \calT_\tau V_1 - \calT_\tau V_2 \right\|_\infty \leq \gamma \cdot \left\| V_1 - V_2 \right\|_\infty.
\end{align*}

In this section, we will let $V_\tau^*$ and $Q_\tau^*$ be the optimal entropy regularized state and action values, with the optimal policy denoted by $\pi^{*}$ (unique). One can immediately see that the first inequality in Lemma~\ref{lem:QV-relation} still holds: 
\begin{align*}
\|Q_\tau^\pi-Q^*_\tau\|_\infty\leq \gamma\cdot \|V_\tau^*-V_\tau^\pi\|_\infty.
\end{align*}
Moreover, $V_\tau^*$ satisfies the Bellman optimality equation $\mathcal{T}_\tau V_\tau^*=V_\tau^*$. It can also be shown that \cite{Nachum2017softPI}
\begin{align*}
\forall s\in\calS, \; a\in\calA: \quad V^*_\tau(s) = Q^*_\tau(s,a) - \tau \log (\pi^*(a|s)),
\end{align*}
which is equivalent to 
\begin{align*}
    \forall s\in\calS, \; a\in\calA: \quad A^*_\tau(s,a) =Q^*_\tau(s,a) - V^*_\tau(s) - \tau \log (\pi^{*}(a|s))= 0.\numberthis\label{eq:opteq}
\end{align*}
In addition, given a policy $\pi$, one has
\begin{align*}
\mathcal{T}_\tau V^{\pi}_\tau(s)-V^\pi_\tau(s) &= \tau\log\|\exp(Q^\pi_\tau(s,\cdot)/\tau)\|_1 - V^\pi_\tau(s)\\
& = Q^\pi_\tau(s,a)-\tau \log \pi^{\mathrm{spi}}(a|s) -V^\pi_\tau(s),\quad\forall a,\\
& = \sum_a \pi(a|s)\left(Q^\pi_\tau(s,a)-\tau \log \pi^{\mathrm{spi}}(a|s) -V^\pi_\tau(s)\right)\\
& = \tau\sum_a \pi(a|s)(\log \pi(a|s)-\log \pi^{\mathrm{spi}}(a|s))\\
& = \tau \mathrm{KL}\left(\pi(\cdot|s)\|\pi^{\mathrm{spi}}(\cdot|s)\right),
\end{align*}
where $\pi^{\mathrm{spi}}(\cdot|s)$ is obtained through $Q_\tau^\pi(s,a)$, and  the fourth equality follows from $\mathbb{E}_{a\sim\pi(\cdot|s)}[Q^\pi_\tau(s,a)-\tau\log\pi(a|s)-V^\pi_\tau(s)]=0$.

Apart from Lemma~\ref{lem:entropyPDL}, there are several other lemmas that will be used throughout this section. The first one can be verified directly. 
\begin{lemma}\label{lem:entropy-value-bound}
For any policy $\pi$, one has 
\begin{align*}
V_\tau^\pi(s)\in\left[0,\frac{1+\tau\log|\mathcal{A}|}{1-\gamma}\right]\quad\mbox{and}\quad Q_\tau^\pi(s,a)\in\left[0,\frac{1+\gamma\,\tau\log|\mathcal{A}|}{1-\gamma}\right].
\end{align*}
\end{lemma}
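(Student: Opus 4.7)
The plan is to just unroll the definitions: both bounds follow by bounding the per-step rewards and entropies term-by-term, then summing the geometric series. Nothing clever is needed.

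For $V^\pi_\tau(s)$, I start from
\[
V^{\pi}_\tau(s) = \mathbb{E}\left[\sum_{t=0}^\infty \gamma^t\left(r(s_t,a_t) + \tau \mathcal{H}(\pi(\cdot|s_t))\right)\Big|s_0=s,\pi\right].
\]
The lower bound is immediate since $r(s,a)\in[0,1]$ and, for any distribution on $|\mathcal{A}|$ atoms, $\mathcal{H}(\pi(\cdot|s_t))\in[0,\log|\mathcal{A}|]$; hence every summand is non-negative. For the upper bound, the same inequalities give $r(s_t,a_t)+\tau\mathcal{H}(\pi(\cdot|s_t))\le 1+\tau\log|\mathcal{A}|$, so summing $\sum_{t\ge 0}\gamma^t$ yields $V^\pi_\tau(s)\le (1+\tau\log|\mathcal{A}|)/(1-\gamma)$.

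For $Q^\pi_\tau(s,a)$, I use the given identity
\[
Q^\pi_\tau(s,a)=\mathbb{E}_{s'\sim P(\cdot|s,a)}\left[r(s,a)+\gamma V^\pi_\tau(s')\right],
\]
so the crucial observation is that the first step contributes only a reward (no entropy term). The lower bound is again trivial from $r\ge 0$ and $V^\pi_\tau\ge 0$. For the upper bound, plugging in $r(s,a)\le 1$ and the upper bound on $V^\pi_\tau$ from the previous step gives
\[
Q^\pi_\tau(s,a)\le 1+\gamma\cdot\frac{1+\tau\log|\mathcal{A}|}{1-\gamma}=\frac{1+\gamma\,\tau\log|\mathcal{A}|}{1-\gamma},
\]
which is exactly the claimed bound.

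There is no real obstacle here; the only subtlety worth explicitly noting in writing the proof is that the factor $\gamma$ (rather than $1$) in front of $\tau\log|\mathcal{A}|$ in the $Q$-bound comes precisely from the fact that $Q^\pi_\tau$ does not include an entropy term at the initial action, which is why the two bounds differ by a $\gamma$ in that coefficient.
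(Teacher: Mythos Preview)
Your proof is correct and is exactly the direct verification the paper has in mind (the paper simply states that this lemma ``can be verified directly'' without spelling out the steps). Your observation about why the $Q$-bound carries a $\gamma$ in front of $\tau\log|\mathcal{A}|$ is the right explanation.
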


The following lemma can be viewed as the entropy analogue of Lemma~\ref{lem:bsk-bounds}, which characterizes the optimality of an policy in the value space based on the KL-divergence in the policy space. This lemma can be proved easily by applying Lemma~\ref{lem:entropyPDL} to $-\left(V_\tau^\pi(\rho)-V_\tau^*(\rho)\right)$ and then using the fact $A_\tau^*(s,a)=0$ in \eqref{eq:opteq}.
\begin{lemma}[\protect{\cite[Lemma~26]{Mei_Xiao_Szepesvari_Schuurmans_2020}}]\label{lem:KL-opt}
For any policy $\pi$ and $\rho\in\Delta(\mathcal{S})$, there holds
\begin{align*}
V_\tau^*(\rho) - V_\tau^\pi(\rho)=\frac{\tau}{1-\gamma}\mathbb{E}_{s\sim d_\rho^\pi}\left[\mathrm{KL}(\pi(\cdot|s)\|\pi^*(\cdot|s))\right].
\end{align*}
\end{lemma}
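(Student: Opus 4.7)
The plan is to invoke the entropy performance difference lemma (Lemma~\ref{lem:entropyPDL}) with $\pi_1 = \pi$ and $\pi_2 = \pi^*$, which expresses
\[
V_\tau^{\pi}(\rho) - V_\tau^{*}(\rho) \;=\; \frac{1}{1-\gamma}\,\mathbb{E}_{s\sim d_\rho^{\pi}}\!\left[\mathcal{T}_\tau^{\pi} V_\tau^{*}(s) - V_\tau^{*}(s)\right].
\]
Note we deliberately flip the sign compared to the lemma statement, as the hint suggests, because the expectation in the resulting quantity is taken against $d_\rho^{\pi}$, which is the visitation measure we want in the final identity (rather than $d_\rho^{\pi^*}$).

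Next I would use the identity for $\mathcal{T}_\tau^{\pi_1} V_\tau^{\pi_2}(s) - V_\tau^{\pi_2}(s)$ derived in the preliminaries, namely
\[
\mathcal{T}_\tau^{\pi} V_\tau^{*}(s) - V_\tau^{*}(s) \;=\; \mathbb{E}_{a\sim \pi(\cdot|s)}\!\left[A_\tau^{*}(s,a)\right] \;-\; \tau\,\mathrm{KL}\!\left(\pi(\cdot|s)\,\|\,\pi^{*}(\cdot|s)\right).
\]
At this point the decisive step is to apply equation~\eqref{eq:opteq}, which states $A_\tau^{*}(s,a) = 0$ for every $s,a$ (this is the entropy-regularized optimality relation $V_\tau^{*}(s) = Q_\tau^{*}(s,a) - \tau \log\pi^{*}(a|s)$). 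Consequently the first term on the right vanishes identically and we obtain
\[
\mathcal{T}_\tau^{\pi} V_\tau^{*}(s) - V_\tau^{*}(s) \;=\; -\tau\,\mathrm{KL}\!\left(\pi(\cdot|s)\,\|\,\pi^{*}(\cdot|s)\right).
\]

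Finally I would substitute this back into the performance difference identity and multiply by $-1$ to flip sides, yielding
\[
V_\tau^{*}(\rho) - V_\tau^{\pi}(\rho) \;=\; \frac{\tau}{1-\gamma}\,\mathbb{E}_{s\sim d_\rho^{\pi}}\!\left[\mathrm{KL}\!\left(\pi(\cdot|s)\,\|\,\pi^{*}(\cdot|s)\right)\right],
\]
which is the claim. There is no real obstacle here; the only subtlety worth flagging is the choice of visitation measure ($d_\rho^{\pi}$ rather than $d_\rho^{\pi^*}$), which is forced by applying the performance difference lemma to the reversed value gap, and is exactly what makes the right-hand side align with the $\mathrm{KL}$ evaluated in the direction $\pi \,\|\, \pi^{*}$ rather than $\pi^{*} \,\|\, \pi$.
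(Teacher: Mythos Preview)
Your proposal is correct and follows exactly the approach sketched in the paper: apply the entropy performance difference lemma (Lemma~\ref{lem:entropyPDL}) to $V_\tau^\pi(\rho)-V_\tau^*(\rho)$ so that the visitation measure is $d_\rho^\pi$, then use $A_\tau^*(s,a)=0$ from~\eqref{eq:opteq} to eliminate the advantage term and leave only the KL piece.
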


Letting  $\{\pi^k\}$ be a sequence of policies, we will use the shorthand notation 
$V_\tau^k$ and $\mathcal{T}_\tau^{k+1}$ for $V_\tau^{\pi^k}$ and $\mathcal\mathcal{T}_\tau^{\pi^{k+1}}$, respectively.  The next lemma is an entropy regularization version of Theorem~\ref{thm:linear-infinity}, and the proof details  are thus omitted.
\begin{lemma}\label{lem:entroy-convergence-lemma}
If $\mathcal{T}_\tau^{k+1}V_\tau^k(s)-V_\tau^k(s)\geq C_k\left(\mathcal{T}_\tau V_\tau^k(s)-V_\tau^k(s)\right)$ for some $C\in(0,1)$, then 
\begin{align*}
\|V_\tau^*-V_\tau^{k+1}\|_\infty\leq  \left(1-(1-\gamma)\,C_k\right)\|V^*_\tau-V^{k}_\tau\|_\infty.
\end{align*}
\end{lemma}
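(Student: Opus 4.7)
The plan is to mirror the argument used for Theorem~\ref{thm:linear-infinity} in the non-entropy setting, with the non-entropy Bellman operator replaced by its entropy-regularized counterpart, and to rely on the $\gamma$-contraction of $\calT_\tau^\pi$ and $\calT_\tau$ that was recorded in the preliminaries of Section~\ref{sec:entropy}.

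First I would establish monotonicity, i.e. $V_\tau^{k+1}\geq V_\tau^k$. The hypothesis gives $\calT_\tau^{k+1}V_\tau^k(s)-V_\tau^k(s)\geq C_k(\calT_\tau V_\tau^k(s)-V_\tau^k(s))$, and the identity $\calT_\tau V_\tau^k(s)-V_\tau^k(s)=\tau\,\mathrm{KL}(\pi^k(\cdot|s)\|\pi^{\mathrm{spi}}(\cdot|s))\geq 0$ derived earlier implies $\calT_\tau^{k+1}V_\tau^k\geq V_\tau^k$. Iterating $\calT_\tau^{k+1}$ on both sides and invoking its $\gamma$-contraction (hence monotonicity as a fixed-point map) yields $V_\tau^{k+1}=\lim_n (\calT_\tau^{k+1})^nV_\tau^k\geq V_\tau^k$.

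Next I would carry out the state-wise decomposition. Writing $V_\tau^*=\calT_\tau V_\tau^*$ and $V_\tau^{k+1}=\calT_\tau^{k+1}V_\tau^{k+1}$, and using $V_\tau^{k+1}\geq V_\tau^k$ together with the monotonicity of $\calT_\tau^{k+1}$, one gets
\begin{align*}
V_\tau^*(s)-V_\tau^{k+1}(s) &= V_\tau^*(s)-\calT_\tau^{k+1}V_\tau^{k+1}(s)\\
&\leq V_\tau^*(s)-\calT_\tau^{k+1}V_\tau^k(s)\\
&\leq V_\tau^*(s)-V_\tau^k(s)-C_k\bigl(\calT_\tau V_\tau^k(s)-V_\tau^k(s)\bigr)\\
&= C_k\bigl(\calT_\tau V_\tau^*(s)-\calT_\tau V_\tau^k(s)\bigr)+(1-C_k)\bigl(V_\tau^*(s)-V_\tau^k(s)\bigr).
\end{align*}
Since $V_\tau^*\geq V_\tau^k$ pointwise, both terms on the right are nonnegative, so taking $\|\cdot\|_\infty$ and applying the $\gamma$-contraction of $\calT_\tau$ to the first term gives the desired bound $\|V_\tau^*-V_\tau^{k+1}\|_\infty\leq(1-(1-\gamma)C_k)\|V_\tau^*-V_\tau^k\|_\infty$.

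I expect no genuine obstacle: the only subtlety compared with the non-entropy proof is verifying the monotonicity $V_\tau^{k+1}\geq V_\tau^k$, which in Theorem~\ref{thm:linear-infinity} was immediate from $V^{k+1}(s)-V^k(s)=\sum_a \pi^{k+1}_{s,a}A^k_{s,a}$ but here must be obtained through the fixed-point iteration of $\calT_\tau^{k+1}$ combined with the non-negativity of $\calT_\tau V_\tau^k-V_\tau^k$. Everything else is a direct translation of the non-entropy argument, which is precisely why the paper omits the details.
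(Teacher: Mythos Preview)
Your proposal is correct and mirrors exactly the argument of Theorem~\ref{thm:linear-infinity} that the paper points to, with the entropy-regularized operators substituted throughout. One tiny wording remark: the monotonicity of $\calT_\tau^{k+1}$ (i.e., $V_1\le V_2\Rightarrow \calT_\tau^{k+1}V_1\le \calT_\tau^{k+1}V_2$) that you use to iterate and to pass from $V_\tau^{k+1}\ge V_\tau^k$ to $\calT_\tau^{k+1}V_\tau^{k+1}\ge \calT_\tau^{k+1}V_\tau^k$ follows directly from the definition of $\calT_\tau^\pi$, not from its $\gamma$-contraction; with that clarified, every step goes through.
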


 
 In this section, we will define $\mathcal{L}_k^{k+1}$ as 
\begin{align*}
\mathcal{L}_k^{k+1} = \frac{1}{1-\gamma}\sum_sd_\rho^*(s)\left(\mathcal{T}_\tau^{k+1}V_\tau^{k}(s)-V_\tau^{k}(s)\right),\numberthis\label{eq:entropy-Lk}
\end{align*}
and define $\mathcal{L}_k^*$ as 
\begin{align*}
\mathcal{L}_k^*=\frac{1}{1-\gamma}\sum_sd_\rho^*(s)\left(\mathcal{T}_\tau^*V_\tau^{k}(s)-V_\tau^{k}(s)\right),\numberthis\label{eq:entropy-Lstar}
\end{align*}
where
 $d_\rho^*$ is the visitation measure with respect to the optimal policy $\pi^*$, and $\mathcal{T}^*_\tau$ is the Bellman operator associated with  $\pi^*$.
It is clear that the following result still holds:
\begin{align*}
\mathcal{L}_k^*=V_\tau^*(\rho)-V_\tau^k(\rho).
\end{align*}
As in the non-entropy case, we consider minimizing $V_\tau^{\pi_\theta}(\mu)$ for fixed $\mu$ with $\tilde{\mu}=\min_s\mu(s)>0$ and the state value error $V_\tau^*(\rho)-V_\tau^k(\rho)$ will be evaluated for  an arbitrary $\rho$ with $\tilde{\rho}=\min_s\rho(s)>0$.
\subsection{Entropy Softmax PG}
Under softmax parameterization, the policy gradient of $V^{\pi_\theta}(\mu)$ with respect to $\theta$ is overall similar to that of the non-entropy case in \eqref{eq:softmax-gradient}, but with $A^{\pi_\theta}(s,a)$ being replace by $A_\tau^{\pi_\theta}(s,a)$:
  \begin{align*}
        \frac{\partial V_\tau^{\pi_\theta}(\mu)}{\partial \theta_{s,a}} = \frac{1}{1-\gamma} d^{\pi_\theta}_\mu(s)\, \pi_\theta(a|s) A_\tau^{\pi_\theta}(s,a).\numberthis\label{eq:entropy-softmax-pg-expression}
    \end{align*}
    Then the update of entropy softmax PG with constant step size in the parameter space is given by 
\begin{align*}
\theta_{s,a}^{k+1} = \theta^k_{s,a}+\eta_s\,\pi^k_{s,a}A^k_\tau(s,a),
\end{align*}
while in the policy space is given by
\begin{align*}
\pi^{k+1}_{s,a}\propto \pi^k_{s,a}\exp\left(\eta_s\,\pi^k_{s,a}A^k_\tau(s,a)\right),
\end{align*}
where $\eta_s=\frac{\eta}{1-\gamma}d_\mu^k(s)$ and $d_\mu^k$ is the visitation measure under the policy $\pi^k$. Again, we will use the shorthand notation $\pi^k_{s,a}$ for $\pi^k(a|s)$ and use $\pi^k_s$ for $\pi^k(\cdot|s)$ whenever it is convenient.  As in the case for softmax PG, we  let
\begin{align*}
\hat{A}^k_\tau(s,a)=\pi^k(a|s)A^k_\tau(s,a).
\end{align*}
It is evident that $\sum_a\hat{A}^k_\tau(s,a)=0$. Moreover, we have the following result, whose proof can be found in  Appendix~\ref{sec:proof-bound-Atau}.
\begin{lemma}\label{lem:bound-Atau}
For any policy $\pi$, one has 
\begin{align*}
\|\hat{A}_\tau^\pi\|_\infty\leq \frac{1+\tau\log|\mathcal{A}|}{1-\gamma}.
\end{align*}
\end{lemma}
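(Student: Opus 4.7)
The plan is to decompose $\hat{A}_\tau^\pi(s,a)$ into two pieces, each of which I can bound cleanly by $V_\tau^\pi(s)$, and then invoke the uniform bound on $V_\tau^\pi$ from Lemma~\ref{lem:entropy-value-bound}. Starting from the definition of $A_\tau^\pi$, I would rewrite
$$\hat{A}_\tau^\pi(s,a) = \pi(a|s)\bigl[Q_\tau^\pi(s,a) - \tau\log\pi(a|s)\bigr] - \pi(a|s)\,V_\tau^\pi(s),$$
and introduce the shorthand $T_a := \pi(a|s)\bigl[Q_\tau^\pi(s,a) - \tau\log\pi(a|s)\bigr]$, so that $\hat A_\tau^\pi(s,a) = T_a - \pi(a|s) V_\tau^\pi(s)$.

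The first step is to note that $T_a \ge 0$ for every $a$: $Q_\tau^\pi(s,a) \ge 0$ by Lemma~\ref{lem:entropy-value-bound}, and $-\tau\pi(a|s)\log\pi(a|s) \ge 0$ on $[0,1]$ with the standard convention $0\log 0 = 0$. The second and central step is the soft-value identity
$$V_\tau^\pi(s) = \sum_a \pi(a|s)\bigl[Q_\tau^\pi(s,a) - \tau\log\pi(a|s)\bigr] = \sum_a T_a,$$
which follows immediately from $V_\tau^\pi(s) = \mathbb{E}_{a\sim\pi(\cdot|s)}[Q_\tau^\pi(s,a)] + \tau\,\mathcal{H}(\pi(\cdot|s))$. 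Because the $T_a$ are non-negative and sum to $V_\tau^\pi(s)$, each satisfies $0 \le T_a \le V_\tau^\pi(s)$.

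Since $\pi(a|s)\in[0,1]$, we also have $0 \le \pi(a|s) V_\tau^\pi(s) \le V_\tau^\pi(s)$, and combining these two estimates yields the two-sided bound $-V_\tau^\pi(s) \le \hat{A}_\tau^\pi(s,a) \le V_\tau^\pi(s)$. Taking the absolute value and then the sup over $(s,a)$, and applying the bound $V_\tau^\pi(s) \le (1+\tau\log|\mathcal{A}|)/(1-\gamma)$ from Lemma~\ref{lem:entropy-value-bound}, gives the claim.

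There is no real obstacle here; the only mild subtlety is that $A_\tau^\pi(s,a)$ itself can blow up as $\pi(a|s)\downarrow 0$ via the $-\tau\log\pi(a|s)$ term, so one cannot bound $A_\tau^\pi$ directly and then multiply by $\pi(a|s)$. The key organizational move is to absorb the singular factor into the quantity $T_a$, which is continuous on $[0,1]$ and is the natural object to control through the soft-value identity.
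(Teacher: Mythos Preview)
Your argument is correct. The lower bound matches the paper's exactly: both observe that $T_a := \pi(a|s)[Q_\tau^\pi(s,a)-\tau\log\pi(a|s)]\ge 0$ and hence $\hat A_\tau^\pi(s,a)\ge -\pi(a|s)V_\tau^\pi(s)\ge -V_\tau^\pi(s)$.

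The upper bound, however, takes a genuinely different route. The paper drops $-\pi(a|s)V_\tau^\pi(s)$ and then bounds the two pieces of $T_a$ separately, using $\pi(a|s)Q_\tau^\pi(s,a)\le Q_\tau^\pi(s,a)\le \frac{1+\gamma\tau\log|\mathcal A|}{1-\gamma}$ and $-\tau\pi(a|s)\log\pi(a|s)\le \tau/e$; it then needs the side assumption $|\mathcal A|\ge 2$ (so that $\log|\mathcal A|\ge 1/e$) to absorb the $\tau/e$ term into the target bound. Your key move is instead the soft-value identity $\sum_a T_a = V_\tau^\pi(s)$: since all $T_a\ge 0$, each one is at most $V_\tau^\pi(s)$, and the upper bound follows immediately. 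This is cleaner --- it uses only the $V_\tau^\pi$ bound from Lemma~\ref{lem:entropy-value-bound} (not the separate $Q_\tau^\pi$ bound), and it dispenses with the $|\mathcal A|\ge 2$ assumption entirely.
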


In this subsection, we  establish the linear convergence of entropy softmax PG for a wider range of constant step sizes than that can be allowed in \cite{Mei_Xiao_Szepesvari_Schuurmans_2020} based on the similar analysis technique used for softmax PG in Section~\ref{sec:softmaxPG}. The following inequalities regarding the distances of two softmax policies will be used.
\begin{lemma}[\protect{\cite[Lemma~27]{Mei_Xiao_Szepesvari_Schuurmans_2020}} \textup{and} \protect{\cite[Section~A.2]{Cen_Cheng_Chen_Wei_Chi_2022}}]\label{lem:KL-softmax}
Let $\pi_\theta=\mathrm{softmax}(\theta)$ and $\pi_{\theta'}=\mathrm{softmax}(\theta')$ be two softmax policies. Then 
\begin{align*}
\mathrm{KL}(\pi_\theta\|\pi_{\theta'})\leq \frac{1}{2}\|\theta-\theta'-c\cdot \bm{1}\|_\infty^2
\end{align*}
for any constant $c\in\R$. In particular, taking $c=\log \|\exp(\theta)\|_1-\log \|\exp(\theta')\|_1$ yields 
\begin{align*}
\mathrm{KL}(\pi_\theta\|\pi_{\theta'})\leq \frac{1}{2}\|\log\pi_\theta-\log\pi_{\theta'}\|_\infty^2.
\end{align*}
Moreover, one has
\begin{align*}
\|\log\pi_\theta-\log\pi_{\theta'}\|_\infty\leq 2\|\theta-\theta'\|_\infty.
\end{align*}
\end{lemma}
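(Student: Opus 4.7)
The plan is to treat the log-partition function $\phi(\theta) := \log \|\exp(\theta)\|_1$ as the central object and derive all three inequalities from its convex-analytic structure. First, I would note that the softmax identity $\log \pi_\theta(a) = \theta_a - \phi(\theta)$ gives, by direct expansion,
\begin{align*}
\mathrm{KL}(\pi_\theta\|\pi_{\theta'}) = \sum_a \pi_\theta(a)(\theta_a - \theta'_a) - \bigl(\phi(\theta) - \phi(\theta')\bigr) = B_\phi(\theta', \theta),
\end{align*}
i.e., the KL divergence coincides with the Bregman divergence of $\phi$ with its arguments swapped. This works cleanly because $\nabla \phi(\theta) = \pi_\theta$, so the linear part of the Bregman divergence reproduces the expectation under $\pi_\theta$ automatically.

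Next I would invoke Taylor's theorem with remainder to write
\begin{align*}
B_\phi(\theta', \theta) = \tfrac{1}{2}(\theta' - \theta)^\top H(\tilde\theta)(\theta' - \theta)
\end{align*}
for some $\tilde\theta$ on the segment joining $\theta$ and $\theta'$, where $H(\theta) = \mathrm{diag}(\pi_\theta) - \pi_\theta \pi_\theta^\top$. The key observation is that $v^\top H(\theta) v = \Var_{a\sim \pi_\theta}(v_a)$, the variance of the entries of $v$ under $\pi_\theta$. Since variance is translation-invariant, $\Var_{\pi_{\tilde\theta}}(\theta - \theta') = \Var_{\pi_{\tilde\theta}}(\theta - \theta' - c\bm{1})$ for every $c \in \mathbb{R}$; bounding variance by the second moment and the second moment coordinatewise by $\|\theta - \theta' - c\bm{1}\|_\infty^2$ then proves the first displayed inequality.

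For the second inequality I would merely verify the algebraic identity $(\theta - \theta' - c\bm{1})_a = \log \pi_\theta(a) - \log \pi_{\theta'}(a)$ at $c = \phi(\theta) - \phi(\theta')$, so the stated bound is an immediate specialization of the first. For the third, I would use that $\phi$ is $1$-Lipschitz with respect to $\|\cdot\|_\infty$ (because $\nabla \phi = \pi_\theta$ lies in the simplex and hence has unit $\ell_1$ norm), so that the triangle inequality
\begin{align*}
|\log \pi_\theta(a) - \log \pi_{\theta'}(a)| \leq |\theta_a - \theta'_a| + |\phi(\theta) - \phi(\theta')| \leq 2\|\theta - \theta'\|_\infty
\end{align*}
yields the claimed factor of $2$. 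No single step looks genuinely hard; the only mildly subtle move is to exploit translation invariance of the Hessian-variance identity to leave $c$ free, which is precisely what allows the right-hand side to be tightened all the way from a naive $\|\theta - \theta'\|_\infty^2$ bound down to $\|\log \pi_\theta - \log \pi_{\theta'}\|_\infty^2$ via the specific choice of $c$.
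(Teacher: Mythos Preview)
Your argument is correct. The paper does not supply its own proof of this lemma; it simply attributes the statement to \cite{Mei_Xiao_Szepesvari_Schuurmans_2020} and \cite{Cen_Cheng_Chen_Wei_Chi_2022}. So there is no ``paper's proof'' to compare against, only the cited sources.

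Your route via the Bregman-divergence identity $\mathrm{KL}(\pi_\theta\|\pi_{\theta'}) = B_\phi(\theta',\theta)$ for the log-partition function $\phi$, followed by the Hessian--variance identification $v^\top \nabla^2\phi(\theta)\,v = \Var_{a\sim\pi_\theta}(v_a)$ and translation invariance of variance, is clean and standard. One small point worth making explicit: the Lagrange-remainder form of Taylor with a single intermediate point $\tilde\theta$ is justified by restricting $\phi$ to the segment $t\mapsto \phi(\theta+t(\theta'-\theta))$ and applying the one-dimensional mean-value form; alternatively the integral remainder $\int_0^1 (1-t)\,v^\top H(\theta+tv)\,v\,dt$ with $\int_0^1(1-t)\,dt = \tfrac12$ gives the same bound without needing a single $\tilde\theta$. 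Either way the variance bound $\Var_{\pi}(v)\le \|v-c\bm 1\|_\infty^2$ holds uniformly in the base point, so the conclusion goes through. The second and third inequalities are exactly as you describe: a substitution and the $1$-Lipschitz property of $\phi$ in $\|\cdot\|_\infty$ (dual to $\|\nabla\phi\|_1=1$).
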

As in the non-entropy case, the analysis begins with a state-wise improvement lower bound in terms of $\max_a|\hat{A}_\tau^k(s,a)|$.
\begin{lemma}[Improvement Lower Bound]\label{lem:entropy-lower-bound}
    For entropy softmax PG with step size $\eta>0$,
\begin{align*}
\mathcal{T}_\tau^{k+1}V_\tau^k(s)-V_\tau^k(s)\geq \eta\,\tilde{\mu}\left[\exp\left(-\frac{2\,\eta\,(1+\tau\log|\mathcal{A}|)}{(1-\gamma)^2}\right)-\frac{\tau\,\eta}{2(1-\gamma)}\right]\,\max_a|\hat{A}_\tau^k(s,a)|^2.
\end{align*}
Moreover, there exists a unique solution to 
\begin{align*}
    \exp\left(-\frac{2\,x\,(1+\tau\log|\mathcal{A}|)}{(1-\gamma)^2}\right)-\frac{\tau\,x}{2(1-\gamma)}=0,
\end{align*}
denoted $\beta$, such that $\mathcal{T}_\tau^{k+1}V_\tau^k(s)-V_\tau^k(s)\geq 0$ for $\eta\in(0,\beta)$.
\end{lemma}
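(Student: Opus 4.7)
The plan is to decompose the one-step improvement into an expected-advantage term minus an entropic penalty, and then bound each piece separately by porting the covariance-identity machinery of Section~\ref{sec:softmaxPG} to the regularized setting. Starting from the entropy Bellman operator together with the identity $A^k_\tau(s,a) = Q^k_\tau(s,a) - V^k_\tau(s) - \tau\log\pi^k(a|s)$, I would first establish
\begin{equation*}
\mathcal{T}^{k+1}_\tau V^k_\tau(s) - V^k_\tau(s) \;=\; \sum_{a} \pi^{k+1}(a|s)\,A^k_\tau(s,a) \;-\; \tau\,\mathrm{KL}\bigl(\pi^{k+1}_s \,\big\|\, \pi^k_s\bigr),
\end{equation*}
which already appears implicitly in the preliminaries of this section.

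For the expected-advantage term, I would mimic the proofs of Lemmas~\ref{lem:softmax-identity} and~\ref{lem:softmaxPG-improvement-lower} with $\hat{A}^k_\tau = \pi^k A^k_\tau$ in place of $\hat{A}^k$: express the sum as $\tfrac{|\mathcal{A}|}{Z^k_s}\,\mathrm{Cov}_{a\sim U}\bigl(\hat{A}^k_\tau(s,a),\,e^{\eta_s \hat{A}^k_\tau(s,a)}\bigr)$, apply the mean value theorem to $e^x$ pairwise to extract a factor of $\eta_s e^{-\eta_s(\max-\min)}$, then combine with $\mathrm{Var}_U(\hat{A}^k_\tau) \ge M^2/|\mathcal{A}|$ (where $M := \max_a|\hat{A}^k_\tau(s,a)|$ and $\sum_a\hat{A}^k_\tau(s,a)=0$) and the crude bound $Z^k_s \le e^{\eta_s \max_a\hat{A}^k_\tau}$ to arrive at $\sum_a \pi^{k+1}(a|s) A^k_\tau(s,a) \ge \eta_s e^{-2\eta_s M}M^2$. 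Invoking Lemma~\ref{lem:bound-Atau} and $\eta_s \le \eta/(1-\gamma)$ to bound $\eta_s M \le \eta(1+\tau\log|\mathcal{A}|)/(1-\gamma)^2$ then cleans up the exponential.

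For the entropic penalty, I would apply Lemma~\ref{lem:KL-softmax} with $\theta^{k+1}_{s,a} - \theta^k_{s,a} = \eta_s \hat{A}^k_\tau(s,a)$, which yields $\mathrm{KL}(\pi^{k+1}_s \| \pi^k_s) \le \eta_s^2 M^2/2$; bounding one factor of $\eta_s$ by $\eta/(1-\gamma)$ rewrites the penalty as $\tau\eta_s\eta M^2/(2(1-\gamma))$. Subtracting and pulling $\eta_s M^2$ outside produces
\begin{equation*}
\mathcal{T}^{k+1}_\tau V^k_\tau(s) - V^k_\tau(s) \;\ge\; \eta_s \left[\, e^{-2\eta_s M} \;-\; \frac{\tau\eta}{2(1-\gamma)} \,\right] M^2.
\end{equation*}
After checking the sign of the bracket, the outer factor $\eta_s$ is replaced by its lower bound $\eta\tilde{\mu}$ and $e^{-2\eta_s M}$ by $\exp\bigl(-2\eta(1+\tau\log|\mathcal{A}|)/(1-\gamma)^2\bigr)$, giving the stated form.

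The second claim reduces to elementary scalar analysis: set $g(x) := \exp\bigl(-2x(1+\tau\log|\mathcal{A}|)/(1-\gamma)^2\bigr) - \tau x/(2(1-\gamma))$; then $g(0)=1>0$, $g$ is strictly decreasing on $[0,\infty)$ because $g'<0$ identically, and $g(x)\to-\infty$, so the intermediate value theorem gives a unique $\beta>0$ with $g(\beta)=0$ and $g>0$ on $(0,\beta)$. The main difficulty I foresee lies in the combination step: the state-dependent weight $\eta_s$ appears both inside the exponent (where one wants $\eta_s$ small) and in the quadratic KL penalty (where one also wants $\eta_s$ small), while the two relevant envelopes $\eta\tilde{\mu}\le\eta_s\le\eta/(1-\gamma)$ pull in opposite directions. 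A careful case split on the sign of the inner bracket, together with the uniform bound on $|\hat{A}^k_\tau|$ from Lemma~\ref{lem:bound-Atau}, is what makes the outer factor $\eta_s$ safely replaceable by $\eta\tilde{\mu}$ in the final step.
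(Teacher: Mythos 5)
Your proposal is correct and follows essentially the same route as the paper's proof: the same decomposition into $\sum_a\pi^{k+1}(a|s)A^k_\tau(s,a)-\tau\,\mathrm{KL}(\pi^{k+1}_s\|\pi^k_s)$, the same covariance/pairwise argument for the advantage term, and the same use of Lemma~\ref{lem:KL-softmax} with $c=0$ for the entropic penalty. Your explicit attention to the sign of the inner bracket before replacing $\eta_s$ by $\eta\tilde{\mu}$ is if anything slightly more careful than the paper, which writes that final substitution without comment (it is valid precisely in the regime $\eta\le\beta$ where the lemma is actually used).
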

\begin{proof}
    Recall that
\begin{align*}
\mathcal{T}_\tau^{k+1}V_\tau^k(s)-V_\tau^k(s)=\sum_a\pi^{k+1}(a|s)A^k_\tau(s,a)-\tau\,\mathrm{KL}(\pi^{k+1}(\cdot|s)\|\pi^k(\cdot|s)).
\end{align*}
For the KL part, setting $c=0$ in Lemma~\ref{lem:KL-softmax} gives
\begin{align*}
\mathrm{KL}(\pi^{k+1}(\cdot|s)\|\pi^k(\cdot|s))&\leq \frac{1}{2}\|\theta^{k+1}_s-\theta^k_s\|_\infty^2=\frac{1}{2}\|\eta_s\,\hat{A}^k_\tau(s,\cdot)\|_\infty^2\\
&=\frac{1}{2}(\eta_s)^2\max_a|\hat{A}_\tau^k(s,a)|^2.
\end{align*}
For the first term, letting $Z_s^k = \mathbb{E}_{a\sim \pi^k(\cdot|s)}\left[\exp\left(\eta_s\,\hat{A}^k_\tau(s,a)\right)\right]$ and $\alpha=(1+\tau\log|\mathcal{A}|)/(1-\gamma)$, one has 
\begin{align*}
&\sum_a\pi^{k+1}(a|s)A^k_\tau(s,a)\\
&=\frac{1}{Z_s^k}\sum_a \hat{A}_\tau^k(s,a)\exp\left(\eta_s\,\hat{A}_\tau^k(s,a)\right)\\
&=\frac{|\mathcal{A}|}{Z_s^k}\mathrm{Cov}_{a\sim U}\left(\hat{A}_\tau^k(s,a),\exp\left(\eta_s\,\hat{A}_\tau^k(s,a)\right)\right)\\
&=\frac{1}{2\,Z_s^k|\mathcal{A}|}\sum_{a,a'}\left[\hat{A}_\tau^k(s,a)-\hat{A}_\tau^k(s,a')\right]\left[\exp\left(\eta_s\,\hat{A}_\tau^k(s,a)\right)-\exp\left(\eta_s\,\hat{A}_\tau^k(s,a')\right)\right]\\
&=\frac{1}{Z_s^k|\mathcal{A}|}\sum_{\hat{A}_\tau^k(s,a)>\hat{A}_\tau^k(s,a')}\left[\hat{A}_\tau^k(s,a)-\hat{A}_\tau^k(s,a')\right]\left[\exp\left(\eta_s\,\hat{A}_\tau^k(s,a)\right)-\exp\left(\eta_s\,\hat{A}_\tau^k(s,a')\right)\right]\\
&=\frac{1}{Z_s^k|\mathcal{A}|}\sum_{\hat{A}_\tau^k(s,a)>\hat{A}_\tau^k(s,a')}\exp\left(\eta_s\,\hat{A}_\tau^k(s,a')\right)\left[\hat{A}_\tau^k(s,a)-\hat{A}_\tau^k(s,a')\right]\left[\exp\left(\eta_s\left(\hat{A}_\tau^k(s,a)-\hat{A}_\tau^k(s,a')\right)\right)-1\right]\\
&\geq\frac{\exp(-\eta_s\,\alpha)}{Z_s^k|\mathcal{A}|}\sum_{\hat{A}_\tau^k(s,a)>\hat{A}_\tau^k(s,a')}\left[\hat{A}_\tau^k(s,a)-\hat{A}_\tau^k(s,a')\right]\left[\exp\left(\eta_s\left(\hat{A}_\tau^k(s,a)-\hat{A}_\tau^k(s,a')\right)\right)-1\right]\\
&\geq \frac{\eta_s\exp(-\eta_s\,\alpha)}{Z_s^k|\mathcal{A}|}\sum_{\hat{A}_\tau^k(s,a)>\hat{A}_\tau^k(s,a')}\left[\hat{A}_\tau^k(s,a)-\hat{A}_\tau^k(s,a')\right]^2\\
&\geq \frac{\eta_s\exp(-2\eta_s\,\alpha)}{|\mathcal{A}|}\sum_{\hat{A}_\tau^k(s,a)>\hat{A}_\tau^k(s,a')}\left[\hat{A}_\tau^k(s,a)-\hat{A}_\tau^k(s,a')\right]^2\\
&\geq \frac{\eta_s\exp(-2\eta_s\,\alpha)}{2\,|\mathcal{A}|}\sum_{a,a'}\left[\hat{A}_\tau^k(s,a)-\hat{A}_\tau^k(s,a')\right]^2\\
&=\eta_s|\mathcal{A}|\exp(-2\eta_s\,\alpha)\mathrm{Var}_{a\sim u}\left(\hat{A}_\tau^k(s,a)\right)\\
&=\eta_s\exp(-2\eta_s\,\alpha)\sum_a \hat{A}_\tau^k(s,a)^2\\
&\geq \eta_s\exp(-2\eta_s\,\alpha)\max_a \hat{A}_\tau^k(s,a)^2,
\end{align*}
where the third inequality follows from $Z_s^k\leq \exp(\eta_s\max_a\hat{A}_\tau^k(s,a))\leq \exp(\eta_s\,\alpha)$.

Combining the above two bounds together yields
\begin{align*}
\mathcal{T}_\tau^{k+1}V_\tau^k(s)-V_\tau^k(s)&\geq \eta_s\left(\exp(-2\eta_s\,\alpha)-\frac{\tau}{2}\eta_s\right)\max_a |\hat{A}_\tau^k(s,a)|^2\\
&\geq\eta\,\tilde{\mu}\left[\exp\left(-\frac{2\eta\,\alpha}{1-\gamma}\right)-\frac{\tau\,\eta}{2(1-\gamma)}\right]\max_a |\hat{A}_\tau^k(s,a)|^2.
\end{align*}
where the second line follows from $\eta\,\tilde{\mu}\leq\eta_s\leq\eta/(1-\gamma)$. The proof of the first claim is now complete, and the second claim can be verified directly.
\end{proof}

Based on the improvement lower bound we can first establish the global convergence of entropy softmax PG for $\eta\in(0,\beta)$.  The key is to show that the limit of $\pi^k_{s,a}$ for any $(s,a)$ exists. Once it is done, the proof follows closely the one for Lemma~16 in \cite{Mei_Xiao_Szepesvari_Schuurmans_2020}. The proof of this result is deferred  to Appendix~\ref{sec:proof-entropyPG-global}.
\begin{lemma}[Global Convergence]\label{lem:entropyPG-global}
   For any $\eta\in(0,\beta)$, the value sequence $\{ V^k_\tau(\rho) \}$ produced by entropy softmax PG converges to the optimal value, 
    \begin{align*}
        \forall \, s\in\calS: \quad \lim_{k\to\infty} V^k_\tau(s) = V^*_\tau(s).
    \end{align*}
\end{lemma}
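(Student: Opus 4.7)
The plan is to follow the four-stage route suggested by the authors: monotonicity, summability, convergence of the policy iterates, and identification of the limit. For stage one, Lemma~\ref{lem:entropy-lower-bound} tells us that for $\eta\in(0,\beta)$ we have $\mathcal{T}_\tau^{k+1}V_\tau^k(s)-V_\tau^k(s)\ge 0$ pointwise in $s$, so the performance difference lemma (Lemma~\ref{lem:entropyPDL}) applied with $\rho=\mu$ gives $V_\tau^{k+1}(\mu)\ge V_\tau^k(\mu)$. Combined with the uniform upper bound $(1+\tau\log|\mathcal{A}|)/(1-\gamma)$ from Lemma~\ref{lem:entropy-value-bound}, the sequence $\{V_\tau^k(\mu)\}$ converges; stating the same for arbitrary $\rho$ only requires changing the measure as in Section~\ref{sec:overview}.

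For stage two, I would telescope: using $d_\mu^{k+1}(s)\ge(1-\gamma)\tilde{\mu}$ inside the performance difference lemma together with the pointwise bound in Lemma~\ref{lem:entropy-lower-bound},
\begin{equation*}
V_\tau^{k+1}(\mu)-V_\tau^k(\mu)\;\ge\;\tilde{\mu}\,\eta\,\tilde{\mu}\,C(\eta,\tau,\gamma)\;\sum_{s}\max_a\bigl|\hat{A}_\tau^k(s,a)\bigr|^2,
\end{equation*}
where $C(\eta,\tau,\gamma)=\exp\!\bigl(-\tfrac{2\eta(1+\tau\log|\mathcal{A}|)}{(1-\gamma)^2}\bigr)-\tfrac{\tau\eta}{2(1-\gamma)}>0$ for $\eta\in(0,\beta)$. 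Summing over $k$ and using the boundedness from stage one yields $\sum_{k=0}^\infty\max_a|\hat{A}_\tau^k(s,a)|^2<\infty$, so in particular $\hat{A}_\tau^k(s,a)=\pi^k(a|s)A_\tau^k(s,a)\to 0$ for every $(s,a)$.

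Stage three is where the work lies and is the main obstacle. The summability obtained above is quadratic in $\hat{A}_\tau^k$, whereas the parameter update $\theta_{s,a}^{k+1}-\theta_{s,a}^k=\eta_s\hat{A}_\tau^k(s,a)$ is only linear; bridging this gap requires an argument in the spirit of Mei et al.~Lemma~16. Concretely, I would partition the actions at each state $s$ into $\mathcal{A}^+(s)=\{a:\liminf_k\pi^k(a|s)>0\}$ and its complement. For $a\in\mathcal{A}^+(s)$, the summability of $|\hat{A}_\tau^k(s,a)|^2$ together with the lower bound on $\pi^k(a|s)$ forces $A_\tau^k(s,a)\to 0$; combining this with the continuity of $Q_\tau^k$ as a function of $V_\tau^k$ (which has a limit) and the softmax form of the update, one can show that $\theta_{s,a}^k-\theta_{s,a'}^k$ is Cauchy for $a,a'\in\mathcal{A}^+(s)$, so the restricted probabilities converge. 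For $a\notin\mathcal{A}^+(s)$ a separate monotonicity argument, tracking $-\log\pi^k(a|s)$ once $k$ is large enough that $A_\tau^k(s,a)$ falls below a suitable threshold, shows $\pi^k(a|s)\to 0$. Together these give $\pi^k(a|s)\to\pi^\infty(a|s)$ for every $(s,a)$.

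Stage four is short. Passing to the limit in $\hat{A}_\tau^k(s,a)\to 0$ gives $\pi^\infty(a|s)A_\tau^\infty(s,a)=0$ for all $(s,a)$; combined with $\sum_a\pi^\infty(a|s)A_\tau^\infty(s,a)=0$ and the softmax-induced strict positivity of $\pi^\infty$ on the support $\mathcal{A}^+(s)$, this forces $A_\tau^\infty(s,a)=0$ on $\mathcal{A}^+(s)$ and $\pi^\infty(a|s)=0$ otherwise. Identifying this with~\eqref{eq:opteq} and invoking the uniqueness of the entropy-regularized optimal policy yields $\pi^\infty=\pi^*$ and hence $V_\tau^k(s)\to V_\tau^*(s)$ for every~$s$.
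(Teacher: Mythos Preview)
Stages one and two are fine and coincide with what the paper does. The real problems are in stages three and four.

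In stage three, your treatment of actions with $\liminf_k\pi^k(a|s)=0$ is incorrect. You write that you will track $-\log\pi^k(a|s)$ ``once $k$ is large enough that $A_\tau^k(s,a)$ falls below a suitable threshold.'' But in the entropy-regularized setting $A_\tau^k(s,a)=Q_\tau^k(s,a)-\tau\log\pi^k(a|s)-V_\tau^k(s)$, and when $\pi^k(a|s)$ is small the term $-\tau\log\pi^k(a|s)$ is large and positive, driving $A_\tau^k(s,a)$ \emph{up}, not down. Consequently the parameter update $\theta_{s,a}^{k+1}-\theta_{s,a}^k=\eta_s\pi^k(a|s)A_\tau^k(s,a)$ is nonnegative whenever $\pi^k(a|s)$ is small enough, which pushes $\theta_{s,a}^k$ upward rather than forcing $\pi^k(a|s)\to 0$. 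So the monotonicity you invoke goes in the wrong direction, and nothing rules out oscillation of $\pi^k(a|s)$ between a neighbourhood of $0$ and a neighbourhood of $p(a):=\exp\bigl((Q_\tau^\infty(s,a)-V_\tau^\infty(s))/\tau\bigr)$.

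Stage four then inherits this gap. You allow the possibility $\pi^\infty(a|s)=0$ and claim identification with~\eqref{eq:opteq}, but~\eqref{eq:opteq} says $A_\tau^*(s,a)=0$ for \emph{every} $(s,a)$, and the entropy-regularized optimum $\pi^*(\cdot|s)\propto\exp(Q_\tau^*(s,\cdot)/\tau)$ has full support. From $\hat A_\tau^\infty(s,a)=0$ you get nothing on the set $\{a:\pi^\infty(a|s)=0\}$, so you cannot match~\eqref{eq:opteq} there, and uniqueness of $\pi^*$ gives no leverage. The paper closes this loop differently: it first shows each $\pi^k(a|s)$ has at most the two cluster points $\{0,p(a)\}$, then uses $\|\partial V_\tau^k/\partial\theta_{s,\cdot}\|_\infty\to 0$ to rule out jumping between the two clusters (hence the limit exists), and finally proves by contradiction that no limit can equal $0$, using precisely the sign observation above (small $\pi^k(a|s)$ forces $\partial V_\tau^k/\partial\theta_{s,a}\ge 0$, hence $\theta_{s,a}^k$ eventually increases, which combined with $\sum_a\hat A_\tau^k(s,a)=0$ forces $\sum_{a\in\mathcal{A}^+(s)}\theta_{s,a}^k$ to eventually decrease, contradicting its divergence). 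You will need both of those ingredients, or an equivalent argument, to finish.
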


Note that the following result can be obtained as a byproduct of  the proof of Lemma~\ref{lem:entropyPG-global}:
\begin{align*}
    \kappa:=\inf_{k\geq 0}\min_{s,a}\pi^k_{s,a}>0.
\end{align*}
Now we are in the position to show the linear convergence of entropy softmax PG.
\begin{theorem}[Linear Convergence]\label{thm:entropyPG-linear}
    For any constant step size $\eta\in(0,\beta)$, entropy softmax PG converges linearly,
    \begin{align*}
        \|V_\tau^*-V_\tau^k\|_\infty\leq
        \left(1-(1-\gamma)\eta\,\tilde{\mu}\,\tau\,\kappa^2\left[2\exp\left(-\frac{2\,\eta\,(1+\tau\log|\mathcal{A}|)}{(1-\gamma)^2}\right)-\frac{\tau\,\eta}{(1-\gamma)}\right]\right)^k\|V_\tau^*-V_\tau^k\|_\infty.
    \end{align*}
\end{theorem}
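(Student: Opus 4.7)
The plan is to verify the hypothesis of Lemma~\ref{lem:entroy-convergence-lemma} state-by-state, with
\[
C_k \;=\; 2\,\eta\,\tilde{\mu}\,\tau\,\kappa^2\Bigl[\exp\Bigl(-\tfrac{2\eta(1+\tau\log|\mathcal{A}|)}{(1-\gamma)^2}\Bigr)-\tfrac{\tau\eta}{2(1-\gamma)}\Bigr].
\]
Indeed, $2(1-\gamma)C_k$ matches the quantity subtracted from $1$ inside the parenthesis of the theorem once the factor $2$ is distributed through the square bracket; iterating the one-step $\|\cdot\|_\infty$ contraction then yields the claim. Thus, it suffices to show
$\mathcal{T}_\tau^{k+1}V_\tau^k(s)-V_\tau^k(s) \ge C_k\bigl(\mathcal{T}_\tau V_\tau^k(s)-V_\tau^k(s)\bigr)$ for every $s$.

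First, I would plug in Lemma~\ref{lem:entropy-lower-bound} to get the state-wise lower bound
\[
\mathcal{T}_\tau^{k+1}V_\tau^k(s)-V_\tau^k(s) \;\ge\; \eta\,\tilde{\mu}\Bigl[\exp\Bigl(-\tfrac{2\eta(1+\tau\log|\mathcal{A}|)}{(1-\gamma)^2}\Bigr)-\tfrac{\tau\eta}{2(1-\gamma)}\Bigr]\max_a|\hat A_\tau^k(s,a)|^2,
\]
valid since $\eta\in(0,\beta)$ makes the bracket nonnegative. By the global convergence statement (Lemma~\ref{lem:entropyPG-global}) and the note following it, $\kappa:=\inf_{k,s,a}\pi^k(a|s)>0$, so $\max_a|\hat A_\tau^k(s,a)|^2 = \max_a\pi^k(a|s)^2A_\tau^k(s,a)^2 \ge \kappa^2\max_a A_\tau^k(s,a)^2$.

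The crucial (and only non-routine) step is to prove the reverse-Pinsker-type bound
\[
\max_a A_\tau^k(s,a)^2 \;\ge\; 2\tau^2\,\mathrm{KL}\bigl(\pi^k(\cdot|s)\,\|\,\pi^{\mathrm{spi},k}(\cdot|s)\bigr),
\]
which I would obtain from Lemma~\ref{lem:KL-softmax} by writing $\pi^k(\cdot|s)=\mathrm{softmax}(\log\pi^k(\cdot|s))$ and $\pi^{\mathrm{spi},k}(\cdot|s)=\mathrm{softmax}(Q_\tau^k(s,\cdot)/\tau)$, and then choosing the free shift as $c=-V_\tau^k(s)/\tau$. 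Under this choice, the defining identity $A_\tau^k(s,a)=Q_\tau^k(s,a)-\tau\log\pi^k(a|s)-V_\tau^k(s)$ rearranges into
\[
\log\pi^k(\cdot|s)\;-\;Q_\tau^k(s,\cdot)/\tau\;-\;c\,\bm{1} \;=\; -A_\tau^k(s,\cdot)/\tau,
\]
so Lemma~\ref{lem:KL-softmax} gives $\mathrm{KL}(\pi^k(\cdot|s)\|\pi^{\mathrm{spi},k}(\cdot|s))\le \tfrac{1}{2\tau^2}\max_a A_\tau^k(s,a)^2$, which is exactly what is needed.

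Combining the three estimates with the identity $\mathcal{T}_\tau V_\tau^k(s)-V_\tau^k(s)=\tau\,\mathrm{KL}(\pi^k(\cdot|s)\|\pi^{\mathrm{spi},k}(\cdot|s))$ established in Section~\ref{sec:entropy}'s preliminaries, I obtain the required inequality with the claimed $C_k$; applying Lemma~\ref{lem:entroy-convergence-lemma} and iterating over $k$ then completes the proof. The main obstacle is isolating the right specialization of Lemma~\ref{lem:KL-softmax}: the shift $c=-V_\tau^k(s)/\tau$ is what makes the right-hand side collapse onto $A_\tau^k(s,\cdot)/\tau$ and thereby produces the factor of $\tau$ needed to absorb the $\tau\,\mathrm{KL}$ on the right of Lemma~\ref{lem:entroy-convergence-lemma}; once this choice is spotted, the rest of the argument is a short algebraic chaining of already-proved inequalities.
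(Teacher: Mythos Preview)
Your proposal is correct and follows essentially the same approach as the paper's proof: invoke the improvement lower bound (Lemma~\ref{lem:entropy-lower-bound}), relate $\max_a|\hat A_\tau^k(s,a)|^2$ to $\max_a|A_\tau^k(s,a)|^2$ via $\kappa$, upper-bound $\mathcal{T}_\tau V_\tau^k(s)-V_\tau^k(s)=\tau\,\mathrm{KL}(\pi^k_s\|\pi^{k,\mathrm{spi}}_s)$ by $\tfrac{1}{2\tau}\|A_\tau^k(s,\cdot)\|_\infty^2$ through Lemma~\ref{lem:KL-softmax} with the shift that produces $A_\tau^k/\tau$, and then apply Lemma~\ref{lem:entroy-convergence-lemma}. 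The only cosmetic difference is that the paper parameterizes $\pi^k_s$ by $\theta_s^k$ and sets $c=\log\|\exp(\theta_s^k)\|_1+V_\tau^k(s)/\tau$, whereas you parameterize by $\log\pi^k(\cdot|s)$ and take $c=-V_\tau^k(s)/\tau$; these are equivalent since softmax is shift-invariant.
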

\begin{proof}
Define 
    \begin{align*}
\pi^{k,\mathrm{spi}}(\cdot|s) = \frac{\exp\left(Q^k_\tau(s,\cdot)/\tau\right)}{\|\exp\left(Q^k_\tau(s,\cdot)/\tau\right\|_1}.
\end{align*}
By Lemma~\ref{lem:KL-softmax}, one has
\begin{align*}
\mathcal{T}_\tau V^k(s)-V^k(s) & =\tau\,\mathrm{KL}(\pi^k(\cdot|s)\|\pi^{k,\mathrm{spi}}(\cdot|s))\\
&\leq \frac{\tau}{2}\left\|\theta_s^k-Q^k_\tau(s,\cdot)/\tau+c\cdot \bm{1}\right\|_\infty^2\\
&=\frac{\tau}{2}\|\log\pi^k(\cdot|s)-Q_\tau^k(s,\cdot)/\tau+(c-\log\|\exp(\theta_s^k)\|_1)\cdot \bm{1}\|_\infty^2.
\end{align*}
Setting $c=\log\|\exp(\theta_s^k)\|_1+V_\tau^k(s)/\tau$  yields that 
\begin{align*}
\mathcal{T}_\tau V^k(s)-V^k(s)& \leq \frac{1}{2\,\tau}\|A^k_\tau(s,\cdot)\|_\infty^2\\
&\leq \frac{1}{2\,\tau\,\kappa^2}\max_a|\hat{A}^k_\tau(s,a)|^2, 
\end{align*}
Combining it with the improvement lower bound and then applying Lemma~\ref{lem:entroy-convergence-lemma}
completes the proof.
\end{proof}

\begin{remark}
    In \textup{\cite{Mei_Xiao_Szepesvari_Schuurmans_2020}}, the linear convergence of entropy softmax PG has been established for $\eta\in(0,2\alpha]$ with $\alpha=(1-\gamma)^3 / (8+\tau(4+8\log |\calA|))$, with the rate at $\alpha$ being given by
    \begin{align*}
        V^*_\tau(\rho) - V^k_\tau(\rho) 
        \leq \frac{1}{\tilde{\mu}} \cdot \frac{1+\tau \log |\calA|}{(1-\gamma)^2} \cdot \exp \left( -(k-1) \cdot \frac{(1-\gamma)\, \tau\, \alpha}{|\calS|} \cdot \tilde{\mu}\, \kappa^2 \left\| \frac{d^*_\mu}{\mu} \right\|_\infty^{-1} \right).
        \numberthis\label{eq:mei-entropy}
    \end{align*}
    Indeed one can show that $2\alpha<\beta$. Thus, our result is applicable for a wider range of constant step sizes. To see this, first note that 
    \begin{align*}
        \alpha = \frac{(1-\gamma)^3}{8+\tau(4+8\log |\calA|)} < \frac{(1-\gamma)^3}{8+ 8\tau \log |\calA|} = \frac{(1-\gamma)^2}{8\,c} := \alpha^\prime,
    \end{align*}
    where $c = (1+\tau \log |\calA|) / (1-\gamma)$. We have
    \begin{align*}
        \exp\left( -\frac{4\alpha^\prime}{1-\gamma} c \right) - \frac{\tau\alpha^\prime}{(1-\gamma)} &= \exp \left( -\frac{1-\gamma}{2} \right) - \frac{\tau(1-\gamma)}{8\,c} \\
        &\geq 1- \frac{1-\gamma}{2} - \frac{\tau(1-\gamma)}{8\,c} \\
        &= 1- \frac{(1-\gamma)(4 + \tau/c)}{8} \\
        &>0,
    \end{align*}
    where the last inequality utilizes  $\tau / c < 2(1-\gamma)$ for $|\calA| >1$. By the monotonically decreasing property of the function $\exp\left( -\frac{2\,x}{1-\gamma} c \right) - \frac{\tau\,x}{2(1-\gamma)}$, it can be concluded that $2\alpha < 2\alpha^\prime < \beta$, i.e. we provide a wider range of step sizes for entropy softmax PG to converge linearly. 
    
    Furthermore, one can verify that 

    \begin{align*}
        \exp\left( -\frac{2\alpha}{1-\gamma} c \right) - \frac{\tau\alpha}{2(1-\gamma)} \geq \exp\left( -\frac{2\alpha^\prime}{1-\gamma} c \right) - \frac{\tau\alpha^\prime}{2(1-\gamma)} \geq 1 - \frac{(1-\gamma)(4+\tau/c)}{16} > \frac{1}{2}. \numberthis \label{eq:remark-ent-softmax-pg}
    \end{align*}
    Therefore plugging $\alpha = (1-\gamma)^3 / (8+\tau(4+8\log |\calA|))$ into Theorem~\ref{thm:entropyPG-linear} gives
    \begin{align*}
        V^*_\tau(\rho) - V^k_\tau(\rho) \leq \left\| V^*_\tau - V^k_\tau \right\|_\infty &\leq \left( 1-(1-\gamma)\, \alpha\, \tilde{\mu}\, \kappa^2 \left[ 2\exp\left( -\frac{2\,\alpha}{1-\gamma} c \right) - \frac{\tau\alpha}{1-\gamma} \right]\right)^k \cdot \left\| V^*_\tau - V^0_\tau \right\|_\infty \\
        &\leq \frac{1+\tau\log |\calA|}{1-\gamma} \cdot \left[ 1-(1-\gamma)\,  \alpha \,\tilde{\mu}\, \tau\, \kappa^2 \right]^k \\
        &\leq \frac{1+\tau\log |\calA|}{1-\gamma} \cdot \exp \left( -k \cdot (1-\gamma)\, \alpha\, \tilde{\mu}\, \tau\, \kappa^2 \right),
    \end{align*}
    where the second inequality is due to Lemma~\ref{lem:entropy-value-bound} and equation~\eqref{eq:remark-ent-softmax-pg}. 
    Compared with \eqref{eq:mei-entropy}, our result is clearly better. 
\end{remark}

\begin{remark}
    For softmax PG, we have established its convergence   for any constant step size \textup{(}though with sublinear convergence rate\textup{)}. For entropy softmax PG, the convergence can only be guaranteed for the step size over a finite interval. Thus, one may wonder whether the convergence of entropy softmax PG can be established for any constant step size. To investigate this issue, numerical test has been conducted on a random MDP,  and the result suggests entropy softmax PG does not converge for a large step size, see Figure~\ref{fig:entropypg}. In the test,   the rewards for the state-action pairs and the entries of the transition probability matrix are generated through a uniform distribution over $\left(0,1\right)$. We then re-scale each row of the transition probability matrix to obtain a stochastic matrix. 
\end{remark}

\begin{figure}[ht!]
    \centering
    \includegraphics[width=0.5\textwidth]{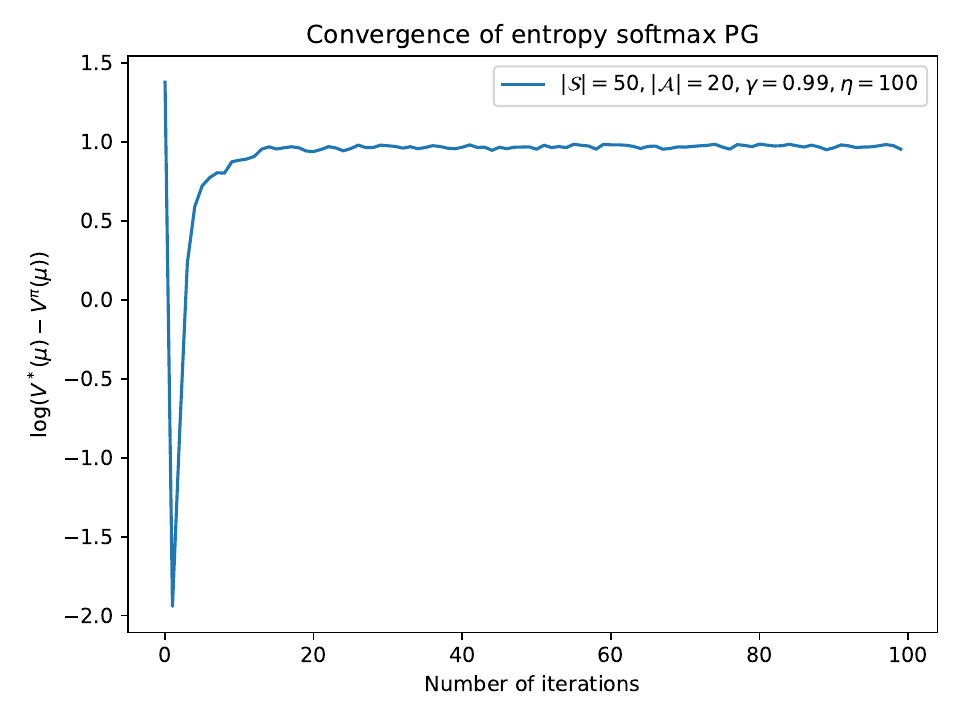}
    \caption{A random MDP example which shows entropy softmax PG does not converge for large step size. }
    \label{fig:entropypg}
\end{figure}
\subsection{Entropy  Softmax NPG}
As for softmax NPG, we present the update of  the entropy  softmax NPG  directly in the policy space,  given by
\begin{align*}
\pi^{k+1}_{s,a}&\propto \pi^k_{s,a}\exp\left(\frac{{\eta}}{{\eta}\tau+1}A^k_\tau(s,a)\right)\\
&\propto \exp\left(\frac{\eta}{\eta\tau+1}Q^k_\tau(s,a)+\frac{1}{\eta\tau+1}\log \pi^k_{s,a}\right),
\numberthis\label{eq:entropyNPG}
\end{align*}
where $\eta>0$ is the step size.
Entropy softmax NPG coincides with the following entropy regularized policy mirror ascent update:
\begin{align*}
\pi_s^{k+1} = \argmax_{p\in\Delta(\mathcal{A})}\left\{{\eta}\left(\langle Q^{k}_\tau(s,\cdot),p\rangle + \tau \mathcal{H}(p)\right)-\mathrm{KL}(p\|\pi^k_s)\right\}.
\end{align*}
In addition, when $\eta\rightarrow\infty$, \eqref{eq:entropyNPG} reduces to soft PI
\begin{align*}
    \pi^{k+1}_{s,a}&\propto \pi^k_{s,a}\exp\left(A_\tau^k(s,a)/\tau\right)\\
    &\propto \exp\left(Q_\tau^k(s,a)/\tau\right)\numberthis\label{eq:softPI}
\end{align*}
 The linear convergence of entropy  softmax NPG in terms of infinity norm has been established in  \cite[Theorem~1]{Cen_Cheng_Chen_Wei_Chi_2022}. 
 
\begin{theorem}[\protect{\cite[Theorem 1]{Cen_Cheng_Chen_Wei_Chi_2022}}] \label{theorem: ent_npg: linear convergence of log probability}
For any constant size $\eta>0$, the sequence generated by  entropy  softmax NPG satisfies\footnote{Note that there is a change of variable between the step size used in \eqref{eq:entropyNPG} and that used in \cite{Cen_Cheng_Chen_Wei_Chi_2022}.}
\begin{align*}
\|Q_\tau^*-Q_\tau^k\|_\infty&\leq C_1\gamma\left( 1-\left( 1-\gamma \right) \frac{\eta \tau}{\eta \tau +1} \right) ^{k-1},\numberthis\label{eq:chen-npg-Q}\\
\left\| \log \pi ^*-\log \pi ^k \right\| _{\infty}&\le \frac{2C_1}{\tau}\left( 1-\left( 1-\gamma \right) \frac{\eta \tau}{\eta \tau +1} \right) ^{k-1}\numberthis\label{eq:chen-npg-pi},
\end{align*}  
where $\pi^*$ is the optimal policy for the entropy regularized RL problem, $Q^*_\tau$ is the corresponding optimal action value function, and $
C_1:=\left\| Q_{\tau}^{*}-Q_{\tau}^{0} \right\| _{\infty}+2\tau \left( 1-\frac{\eta \tau}{1+\eta \tau} \right) \left\| \log \pi ^*_\tau-\log \pi ^0 \right\| _{\infty}$.
\end{theorem}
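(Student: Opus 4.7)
The plan is to track the coupled infinity-norm errors $\Delta_Q^k := \|Q^*_\tau - Q^k_\tau\|_\infty$ and $\Delta_\pi^k := \|\log\pi^* - \log\pi^k\|_\infty$ and to exhibit a linear combination of them that contracts at the stated rate. Let $\alpha := 1/(\eta\tau+1)$, so that $1-\alpha = \eta\tau/(\eta\tau+1)$ is the NPG mixing weight in log-space and $\rho := 1 - (1-\gamma)(1-\alpha)$ is the target contraction rate.

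First I would rewrite the update \eqref{eq:entropyNPG} in log-space. Taking $\tau\log$ of both sides and subtracting the optimality identity $\tau\log\pi^*(a|s) = Q^*_\tau(s,a) - V^*_\tau(s)$ from \eqref{eq:opteq} gives, for every $(s,a)$,
\begin{align*}
\tau\log\pi^{k+1}(a|s) - \tau\log\pi^*(a|s) = \alpha\big[\tau\log\pi^k(a|s) - \tau\log\pi^*(a|s)\big] + (1-\alpha)\big[Q^k_\tau(s,a) - Q^*_\tau(s,a)\big] + c^k(s),
\end{align*}
where $c^k(s)$ is the softmax normalizer, independent of $a$. Handling $c^k(s)$ via either a Lagrangian normalization enforcing $\sum_a \pi^{k+1}(a|s) = 1$ or an explicit min--max cancellation yields a clean recursion for $\Delta_\pi^{k+1}$ in terms of $\Delta_\pi^k$ and $\Delta_Q^k$. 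Separately, the $\gamma$-contraction $\|Q^*_\tau - Q^{k+1}_\tau\|_\infty \leq \gamma\|V^*_\tau - V^{k+1}_\tau\|_\infty$ combined with the soft-Bellman identity $V^{k+1}_\tau(s) = \mathbb{E}_{a\sim\pi^{k+1}(\cdot|s)}[Q^{k+1}_\tau(s,a) - \tau\log\pi^{k+1}(a|s)]$ (and likewise for $V^*_\tau$) and substitution of the log-space recursion above yields a schematic one-step bound
\begin{align*}
\Delta_Q^{k+1} \leq \gamma\big[(1-\alpha)\,\Delta_Q^k + \alpha\,\tau\Delta_\pi^k\big].
\end{align*}

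With these two one-step inequalities in hand, I would introduce the potential $\Phi^k := \Delta_Q^k + 2\tau\alpha\,\Delta_\pi^k$, which matches $C_1$ when $k=0$. Combining the $Q$-recursion with weight one and the log-policy recursion with weight $2\tau\alpha$, I would verify $\Phi^{k+1} \leq \rho\,\Phi^k$ by checking that the combined coefficients of $\Delta_Q^k$ and $\tau\Delta_\pi^k$ do not exceed $\rho$ times their weights in $\Phi^k$; iterating gives $\Delta_Q^k \leq \Phi^k \leq \rho^k C_1$. One extra application of the $Q$-recursion then sharpens this to $\gamma\rho^{k-1}C_1$ as in \eqref{eq:chen-npg-Q}, and feeding the $Q$-bound back into the log-space recursion produces the factor $2/\tau$ and yields \eqref{eq:chen-npg-pi}.

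The main obstacle is not the structure of the coupled recursion but the tightness of its constants. The coefficient $\alpha$ (not $2\alpha$) in front of $\tau\Delta_\pi^k$ in the $Q$-recursion, and the weight $2\tau\alpha$ in $\Phi^k$, must be matched exactly in order for the mixing rate $\alpha$ from log-space and the discount $\gamma$ from value-space to collapse into the single rate $\rho = 1-(1-\gamma)(1-\alpha)$; being off by even a constant factor would destroy the sharp rate and, in particular, the limit $\rho\to\gamma$ as $\eta\to\infty$ that recovers soft policy iteration. Unwinding the soft Bellman consistency equation for $V^{k+1}_\tau$ carefully, rather than invoking a loose triangle inequality, is what makes the constants line up.
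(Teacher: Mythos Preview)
This theorem is not proved in the present paper; it is quoted verbatim from \cite{Cen_Cheng_Chen_Wei_Chi_2022} and only \emph{used} afterwards (in Theorems~\ref{thm:entropyNPG-improvement} and~\ref{thm:entropyNPG-local}). So there is no ``paper's own proof'' to compare against; the relevant benchmark is the original argument of Cen et al.

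Your plan---set up a coupled linear recursion for $\Delta_Q^k$ and $\tau\Delta_\pi^k$ and contract the Lyapunov combination $\Phi^k=\Delta_Q^k+2\tau\alpha\,\Delta_\pi^k$ (which is exactly $C_1$ at $k=0$)---is indeed the structure of the Cen et al.\ proof, and your log-space identity for $\tau\log\pi^{k+1}-\tau\log\pi^*$ is correct. The log-policy half of the recursion is fine once you invoke the softmax $2$-Lipschitz bound (Lemma~\ref{lem:KL-softmax}), which is where the factor $2$ enters.

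The part that is not yet a proof is your ``schematic'' $Q$-recursion $\Delta_Q^{k+1}\le\gamma[(1-\alpha)\Delta_Q^k+\alpha\tau\Delta_\pi^k]$. Substituting the log-space identity into $V^{k+1}_\tau(s)=\mathbb{E}_{a\sim\pi^{k+1}}[Q^{k+1}_\tau(s,a)-\tau\log\pi^{k+1}(a|s)]$ does not eliminate $Q^{k+1}_\tau$: the Bellman consistency equation for $V^{k+1}_\tau$ is self-referential in $Q^{k+1}_\tau$, so you end up with $\Delta_Q^{k+1}$ on both sides together with the uncontrolled shift $(1-\alpha)V^*_\tau(s)-c^k(s)$. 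You cannot get a clean bound of $\|V^*_\tau-V^{k+1}_\tau\|_\infty$ purely in terms of $\Delta_Q^k$ and $\Delta_\pi^k$ this way. Cen et al.\ avoid this by tracking an auxiliary ``pre-normalization'' quantity (essentially the logit $\xi^{k}(s,a)=(1-\alpha)Q^{k-1}_\tau(s,a)+\alpha\,\tau\log\pi^{k-1}(a|s)$ that generates $\pi^k$) and showing that the map on the pair $(Q,\xi)$ is a $\rho$-contraction in a suitable weighted $\ell_\infty$ norm; the normalizer then drops out because both $\pi^k$ and $\pi^*$ are softmaxes of their respective logits and one compares logits directly. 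Your final paragraph correctly identifies that the constants must line up exactly, but the mechanism that makes them line up is this auxiliary-sequence reformulation, not a more careful triangle inequality on $V^{k+1}_\tau$ alone.
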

\noindent It is also noted after \cite[Theorem 1]{Cen_Cheng_Chen_Wei_Chi_2022} that, since 
\begin{align*}
V_\tau^*(s)-V_\tau^k(s) &= \mathbb{E}_{a\sim \pi^k(\cdot|s)}\left[(Q^*_\tau(s,a)-\tau\log\pi^*(a|s))-(Q^k_\tau(s,a)-\tau\log\pi^k(a|s))\right]\\
&\leq \tau \|\log \pi^*-\log \pi^k\|_\infty+\|Q_\tau^*-Q_\tau^k\|_\infty,
\end{align*}
the linear convergence in terms of the entropy regularized state value follows immediately:
\begin{align*}
\|V_\tau^*-V_\tau^k\|_\infty\leq(2+\gamma) C_1 \left( 1-\left( 1-\gamma \right) \frac{\eta \tau}{\eta \tau +1} \right) ^{k-1}.
\end{align*}
We first show that a simple application of  Lemma~\ref{lem:KL-opt} can improve the  convergence rate for the state value from $ 1-\left( 1-\gamma \right) \frac{\eta \tau}{\eta \tau +1}  $ to $\left( 1-\left( 1-\gamma \right) \frac{\eta \tau}{\eta \tau +1} \right)^2$.
\begin{theorem}\label{thm:entropyNPG-improvement}
For any constant step size $\eta>0$, the value sequence $\{ V^k_\tau(\rho) \}$ produced by entropy softmax NPG satisfies 
\begin{align*}
\left\|V_\tau^*-V^k_\tau\right\|_\infty \leq \frac{2\,C_1^2}{(1-\gamma)\,\tau}\left( 1-\left( 1-\gamma \right) \frac{\eta\, \tau}{\eta\, \tau +1} \right) ^{2\,(k-1)},\numberthis\label{eq:entropy-npg-global-rate}
\end{align*}
where $C_1$ is defined as above.
\end{theorem}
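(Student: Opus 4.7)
The plan is to leverage the regularized performance-difference identity of Lemma~\ref{lem:KL-opt}, which converts the entropy-regularized value gap into a pointwise KL divergence on the policy side, and then combine it with the already-known linear rate for $\|\log\pi^\ast-\log\pi^k\|_\infty$ from Theorem~\ref{theorem: ent_npg: linear convergence of log probability}. The doubling of the exponent is essentially automatic, because the log-policy error enters KL quadratically through the softmax KL bound of Lemma~\ref{lem:KL-softmax}.

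First I would fix an arbitrary state $s$ and apply Lemma~\ref{lem:KL-opt} with $\rho$ equal to the point mass $\delta_s$, giving
\begin{align*}
V_\tau^\ast(s)-V_\tau^k(s)
= \frac{\tau}{1-\gamma}\,\mathbb{E}_{s'\sim d_{\delta_s}^{\pi^k}}\!\bigl[\mathrm{KL}(\pi^k(\cdot|s')\,\|\,\pi^\ast(\cdot|s'))\bigr]
\le \frac{\tau}{1-\gamma}\,\max_{s'}\mathrm{KL}(\pi^k(\cdot|s')\,\|\,\pi^\ast(\cdot|s')).
\end{align*}
Since both $\pi^k(\cdot|s')$ and $\pi^\ast(\cdot|s')$ are softmax policies, the second (sharper) inequality of Lemma~\ref{lem:KL-softmax} applies at each state $s'$, so
\begin{align*}
\mathrm{KL}(\pi^k(\cdot|s')\,\|\,\pi^\ast(\cdot|s'))
\le \tfrac{1}{2}\,\|\log\pi^k(\cdot|s')-\log\pi^\ast(\cdot|s')\|_\infty^2
\le \tfrac{1}{2}\,\|\log\pi^\ast-\log\pi^k\|_\infty^2.
\end{align*}

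Next I would plug in the log-policy bound \eqref{eq:chen-npg-pi} from Theorem~\ref{theorem: ent_npg: linear convergence of log probability}, namely $\|\log\pi^\ast-\log\pi^k\|_\infty\le (2C_1/\tau)\bigl(1-(1-\gamma)\tfrac{\eta\tau}{\eta\tau+1}\bigr)^{k-1}$, and collect constants to obtain
\begin{align*}
V_\tau^\ast(s)-V_\tau^k(s)
\le \frac{\tau}{2(1-\gamma)}\cdot\frac{4 C_1^2}{\tau^2}\,\Bigl(1-(1-\gamma)\tfrac{\eta\tau}{\eta\tau+1}\Bigr)^{2(k-1)}
=\frac{2C_1^2}{(1-\gamma)\,\tau}\Bigl(1-(1-\gamma)\tfrac{\eta\tau}{\eta\tau+1}\Bigr)^{2(k-1)}.
\end{align*}
Finally, since $V_\tau^\ast\ge V_\tau^k$ pointwise and the above bound is uniform in $s$, taking the maximum over $s$ converts the pointwise estimate into the infinity-norm bound claimed in \eqref{eq:entropy-npg-global-rate}.

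There is no real obstacle: the argument is a one-line reduction once one realizes that Lemma~\ref{lem:KL-opt} gives a linear-in-KL representation of the regularized value gap, while the KL is quadratic in the sup-norm of the log-policy difference. The only point to watch is that Lemma~\ref{lem:KL-opt} is applied with $\rho=\delta_s$ for each $s$, which is legitimate because the identity holds for any initial distribution and therefore for every state individually.
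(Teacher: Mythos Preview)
The proposal is correct and follows essentially the same approach as the paper: both combine Lemma~\ref{lem:KL-opt} (value gap equals averaged KL), the softmax KL bound of Lemma~\ref{lem:KL-softmax} (KL $\le \tfrac12\|\log\pi^k-\log\pi^*\|_\infty^2$), and the log-policy rate \eqref{eq:chen-npg-pi}, so that the squaring of the log-policy error yields the doubled exponent. The only cosmetic difference is that you specialize $\rho=\delta_s$ and then take a maximum, whereas the paper keeps $\rho$ arbitrary and observes at the end that the bound is uniform in $\rho$; these are equivalent.
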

\begin{proof}
It follows from Lemma~\ref{lem:KL-softmax} that 
\begin{align*}
\mathrm{KL}(\pi^k(\cdot|s)\|\pi_\tau^*(\cdot|s))&\leq \frac{1}{2}\|\log\pi^k(\cdot|s)-\log\pi_\tau^*(\cdot|s)\|_\infty^2\leq \frac{1}{2}\|\log\pi^k-\log\pi_\tau^*\|_\infty^2\\
&\leq\frac{2C_1^2}{\tau^2}\left( 1-\left( 1-\gamma \right) \frac{\eta \tau}{\eta \tau +1} \right) ^{2(k-1)},
\end{align*}
where the last inequality follows from the inequality \eqref{eq:chen-npg-pi}. Therefore,
\begin{align*}
V_\tau^*(\rho)-V^k_\tau(\rho) & = \frac{\tau}{1-\gamma}\mathbb{E}_{s\sim d_\rho^k}\left[\mathrm{KL}\left(\pi^k(\cdot|s)\|\pi_\tau^*(\cdot|s)\right)\right]\\
&\leq \frac{2\,C_1^2}{(1-\gamma)\,\tau}\left( 1-\left( 1-\gamma \right) \frac{\eta\,\tau}{\eta \,\tau +1} \right) ^{2(k-1)}.
\end{align*}
Noting that $\rho$ can be an arbitrary distribution over $\mathcal{S}$, the proof is complete.
\end{proof}

For the special Soft PI case ($\eta\rightarrow\infty$), the local quadratic convergence rate has also been established in \cite[Theorem~3]{Cen_Cheng_Chen_Wei_Chi_2022}. Next, we are going to  provide an elementary analysis of the local quadratic convergence of soft PI with improved rate  and without additional assumption on the stationary distribution under the optimal policy $\pi^*$. To this end, we first  establish the following convergence property of soft PI.
\begin{lemma}\label{lem:softpi}
    For any $t\ge 0$, the state value errors of soft PI satisfy 
    \begin{align*}
        \forall\,k\geq t:\quad \|V_\tau^*-V_\tau^k\|_\infty\leq \frac{2\,\tau(1-\gamma)}{\gamma^2}\exp\left(2^t\log\left(a_0\cdot\gamma^{k-t}\right)\right),\numberthis\label{eq:softpi01}
    \end{align*}
    where $a_0:=\displaystyle{\frac{1}{2\tau}\frac{\gamma ^2}{\left( 1-\gamma \right)}}\left\| V_{\tau}^{*}-V_{\tau}^{0} \right\| _{\infty}$.
\end{lemma}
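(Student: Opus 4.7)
The plan is to combine a sharp one-step quadratic improvement inequality for soft PI with the $\gamma$-contraction of $\mathcal{T}_\tau$, and then derive the entire family of bounds indexed by $t$ via a clean induction on $k$.

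First, I will establish the one-step quadratic bound
\[
\|V_\tau^* - V_\tau^{k+1}\|_\infty \le \frac{\gamma^2}{2\tau(1-\gamma)} \|V_\tau^* - V_\tau^k\|_\infty^2.
\]
By the soft PI update~\eqref{eq:softPI} and the optimality identity~\eqref{eq:opteq}, both $\pi^{k+1}(\cdot|s)$ and $\pi^*(\cdot|s)$ are softmax policies with logit vectors $Q_\tau^k(s,\cdot)/\tau$ and $Q_\tau^*(s,\cdot)/\tau$, respectively. Applying Lemma~\ref{lem:KL-softmax} with $c=0$, together with the $\gamma$-contraction $\|Q_\tau^k - Q_\tau^*\|_\infty \le \gamma\|V_\tau^* - V_\tau^k\|_\infty$ recorded in Section~\ref{sec:entropy}, yields the uniform-in-$s$ estimate
\[
\mathrm{KL}(\pi^{k+1}(\cdot|s)\,\|\,\pi^*(\cdot|s)) \le \frac{1}{2\tau^2}\|Q_\tau^k(s,\cdot) - Q_\tau^*(s,\cdot)\|_\infty^2 \le \frac{\gamma^2}{2\tau^2}\|V_\tau^* - V_\tau^k\|_\infty^2.
\]
Plugging this into Lemma~\ref{lem:KL-opt} with $\pi=\pi^{k+1}$ and letting $\rho$ be a Dirac mass at the state maximizing the nonnegative function $V_\tau^*-V_\tau^{k+1}$ delivers the desired inequality.

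Define $b_k:=\frac{\gamma^2}{2\tau(1-\gamma)}\|V_\tau^*-V_\tau^k\|_\infty$, so that $b_0=a_0$ and the above becomes $b_{k+1}\le b_k^2$. The $\gamma$-contraction of $\mathcal{T}_\tau$, combined with the soft-PI monotonicity $V_\tau^{k+1}\ge\mathcal{T}_\tau V_\tau^k$ (and $V_\tau^{k+1}\le V_\tau^*$), further gives the linear bound $b_{k+1}\le\gamma b_k$. The lemma is equivalent to showing $b_k\le(a_0\gamma^{k-t})^{2^t}$ for all $0\le t\le k$, which I prove by induction on $k$, maintaining the claim for every admissible $t$ at each stage. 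The base case $k=0,\,t=0$ is immediate since $(a_0\gamma^0)^{2^0}=a_0=b_0$. For the inductive step at $k+1$, the case $t=0$ uses the linear bound and the hypothesis $b_k\le a_0\gamma^k$ to conclude $b_{k+1}\le a_0\gamma^{k+1}$; for $1\le t\le k+1$, I invoke the hypothesis at index $(k,t-1)$ to get $b_k\le(a_0\gamma^{k-t+1})^{2^{t-1}}$, and then squaring via $b_{k+1}\le b_k^2$ yields $b_{k+1}\le(a_0\gamma^{k+1-t})^{2^t}$, exactly the desired bound.

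The main technical ingredient is the one-step quadratic bound with the sharp constant $\frac{\gamma^2}{2\tau(1-\gamma)}$ that matches the definition of $a_0$; this hinges on the choice $c=0$ in Lemma~\ref{lem:KL-softmax} combined with the $Q$-value contraction. Once this is in hand, the induction telescopes cleanly: squaring the bound at level $(k,t-1)$ converts the exponent $2^{t-1}$ into $2^t$, while the inner factor $\gamma^{k+1-t}$ emerges automatically. The interplay between the quadratic bound (needed for $t\ge 1$) and the linear bound (needed solely for $t=0$) is precisely what makes the induction carry through the full family of values of $t$.
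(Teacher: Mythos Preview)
Your proof is correct and follows essentially the same approach as the paper: the one-step quadratic bound is obtained identically via Lemma~\ref{lem:KL-softmax} and Lemma~\ref{lem:KL-opt}, and the base linear bound via the $\gamma$-contraction of $\mathcal{T}_\tau$. The only difference is organizational—you induct on $k$ while maintaining all admissible $t$ simultaneously, whereas the paper inducts on $t$ (using the linear $\gamma^k$ rate as the base case $t=0$ and squaring the bound at level $t=m-1$ for $k-1$ to obtain level $t=m$ for $k$); both inductions are equivalent ways of combining $b_{k+1}\le b_k^2$ with $b_{k+1}\le\gamma b_k$.
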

\begin{proof}
    Noting that in soft PI,
    $
    \pi_s^{k+1} = \mathrm{softmax}\left(Q_\tau^k(\cdot|s)/\tau\right),
    $
    the application of Lemma~\ref{lem:KL-softmax} yields that 
    \begin{align*}
        \mathrm{KL}(\pi_s^{k+1}\|\pi_s^*) \leq \frac{1}{2\,\tau^2}\|Q^k_\tau(\cdot|s)-Q^*_\tau(\cdot|s)\|_\infty^2\leq \frac{1}{2\,\tau^2}\|Q^k_\tau-Q^*_\tau\|_\infty^2\leq \frac{\gamma^2}{2\,\tau^2}\|V^k_\tau-V^*_\tau\|_\infty^2.
    \end{align*}
    Then it follows from Lemma~\ref{lem:KL-opt} that 
    \begin{align*}
        V_\tau^*(\rho)-V_\tau^{k+1}(\rho)&=\frac{\tau}{1-\gamma}\mathbb{E}_{s\sim d_\rho^{k+1}}\left[\mathrm{KL}(\pi_s^{k+1}\|\pi_s^*)\right]\\
        &\leq \frac{1}{2\,\tau}\frac{\gamma^2}{1-\gamma}\|V^k_\tau-V^*_\tau\|_\infty^2.
    \end{align*}
    Since this inequality holds for any $\rho$, one has
    \begin{align*}
       \forall\,k\geq 0:\quad  \|V_\tau^*-V_\tau^{k+1}\|_\infty\leq \frac{1}{2\,\tau}\frac{\gamma^2}{1-\gamma}\|V^k_\tau-V^*_\tau\|_\infty^2.\numberthis\label{eq:softpi02}
    \end{align*}

Next, we will proceed the proof by induction on $t$. When $t=0$, by the $\gamma$-contraction property of $\mathcal{T}_\tau$, it can be shown that \cite{Nachum2017softPI}
\begin{align*}
    \|V_\tau^*-V_\tau^k\|_\infty\leq \gamma^k\|V_\tau^*-V_\tau^0\|_\infty,
\end{align*}
which coincides with \eqref{eq:softpi01}. Now suppose \eqref{eq:softpi01} holds for $t=m-1$. For $t=m$, by  \eqref{eq:softpi02}, one has 
\begin{align*}
    \forall\,k\geq m:\quad \|V_\tau^*-V_\tau^k\|_\infty&\leq  \frac{1}{2\,\tau}\frac{\gamma^2}{1-\gamma}\|V^{k-1}_\tau-V^*_\tau\|_\infty^2\\
    &\leq \frac{1}{2\,\tau}\frac{\gamma^2}{1-\gamma}\left[\frac{2\,\tau(1-\gamma)}{\gamma^2}\exp\left(2^{m-1}\log\left(a_0\cdot\gamma^{(k-1)-(m-1)}\right)\right)\right]^2\\
    &=\frac{2\,\tau(1-\gamma)}{\gamma^2}\exp\left(2^m\log\left(a_0\cdot \gamma^{k-m}\right)\right),
\end{align*}
where the second inequality follows from $k-1\geq m-1$ and the induction assumption.
\end{proof}
\begin{theorem}[Local quadratic convergence of soft PI]\label{thm:softPI-quadratic}
The state value errors of soft PI satisfy 

\begin{align*}
    \forall k\geq k_0 : \quad \|V_\tau^*-V_\tau^k\|_\infty \leq \frac{2\,\tau(1-\gamma)}{\gamma^2}\,\gamma^{\displaystyle 2^{ k-k_0}},
\end{align*}
where $k_0$ is given by
\begin{align*}
    k_0 = 
    \left\{ 
        \begin{array}{cc}
            1 & \mbox{if }a_0 \leq 1, \\
            2+\frac{\log a_0}{\log \frac{1}{\gamma}} & \mbox{if }a_0 > 1.
        \end{array}
    \right.
\end{align*}

\end{theorem}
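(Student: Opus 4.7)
The plan is to apply Lemma~\ref{lem:softpi} with a judicious choice of the free parameter $t$ so that the base $a_0\gamma^{k-t}$ in \eqref{eq:softpi01} is at most $\gamma$, after which the right-hand side collapses to the desired form $\gamma^{2^{k-k_0}}$. The definition of $k_0$ is precisely engineered for this: one picks $t$ so that $k-t$ equals $k_0$ (or the smallest integer $\ge k_0$) and then verifies $a_0\gamma^{k_0}\le \gamma$ by a direct computation.

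For the first case, $a_0\le 1$ and $k_0=1$, I would set $t=k-1\ge 0$ in \eqref{eq:softpi01} to obtain
\[
\|V_\tau^*-V_\tau^k\|_\infty \le \frac{2\tau(1-\gamma)}{\gamma^2}(a_0\gamma)^{2^{k-1}} \le \frac{2\tau(1-\gamma)}{\gamma^2}\gamma^{2^{k-k_0}},
\]
where the second inequality uses $a_0\le 1$ and $k_0=1$. For the second case, $a_0>1$ and $k_0=2+\log a_0/\log(1/\gamma)$, the key identity is
\[
\gamma^{\log a_0/\log(1/\gamma)} = \exp\!\bigl(\log a_0\cdot \log\gamma/\log(1/\gamma)\bigr) = 1/a_0,
\]
so that $a_0\gamma^{k_0}=\gamma^2$. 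Choosing $t=k-\lceil k_0\rceil\ge 0$ in Lemma~\ref{lem:softpi} and using $a_0\gamma^{\lceil k_0\rceil}\le a_0\gamma^{k_0}=\gamma^2$ then gives
\[
\|V_\tau^*-V_\tau^k\|_\infty \le \frac{2\tau(1-\gamma)}{\gamma^2}\bigl(a_0\gamma^{\lceil k_0\rceil}\bigr)^{2^{k-\lceil k_0\rceil}} \le \frac{2\tau(1-\gamma)}{\gamma^2}\gamma^{2^{k-\lceil k_0\rceil+1}},
\]
and the inequality $\lceil k_0\rceil\le k_0+1$ combined with $\gamma<1$ yields $\gamma^{2^{k-\lceil k_0\rceil+1}}\le \gamma^{2^{k-k_0}}$, completing the proof.

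The main obstacle is really just spotting the correct substitution $t\approx k-k_0$; after that the argument reduces to a one-line calculation of $a_0\gamma^{k_0}$ and a comparison of exponents. The only mild technical wrinkle is that $k_0$ is not an integer in the second case, handled by replacing it with its ceiling; since $\lceil k_0\rceil-k_0<1$, the loss in the doubly-exponential exponent is at most a factor of two, which is precisely the safety margin built into writing $k_0=2+\log a_0/\log(1/\gamma)$ rather than the tighter-looking $1+\log a_0/\log(1/\gamma)$.
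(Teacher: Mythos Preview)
Your proof is correct and follows essentially the same strategy as the paper: apply Lemma~\ref{lem:softpi} with $t$ chosen so that $k-t\approx k_0$, then verify that $a_0\gamma^{k-t}$ is bounded by a power of $\gamma$. The only cosmetic difference is the precise integer choice of $t$: you take $t=k-\lceil k_0\rceil$ and bound $a_0\gamma^{\lceil k_0\rceil}\le \gamma^2$, whereas the paper takes $t=\lfloor k-1-\log a_0/\log(1/\gamma)\rfloor=k+1-\lceil k_0\rceil$ and bounds $a_0\gamma^{k-t}\le \gamma$; both routes land on the same final inequality after absorbing the off-by-one via $\lceil k_0\rceil\le k_0+1$.
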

\begin{proof}
    In the case $a_0\leq 1$, setting $t=k-1$ in \eqref{eq:softpi01} yields the result. In the case $a_0>1$, setting 
    \begin{align*}
        t = \left\lfloor k-1-\frac{\log a_0}{\log\frac{1}{\gamma}}\right\rfloor.
    \end{align*}
    It follows that 
    \begin{align*}
     k-2-\frac{\log a_0}{\log\frac{1}{\gamma}}   \leq t\leq k-1-\frac{\log a_0}{\log\frac{1}{\gamma}}.
    \end{align*}
    The application of Lemma~\ref{lem:softpi} yields that 
    \begin{align*}
        \|V_\tau^*-V_\tau^k\|_\infty&\leq \frac{2\,\tau(1-\gamma)}{\gamma^2}\exp\left(2^t\log\left(a_0\cdot\gamma^{k-t}\right)\right)\\
        &\leq \frac{2\,\tau(1-\gamma)}{\gamma^2}\exp\left(2^{k-2-\frac{\log a_0}{\log\frac{1}{\gamma}} }\log\left(a_0\cdot\gamma^{1+\frac{\log a_0}{\log\frac{1}{\gamma}}}\right)\right)\\
        &=\frac{2\,\tau(1-\gamma)}{\gamma^2}\,\gamma^{\displaystyle 2^{ k-\left(2+\frac{\log a_0}{\log\frac{1}{\gamma}}\right)}},
    \end{align*}
    which completes the proof.
\end{proof}

\begin{remark}
  Consider the case $a_0>1$. Our result indicates that the number of iterations required for the soft PI to achieve an $\varepsilon$-accuracy is of order 
  \begin{align*}
      \frac{\log \left(\displaystyle{\frac{1}{2\tau}\frac{\gamma ^2}{\left( 1-\gamma \right)}}\left\| V_{\tau}^{*}-V_{\tau}^{0} \right\| _{\infty}\right) }{\log\frac{1}{\gamma}}+\log\log\left(\frac{1}{\varepsilon}\cdot\frac{2\,\tau(1-\gamma)}{\gamma^2}\right).
  \end{align*}
  This basically means soft PI will convergence at least linearly with rate $\gamma$ to achieve the accuracy $\frac{2\tau(1-\gamma)}{\gamma^2}$, and then converges quadratically. Therefore, the result in Theorem~6.13 encodes  the information of the linear convergence phase and the quadratic convergence phase simultaneously. 
  
  The local quadratic convergence of the following form has been established for soft PI in \textup{\cite{Cen_Cheng_Chen_Wei_Chi_2022}} (assume $V^*_\tau(\mu_\tau^*)-V_\tau^{k_0}(\mu_\tau^*)$ is sufficiently small),
  \begin{align*}
      V^*_\tau(\rho)-V^k_\tau(\rho)\leq \left\|\frac{\rho}{\mu_\tau^*}\right\|_\infty
      \frac{(1-\gamma)\tau}{4\gamma^2}\left\|\frac{1}{\mu_\tau^*}\right\|_\infty^{-1}
      \left(\frac{4\gamma^2}{(1-\gamma)\tau}\left\|\frac{1}{\mu_\tau^*}\right\|_\infty\left(V^*_\tau(\mu_\tau^*)-V_\tau^{k_0}(\mu_\tau^*)\right)\right)^{2^{k-k_0}}, \numberthis\label{eq:softPI-tmp01}
  \end{align*}
  where $\mu_\tau^*$ is the stationary distribution of the MDP under the optimal policy that should satisfy $\min_s\mu_\tau^*(s)>0$. Compared with this result, our result is clearly more concise. In particular, the assumption $\min_s\mu_\tau^*(s)>0$ is not needed. Note that the rate in \eqref{eq:softPI-tmp01} relies on the initial error $V^*_\tau(\mu_\tau^*)-V_\tau^{k_0}(\mu_\tau^*)$, and indeed we can also establish a local quadratic convergence of a similar form \textup{(}still without the additional assumption on $\mu_\tau^*$\textup{)} for soft PI by taking $t=k-k_0$ \textup{(}assume $\left\| V_{\tau}^{*}-V_{\tau}^{k_0} \right\| _{\infty}$ is sufficiently small and count from $k_0$\textup{)} in \eqref{eq:softpi01}: 
    \begin{align*}
        \|V_\tau^*-V_\tau^k\|_\infty\leq \frac{2\,\tau(1-\gamma)}{\gamma^2}\left(\displaystyle{\frac{1}{2\tau}\frac{\gamma ^2}{\left( 1-\gamma \right)}}\left\| V_{\tau}^{*}-V_{\tau}^{k_0} \right\| _{\infty}\right)^{2^{k-k_0}}.
    \end{align*} 
\end{remark}

Since entropy softmax NPG tends to become soft PI when $\eta\rightarrow \infty$, it is natural to anticipate the convergence of entropy softmax NPG becomes faster (at least locally) as $\eta$ increases due to the above local quadratic convergence of soft PI. However, the global rate in \eqref{eq:entropy-npg-global-rate} converges to the fixed $\gamma^2$ as $\eta\rightarrow\infty$, which cannot reflects this intuition. At the end of this section, we are going to establish a local and tight convergence rate for entropy softmax NPG which can be arbitrarily small as $\eta$ increases. To this end, we first list two useful lemmas.

\begin{lemma}\label{lem:KL-ratio}
With a slight abuse of notation, let $\{\pi^k\}$ be a sequence of positive probability vectors \textup{(}i.e., $\pi^k(a)>0,\,\forall a$\textup{)} which converge to a probability vector $\pi^*$ which is also positive.  Then,
\begin{align*}
\lim_{k\rightarrow\infty}\frac{\mathrm{KL}(\pi^k\|\pi^{k+1})}{\mathrm{KL}(\pi^{{k+1}}\|\pi^{{k}})}=1\quad\mbox{and}\quad\lim_{k\rightarrow\infty}\frac{\mathrm{KL}(\pi^{k}\|\pi^{*})}{\mathrm{KL}(\pi^{*}\|\pi^{k})}=1.
\end{align*}
\end{lemma}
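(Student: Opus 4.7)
The plan is to compute the second-order Taylor expansion of each KL divergence near the diagonal $\{p=q\}$ in the space of positive probability vectors, where both divergences vanish. The key observation is that both $\mathrm{KL}(p\|q)$ and $\mathrm{KL}(q\|p)$ agree to leading (quadratic) order with the same chi-squared-type functional $\tfrac{1}{2}\sum_a (p_a-q_a)^2/\pi^*(a)$ whenever $p$ and $q$ are close to the common limit $\pi^*$, so their ratio must tend to one.

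Concretely, since $\pi^*(a)>0$ for every $a$ and $\pi^k\to\pi^*$, there exist $k_0$ and $\varepsilon>0$ with $\pi^k(a)\ge\varepsilon$ for all $k\ge k_0$ and all $a\in\mathcal{A}$. For any two such positive vectors $p,q$ set $\delta_a:=q_a-p_a$. The expansion $\log(1+x)=x-x^2/2+O(x^3)$, together with $\sum_a\delta_a=0$, gives
\begin{align*}
\mathrm{KL}(p\|q) \;=\; -\sum_a p_a\log\!\left(1+\tfrac{\delta_a}{p_a}\right) \;=\; \tfrac{1}{2}\sum_a\frac{\delta_a^2}{p_a} + O\!\left(\tfrac{\|\delta\|_\infty^3}{\varepsilon^2}\right),
\end{align*}
and symmetrically $\mathrm{KL}(q\|p)=\tfrac12\sum_a\delta_a^2/q_a+O(\|\delta\|_\infty^3/\varepsilon^2)$. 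Assuming $\|\delta\|_2>0$, writing $v_a=\delta_a/\|\delta\|_2$ and dividing numerator and denominator by $\tfrac12\|\delta\|_2^2$ yields
\begin{align*}
\frac{\mathrm{KL}(p\|q)}{\mathrm{KL}(q\|p)} \;=\; \frac{\sum_a v_a^2/p_a+O(\|\delta\|_\infty)}{\sum_a v_a^2/q_a+O(\|\delta\|_\infty)}.
\end{align*}

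For the first limit I take $p=\pi^k$ and $q=\pi^{k+1}$: both sequences converge to $\pi^*$, so $\|\delta\|_\infty\to 0$ while $1/p_a$ and $1/q_a$ both tend to $1/\pi^*(a)$ uniformly in $a$. Hence the numerator and denominator of the ratio share a common limit $\sum_a v_a^2/\pi^*(a)$, which is bounded below by a positive constant since $\pi^*(a)\le 1$ and $\sum_a v_a^2=1$, so the ratio tends to one. The second limit follows by the same argument with $q$ replaced by the fixed $\pi^*$ and $\delta_a=\pi^*(a)-\pi^k(a)\to 0$.

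The only delicate point is that the ratio is undefined when $\|\delta\|_2=0$, i.e., when $\pi^k=\pi^{k+1}$ or $\pi^k=\pi^*$ for some $k$. The statement should be read along the subsequence of indices on which the denominator is strictly positive; equivalently, since the leading-order expression above depends continuously on the direction $v$ on the unit sphere and uniformly on the base points near $\pi^*$, one may consistently assign $0/0=1$ at any such index. Beyond this minor bookkeeping there is no real obstacle: the proof is a routine second-order Taylor expansion of the logarithm enabled by the uniform positivity afforded by $\pi^*(a)>0$.
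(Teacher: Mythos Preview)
Your proof is correct and considerably more elementary than the paper's. Both arguments rest on a second-order Taylor expansion of the KL divergence near the diagonal, but the paper takes a detour through the softmax parameterization: it sets $\theta_k=(I-\tfrac{1}{n}\mathbf{1}\mathbf{1}^T)\log\pi^k$, applies the multivariate mean-value form of Taylor's theorem to $\mathrm{KL}(\pi_\theta\|\pi_{\theta'})$ and $\mathrm{KL}(\pi_{\theta'}\|\pi_\theta)$ as functions of $\theta$, and then invokes two auxiliary lemmas showing that both Hessians are close to the Fisher information matrix $F(\theta)=\mathrm{diag}(\pi_\theta)-\pi_\theta\pi_\theta^T$, that $F$ is Lipschitz in $\pi_\theta$, and that $F$ is uniformly positive on $\{\mathbf{1}^Tx=0\}$. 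Your approach works directly in the simplex with the scalar expansion of $\log(1+x)$ and bypasses this machinery entirely; the role of the Fisher matrix is played transparently by the weights $1/p_a$ and $1/q_a$.

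One minor imprecision worth tightening: you say the numerator and denominator ``share a common limit $\sum_a v_a^2/\pi^*(a)$,'' but the unit vector $v=v^{(k)}$ depends on $k$ and need not converge. What you actually need (and what your setup gives) is that
\[
\Bigl|\sum_a v_a^2/p_a-\sum_a v_a^2/q_a\Bigr|\le \max_a\bigl|1/p_a-1/q_a\bigr|\to 0
\]
while both sums are bounded below by $1$; this already forces the ratio to $1$. This is a phrasing issue, not a gap.
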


\begin{lemma}[\protect{\cite[Lemma 8]{Cen_Cheng_Chen_Wei_Chi_2022}}] \label{lem:visitation-ratio}
Consider any policy $\pi$ satisfying $\|\log\pi-\log\pi^*\|_\infty\leq 1$. There holds 
\begin{align*}
    \left\|1-\frac{d_\rho^*}{d_\rho^\pi}\right\|_\infty
    \leq 2\left(\frac{1}{1-\gamma}\left\|\frac{d_\rho^*}{\rho}\right\|_\infty-1\right)\|\log\pi-\log\pi^*\|_\infty.
\end{align*}
\end{lemma}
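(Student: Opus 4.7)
The plan is to exploit the Bellman fixed-point relation $d_\rho^{\pi'}(s) = (1-\gamma)\rho(s) + \gamma\sum_{s',a}d_\rho^{\pi'}(s')\pi'(a|s')P(s|s',a)$ satisfied by both $\pi' = \pi$ and $\pi' = \pi^*$. Subtracting the two instances yields the decomposition
\[
d_\rho^\pi - d_\rho^* \;=\; \gamma\,(M^\pi - M^*)^{\tran} d_\rho^* \;+\; \gamma\,(M^\pi)^{\tran}(d_\rho^\pi - d_\rho^*),
\]
where $M^{\pi'}_{s,s'} := \sum_a \pi'(a|s)P(s'|s,a)$. This splits the bound on $|1 - d_\rho^*(s)/d_\rho^\pi(s)|$ into a ``source'' term driven by $\pi - \pi^*$ and a ``propagation'' term that reintroduces $d_\rho^\pi - d_\rho^*$.

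For the source term, I would write $\pi(a|s) = \pi^*(a|s)\,e^{\delta_{s,a}}$ with $|\delta_{s,a}| \leq \|\log\pi - \log\pi^*\|_\infty \leq 1$, invoke the elementary bound $|e^x - 1|\leq 2|x|$ valid on $[-1,1]$ to get $|\pi(a|s)-\pi^*(a|s)|\le 2\|\log\pi-\log\pi^*\|_\infty\,\pi^*(a|s)$, and use the Bellman identity $\gamma(M^*)^{\tran} d_\rho^* = d_\rho^* - (1-\gamma)\rho$ to make the mixing under $\pi^*$ (rather than under $\pi$) appear. Combined with the pointwise lower bound $d_\rho^\pi(s) \geq (1-\gamma)\rho(s)$, which is immediate from the Bellman equation for $d_\rho^\pi$, this yields a state-wise source bound of exactly
\[
2\,\|\log\pi - \log\pi^*\|_\infty\,\bigl(\|d_\rho^*/\rho\|_\infty/(1-\gamma) - 1\bigr),
\]
matching the right-hand side of the claim.

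For the propagation term, after dividing by $d_\rho^\pi(s)$ the coefficient $\gamma(M^\pi)^{\tran}d_\rho^\pi(s)/d_\rho^\pi(s) = 1 - (1-\gamma)\rho(s)/d_\rho^\pi(s)$ is strictly less than $1$ by the Bellman equation for $d_\rho^\pi$, so taking $\|\cdot\|_\infty$ of the identity produces a self-referential inequality $\|1-d_\rho^*/d_\rho^\pi\|_\infty \leq S + C\,\|1-d_\rho^*/d_\rho^\pi\|_\infty$ with $C<1$ and $S$ as above, which can be closed to deliver the claim. The main obstacle I anticipate is the accounting required to recover exactly the stated constant: a crude closure via $S/(1-C)$ would introduce a spurious factor $\|d_\rho^\pi/\rho\|_\infty/(1-\gamma)$, and eliminating it calls for a more careful pointwise cancellation between the source and propagation coefficients rather than bounding them separately at their worst $s$ — essentially, one must track at each $s$ that the same $d_\rho^\pi(s)$ normalization appears in the source bound and in the propagation factor.
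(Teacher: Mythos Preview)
The paper does not supply a proof of this lemma --- it is quoted from \cite{Cen_Cheng_Chen_Wei_Chi_2022} --- so there is no ``paper's proof'' to compare against. Your plan is correct and in fact complete; the obstacle you flag dissolves once you evaluate the self-referential inequality at the maximising state rather than bounding source and propagation at separate worst cases.

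Concretely: with $E:=\|1-d_\rho^*/d_\rho^\pi\|_\infty$ and $s^\star$ an argmax, your decomposition gives
\[
E \;\le\; \frac{2\epsilon\bigl(d_\rho^*(s^\star)-(1-\gamma)\rho(s^\star)\bigr)}{d_\rho^\pi(s^\star)} \;+\; \Bigl(1-\frac{(1-\gamma)\rho(s^\star)}{d_\rho^\pi(s^\star)}\Bigr)\,E,
\]
where $\epsilon=\|\log\pi-\log\pi^*\|_\infty$ and the source bound uses $\gamma(M^*)^{\tran}d_\rho^*=d_\rho^*-(1-\gamma)\rho$ together with $|\pi_{s,a}-\pi^*_{s,a}|\le 2\epsilon\,\pi^*_{s,a}$. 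Rearranging, the factor $d_\rho^\pi(s^\star)$ cancels \emph{exactly}:
\[
\frac{(1-\gamma)\rho(s^\star)}{d_\rho^\pi(s^\star)}\,E \;\le\; \frac{2\epsilon\bigl(d_\rho^*(s^\star)-(1-\gamma)\rho(s^\star)\bigr)}{d_\rho^\pi(s^\star)}
\quad\Longrightarrow\quad
E \;\le\; 2\epsilon\Bigl(\tfrac{d_\rho^*(s^\star)}{(1-\gamma)\rho(s^\star)}-1\Bigr)\;\le\; 2\epsilon\Bigl(\tfrac{1}{1-\gamma}\bigl\|\tfrac{d_\rho^*}{\rho}\bigr\|_\infty-1\Bigr).
\]
This is precisely the ``pointwise cancellation'' you describe; no extra $\|d_\rho^\pi/\rho\|_\infty$ factor appears because you never invoke $d_\rho^\pi(s^\star)\ge(1-\gamma)\rho(s^\star)$ in the source term before rearranging. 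So your proposal is a complete proof once this last step is written out.
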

The proof of Lemma~\ref{lem:KL-ratio} is provided in Appendix~\ref{sec:proof-KL-ratio}, and Lemma~\ref{lem:visitation-ratio} is established in \cite{Cen_Cheng_Chen_Wei_Chi_2022}. Letting $\{\pi^k\}$ be the policy sequence produced by entropy softmax NPG, together with \eqref{eq:chen-npg-pi}, these two lemmas imply that for any $\varepsilon\in(0,1)$ and $\delta\in(0,1)$, there exists a time $T(\varepsilon,\delta)$ such that for any $k\geq T(\varepsilon,\delta)$,
\begin{align*}
\left|\frac{\mathrm{KL}(\pi^k_s\|\pi^{k+1}_s)}{\mathrm{KL}(\pi^{{k+1}}_s\|\pi^{{k}}_s)}-1\right|\leq \varepsilon,\quad \left|\frac{\mathrm{KL}(\pi^{{k+1}}_s\|\pi^{{k}}_s)}{\mathrm{KL}(\pi^k_s\|\pi^{k+1}_s)}-1\right|\leq \varepsilon,\quad \left|\frac{\mathrm{KL}(\pi^{k}_s\|\pi^{*}_s)}{\mathrm{KL}(\pi^{*}_s\|\pi^{k}_s)}-1\right|\leq \varepsilon,\quad  \left|\frac{\mathrm{KL}(\pi^{*}_s\|\pi^{k}_s)}{\mathrm{KL}(\pi^{k}_s\|\pi^{*}_s)}-1\right|\leq \varepsilon,\numberthis\label{eq:KL-ratio-epsilon}
\end{align*}
and 
\begin{align*}
    \left|\frac{d_\rho^*}{d_\rho^k}-1\right|\leq\delta,\quad \left|\frac{d_\rho^k}{d_\rho^*}-1\right|\leq\delta.\numberthis\label{eq:visitation-ratio-delta}
\end{align*}
These properties will be used in the establishment of the fast local convergence rate of entropy softmax NPG. The improvement lower and upper bounds of $\mathcal{L}_k^{k+1}$ in terms of $\mathcal{L}_k^*$  (see \eqref{eq:entropy-Lk} and \eqref{eq:entropy-Lstar} for their definitions) are firstly established.
\begin{lemma}\label{lem:entropynpg-bounds} 
Consider entropy softmax NPG with constant step size $\eta>0$.
Under the conditions  in \eqref{eq:KL-ratio-epsilon} and \eqref{eq:visitation-ratio-delta}, one has
    \begin{align*}
\mathcal{L}_k^{k+1}\geq\left(1+\frac{1}{(\eta\tau+1)(1+\varepsilon)}\right)\left[
\left(1-\frac{1}{\eta\tau}(1+\varepsilon)(1+\delta)\right)\mathcal{L}_k^*+\left(1+\frac{1}{\eta\tau}\right)(1-\varepsilon)(1-\delta)\mathcal{L}_{k+1}^*\right]
\end{align*}
and 
\begin{align*}
\mathcal{L}_k^{k+1}\leq\left(1+\frac{1}{(\eta\tau+1)(1-\varepsilon)}\right)\left[
\left(1-\frac{1}{\eta\tau}(1-\varepsilon)(1-\delta)\right)\mathcal{L}_k^*+\left(1+\frac{1}{\eta\tau}\right)(1+\varepsilon)(1+\delta)\mathcal{L}_{k+1}^*\right].
\end{align*}
\end{lemma}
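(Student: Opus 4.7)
The plan is to first derive an exact identity for the state-wise improvement $\mathcal{T}_\tau^{k+1} V_\tau^k(s)-V_\tau^k(s)$ and then convert it into a bound involving $\mathcal{L}_k^*$ and $\mathcal{L}_{k+1}^*$ using the performance difference lemma together with the two ratio estimates in \eqref{eq:KL-ratio-epsilon} and \eqref{eq:visitation-ratio-delta}.

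First I would take logarithms of the entropy NPG update and test against $\pi^k_s$ and $\pi^{k+1}_s$ (mirroring \eqref{eq:softmaxNPG-identity01}), which yields $\log Z_s^k=\mathrm{KL}(\pi_s^k\,\|\,\pi_s^{k+1})$ and the identity
\begin{align*}
\mathcal{T}_\tau^{k+1}V_\tau^k(s)-V_\tau^k(s) \;=\; \tfrac{1}{\eta}\,\mathrm{KL}(\pi_s^{k+1}\,\|\,\pi_s^k)+\tfrac{\eta\tau+1}{\eta}\,\mathrm{KL}(\pi_s^k\,\|\,\pi_s^{k+1}),
\end{align*}
after cancelling the $-\tau\,\mathrm{KL}(\pi_s^{k+1}\|\pi_s^k)$ term coming from $\mathcal{T}_\tau^{k+1}V_\tau^k(s)-V_\tau^k(s)=\sum_a\pi_s^{k+1}(a)A_\tau^k(s,a)-\tau\,\mathrm{KL}(\pi_s^{k+1}\,\|\,\pi_s^k)$. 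Applying \eqref{eq:KL-ratio-epsilon} to replace $\mathrm{KL}(\pi_s^{k+1}\,\|\,\pi_s^k)$ by $(1\pm\varepsilon)^{-1}\mathrm{KL}(\pi_s^k\,\|\,\pi_s^{k+1})$ lets me factor out $\tfrac{\eta\tau+1}{\eta}\bigl(1+\tfrac{1}{(\eta\tau+1)(1\pm\varepsilon)}\bigr)\mathrm{KL}(\pi_s^k\,\|\,\pi_s^{k+1})$, which explains the prefactor in the target bounds.

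Next, testing the log-update against $\pi^*_s$ gives the three-point style identity
\begin{align*}
\mathrm{KL}(\pi_s^k\,\|\,\pi_s^{k+1})=\tfrac{\eta}{\eta\tau+1}\sum_a\pi^*_{s,a}A_\tau^k(s,a)-\mathrm{KL}(\pi^*_s\,\|\,\pi_s^k)+\mathrm{KL}(\pi^*_s\,\|\,\pi_s^{k+1}),
\end{align*}
and substituting $\sum_a\pi^*_{s,a}A_\tau^k(s,a)=\mathcal{T}_\tau^*V_\tau^k(s)-V_\tau^k(s)+\tau\,\mathrm{KL}(\pi^*_s\,\|\,\pi_s^k)$ yields
\begin{align*}
\mathrm{KL}(\pi_s^k\,\|\,\pi_s^{k+1})=\tfrac{\eta}{\eta\tau+1}(\mathcal{T}_\tau^*V_\tau^k(s)-V_\tau^k(s))-\tfrac{1}{\eta\tau+1}\mathrm{KL}(\pi^*_s\,\|\,\pi_s^k)+\mathrm{KL}(\pi^*_s\,\|\,\pi_s^{k+1}).
\end{align*}
Taking $\mathbb{E}_{s\sim d_\rho^*}$ turns the first term into $\tfrac{\eta(1-\gamma)}{\eta\tau+1}\mathcal{L}_k^*$. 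For the two remaining KL terms I would apply \eqref{eq:KL-ratio-epsilon} to swap $\mathrm{KL}(\pi^*_s\,\|\,\pi_s^k)$ and $\mathrm{KL}(\pi^*_s\,\|\,\pi_s^{k+1})$ with $\mathrm{KL}(\pi_s^k\,\|\,\pi^*_s)$ and $\mathrm{KL}(\pi_s^{k+1}\,\|\,\pi^*_s)$ respectively, then use \eqref{eq:visitation-ratio-delta} to change the measure from $d_\rho^*$ to $d_\rho^k$ (for the first) and to $d_\rho^{k+1}$ (for the second), and finally invoke Lemma~\ref{lem:KL-opt} to rewrite the resulting expectations as $\tfrac{1-\gamma}{\tau}\mathcal{L}_k^*$ and $\tfrac{1-\gamma}{\tau}\mathcal{L}_{k+1}^*$.

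For the lower bound I choose the sign of each $\varepsilon$ and $\delta$ inequality to make $-\mathrm{KL}(\pi^*_s\,\|\,\pi_s^k)$ as large as possible and $\mathrm{KL}(\pi^*_s\,\|\,\pi_s^{k+1})$ as small as possible, giving the factors $(1+\varepsilon)(1+\delta)$ and $(1-\varepsilon)(1-\delta)$ respectively; the upper bound follows by flipping all four signs. Combining this with the prefactor from the first identity and the definition $\mathcal{L}_k^{k+1}=\tfrac{1}{1-\gamma}\mathbb{E}_{s\sim d_\rho^*}[\mathcal{T}_\tau^{k+1}V_\tau^k(s)-V_\tau^k(s)]$ produces exactly the two claimed inequalities after the $\tfrac{1}{1-\gamma}$ and $\tfrac{\eta(1-\gamma)}{\eta\tau+1}$ factors cancel. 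The main subtlety, rather than any hard estimate, is bookkeeping the four sign choices consistently so that each of $\mathcal{L}_k^*$ and $\mathcal{L}_{k+1}^*$ receives the correct multiplicative perturbation in $\varepsilon$ and $\delta$.
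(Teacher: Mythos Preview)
Your plan is correct and follows essentially the same route as the paper: both derive the improvement identity \eqref{eq:entropynpg-identity01}, the three-point identity obtained by testing the log-update against $\pi^*_s$, and then convert the $\mathrm{KL}(\pi^*\|\pi^k)$ and $\mathrm{KL}(\pi^*\|\pi^{k+1})$ terms into $\mathcal{L}_k^*$ and $\mathcal{L}_{k+1}^*$ via Lemma~\ref{lem:KL-opt} after swapping KL arguments and changing measures using \eqref{eq:KL-ratio-epsilon} and \eqref{eq:visitation-ratio-delta}. The only cosmetic difference is the order of substitution: the paper bounds $\tfrac{1}{\eta}\mathrm{KL}(\pi_s^{k+1}\|\pi_s^k)$ in terms of $\mathcal{T}_\tau^{k+1}V_\tau^k(s)-V_\tau^k(s)$ from \eqref{eq:entropynpg-identity01} and substitutes this into the averaged three-point identity \eqref{eq:entropynpg-identity03} (so $\mathcal{L}_k^{k+1}$ appears on both sides and is then collected), whereas you substitute the three-point expression for $\mathrm{KL}(\pi_s^k\|\pi_s^{k+1})$ back into \eqref{eq:entropynpg-identity01}; the two are algebraically equivalent and produce identical constants.
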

\begin{proof} The proof begins with  two useful identities that are similar to \eqref{eq:softmaxNPG-identity01} and \eqref{eq:softmaxNPG-identity02}, but for entropy softmax NPG. Recall that the update of entropy softmax NPG is given by 
\begin{align*}
    \pi^{k+1}_{s,a}=\frac{\pi^k_{s,a}\exp\left(\frac{{\eta}}{{\eta}\tau+1}A^k_\tau(s,a)\right)}{Z_s^k},
\end{align*}
where $Z_s^k=\sum_{a'}\pi^k_{s,a'}\exp\left(\frac{{\eta}}{{\eta}\tau+1}A^k_\tau(s,a')\right)$ is the normalization factor. It follows that 
\begin{align*}
\log\frac{\pi_{s,a}^{k+1}}{\pi^k_{s,a}} = \frac{\eta}{\eta\tau+1}A^k_\tau(s,a)-\log Z_s^k.\numberthis\label{eq:entropynpg-bounds-01}
\end{align*}
Taking expectations on both sides with respect to $\pi^k_s$ and $\pi_s^{k+1}$ respectively and noting that $\mathbb{E}_{a\sim\pi(\cdot|s)}[A^k_\tau(s,a)]=0$, one has
\begin{align*}
\log Z_s^k &=\mathrm{KL}(\pi_s^k\|\pi_s^{k+1}),\\
\mathrm{KL}(\pi_s^{k+1}\|\pi_s^k) &= \frac{\eta}{\eta\tau+1}\mathbb{E}_{a\sim\pi^{k+1}(\cdot|s)}[A_\tau^k(s,a)]-\mathrm{KL}(\pi_s^k\|\pi_s^{k+1}).
\end{align*}
It follows that 
\begin{align*}
\mathcal{T}_\tau^{k+1}V_\tau^{k}(s)-V_\tau^{k}(s) &= \mathbb{E}_{a\sim\pi^{k+1}(\cdot|s)}[A^{k}_\tau(s,a)]-\tau \mathrm{KL}(\pi^{k+1}_s\|\pi^k_s)\\
&=\frac{1}{\eta}\mathrm{KL}(\pi_s^{k+1}\|\pi_s^k)+\frac{\eta\tau+1}{\eta}\mathrm{KL}(\pi_s^k\|\pi_s^{k+1}),\numberthis\label{eq:entropynpg-identity01}
\end{align*}
which is analogous to \eqref{eq:softmaxNPG-identity01}. Taking expectation on both sides of \eqref{eq:entropynpg-bounds-01} with respect to $\pi^*_{s}$ yields that 
\begin{align*}
\mathrm{KL}(\pi^*_{s}\|\pi^k_s)-\mathrm{KL}(\pi^*_{s}\|\pi^{k+1}_s)=\frac{\eta}{\eta\tau+1}\mathbb{E}_{a\sim\pi^{*}(\cdot|s)}[A_\tau^k(s,a)]-\mathrm{KL}(\pi_s^k\|\pi_s^{k+1}).
\end{align*}
It follows that 
\begin{align*}
\mathcal{T}_\tau^*V_\tau^{k}(s)-V_\tau^{k}(s) &= \mathbb{E}_{a\sim\pi^*(\cdot|s)}[A^{k}_\tau(s,a)]-\tau \mathrm{KL}(\pi^*_{s}\|\pi^k_s)\\
&=\frac{1}{\eta}\mathrm{KL}(\pi^*_{s}\|\pi^k_s)-\frac{\eta\tau+1}{\eta}\mathrm{KL}(\pi^*_{s}\|\pi^{k+1}_s)+\frac{\eta\tau+1}{\eta}\mathrm{KL}(\pi_s^k\|\pi^{k+1}_s)\\
&=\frac{1}{\eta}\mathrm{KL}(\pi^*_{s}\|\pi^k_s)-\frac{\eta\tau+1}{\eta}\mathrm{KL}(\pi^*_{s}\|\pi^{k+1}_s) +\mathcal{T}_\tau^{k+1}V_\tau^{k}(s)-V_\tau^{k}(s)-\frac{1}{\eta}\mathrm{KL}(\pi_s^{k+1}\|\pi_s^k).
\end{align*}
After rearrangement, we have 
\begin{align*}
\mathcal{T}_\tau^{k+1}V_\tau^{k}(s)-V_\tau^{k}(s) =\mathcal{T}_\tau^*V_\tau^{k}(s)-V_\tau^{k}(s) +\frac{\eta\tau+1}{\eta}\mathrm{KL}(\pi^*_{s}\|\pi^{k+1}_s)-\frac{1}{\eta}\mathrm{KL}(\pi^*_{s}\|\pi_s^k)+\frac{1}{\eta}\mathrm{KL}(\pi_s^{k+1}\|\pi_s^k),\numberthis\label{eq:entropynpg-identity02}
\end{align*}
which is analogous to \eqref{eq:softmaxNPG-identity02}. Recall the definition of $\mathcal{L}_k^{k+1}$ and $\mathcal{L}_k^*$ in \eqref{eq:entropy-Lk} and \eqref{eq:entropy-Lstar} and taking the expectation on both sides with respective to $d_\rho^*$ leads to 
\begin{align*}
\mathcal{L}_k^{k+1}&=\mathcal{L}_k^*+\frac{\eta\tau+1}{\eta(1-\gamma)}\mathbb{E}_{s\sim d_\rho^*}\left[\mathrm{KL}(\pi^{*}_{s}\|\pi^{k+1}_s)\right]-\frac{1}{\eta(1-\gamma)}\mathbb{E}_{s\sim d_\rho^*}\left[\mathrm{KL}(\pi^{*}_{s}\|\pi_s^k)\right]+\frac{1}{\eta(1-\gamma)}\mathbb{E}_{s\sim d_\rho^*}\left[\mathrm{KL}(\pi_s^{k+1}\|\pi_s^k)\right].\numberthis\label{eq:entropynpg-identity03}
\end{align*}

Under the condition 
\begin{align*}
\left|\frac{\mathrm{KL}(\pi_s^k\|\pi_s^{k+1})}{\mathrm{KL}(\pi_s^{k+1}\|\pi_s^k)}-1\right|\leq \varepsilon,
\end{align*}
it follows from \eqref{eq:entropynpg-identity01} that 
\begin{align*}\frac{(\eta\tau+1)(1-\varepsilon)+1}{\eta}\mathrm{KL}(\pi_s^{k+1}\|\pi_s^k)
\leq \mathcal{T}_\tau^{k+1}V_\tau^{k}(s)-V_\tau^{k}(s)\leq \frac{(\eta\tau+1)(1+\varepsilon)+1}{\eta}\mathrm{KL}(\pi_s^{k+1}\|\pi_s^k),
\end{align*}
which is equivalent to 
\begin{align*}
\frac{1}{(\eta\tau+1)(1+\varepsilon)+1}\left(\mathcal{T}_\tau^{k+1}V_\tau^{k}(s)-V_\tau^{k}(s)\right)\leq \frac{\mathrm{KL}(\pi_s^{k+1}\|\pi_s^k)}{\eta}\leq \frac{1}{(\eta\tau+1)(1-\varepsilon)+1}\left(\mathcal{T}_\tau^{k+1}V_\tau^{k}(s)-V_\tau^{k}(s)\right).
\end{align*}
It follows that 
\begin{align*}
\frac{1}{(\eta\tau+1)(1+\varepsilon)+1}\mathcal{L}_k^{k+1}\leq \mathbb{E}_{s\sim d_\rho^*}\left[\frac{\mathrm{KL}(\pi_s^{k+1}\|\pi_s^k)}{\eta\,(1-\gamma)}\right]\leq \frac{1}{(\eta\tau+1)(1-\varepsilon)+1}\mathcal{L}_{k}^{k+1}.
\end{align*}
Inserting this result into \eqref{eq:entropynpg-identity03} gives 
\begin{align*}
\left(1-\frac{1}{(\eta\tau+1)(1+\varepsilon)+1}\right)\mathcal{L}_k^{k+1}\geq\mathcal{L}_k^*+\frac{\eta\tau+1}{\eta(1-\gamma)}\mathbb{E}_{s\sim d_\rho^*}\left[\mathrm{KL}(\pi^{*}_{s}\|\pi^{k+1}_s)\right]-\frac{1}{\eta(1-\gamma)}\mathbb{E}_{s\sim d_\rho^*}\left[\mathrm{KL}(\pi^{*}_{s}\|\pi_s^k)\right].\numberthis\label{eq:entropynpg-lower}
\end{align*}
and 
\begin{align*}
\left(1-\frac{1}{(\eta\tau+1)(1-\varepsilon)+1}\right)\mathcal{L}_k^{k+1}\leq\mathcal{L}_k^*+\frac{\eta\tau+1}{\eta(1-\gamma)}\mathbb{E}_{s\sim d_\rho^*}\left[\mathrm{KL}(\pi^{*}_{s}\|\pi^{k+1}_s)\right]-\frac{1}{\eta(1-\gamma)}\mathbb{E}_{s\sim d_\rho^*}\left[\mathrm{KL}(\pi^{*}_{s}\|\pi_s^k)\right]\numberthis\label{eq:entropynpg-upper}
\end{align*}

Under the conditions 
\begin{align*}
\left|\frac{\mathrm{KL}(\pi^{*}_{s}\|\pi_s^k)}{\mathrm{KL}(\pi^k_s\|\pi_{s}^{*})}-1\right|\leq \varepsilon\quad\mbox{and}\quad\left|\frac{d_\rho^*(s)}{d_\rho^k(s)}-1\right|\leq\delta,
\end{align*}
one has 
\begin{align*}
\frac{1}{1-\gamma}\mathbb{E}_{s\sim d_\rho^*}\left[\mathrm{KL}(\pi^{*}_{s}\|\pi_s^k)\right]&=\frac{1}{1-\gamma}\sum_sd_\rho^*(s)\mathrm{KL}(\pi^{*}_{s}\|\pi_s^k)\\
&=\frac{1}{1-\gamma}\sum_s\frac{d_\rho^*(s)}{d_\rho^k(s)}d_\rho^k(s)\mathrm{KL}(\pi^{*}_{s}\|\pi_s^k)\\
&\leq(1+\varepsilon)(1+\delta) \frac{1}{1-\gamma}\mathbb{E}_{s\sim d_\rho^k}\left[\mathrm{KL}(\pi_s^k\|\pi^{*}_{s})\right]\\
&=(1+\varepsilon)(1+\delta)\frac{1}{\tau}(V_\tau^*(\rho)-V^k_\tau(\rho))\\
&=(1+\varepsilon)(1+\delta)\frac{1}{\tau}\mathcal{L}_k^*\numberthis\label{eq:entropynpg-bounds-tmp02}
\end{align*}
where the inequality holds since all the involved terms are all non-negative, and the last line follows from Lemma~\ref{lem:KL-opt} and the fact $V_\tau^*(\rho)-V^k_\tau(\rho)=\mathcal{L}_k^*$. Moreover, the reverse direction 
\begin{align*}
    \frac{1}{1-\gamma}\mathbb{E}_{s\sim d_\rho^*}\left[\mathrm{KL}(\pi^{*}_{s}\|\pi_s^k)\right]\geq (1-\varepsilon)(1-\delta)\frac{1}{\tau}\mathcal{L}_k^*\numberthis\label{eq:entropynpg-bounds-tmp03}
\end{align*}
can also be obtained. Similarly, under the same conditions but for $k+1$, one can show that 
\begin{align*}
   (1-\varepsilon)(1-\delta)\frac{1}{\tau}\mathcal{L}_{k+1}^*\leq \frac{1}{1-\gamma}\mathbb{E}_{s\sim d_\rho^*}\left[\mathrm{KL}(\pi^{*}_{s}\|\pi_s^{k+1})\right]\leq (1+\varepsilon)(1+\delta)\frac{1}{\tau}\mathcal{L}_{k+1}^*.\numberthis\label{eq:entropynpg-bounds-tmp04}
\end{align*}
Inserting \eqref{eq:entropynpg-bounds-tmp02}, \eqref{eq:entropynpg-bounds-tmp03}, and \eqref{eq:entropynpg-bounds-tmp04} into \eqref{eq:entropynpg-lower} and \eqref{eq:entropynpg-upper} completes the proof.
\end{proof}
\begin{theorem}[Tight local linear convergence of entropy softmax NPG]\label{thm:entropyNPG-local}
    Consider entropy softmax NPG with constant step size $\eta>0$.
Under the conditions  in \eqref{eq:KL-ratio-epsilon} and \eqref{eq:visitation-ratio-delta}, one has 
\begin{align*}
    \forall\,k\geq T(\varepsilon,\delta):\quad
    \frac{1-\sigma_1(\varepsilon,\delta)}{(\eta\,\tau+1)^2}\left(V_\tau^*(\rho)-V^k_\tau(\rho)\right)\leq V_\tau^*(\rho)-V^{k+1}_\tau(\rho)\leq\frac{1+\sigma_2(\varepsilon,\delta)}{(\eta\,\tau+1)^2}\left(V_\tau^*(\rho)-V^k_\tau(\rho)\right),
\end{align*}
where $\sigma_1(\varepsilon,\delta)\geq 0$ and $\sigma_2(\varepsilon,\delta)\geq 0$  are two values that approach $0$ as 
$\varepsilon\rightarrow 0$, $\delta\rightarrow 0$.
\end{theorem}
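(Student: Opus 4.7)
The plan is to build on Lemma~\ref{lem:entropynpg-bounds}, which already sandwiches $\mathcal{L}_k^{k+1}$ tightly in terms of $\mathcal{L}_k^*$ and $\mathcal{L}_{k+1}^*$ under the stated hypotheses. The missing link is to compare $\mathcal{L}_k^{k+1}$ with the one-step decrease $\mathcal{L}_k^*-\mathcal{L}_{k+1}^*=V_\tau^{k+1}(\rho)-V_\tau^k(\rho)$. Once this comparison is in hand, solving the two resulting systems of linear inequalities for $\mathcal{L}_{k+1}^*$ as a function of $\mathcal{L}_k^*$ yields the desired two-sided bound, and a short algebraic calculation will pin the leading factor to $(\eta\tau+1)^{-2}$.

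First I would establish the preliminary sandwich
\begin{equation*}
(1-\delta)\bigl(\mathcal{L}_k^*-\mathcal{L}_{k+1}^*\bigr)\;\leq\;\mathcal{L}_k^{k+1}\;\leq\;(1+\delta)\bigl(\mathcal{L}_k^*-\mathcal{L}_{k+1}^*\bigr).
\end{equation*}
It comes from rewriting $\mathcal{L}_k^{k+1}$ as $\frac{1}{1-\gamma}\sum_s\frac{d_\rho^*(s)}{d_\rho^{k+1}(s)}\,d_\rho^{k+1}(s)\bigl(\mathcal{T}_\tau^{k+1}V_\tau^k(s)-V_\tau^k(s)\bigr)$, using the pointwise non-negativity $\mathcal{T}_\tau^{k+1}V_\tau^k(s)-V_\tau^k(s)\geq 0$ that is built into the identity~\eqref{eq:entropynpg-identity01}, applying the entropy performance difference lemma (Lemma~\ref{lem:entropyPDL}), and plugging in~\eqref{eq:visitation-ratio-delta} at the $(k+1)$-th iterate. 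Next I would cross-pair this sandwich with the two inequalities of Lemma~\ref{lem:entropynpg-bounds}: pairing the upper bound $\mathcal{L}_k^{k+1}\leq(1+\delta)(\mathcal{L}_k^*-\mathcal{L}_{k+1}^*)$ with the lower bound from the lemma and isolating $\mathcal{L}_{k+1}^*$ produces $\mathcal{L}_{k+1}^*\leq F_+(\varepsilon,\delta)\,\mathcal{L}_k^*$, while the dual pairing produces $\mathcal{L}_{k+1}^*\geq F_-(\varepsilon,\delta)\,\mathcal{L}_k^*$, for explicit rational functions $F_\pm$ of $\varepsilon$, $\delta$ and $\eta\tau$. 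The critical algebraic check is that at $\varepsilon=\delta=0$ both functions collapse to the common value $(\eta\tau+1)^{-2}$: the numerator equals $1-\frac{(\eta\tau+2)(\eta\tau-1)}{(\eta\tau+1)\eta\tau}=\frac{2}{(\eta\tau+1)\eta\tau}$ and the denominator equals $1+\frac{\eta\tau+2}{\eta\tau}=\frac{2(\eta\tau+1)}{\eta\tau}$, which makes the $(\eta\tau+1)^{-2}$ factor transparent. Setting $\sigma_2:=(\eta\tau+1)^2 F_+-1$ and $\sigma_1:=1-(\eta\tau+1)^2 F_-$ then finishes the theorem.

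The main obstacle is the final bookkeeping: showing (i) the numerators of $F_\pm$ remain positive so that the inequalities retain the right orientation after division, (ii) $\sigma_1,\sigma_2\geq 0$, and (iii) $\sigma_1,\sigma_2\to 0$ as $\varepsilon,\delta\to 0$. All three properties follow from continuity of the rational expressions $F_\pm$ at $(\varepsilon,\delta)=(0,0)$ together with the collapse computation above; if necessary one shrinks $\varepsilon$ and $\delta$ and correspondingly enlarges $T(\varepsilon,\delta)$ so that both~\eqref{eq:KL-ratio-epsilon} and~\eqref{eq:visitation-ratio-delta} continue to hold. The cleanest route is to carry out a first-order Taylor expansion in $(\varepsilon,\delta)$ of the coefficients inside the brackets of Lemma~\ref{lem:entropynpg-bounds} and read off explicit dominant terms for $\sigma_1,\sigma_2$, but any continuity argument suffices for the qualitative statement.
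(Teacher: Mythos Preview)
Your proposal is correct and follows essentially the same route as the paper's proof: the paper also sandwiches $\mathcal{L}_k^*-\mathcal{L}_{k+1}^*$ against $\mathcal{L}_k^{k+1}$ via the visitation-ratio condition (it writes the inequality in the equivalent form $(1-\delta)\mathcal{L}_k^{k+1}\le \mathcal{L}_k^*-\mathcal{L}_{k+1}^*\le(1+\delta)\mathcal{L}_k^{k+1}$ rather than your version), cross-pairs with the two inequalities of Lemma~\ref{lem:entropynpg-bounds}, and then verifies the same algebraic collapse of both rational expressions to $(\eta\tau+1)^{-2}$ at $\varepsilon=\delta=0$. Your treatment of the sign and continuity bookkeeping in items (i)--(iii) is in fact slightly more careful than the paper's, which simply records the limits $\rho_u,\rho_l\to(\eta\tau+1)^{-2}$ and declares the proof complete.
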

\begin{proof}
    By the performance difference lemma (Lemma~\ref{lem:entropyPDL}),
\begin{align*}
    \mathcal{L}_k^*-\mathcal{L}_{k+1}^*&=V_\tau^{k+1}(\rho)-V_\tau^k(\rho)\\
    &=\frac{1}{1-\gamma} \sum_sd_\rho^{k+1}(s)\left(\mathcal{T}_\tau^{k+1}V_\tau^{k}(s)-V_\tau^{k}(s)\right)\\
    &=\frac{1}{1-\gamma} \sum_s\frac{d_\rho^{k+1}}{d_\rho^*}d_\rho^*(s)\left(\mathcal{T}_\tau^{k+1}V_\tau^{k}(s)-V_\tau^{k}(s)\right).
\end{align*}
Noting that $\mathcal{T}_\tau^{k+1}V_\tau^{k}(s)-V_\tau^{k}(s)\geq 0$ from \eqref{eq:entropynpg-identity01}, under the condition
\begin{align*}
\left|\frac{d_\rho^{k+1}(s)}{d_\rho^*(s)}-1\right|\leq\delta,
\end{align*}
one has
\begin{align*}
   (1-\delta)\mathcal{L}_k^{k+1} \leq \mathcal{L}_k^*-\mathcal{L}_{k+1}^*\leq (1+\delta)\mathcal{L}_k^{k+1}.
\end{align*}
For ease of notation, let $\kappa_1 = \frac{1}{(1+\varepsilon)(\eta\tau+1)}$. Together with the first inequality in Lemma~\ref{lem:entropynpg-bounds}, we have 
\begin{align*}
\mathcal{L}_k^*-\mathcal{L}_{k+1}^*\geq (1-\delta)(1+\kappa_1)\left(\left(1-\frac{1}{\eta}(1+\varepsilon)(1+\delta\tau)\right)\mathcal{L}_k^*+\left(1+\frac{1}{\eta\tau}\right)(1-\varepsilon)(1-\delta)\mathcal{L}_{k+1}^*\right).
\end{align*}
After rearrangement, one has
\begin{align*}
\mathcal{L}_{k+1}^*\leq \frac{1-(1-\delta)(1+\kappa_1)\left(1-\frac{1}{\eta\tau}(1+\varepsilon)(1+\delta)\right)}{1+(1-\delta)^2(1+\kappa_1)\left(1+\frac{1}{\eta\tau}\right)(1-\varepsilon)}\mathcal{L}_k^*:=\rho_u\,\mathcal{L}_k^*.
\end{align*}
Similarly, letting $\kappa_2=\frac{1}{(1-\varepsilon)(\eta\tau+1)}$, together with the second inequality in Lemma~\ref{lem:entropynpg-bounds}, one can obtain 
\begin{align*}
\mathcal{L}_{k+1}^*\geq \frac{1-(1+\delta)(1+\kappa_2)
\left(1-\frac{1}{\eta\tau}(1-\varepsilon)(1-\delta)\right)}{1+(1+\delta)^2(1+\kappa_2)\left(1+\frac{1}{\eta\tau}\right)(1+\varepsilon)}\mathcal{L}_k^*:=\rho_l\,\mathcal{L}_k^*.
\end{align*}
Since 
\begin{align*}
\rho_u\rightarrow \frac{1}{(\eta\,\tau+1)^2},\quad 
\rho_l\rightarrow \frac{1}{(\eta\,\tau+1)^2}\quad \mbox{as }\varepsilon\rightarrow 0, \,\delta\rightarrow 0,
\end{align*}
the proof is completed.
\end{proof}

\begin{remark}
    Theorem~\ref{thm:entropyNPG-local} implies that the local convergence rate of entropy softmax NPG is about ${1}/{(\eta\tau+1)^2}$, which becomes faster as $\eta\rightarrow \infty$. This on the other side suggests the local quadratic convergence of soft PI.
    Numerical results on a random MDP demonstrates that the theoretical rate desirably matches the empirical one, see Figure~\ref{fig:entropynpg}. 
\end{remark}

\begin{figure}[ht!]
    \centering
    \includegraphics[width=0.45\textwidth]{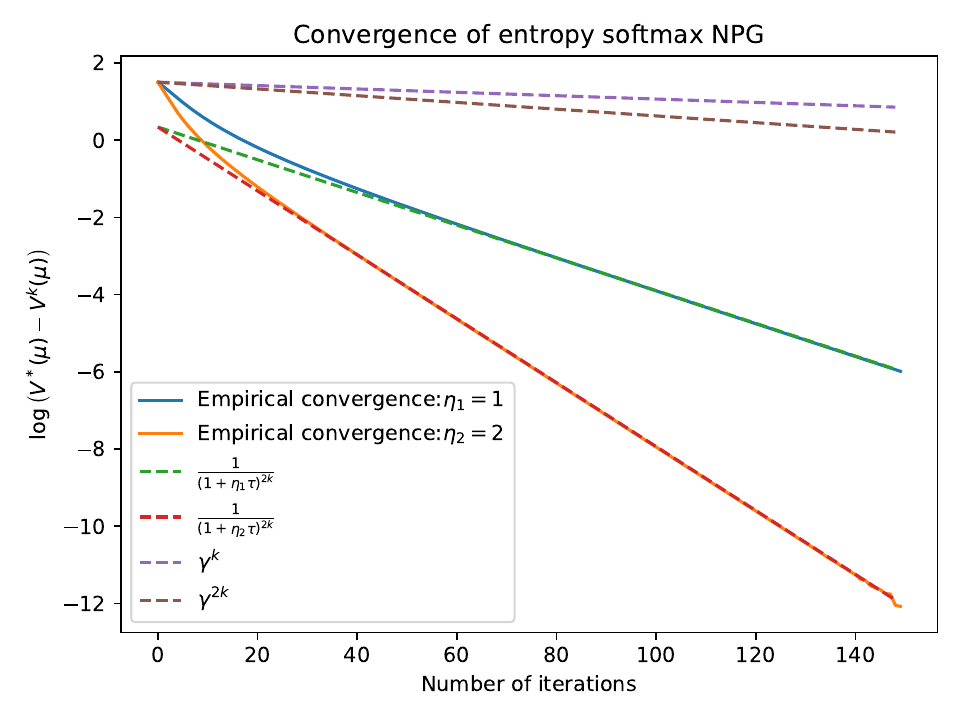}
    \caption{Empirical justification of the local linear rate of entropy softmax NPG on random MDP with $|\mathcal{S}|=50$, $|\mathcal{A}|=20$, and $\gamma=0.99$: $\gamma$, $\gamma^{2k}$, and $\frac{1}{(1+\eta\tau)^{2k}}$ correspond to the limit convergence results presented in Theorems~\ref{theorem: ent_npg: linear convergence of log probability}, \ref{thm:entropyNPG-improvement} and \ref{thm:entropyNPG-local}, respectively. The regularization parameter is set to $\tau=0.05$ in the test.}
    \label{fig:entropynpg}
\end{figure}
\section{Conclusion}\label{sec:conclusion}
In this paper, we comprehensively study the convergence of a family of basic policy gradient methods  for RL, including PPG, softmax PG, softmax NPG, entropy softmax PG, entropy softmax NPG, and soft PI. New  convergence results have been obtained for them based on  elementary analysis techniques.
There are several lines of direction for future work. For instance, it is interesting to see whether the analysis techniques can be used to handle the stochastic case where the policy gradient is evaluated through samples or the case in which  policy parameterization involves function approximation. It is also likely to extend the analysis study to other scenarios in addition to the discounted MDP setting, such as the  average reward MPD setting or the risk sensitive RL problem. 
\bibliographystyle{plain}
\bibliography{refs}
\appendix
\section{Proofs of Lemmas~\ref{lem:covariance-identity}, \ref{lem:positive-covariance} and \ref{lem:LstarvsLmax} in Section~\ref{subsec:preliminaries}}\label{sec:proofs-covariance}
\begin{proof}[Proof of Lemma~\ref{lem:covariance-identity}]
A direct computation yields that
    \begin{align*}
        \mathbb{E}[(f(X)-f(Y))(g(X)-g(Y))] &= \mathbb{E}[f(X)g(X)] + \mathbb{E}[f(Y)g(Y)] - \mathbb{E}[f(X)g(Y)] - \mathbb{E}[f(Y)g(X)] \\
        &= 2\mathbb{E}[f(X)g(X)] - 2\mathbb{E}[f(X)]\mathbb{E}[g(X)] \\
        &= 2\mathrm{Cov}(f(X), g(X)),
    \end{align*}
    which completes the proof.
\end{proof}
\begin{proof}[Proof of Lemma~\ref{lem:positive-covariance}]
As $f$ and $g$ are both monotonically increasing, we have
    \begin{align*}
        \forall\, x, y \in \mathbb{R}, \quad (f(x) - f(y))(g(x) - g(y)) \geq 0.
    \end{align*}
   It follows that $\mathbb{E}[(f(X)-f(Y))(g(X)-g(Y))] \geq 0$ for arbitrary random variables $X, Y$. Considering $Y$ as an i.i.d. copy of $X$, we get $\mathrm{Cov}(f(X), g(X)) \geq 0$ from Lemma~\ref{lem:covariance-identity}.
\end{proof}

\begin{proof}[Proof of Lemma~\ref{lem:LstarvsLmax}]
The first inequality follows from 
\begin{align*}
\mathcal{L}_k^* &= \frac{1}{1-\gamma}\sum_sd_\rho^*(s)\sum_a\pi^*(a|s)A^k(s,a)\leq \frac{1}{1-\gamma}\sum_sd_\rho^*(s)\max_a A^k(s,a).
\end{align*}
For the second inequality, noting that $\mathcal{L}_k^*=V^*(\rho)-V^k(\rho)$. Thus, if we let $\pi^{k,\mathrm{pi}}$ denote the greedy PI policy based on $\pi^k$ (see \eqref{eq:PI-update}), then 
\begin{align*}
\mathcal{L}_k^*&\geq V^{\pi^{k,\mathrm{pi}}}(\rho)-V^k(\rho)\\
&=\frac{1}{1-\gamma}\sum_s \frac{d_\rho^{\pi^{k,\mathrm{pi}}}(s)}{d_\rho^*(s)}d_\rho^*(s)\max_a A^k(s,a)\\
&\geq \tilde{\rho}\,\mathbb{E}_{s\sim d_\rho^*}\left[\max_a A^k(s,a)\right].
\end{align*}
Note that the inequality requires  $\max_a A^k(s,a)\geq 0$ which holds since $\sum_a\pi^k(a|s)A^k(s,a)=0$.
\end{proof}
\section{Proof of Theorem~\ref{thm:softmaxPG-global}}\label{sec:proof-softmaxPG-global}

In~\cite{Agarwal_Kakade_Lee_Mahajan_2019}, the global convergence of softmax PG algorithm is established when the step size satisfies $\eta \leq (1-\gamma)^2 / 5$. The proof leverages a series of lemmas (Lemmas 41-51 in~\cite{Agarwal_Kakade_Lee_Mahajan_2019}). Here we generalize the global convergence to an arbitrary  step size sequence $\{ \eta_k \}$ obeying $\inf_k \eta_k > \alpha>0$. 
 Essentially, it can be seen that the following result has been established in~\cite{Agarwal_Kakade_Lee_Mahajan_2019}.
\begin{lemma} 
    Let $\{ \pi^k \}$ be the policies sequence generated by softmax PG. If the following conditions are satisfied,
    \begin{align*}
        \forall\, k\in\mathbb{N}, \, s\in\calS, \, a\in\calA: \quad V^{k+1}(s) - V^k(s) \geq 0, \quad Q^{k+1}(s,a) - Q^k(s,a) \geq 0\numberthis\label{eq:softmaxPG-global-condition01}
    \end{align*}
    and
    \begin{align*}
        \forall\, s\in\calS, \, a\in\calA: \quad \lim_{k\to\infty} \frac{\partial V^k(\mu)}{\partial \theta_{s,a}} = 0,\numberthis\label{eq:softmaxPG-global-condition02}
    \end{align*}
     then one has $V^k \to V^*$.
    \label{lem:softmax-PG-optimality-condition}
\end{lemma}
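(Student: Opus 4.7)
The plan is to combine monotone convergence with a limit-extraction-plus-contradiction argument. Since $r\in[0,1]$ yields the uniform bounds $V^k(s), Q^k(s,a)\in[0,1/(1-\gamma)]$, the monotonicity hypothesis \eqref{eq:softmaxPG-global-condition01} gives the pointwise limits $V^\infty(s):=\lim_k V^k(s)$ and $Q^\infty(s,a):=\lim_k Q^k(s,a)$, whence $A^k(s,a)\to A^\infty(s,a):=Q^\infty(s,a)-V^\infty(s)$. I would then invoke compactness of $\Pi$ to extract a subsequence $\{k_j\}$ along which $\pi^{k_j}\to\pi^\infty$ for some $\pi^\infty\in\Pi$; continuity of the maps $\pi\mapsto V^\pi$ and $\pi\mapsto Q^\pi$ (through the closed form $V^\pi=(I-\gamma P^\pi)^{-1}r^\pi$) then gives $V^{\pi^\infty}=V^\infty$, $Q^{\pi^\infty}=Q^\infty$, and $A^{\pi^\infty}=A^\infty$.

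Next, substituting the softmax gradient formula \eqref{eq:softmax-gradient} into condition \eqref{eq:softmaxPG-global-condition02} and using the uniform lower bound $d^k_\mu(s)\ge(1-\gamma)\tilde\mu>0$, I obtain the asymptotic stationarity $\pi^k(a|s)A^k(s,a)\to 0$ for every $(s,a)$; passing to the limit along $\{k_j\}$ gives the complementarity condition $\pi^\infty(a|s)A^{\pi^\infty}(s,a)=0$. If I can further show that $A^{\pi^\infty}(s,a)\le 0$ for every $(s,a)$, then the identity $V^{\pi^\infty}(s)=\sum_a\pi^\infty(a|s)Q^{\pi^\infty}(s,a)$ is sandwiched between $V^{\pi^\infty}(s)$ and $\max_a Q^{\pi^\infty}(s,a)$, forcing $V^{\pi^\infty}(s)=\max_a Q^{\pi^\infty}(s,a)$; that is, $V^\infty$ satisfies the Bellman optimality equation and $V^\infty=V^*$, as desired.

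The remaining and hardest step is to rule out the existence of a pair $(s_0,a^*)$ with $c:=A^{\pi^\infty}(s_0,a^*)>0$. Complementarity already gives $\pi^\infty(a^*|s_0)=0$, and since $A^k(s_0,a^*)\ge c/2$ for large $k$, the asymptotic stationarity upgrades to $\pi^k(a^*|s_0)\to 0$ along the full sequence. On the other hand, the softmax parameter update $\theta^{k+1}_{s_0,a^*}-\theta^k_{s_0,a^*}=\eta_k\tfrac{d^k_\mu(s_0)}{1-\gamma}\pi^k(a^*|s_0)A^k(s_0,a^*)$ is strictly positive for all large $k$, so $\theta^k_{s_0,a^*}$ is eventually non-decreasing; for $\pi^k(a^*|s_0)\to 0$ to hold under the softmax parameterization, some other action $a^\dagger$ must satisfy $\theta^k_{s_0,a^\dagger}-\theta^k_{s_0,a^*}\to\infty$, and applying stationarity to $a^\dagger$ (whose probability is bounded below away from $0$) forces $A^k(s_0,a^\dagger)\to 0$, i.e., $A^{\pi^\infty}(s_0,a^\dagger)=0$. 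A careful comparison of the cumulative increments of $\theta^k_{s_0,a^*}$ (driven by the persistent advantage $c$ times the vanishing probability $\pi^k(a^*|s_0)$) against those of $\theta^k_{s_0,a^\dagger}$ (driven by the vanishing advantage $A^k(s_0,a^\dagger)$ times a probability bounded below) is what ultimately contradicts $\theta^k_{s_0,a^\dagger}-\theta^k_{s_0,a^*}\to\infty$; this is exactly the bookkeeping carried out in Lemmas~41--51 of~\cite{Agarwal_Kakade_Lee_Mahajan_2019}, and transfers to our setting because those lemmas rely only on conditions~\eqref{eq:softmaxPG-global-condition01} and \eqref{eq:softmaxPG-global-condition02} together with the softmax update rule. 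The main obstacle lies in this boundary case: actions with zero limiting advantage can have $A^k(s_0,a)$ approaching $0$ from either side, producing increments whose cumulative sign and magnitude are subtle to control, so isolating the dominating action $a^\dagger$ and quantifying the competing growth rates so that the log-ratio $\theta^k_{s_0,a^*}-\theta^k_{s_0,a^\dagger}$ cannot diverge to $-\infty$ is the delicate analytical heart of the argument.
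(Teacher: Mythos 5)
Your proposal is correct and takes essentially the same route as the paper: both reduce the lemma to the asymptotic-convergence argument of Lemmas~41--51 in Agarwal et al., observing that once the monotonicity and vanishing-gradient hypotheses are granted, the small-step-size restriction there (used only to establish those hypotheses) is no longer needed and the remaining bookkeeping depends only on the two conditions plus the softmax update rule. The one detail the paper makes explicit that your "transfers directly" claim glosses over is that Lemma~50 of that reference must be adjusted for a non-constant step-size sequence by redefining $Z_t$ as $\sum_{t'\in\calT}\eta_{t'}\,\partial V^{t'}(\mu)/\partial\theta_{s,a'}$.
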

Indeed, it is shown respectively in \cite[Lemma 41]{Agarwal_Kakade_Lee_Mahajan_2019} and \cite[Lemma 44]{Agarwal_Kakade_Lee_Mahajan_2019} that  \eqref{eq:softmaxPG-global-condition01} and \eqref{eq:softmaxPG-global-condition02} hold provided $\eta < (1-\gamma)^2/5$ by leveraging the smoothness of the value function and the standard gradient ascent optimization results. Once these two conditions are satisfied,  one can verify that all the other lemmas are irrelevant to the step size $\eta_k$ and are also applicable for any positive step size sequence. Note that in the proof of \cite[Lemma 50]{Agarwal_Kakade_Lee_Mahajan_2019}, if we modify the definition of $Z_t$ from $Z_t = \sum_{t^\prime \in \calT} \frac{\partial V^{t^\prime} (\mu)}{\partial \theta_{s,a^\prime}}$ to $Z_t=\sum_{t^\prime \in \calT} \eta_{t^\prime} \frac{\partial V^{t^\prime} (\mu)}{\partial \theta_{s,a^\prime}}$, it can be shown the claim of the lemma  holds for any positive step size sequence.



\begin{proof}[Proof of Theorem~\ref{thm:softmaxPG-global}]
    It suffices to verify the conditions in Lemma~\ref{lem:softmax-PG-optimality-condition} under the assumption $\inf_k \eta_k > \alpha>0$. For the first condition, note that the improvement lower bound in Lemma~\ref{lem:softmaxPG-improvement-lower} implies that
    \begin{align*}
        \forall\, k\in\mathbb{N}, \,s\in\calS, \, a\in\calA: \quad \sum_{a} \pi^{k+1}(a|s) A^k(s,a) \geq 0.
    \end{align*}
    Then $V^{k+1}(s) - V^k(s) \geq 0$ follows  by the performance difference lemma and $Q^{k+1}(s,a) - Q^k(s,a)\geq 0$ follows from $Q^t(s,a) = r(s,a) + \gamma \mathbb{E}_{s^\prime \sim P(\cdot|s,a)} \left[ V^t(s^\prime) \right]$.
    
    For the second condition, noting that $V^t(s)$ is monotonically increasing and $V^t(s) \leq 1/(1-\gamma)$, we know that the limit of $V^t(s)$ exists. Therefore
    \begin{align*}
        \lim_{t\to\infty} \left( V^{k+1}(\mu) - V^k(\mu) \right) = 0.
    \end{align*}
    Since $\inf_t \eta_t \geq \alpha> 0$, it follows from Lemma~\ref{lem:PDL} and Lemma~\ref{lem:softmaxPG-improvement-lower} that
    \begin{align*}
        V^{k+1}(\mu) - V^k(\mu) &= \frac{1}{1-\gamma} \sum_s d^{k+1}_\mu(s) \sum_{a} \pi^{k+1} A^k(s,a) \\
        &\geq \frac{1}{1-\gamma} \sum_s d^{k+1}_\mu(s) \frac{1}{|\mathcal{A}|}\left(\max_a|\hat{A}^k(s,a)|\right)\left(1-\exp\left(-\eta_k\,\tilde{\mu}\max_a|\hat{A}^k(s,a)|\right)\right)\\
        &\geq \frac{1}{1-\gamma} \sum_s d^{k+1}_\mu(s) \frac{1}{|\mathcal{A}|}\left(\max_a|\hat{A}^k(s,a)|\right)\left(1-\exp\left(-\alpha\,\tilde{\mu}\max_a|\hat{A}^k(s,a)|\right)\right).
    \end{align*}
    Noting that $d^{k+1}_\mu(s) \geq (1-\gamma)\tilde{\mu} > 0$, together with $\lim_{k\rightarrow\infty} \left( V^{k+1}(\mu) - V^k(\mu) \right) = 0$, there holds
    \begin{align*}
        &\forall s\in\calS: \quad \lim_{k\to\infty} \left(\max_{a} \left| \hat{A}^k(s,a) \right|\right) \left( 1 - \exp\left( -\alpha\,\tilde{\mu} \max_a \left| \hat{A}^k(s,a) \right| \right) \right) = 0, \\
        \Longleftrightarrow \quad &\forall s\in\calS: \quad \lim_{k\to\infty} \max_a \left| \hat{A}^k(s,a) \right| = 0, \\
        \Longleftrightarrow \quad &\forall s\in\calS,\; a\in\calA: \quad \lim_{k\to\infty} \hat{A}^k(s,a) = 0.
    \end{align*}
    Therefore the second condition further holds due to the expression of the gradient. 
\end{proof}

\section{Proof of Lemma~\ref{lem:bound-Atau}}\label{sec:proof-bound-Atau}
On the one hand,
\begin{align*}
\hat{A}^\pi_\tau(s,a) &= \pi(a|s)\left(Q^\pi_\tau(s,a)-\tau\log\pi(a|s)-V_\tau^\pi(s)\right)\\
&\leq  \pi(a|s)\left(Q^\pi_\tau(s,a)-\tau\log\pi(a|s)\right)\\
&\leq \frac{1+\gamma\,\tau\log|\mathcal{A}|}{1-\gamma}+\frac{\tau}{e}\\
&\leq \frac{1+\tau\log|\mathcal{A}|}{1-\gamma},
\end{align*}
where we assume $|\mathcal{A}|\geq 2$ for simplicity so that $\log|\mathcal{A}|\geq \log 2\geq 1/e$. On the other hand,
\begin{align*}
\hat{A}^\pi_\tau(s,a) \geq - V_\tau^\pi(s)\geq -\frac{1+\tau\log|\mathcal{A}|}{1-\gamma},
\end{align*}
which completes the proof.

\section{Proof of Lemma~\ref{lem:entropyPG-global}}\label{sec:proof-entropyPG-global}
    
    We are going to establish the global convergence of entropy softmax PG by proving the following four claims in order.
    \begin{enumerate}
        \item The sequence of $\{ V^k_\tau(s) \}$ and $\{ Q^k_\tau(s,a) \}$  converge for all $s, a$, with the limits denoted by $V^\infty_\tau(s)$ and $Q^\infty_\tau(s,a)$, respectively.
        \item For all $s,a$, the policy  sequence $\{\pi^k(a|s)\}$ has at most two limit points. 
        \item For all $s,a$, the limit of the policy sequence $\{ \pi^k(a|s) \}$ indeed exists, namely there  exists only one limit point.
        \item The limit of the value function is optimal, i.e. $V^\infty = V^*$.
    \end{enumerate}
    Note that once Claim~3 have been justified, the proof of Claim~4 is overall the same as the proof of \cite[Lemma~16]{Mei_Xiao_Szepesvari_Schuurmans_2020}. The details are included for completeness.

    \paragraph{Claim 1} As $\eta \in (0, \beta)$, Lemma \ref{lem:entropy-lower-bound} implies that 
    \begin{align*}
        \forall\, s\in\calS: \quad \calT_\tau^{k+1} V^k_\tau(s) - V^k_\tau(s) \geq 0.
    \end{align*}
    Hence, by the performance difference lemma (Lemma \ref{lem:entropyPDL}), $\left\{ V^k_{\tau}(s) \right\}$ is a monotonically increasing sequence. Noting that $|V^k_\tau(s)| \leq (1+\tau\log|\calA|) / (1-\gamma)$ is bounded, the limit exists,
    \begin{align*}
        \forall\, s\in\calS: \quad V^\infty_\tau(s) := \lim_{k\to\infty} V^k_\tau(s).
    \end{align*}
    By the relation of
    $
        Q^k_\tau(s,a) = r(s,a) + \gamma \mathbb{E}_{s^\prime \sim P(\cdot|s,a)} \left[ V^k_\tau(s^\prime) \right]
    $,
    we know that $Q^\infty_\tau(s,a):=\lim_{k\rightarrow\infty}Q^k_\tau(s,a)$ also exists, and \begin{align*}Q^\infty_\tau(s,a) = r(s,a) + \gamma\mathbb{E}_{s^\prime \sim P(\cdot|s,a)} \left[  V^\infty_\tau(s^\prime) \right].
    \end{align*}
    
    \paragraph{Claim 2} As the limit of $V^k_\tau(s)$ exists for any $s$, we have $\lim_{k\to\infty} \left(V^{k+1}_\tau(s) - V^k_\tau(s)\right) = 0$. Then
    \begin{align*}
        \lim_{k\to\infty} \left(V^{k+1}_\tau(\mu) - V^k_\tau(\mu)\right) = 0 .
    \end{align*}
    By Lemma \ref{lem:entropyPDL} and Lemma~\ref{lem:entropy-lower-bound}, one has 
    \begin{align*}
        V^{k+1}_\tau(\mu) - V^k_\tau(\mu)  &=  \frac{1}{1-\gamma} \sum_{s} d^{k+1}_\mu(s) \left( \calT_\tau^{k+1} V^k_\tau(s) - V^k_\tau(s) \right) \\
        &\geq \frac{1}{1-\gamma} \sum_{s} d^{k+1}_\mu(s) \cdot C(\eta) \cdot \max_a \left| \hat{A}^k_\tau(s,a) \right|^2, 
    \end{align*}
    where $C(\eta) := \eta \,\tilde{\mu}  \left[ \exp \left( -\frac{2\eta (1+ \tau \log |\calA|)}{(1-\gamma)^2} \right) - \frac{\tau \eta}{2(1-\gamma)} \right]$ is the constant given in  Lemma~\ref{lem:entropy-lower-bound}. Noting that $d^{k+1}_\mu(s) \geq (1-\gamma) \tilde{\mu} > 0$ and $C(\eta) > 0$, together with $\lim_{k}\left( V^{k+1}(\mu) - V^k(\mu)\right) = 0$, we get
    \begin{align*}
        &\forall\, s\in\calS: \quad \lim_{k\to\infty} \max_{a\in\calA} \left| \hat{A}^k_\tau(s,a) \right|^2 = 0 \\
        \Longleftrightarrow \quad &\forall\, s\in\calS, \; a\in\calA: \quad \lim_{k\to\infty} \hat{A}^k_\tau(s,a) = 0 \numberthis \label{appendix: limit A hat of entropy Softmax PG equals to zero} \\ 
        \Longleftrightarrow \quad &\forall\, s\in\calS, \; a\in\calA: \quad \lim_{k\to\infty} \pi^k(a|s) \cdot {A}^k_\tau(s,a) = 0 \\
        \Longleftrightarrow \quad &\forall\, s\in\calS, \; a\in\calA: \quad \lim_{k\to\infty} \pi^k(a|s) \cdot \left[ Q^k_\tau(s,a) - V^k_\tau(s) - \tau \log \pi^k(a|s) \right] = 0. \numberthis \label{appendix: limit condition of entropy Softmax PG}
    \end{align*}
    For any $a$, define $p(a)$ as
    \begin{align*}
        p(a) := \exp \left( \frac{1}{\tau} \cdot \left[ Q^\infty_\tau(s,a) - V^\infty_\tau(s) \right] \right). 
    \end{align*}
    Note that $p(a)$ satisfies $ Q^\infty_\tau(s,a) - V^\infty_\tau(s) - \tau \log p(a)  = 0$. 
    
    For the case that $p(a) > 1$, we have $Q^\infty_\tau(s,a) > V^\infty_\tau(s)$. Therefore there exists a constant $c_0>0$ and a finite time $T_0$ such that $Q^k_\tau(s,a) - V^k_\tau(s) \geq c_0$ for all $k > T_0$, so $Q^k_\tau(s,a) - V^k_\tau(s) -\tau \log \pi^k(a|s) \geq  c_0$  for all $k > T_0$. It follows from \eqref{appendix: limit condition of entropy Softmax PG} that
    $
      \lim_{k\to\infty} \pi^k(a|s) = 0
    $, i.e. only one limit point for $\{ \pi^k(a|s) \}$.
    
    For the case that $p(a) \in (0,1] $, we next show that  \eqref{appendix: limit condition of entropy Softmax PG} implies that $\pi^k(a|s)$ has at most two limit points $\{ 0, \, p(a) \}$, denoted $\pi^k(a|s) \to \{0, \, p(a) \}$. First we have
    \begin{align*}
        &\forall\, \epsilon > 0,\,  \exists\, T_0 > 0, \; \forall\, k > T_0: \quad \left| \pi^k(a|s) \right| \cdot \left| Q^k_\tau(s,a) - V^k_\tau(s) -\tau \log \pi^k(a|s) \right| \leq  \epsilon,
        \end{align*}
        which implies
        \begin{align*}
        &\forall\, \epsilon > 0, \; \exists\, T_0 > 0, \; \forall\, k > T_0: \quad \left| \pi^k(a|s) \right| \leq  \sqrt{\epsilon} \quad \mbox{or} \quad  \left| Q^k_\tau(s,a) - V^k_\tau(s) -\tau \log \pi^k(a|s) \right| \leq  \sqrt{\epsilon}. \numberthis \label{appendix: condition of two limit points}
    \end{align*}
    Define $\delta_k := \left( Q^\infty_\tau(s,a) - V^\infty_\tau(s) \right) - \left( Q^k_\tau(s,a) - V^k_\tau(s) \right)$. Noting that $\delta_k \to 0$, there exists a time $T_1 > 0$ such that $|\delta_k| \leq \sqrt{\epsilon}$ for all $k \geq T_1$. Under this condition,
    \begin{align*}
        \forall\, k \geq T_1 :\quad \left| Q^k_\tau(s,a) - V^k_\tau(s) -\tau \log \pi^k(a|s) \right| &= \left| \tau \log\frac{p(a)}{\pi^k(a|s)} - \delta_k \right| \\
        & \geq \tau \left| \log\frac{p(a)}{\pi^k(a|s)} \right| - \sqrt{\epsilon} \\
        &\geq \tau \left| \pi^k(a|s) - p(a) \right| - \sqrt{\epsilon},
    \end{align*}
    where the last inequality is due to $\left| \log x - \log p \right| \geq |x - p|$ when $x \in (0,1]$ and $p \in (0, 1]$. Plugging it back into \eqref{appendix: condition of two limit points} and letting $T := \max \{ T_0, T_1 \}$, we have
    \begin{align*}
        \forall\, \epsilon > 0, \; \exists\, T > 0, \; \forall\, k > T: \quad \left| \pi^k(a|s) \right| < \sqrt{\epsilon} \quad \mbox{or} \quad \left|  \pi^k(a|s) - p(a) \right| \leq \frac{2\sqrt{\epsilon}} {\tau},
    \end{align*}
    which implies that $\pi^k(a|s) \to \{0, \; p(a) \}$.

    \paragraph{Claim 3} We have shown in Claim 2 that for any $a$ such that $p(a) \in (0, 1]$,
    \begin{align*}
        \pi^k(a|s) \to \{ 0, \; p(a) \}.
    \end{align*}
    Next, we will further show that for these actions the limit of $\pi^k(a|s)$ indeed exists, i.e. it converges to one of the $\{0, \, p(a) \}$. Once it is done, we know that $\lim_k \pi^k(a|s)$ exists for any $s,a$. In the proof, we will leverage the fact that
    \begin{align*}
        \forall\, s: \quad  \left\|\frac{\partial V^k_\tau(\mu)}{\partial \theta_{s,a}}
        \right\|_\infty\to 0,
    \end{align*}
    which follows from the expression for the gradient in \eqref{eq:entropy-softmax-pg-expression} and the fact $\hat{A}^k_\tau(s,a) \to 0$ for all $(s,a)$. Thus there exists a finite time $T_1 > 0$ such that
    \begin{align*}
        \forall\, k \geq T_1: \quad \left\|\frac{\partial V^k_\tau(\mu)}{\partial \theta_{s,\cdot}} \right\|_\infty < \frac{1}{2\eta} \log \left( \frac{p(a)}{\epsilon} - 1 \right).
    \end{align*}
      In addition, since $\pi^k(a|s) \to \{ 0, \; p(a) \}$, for $\epsilon < p(a) / 2$, there exists a finite time $T_0 > 0$ such that
    \begin{align*}
        \forall\, k \geq T_0: \quad \pi^k(a|s) \leq \epsilon \quad \mathrm{or} \quad p(a) - \epsilon \leq \pi^k(a|s) \leq p(a) + \epsilon.
    \end{align*}
    Note that these two sets are disjointed. 
    
    The proof is then proceeded by contradiction.
    Suppose $\lim_{k\to\infty} \pi^k(a|s)$ does not exist.
    Then $\pi^k(a|s)$ will move between the $\epsilon$-neighborhood of $0$ and $p(a)$ infinite times since $\pi^k(a|s) \to \{0,p(a) \}$. Therefore there must exist a time $t \geq \max(T_0,T_1)$ such that $\pi^t(a|s) \leq \epsilon$ and $\pi^{t+1}(a|s) \geq p(a)-\epsilon$. According to the update rule of entropy softmax PG (let $l_t := \left\| \frac{\partial V^t_\tau(\mu)}{\partial \theta_{s,\cdot}} \right\|_\infty$),
    \begin{align*}
        \pi^{t+1}(a|s)& = \frac{\exp\left( \theta_{s,a}^t + \eta_t \frac{\partial V^t_\tau(\mu)}{\partial \theta_{s,a}} \right)}{\sum_{a'} \exp\left( \theta_{s,a'}^t + \eta_t \frac{\partial V^t_\tau(\mu)}{\partial \theta_{s,a'}} \right)} \\
        &\leq \frac{\exp \left( \theta_{s,a}^t + \eta_t \, l_t \right)}{\sum_{a'} \exp\left( \theta_{s,a'}^t - \eta_t \,l_t \right)} \\
        &= \exp(2\eta_t\, l_t) \cdot \pi^t(a|s) \\
        &\leq \exp(2\eta\, l_t) \cdot \epsilon \\
        &< p(a) - \epsilon,
    \end{align*}
    which contradicts  the claim  $\pi^{t+1}(a|s) \geq p(a) - \epsilon$. 

    \paragraph{Claim 4}  Denote by $\pi^\infty(a|s)$ the limit of $\pi^k(a|s)$, i.e.
    \begin{align*}
        \forall\, s\in\calS, \; a\in\calA: \quad \lim_{k\to\infty} \pi^k(a|s) = \pi^\infty(a|s).
    \end{align*}
   As in \cite{Mei_Xiao_Szepesvari_Schuurmans_2020}, we divide the action set into the following two subsets:
    \begin{align*}
        \calA_0(s) &:= \left\{ a: \pi^\infty(a|s) = 0 \right\} \\
        \calA_+(s) &:= \left\{ a: \pi^\infty(a|s) = p(a) \right\}.
    \end{align*}
    Next we show $\calA_0(s) = \varnothing$. Otherwise, for any $a_0 \in \calA_0(s)$, there exists a time $T\geq 0$ such that,
    \begin{align*}
        \forall\, k \geq T: \quad -\log \pi^k(a_0|s) \geq \frac{1+\tau \log |\calA|}{\tau (1-\gamma)}.
    \end{align*}
    Therefore,
    \begin{align*}
        \forall\, k \geq T: \quad \frac{\partial V^k_\tau(\mu)}{\partial \theta_{s,a_0}} &= \frac{1}{1-\gamma} d^k_\mu(s) \pi^k(a|s) \left[ Q^k_\tau(s,a) - V^k_\tau(s) - \tau \log \pi^k(a|s) \right] \\
        &\geq \frac{1}{1-\gamma} d^k_\mu(s) \pi^k(a|s) \left[ 0 - \frac{1+\tau \log |\calA|}{1-\gamma} 
        +\tau \frac{1+\tau \log |\calA|}{\tau (1-\gamma)} \right] \geq 0,\numberthis\label{eq:entropy-softmaxpg-global01}
    \end{align*}
    which implies that $\theta^k_{s,a_0}$ is increasing for any $k \geq T$.  Thus $\theta^k_{s,a_0}$ is lower bounded.  Then according to 
    \begin{align*}
        \pi^k(a_0|s) = \frac{\exp (\theta^k_{s,a_0})}{\sum_a \exp (\theta^k_{s,a})} \to 0,
    \end{align*}
    we know that
        $\sum_a \exp (\theta^k_{s,a}) \to \infty$.
    On the other hand, for any $a_+ \in \calA_+(s)$, by
    \begin{align*}
        \pi^k(a_+|s) = \frac{\exp(\theta^k_{s,a_+})}{\sum_{a} \exp(\theta^k_{s,a})} \to p(a) > 0
    \end{align*}
    we get $\exp(\theta^k_{s,a_+}) \to \infty$, so $\theta^k_{s,a_+} \to \infty$, leading to $\sum_{a_+\in\calA_+(s)} \theta^k_{s,a_+} \to \infty$.
    Note that since $\sum_a\hat{A}^k_\tau(s,a)=0$, one has
    \begin{align*}
        \sum_a \frac{\partial V^k_\tau(\mu)}{\partial \theta_{s,a}} = \sum_{a_0 \in \calA_0(s)} \frac{\partial V^k_\tau(\mu)}{\partial \theta_{s,a_0}} + \sum_{a_+ \in \calA_+(s)} \frac{\partial V^k_\tau(\mu)}{\partial \theta_{s,a_+}} = 0.
    \end{align*}
    Thus, the result in \eqref{eq:entropy-softmaxpg-global01} implies that $\sum_{a_+ \in \calA_+(s)} \frac{\partial V^k_\tau(\mu)}{\partial \theta_{s,a_+}}\leq 0,\,\forall\, k\ge T$, which means  
    $\sum_{a_+ \in \calA_+(s)} \theta^k_{s,a_+}$  eventually decreases, contradicting $\sum_{a_+\in\calA_+(s)} \theta^k_{s,a_+} \to \infty$.
    
    Since $\pi^\infty(a|s) > 0$ for all $s,\,a$, together  with  \eqref{appendix: limit condition of entropy Softmax PG}, one has
    \begin{align*}
        \forall\, s,a: \quad \lim_{k\to\infty} A^k_\tau(s,a) = \lim_{k\to\infty}\left[Q^k_\tau(s,a) - V^k_\tau(s) - \tau \log \pi^k(a|s)\right] = 0.
    \end{align*}
    It follows that 
    \begin{align*}
        \forall\, s,a: 
        \quad Q^\infty_\tau(s,a) - V^\infty_\tau(s) - \tau \log \pi^\infty(a|s)= r(s,a) + \gamma \mathbb{E}_{s^\prime \sim P(\cdot|s,a)} \left[ V^\infty_\tau(s^\prime) \right] - V^\infty_\tau(s) - \tau \log \pi^\infty(a|s) = 0.
    \end{align*}
    Thus, the application of \cite[Corollary~21]{Nachum2017softPI} implies that  $\pi^\infty = \pi^*$ and $V^\infty_\tau = V^*_\tau$, which concludes the proof.

\section{Proof of Lemma~\ref{lem:KL-ratio}}\label{sec:proof-KL-ratio}

In order to prove Lemma~\ref{lem:KL-ratio}, we first introduce the following two lemmas regarding the gradient and Hessian of the KL divergence. The first lemma can be verified directly.

\begin{lemma}
    Let $\pi_\theta$ and $\pi_{\theta'}$ be two smooth parameterized policies.
    \begin{enumerate}
        \item[\textup{1)}] For the gradient, we have
        \begin{align*}
            \left.\nabla_\theta\mathrm{KL}(\pi_\theta\|\pi_{\theta'})\right|_{\theta=\theta'}=\left.\nabla_\theta\mathrm{KL}(\pi_{\theta'}\|\pi_{\theta})\right|_{\theta=\theta'}=0.
        \end{align*}
        \item[\textup{2)}] For the Hessian, we have
        \begin{align*}
            \nabla_\theta^2\mathrm{KL}(\pi_\theta\|\pi_{\theta'})&=\sum_a\nabla_\theta^2\pi_\theta(a)(\log\pi_\theta(a)-\log\pi_{{\theta'}}(a))+F(\theta)
        \end{align*}
        and
        \begin{align*}
            \nabla_\theta^2\mathrm{KL}(\pi_{{\theta'}}\|\pi_{{\theta}})=-\sum_a\left(\pi_{{\theta'}}(a)-\pi_\theta(a)\right)\nabla_\theta^2\log\pi_\theta(a)+F(\theta),
        \end{align*}
        where $F(\theta)=\sum_a\pi_\theta(a)\nabla_\theta\log\pi_\theta(a)\nabla_\theta\log\pi_\theta(a)^T$ is the Fisher information matrix.
    \end{enumerate}
    \label{lem:KL-grad/Hessian}
\end{lemma}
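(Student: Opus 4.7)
The plan is to verify all four identities by a direct calculation based on the defining formula $\mathrm{KL}(\pi_\theta\|\pi_{\theta'}) = \sum_a \pi_\theta(a)\bigl(\log\pi_\theta(a) - \log\pi_{\theta'}(a)\bigr)$, combined with three elementary facts that follow from differentiating the normalization $\sum_a \pi_\theta(a) = 1$:
\begin{align*}
\sum_a \nabla_\theta \pi_\theta(a) = 0, \qquad \sum_a \nabla_\theta^2 \pi_\theta(a) = 0, \qquad \nabla_\theta \pi_\theta(a) = \pi_\theta(a)\nabla_\theta \log\pi_\theta(a).
\end{align*}

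For part~1, I would apply the product rule to get
\begin{align*}
\nabla_\theta \mathrm{KL}(\pi_\theta\|\pi_{\theta'}) = \sum_a \nabla_\theta \pi_\theta(a)\bigl(\log\pi_\theta(a)-\log\pi_{\theta'}(a)\bigr) + \sum_a \pi_\theta(a)\nabla_\theta\log\pi_\theta(a),
\end{align*}
where the second sum equals $\sum_a \nabla_\theta\pi_\theta(a) = 0$, and the first sum vanishes when $\theta=\theta'$. A symmetric computation shows $\nabla_\theta\mathrm{KL}(\pi_{\theta'}\|\pi_\theta) = -\sum_a \pi_{\theta'}(a)\nabla_\theta\log\pi_\theta(a)$, which at $\theta=\theta'$ reduces to $-\sum_a \nabla_\theta\pi_\theta(a) = 0$.

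For part~2, differentiating the simplified expression of $\nabla_\theta\mathrm{KL}(\pi_\theta\|\pi_{\theta'})$ above once more gives the first stated Hessian identity immediately, since the outer-product term collapses to $\sum_a \pi_\theta(a)\nabla_\theta\log\pi_\theta(a)\nabla_\theta\log\pi_\theta(a)^\tran = F(\theta)$. For the second Hessian formula, differentiating $\nabla_\theta\mathrm{KL}(\pi_{\theta'}\|\pi_\theta) = -\sum_a \pi_{\theta'}(a)\nabla_\theta\log\pi_\theta(a)$ yields the compact form $-\sum_a \pi_{\theta'}(a)\nabla_\theta^2\log\pi_\theta(a)$. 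The only non-obvious step is then to convert this into the claimed decomposition; this is achieved by adding and subtracting $\sum_a \pi_\theta(a)\nabla_\theta^2\log\pi_\theta(a)$ and recognizing the key identity $-\sum_a \pi_\theta(a)\nabla_\theta^2\log\pi_\theta(a) = F(\theta)$. This identity itself follows by summing the expansion $\nabla_\theta^2\pi_\theta(a) = \pi_\theta(a)\nabla_\theta\log\pi_\theta(a)\nabla_\theta\log\pi_\theta(a)^\tran + \pi_\theta(a)\nabla_\theta^2\log\pi_\theta(a)$ over $a$ and using $\sum_a \nabla_\theta^2\pi_\theta(a)=0$.

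The calculation is entirely mechanical with no real obstacle; the only bookkeeping worth care is the matrix-valued second derivative of $\pi_\theta(a)$ in terms of $\log\pi_\theta(a)$, which is the source of both the Fisher term $F(\theta)$ and the rewriting trick used to put the second Hessian into the stated form.
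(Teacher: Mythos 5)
Your computation is correct and is exactly the direct verification the paper has in mind (the paper offers no written proof, stating only that the lemma "can be verified directly"). All the identities you use — $\sum_a\nabla_\theta\pi_\theta(a)=0$, $\sum_a\nabla_\theta^2\pi_\theta(a)=0$, and $-\sum_a\pi_\theta(a)\nabla_\theta^2\log\pi_\theta(a)=F(\theta)$ — are the right ingredients, and the add-and-subtract step for the second Hessian formula is handled correctly.
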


\begin{lemma}
    Let $\pi_{\theta}=\mathrm{softmax}(\theta)$ and $\pi_{\theta'}=\mathrm{softmax}(\theta')$ be two softmax policies.
    \begin{enumerate}
        \item[\textup{1)}] The Fisher information matrix $F(\theta)$ has the following expression:
        \begin{align*}
            F(\theta) =\mathrm{diag}(\pi_\theta)-\pi_\theta\pi_\theta^T,            
        \end{align*}
        and there holds
        \begin{align*}
            \|F(\theta_1)-F(\theta_2)\|_2\leq 3\|\pi_{\theta_1}-\pi_{\theta_2}\|_2.
        \end{align*}
        Moreover, one has $1^TF(\theta)\,1=0$ and for any $x$ such that $1^Tx=0$,
        \begin{align*}
            x^TF(\theta)\,x\geq \min_a\pi_\theta(a) \cdot \|x\|_2^2.
        \end{align*}
        \item[\textup{2)}] The Hessian of $\mathrm{KL}(\pi_{\theta'}\|\pi_\theta)$ is equal to $F(\theta)$, i.e.,
        \begin{align*}
            \nabla_\theta^2\mathrm{KL}(\pi_{{\theta'}}\|\pi_{{\theta}}) = F(\theta).
        \end{align*}
        \item[\textup{3)}] For any $x$, the Hessian of $\mathrm{KL}(\pi_\theta\|\pi_{\theta'})$ satisfies
        \begin{align*}
            \left(1-c(\theta,\theta')\right)\left(x^TF(\theta)\,x\right)\leq x^T\nabla_\theta^2\mathrm{KL}(\pi_\theta\|\pi_{\theta'})\,x \leq
            \left(1+c(\theta,\theta')\right)\left(x^TF(\theta)\,x\right),
        \end{align*}    
        where $c(\theta,\theta')= \|\log\pi_\theta-\log\pi_{\theta'}\|_\infty+\mathrm{KL}(\pi_\theta\|\pi_{\theta'})$ is assumed to be sufficiently small.
    \end{enumerate}
    \label{lem:Softmax-KL-Hessian}
\end{lemma}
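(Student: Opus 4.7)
The plan is to handle the three parts in order, since Part~2 and Part~3 both rely on the explicit form of $F(\theta)$ obtained in Part~1, and the identity from Lemma~\ref{lem:KL-grad/Hessian} does the heavy lifting in Part~2.

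For Part~1, I will start from the softmax identity $\nabla_\theta \log\pi_\theta(a) = e_a - \pi_\theta$, which gives $F(\theta) = \sum_a \pi_\theta(a)(e_a-\pi_\theta)(e_a-\pi_\theta)^\tran = \diag(\pi_\theta)-\pi_\theta\pi_\theta^\tran$ after a direct expansion. The Lipschitz bound $\|F(\theta_1)-F(\theta_2)\|_2 \le 3\|\pi_{\theta_1}-\pi_{\theta_2}\|_2$ then follows by triangle inequality: the diagonal piece contributes $\|\pi_{\theta_1}-\pi_{\theta_2}\|_\infty \le \|\pi_{\theta_1}-\pi_{\theta_2}\|_2$, and the rank-one piece $\pi_{\theta_1}\pi_{\theta_1}^\tran - \pi_{\theta_2}\pi_{\theta_2}^\tran$ telescopes as $(\pi_{\theta_1}-\pi_{\theta_2})\pi_{\theta_1}^\tran + \pi_{\theta_2}(\pi_{\theta_1}-\pi_{\theta_2})^\tran$, each summand having spectral norm at most $\|\pi_{\theta_1}-\pi_{\theta_2}\|_2$ since $\|\pi_{\theta_i}\|_2 \le 1$. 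The identity $\bone^\tran F(\theta)\bone = 1-1 = 0$ is immediate. For the lower bound on $\{x:\bone^\tran x = 0\}$, I will use the variance representation $x^\tran F(\theta)x = \mathrm{Var}_{a\sim\pi_\theta}(x_a)$, and then write $\pi_\theta(a)=p_{\min}+q_a$ with $q_a\ge 0$. Since $\bone^\tran x=0$, one has $\sum_a \pi_\theta(a)x_a = \sum_a q_a x_a$, and Cauchy-Schwarz yields $(\sum_a q_a x_a)^2 \le (\sum_a q_a)(\sum_a q_a x_a^2) \le \sum_a q_a x_a^2$. The claimed bound $x^\tran F(\theta)x \ge p_{\min}\|x\|_2^2$ then drops out after rearranging.

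For Part~2, I will simply invoke Lemma~\ref{lem:KL-grad/Hessian}(2) and observe the special feature of softmax: differentiating $\log\pi_\theta(a) = \theta_a - \log Z(\theta)$ twice gives $\nabla_\theta^2\log\pi_\theta(a) = -F(\theta)$, independent of $a$. Plugging this into the formula for $\nabla_\theta^2\mathrm{KL}(\pi_{\theta'}\|\pi_\theta)$ collapses the sum to $F(\theta)\sum_a(\pi_{\theta'}(a)-\pi_\theta(a)) = 0$, leaving precisely $F(\theta)$.

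Part~3 is where the main obstacle lies, as it requires an explicit handle on $\sum_a u_a \nabla_\theta^2\pi_\theta(a)$ with $u_a := \log\pi_\theta(a) - \log\pi_{\theta'}(a)$. Starting from $\nabla_\theta\pi_\theta(a) = \pi_\theta(a)(e_a - \pi_\theta)$ and differentiating once more, I expect to obtain
\begin{align*}
x^\tran\!\left[\sum_a u_a \nabla_\theta^2\pi_\theta(a)\right]\! x
= \mathbb{E}_{a\sim\pi_\theta}\!\left[u_a(x_a - \bar{x}_\pi)^2\right] - \mathbb{E}_{a\sim\pi_\theta}[u_a]\cdot(x^\tran F(\theta)x),
\end{align*}
where $\bar{x}_\pi = \sum_a \pi_\theta(a) x_a$, so that the quadratic form $x^\tran F(\theta) x = \mathrm{Var}_{a\sim\pi_\theta}(x_a)$ emerges naturally in both terms. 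The first summand is controlled by $\|u\|_\infty \cdot \mathrm{Var}_{\pi_\theta}(x_a) = \|\log\pi_\theta-\log\pi_{\theta'}\|_\infty(x^\tran F(\theta)x)$, and the coefficient in the second summand is $\mathbb{E}_{\pi_\theta}[u_a] = \mathrm{KL}(\pi_\theta\|\pi_{\theta'})\ge 0$. Adding $F(\theta)$ from the formula in Lemma~\ref{lem:KL-grad/Hessian}(2) and using $|a|\le c$ iff $-c\le a\le c$ yields the two-sided bound with $c(\theta,\theta') = \|\log\pi_\theta-\log\pi_{\theta'}\|_\infty+\mathrm{KL}(\pi_\theta\|\pi_{\theta'})$. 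The trickiest calculation is organizing the Hessian of $\pi_\theta(a)$ so that $F(\theta)$ appears cleanly on the right-hand side; once that identity is in place, the rest is a straightforward application of $|\mathbb{E}[u_a Y]| \le \|u\|_\infty \mathbb{E}[Y]$ for $Y\ge 0$.
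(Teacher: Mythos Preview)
Your proposal is correct and follows essentially the same route as the paper's proof: the same softmax identities $\nabla_\theta\log\pi_\theta(a)=e_a-\pi_\theta$ and $\nabla_\theta^2\log\pi_\theta(a)=-F(\theta)$ drive Parts~1 and~2, and in Part~3 you arrive at precisely the paper's decomposition $x^\tran[\nabla_\theta^2\mathrm{KL}(\pi_\theta\|\pi_{\theta'})-F(\theta)]x=\mathbb{E}_{a\sim\pi_\theta}[u_a((e_a-\pi_\theta)^\tran x)^2]-\mathrm{KL}(\pi_\theta\|\pi_{\theta'})\,x^\tran F(\theta)x$ and bound each piece the same way. The only cosmetic differences are that for the Lipschitz estimate you use a direct spectral-norm triangle inequality on $\diag(\cdot)$ and the rank-one telescoping, whereas the paper bounds the quadratic form $x^\tran(F(\theta_1)-F(\theta_2))x$ explicitly, and for the lower bound on $\{x:\bone^\tran x=0\}$ you phrase the key step as Cauchy--Schwarz on the weights $q_a=\pi_\theta(a)-p_{\min}$ while the paper writes the same inequality after the substitution $x_{a'}=-\sum_{a\ne a'}x_a$; both yield the identical conclusion.
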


\begin{proof}
    1) The first one follows from $
    \nabla_\theta\log\pi_\theta(a)=e_a-\pi_\theta
    $ when $\pi_\theta$ is a softmax policy.
    Noting that
    \begin{align*}
    x^T(F(\theta_1)-F(\theta_2))\,x &=\sum_a\left(\pi_{\theta_1}(a)-\pi_{\theta_2}(a)\right)x_a^2-\left((\pi_{\theta_1}^Tx)^2-(\pi_{\theta_2}^Tx)^2\right),
    \end{align*}
    one has
    \begin{align*}
    \left|x^T(F(\theta_1)-F(\theta_2))\,x\right|&\leq \|\pi_{\theta_1}-\pi_{\theta_2}\|_\infty\|x\|_2^2+\|\pi_{\theta_1}+\pi_{\theta_2}\|_2\|\pi_{\theta_1}-\pi_{\theta_2}\|_2\|x\|_2^2\\
    &\leq 3\|\pi_{\theta_1}-\pi_{\theta_2}\|_2\,\|x\|_2^2.
    \end{align*}
    The fact $1^TF(\theta)\,1=0$ can be seen easily. For the other one, assume $a'=\arg\min_a \pi_{\theta}(a)$. One has
    \begin{align*}
    x^TF_\theta x-\pi_\theta(a') \sum_ax_a^2&=\sum_a\pi_\theta(a)x_a^2-\left(\sum_a\pi_\theta(a)x_a\right)^2-\pi_\theta(a') \sum_ax_a^2\\
    &=\sum_{a\neq a'}(\pi_\theta(a)-\pi_\theta(a'))x_a^2 - \left(\sum_{a\neq a'}(\pi_\theta(a)-\pi_\theta(a'))x_a\right)^2\\
    &\geq 0,
    \end{align*}
    where the second equality uses the fact $x_{a'}=-\sum_{a\neq a'} x_a$.

    2) This result follows immediately from
    \begin{align*}
        \nabla_\theta^2\log\pi_\theta(a)=-\mathrm{diag}(\pi_\theta)+\pi_\theta\pi_\theta^T=-F(\theta),
    \end{align*}
    which is independent of $a$.

    3) Note that
    \begin{align*}
        \frac{\nabla_\theta^2\pi_\theta(a)}{\pi_\theta(a)}=(e_a-\pi_\theta)(e_a-\pi_\theta)^T-\left(\mathrm{diag}(\pi_\theta)-\pi_\theta\pi_\theta^T\right).
    \end{align*}
    Therefore,
    \begin{align*}
        &x^T\left(\nabla_\theta^2\mathrm{KL}(\pi_\theta\|\pi_{\theta'})-F(\theta)\right)\,x\\&=x^T\mathbb{E}_{a\sim\pi_\theta}\left[\left[(e_a-\pi_\theta)(e_a-\pi_\theta)^T-\left(\mathrm{diag}(\pi_\theta)-\pi_\theta\pi_\theta^T\right)\right]\log\frac{\pi_\theta(a)}{\pi_{\theta'}(a)}\right]x\\
        &=\mathbb{E}_{a\sim\pi_\theta}\left[\left((e_a-\pi_\theta)^T x\right)^2\log\frac{\pi_\theta(a)}{\pi_{\theta'}(a)}\right]-\left[x^T\mathrm{diag}(\pi_\theta)\,x-\left(\pi_\theta^Tx\right)^2\right]\mathrm{KL}(\pi_\theta\|\pi_{\theta'}).
    \end{align*}
    It follows that
    \begin{align*}
        \left|x^T\left(\nabla_\theta^2\mathrm{KL}(\pi_\theta\|\pi_{\theta'})-F(\theta)\right)\,x\right|&\leq \|\log\pi_\theta-\log\pi_{\theta'}\|_\infty \,
        \mathbb{E}_{a\sim\pi_\theta}\left[\left((e_a-\pi_\theta)^T x\right)^2\right]\\
        &+\mathrm{KL}(\pi_\theta\|\pi_{\theta'})\left[x^T\mathrm{diag}(\pi_\theta)\,x-\left(\pi_\theta^Tx\right)^2\right].
    \end{align*}
    The proof is completed since $\left[x^T\mathrm{diag}(\pi_\theta)\,x-\left(\pi_\theta^Tx\right)^2\right]=\mathbb{E}_{a\sim\pi_\theta}\left[\left((e_a-\pi_\theta)^T x\right)^2\right]=x^TF(\theta)\,x$.    
\end{proof}



\begin{proof}[Proof of Lemma~\ref{lem:KL-ratio}]
    We focus on the first one and the second one can be proved similarly. Define
    \begin{align*}
        \theta_k = \left(I-\frac{1}{n}11^T\right)\log \pi^k\quad\mathrm{and}\quad\theta^*=\left(I-\frac{1}{n}11^T\right)\log \pi^*.
    \end{align*}    
    Then
    \begin{align*}
        \pi^k=\pi_{\theta_k}, \quad  \pi^*=\pi_{\theta^*} \quad \mathrm{and}\quad \theta_k \to \theta^* ,
    \end{align*}    
    where $\pi_{\theta_k}$ and $\pi_{\theta^*}$ denote softmax policies with respect to $\theta_k$ and $\theta^*$, respectively.

    Due to the first result of Lemma~\ref{lem:KL-grad/Hessian}, one has
    \begin{align*}
        \mathrm{KL}(\pi^k\|\pi^{k+1})=\mathrm{KL}(\pi_{\theta_k}\|\pi_{\theta_{k+1}}) &= \frac{1}{2}(\theta_{k+1}-\theta_k)^T\left.\nabla_\theta^2\mathrm{KL}(\pi_\theta\|\pi_{\theta_{k+1}})\right|_{\theta=\xi_k}(\theta_{k+1}-\theta_k),\\
        \mathrm{KL}(\pi^{k+1}\|\pi^{k}) =\mathrm{KL}(\pi_{\theta_{k+1}}\|\pi_{\theta_{k}}) &=\frac{1}{2}(\theta_{k+1}-\theta_k)^T\left.\nabla_\theta^2\mathrm{KL}(\pi_{\theta_{k+1}}\|\pi_\theta)\right|_{\theta=\hat{\xi}_k}(\theta_{k+1}-\theta_k)
    \end{align*}    
    for two points $\xi_k$ and $\hat{\xi}_k$ between $\theta_k$ and $\theta_{k+1}$. By the third result of Lemma~\ref{lem:Softmax-KL-Hessian}, there exists a
    \begin{align*}
        -c(\xi_k,\theta_{k+1})\leq c_k\leq c(\xi_k,\theta_{k+1})
    \end{align*}
    such that 
    \begin{align*}
        \mathrm{KL}(\pi_{\theta_k}\|\pi_{\theta_{k+1}}) = \frac{1}{2}(1+c_k)(\theta_{k+1}-\theta_k)^TF(\xi_k)(\theta_{k+1}-\theta_k).
    \end{align*}    
    Since $\left.\nabla_\theta^2\mathrm{KL}(\pi_{\theta_{k+1}}\|\pi_\theta)\right|_{\theta=\hat{\xi}_k}=F(\hat{\xi}_k)$, one has
    \begin{align*}
        \mathrm{KL}(\pi_{\theta_{k+1}}\|\pi_{\theta_{k}}) &=\frac{1}{2}(\theta_{k+1}-\theta_k)^TF(\hat{\xi}_k)(\theta_{k+1}-\theta_k).
    \end{align*}    
    It follows that
    \begin{align*}
        \frac{\mathrm{KL}(\pi_{\theta_k}\|\pi_{\theta_{k+1}})}
        {\mathrm{KL}(\pi_{\theta_{k+1}}\|\pi_{\theta_{k}})}=(1+c_k) \cdot 
        \frac{(\theta_{k+1}-\theta_k)^TF(\xi_k)(\theta_{k+1}-\theta_k)} 
        {(\theta_{k+1}-\theta_k)^TF(\theta^*)(\theta_{k+1}-\theta_k)}
        \cdot
        \frac{(\theta_{k+1}-\theta_k)^TF(\theta^*)(\theta_{k+1}-\theta_k)}{(\theta_{k+1}-\theta_k)^TF(\hat{\xi}_k)(\theta_{k+1}-\theta_k)}.
    \end{align*}   
    By the construction of $\theta_k$, it is easy to see that
    \begin{align*}
        1^T\theta_k = 1^T\theta_{k+1} = 1^T(\theta_{k+1}-\theta_k)=0.
    \end{align*}
    Thus,
    \begin{align*}
        \left|\frac{(\theta_{k+1}-\theta_k)^TF(\xi_k)(\theta_{k+1}-\theta_k)}{(\theta_{k+1}-\theta_k)^TF(\theta^*)(\theta_{k+1}-\theta_k)}-1\right|&=\left|\frac{\left(\theta_{k+1}-\theta_k)^T(F(\xi_k)-F(\theta^*)\right)(\theta_{k+1}-\theta_k)}{(\theta_{k+1}-\theta_k)^TF(\theta^*)(\theta_{k+1}-\theta_k)}\right|\\
        &\leq \frac{3\|\pi_{\xi_k}-\pi_{\theta^*}\|_2\,\|\theta_{k+1}-\theta_k\|_2^2}{\min_a\pi_{\theta^*}(a)\,\|\theta_{k+1}-\theta_k\|_2^2}\\
        &=\frac{3\|\pi_{\xi_k}-\pi_{\theta^*}\|_2}{\min_a\pi_{\theta^*}(a)}.
    \end{align*}
    Similarly, there holds
    \begin{align*}
        \left|\frac{(\theta_{k+1}-\theta_k)^TF(\theta^*)(\theta_{k+1}-\theta_k)}{(\theta_{k+1}-\theta_k)^TF(\hat{\xi}_k)(\theta_{k+1}-\theta_k)} - 1\right|\leq \frac{3\|\pi_{\hat{\xi}_k}-\pi_{\theta^*}\|_2}{\min_a\pi_{\hat{\xi}_k}(a)}\leq\frac{3\|\pi_{\hat{\xi}_k}-\pi_{\theta^*}\|_2}{\min_a\pi_{\theta^*}(a)-\|\pi_{\hat{\xi}_k}-\pi_{\theta^*}\|_\infty}
    \end{align*}
    for sufficiently large $k$. Noting that when $\pi^k\rightarrow \pi^*$, we have $\theta_k\rightarrow \theta^*$ and $\theta_{k+1}\rightarrow \theta^*$. It follows that $\xi_k\rightarrow \theta^*$, $\hat{\xi}_k\rightarrow\theta^*$, and $\xi_k-\theta_{k+1}\rightarrow 0$. Therefore,
    \begin{align*}
        c_k\rightarrow 0, \quad \|\pi_{\xi_k}-\pi_{\theta^*}\|_2\rightarrow 0,\quad \|\pi_{\hat{\xi}_k}-\pi_{\theta^*}\|_2\rightarrow 0,\quad\mbox{and  }\|\pi_{\hat{\xi}_k}-\pi_{\theta^*}\|_\infty\rightarrow 0.
    \end{align*}    
    The proof is completed by combining the above results together.
\end{proof}

\end{document}